\renewenvironment{proof}[1][Proof]{\textbf{#1.} }{\ \rule{0.5em}{0.5em}}
\DeclareMathOperator{\Id}{Id}
\DeclareMathOperator{\can}{can}
\DeclareMathOperator{\diag}{diag}
\DeclareMathOperator{\Ad}{Ad}
\DeclareMathOperator{\trace}{trace}
\DeclareMathOperator{\rk}{rk}
\DeclareMathOperator{\ccan}{can}
\DeclareMathOperator{\incl}{incl}
\renewenvironment{proof}[1][Proof]{\textbf{#1.} }
{\ \rule{0.5em}{0.5em}}
\newtheorem{theorem}{Theorem}
\newtheorem{prop}{Proposition}
\newtheorem{lemma}{Lemma}
\newtheorem{corollary}{Corollary}
\theoremstyle{definition}
\newtheorem{definition}{Definition}
\newtheorem{remark}{Remark}
\begin{document}

\title
[Generalized normal homogeneous Riemannian metrics \dots]
{Generalized normal homogeneous Riemannian metrics on spheres and projective spaces}
\author{V.N.~Berestovski\u\i, Yu.G.~Nikonorov}

\address{Berestovski\u\i\  Valeri\u\i\  Nikolaevich \newline
Omsk Branch of Sobolev Institute \newline
of Mathematics of SD RAS, \newline
Omsk, Pevtsov str., 13, \newline
644099, RUSSIA}
\email{berestov@ofim.oscsbras.ru}

\address{Nikonorov\ Yuri\u\i\  Gennadievich\newline
South Mathematical Institute of VSC RAS \newline
Vladikavkaz, Markus str., 22, \newline
362027, RUSSIA}
\email{nikonorov2006@mail.ru}

\thanks{The first author was supported in part by RFBR (grant 11-01-00081-a).
The second author was supported in part by the
State Maintenance Program for the Leading Scientific Schools of
the Russian Federation (grant NSh-921.2012.1) and by Federal Target
Grant ``Scientific and educational personnel of innovative
Russia'' for 2009-2013 (agreement no. 8206, application no. 2012-1.1-12-000-1003-014).}

\begin{abstract}
In this paper we develop new methods of study of generalized
normal homogeneous Riemannian manifolds.
In particular, we obtain a complete classification
of generalized normal homogeneous Riemannian metrics on spheres.
We prove that for any connected (almost effective) transitive on $S^n$
compact Lie group $G$, the family of $G$-invariant
Riemannian metrics on $S^n$ contains generalized normal homogeneous
but not normal homogeneous metrics if and only if this family depends
on more than one parameters. Any such family (that exists only for $n=2k+1$)
contains a metric $g_{\can}$ of constant sectional curvature $1$ on $S^n$. We also prove that
$(S^{2k+1}, g_{\can})$ is Clifford-Wolf homogeneous, and therefore generalized normal
homogeneous, with respect to $G$ (excepting the groups $G=SU(k+1)$ with odd $k+1$).
The space of unit Killing vector fields on $(S^{2k+1}, g_{\can})$ from Lie
algebra $\mathfrak{g}$ of Lie group $G$ is described as some symmetric space
(excepting the case $G=U(k+1)$ when one obtains the union of all
complex Grassmannians in $\mathbb{C}^{k+1}$).

\vspace{2mm}
\noindent
2000 Mathematical Subject Classification: 53C20 (primary),
53C25, 53C35 (secondary).

\vspace{2mm} \noindent Key words and phrases: Clifford algebras, Clifford-Wolf homogeneous spaces, generalized normal
homogeneous Riemannian manifolds, g.o. spaces, Grassmannian algebra, homogeneous spaces, Hopf fibrations, normal homogeneous Riemannian manifolds, Riemannian submersions, symmetric spaces.
\end{abstract}

\maketitle

\section*{Introduction}

In papers \cite{BerNik, BerNik3, BNN} the authors studied a class of
generalized normal homogeneous Riemannian manifolds ($\delta$-homogeneous Riemannian manifolds, in other terms).

A metric space $(M,\rho)$ is \textit{$\delta$-homogeneous} (respectively, \textit{Clifford-Wolf homogeneous})
\cite{BP,BerNik} if for any points $x, y\in M$ there exists an isometry $f,$ \textit{$\delta$-$x$-translation}
(respectively, \textit{Clifford-Wolf translation}), of the space $(M,\rho)$ onto itself such that $f(x)=y$ and $f$
has maximal displacement at the point $x$ (respectively, equal displacements at all points), i.~e. for every point
$z\in M,$
$\rho(z,f(z))\leq \rho(x,f(x))=\rho(x,y)$ (respectively $\rho(z,f(z))=\rho(x,f(x))$). It is clear that any Clifford-Wolf
homogeneous metric space is $\delta$-homogeneous and any $\delta$-homogeneous metric space is homogeneous.
A connected Riemannian manifold $(M,\mu)$ is $\delta$-homogeneous (respectively, Clifford-Wolf homogeneous) if it
is $\delta$-homogeneous (respectively, Clifford-Wolf homogeneous) relative to its inner metric $\rho_{\mu}$.
In addition, it is \textit{$G$-$\delta$-homogeneous} or \textit{generalized $G$-normal homogeneous}
\cite{BNN} (respectively, \textit{$G$-Clifford-Wolf homogeneous}) if one can take isometries $f$ in definition
of $\delta$-homogeneity (respectively, Clifford-Wolf homogeneity) from Lie (sub)group $G$ of isometries of the
space $(M,\mu)$.

Any $\delta$-homogeneous Riemannian manifold is a \textit{geodesic orbit} and has nonnegative sectional curvature \cite{BerNik}. It follows from Theorem \ref{body} that any ($G$-)normal homogeneous Riemannian manifold \cite{Berg} is also ($G$-)$\delta$-homogeneous \cite{BerNik}. The converse statement is not true for some spaces \cite{BerNik, BNN}.
The existence of such spaces is always connected with unusual geometric properties of adjoint representations of corresponding Lie groups $G$ \cite{BerNik, BNN}. The complete classification of such simply connected Riemannian manifolds with positive Euler characteristic, indecomposable into direct metric product, is given in paper
\cite{BNN}. These are exactly all complex projective spaces with invariant Riemannian metrics
$(\mathbb{C}P^{2n+1}=Sp(n+1)/(U(1)\times Sp(n)),\nu_t),$ for $n\geq 1,$ $\frac{1}{2}< t<1,$ considered also in this paper, as well as the spaces homothetic to them. All previous results on generalized normal homogeneous Riemannian
manifolds have been collected, and revised in some respect, in our joint book \cite{BerNikBook}.

It was unknown up to now whether there are similar to $(\mathbb{C}P^{2n+1},\nu_t)$ connected compact homogeneous Riemannian manifolds with zero Euler characteristic (by the Hopf --- Samelson theorem \cite{HS}, every compact homogeneous manifold $G/H$ has nonnegative Euler characteristic).

In this paper we obtain known and new compact simply connected indecomposable generalized normal homogeneous, but not normal homogeneous, Riemannian manifolds. For this we use known results about normal invariant Riemannian metrics on compact homogeneous spaces and apply new construction of generalized normal homogeneous Riemannian metric from Theorem \ref{fireysum} and our results about Clifford-Wolf homogeneity of (round) Euclidean spheres with respect to different Lie groups from Theorems \ref{osn_sp} and \ref{CWhom_spin}. In particular, any sphere $S^{2n+1},$ $n\geq 2,$
(of zero Euler characteristic!) admits such metrics. In last sections we apply extensively Clifford algebras and some
reduced form of Grassmannian algebra. A part of these results was proved recently in texts \cite{B1}, \cite{B2} by quite different methods.

One of the most important results of this paper is a classification of all generalized normal homogeneous
metrics on spheres (see Theorems \ref{u_main}, \ref{su_main_del},  \ref{sp_sp_main}, \ref{sp_u_main},  \ref{sp_main},
\ref{new_spin}, Proposition \ref{su(2)n} and Conclusion).  We collect all these results in Table~2. To compare this classification with the classification of (usual) normal homogeneous metrics on spheres, we reproduce here Table~1
with the latter classification \cite[Table 2.4]{GZ}. We denote by $g_{\can}$ a Riemannian metric of constant
curvature $1$ on a given sphere. The space $\mathfrak{p}_3$ appears only in the last line of Tables 1 and 2; it is
$\mathfrak{u}(1)\subset \mathfrak{sp}(n+1)\oplus \mathfrak{u}(1),$ which is naturally isomorphic to the space, tangent
to the fiber of the Hopf fibration $pr_1$ (see Section \ref{invest}) at the point $(1,0,\dots,0)\in S^{4n+3}$.

The notation of parameters in Table 2 is adapted to notation in Table 1. In fact we use in this paper another notation,
namely, $c^2:=1,$ $t:=b^2,$ $s:=a^2,$ which coincides with the notation in papers \cite{Volp}, \cite{Volp1}, \cite{VZ}.
Then, summing up and comparing Tables 1 and 2, applying the notation from Section \ref{invest}, and adding at the end
the mentioned result from paper \cite{BNN}, we obtain the following statement.

\begin{theorem}
\label{nonnormal}
Up to homothety, the following spaces are all known at present generalized normal homogeneous,
but not normal homogeneous with respect to any transitive connected Lie group, Riemannian manifolds (for $n\geq 1$):
$$
(S^{2n+3},\xi_t), \, \frac{n+2}{2(n+1)}< t < 1; \quad  (S^{4n+3},\mu_t), \, \frac{1}{2}<t<1;
$$
$$
(S^{4n+3},\mu_{t,s}), \,\frac{1}{2}<t<1, \, 0<s<t; \quad (S^{15},\psi_t), \, \frac{1}{4}<t<1;
$$
$$
(\mathbb{C}P^{2n+1},\nu_t), \, \frac{1}{2}<t<1.
$$
In addition, all spheres above have zero Euler characteristic and the sphere $(S^5,\xi_t)$ has minimal dimension.
\end{theorem}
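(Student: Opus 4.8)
The statement is a compilation result, and the plan is to read it off from the two classifications already assembled in the paper together with the classical description of transitive actions on spheres. First I would recall that the connected compact Lie groups acting transitively and almost effectively on a sphere $S^N$ are, up to conjugacy and covering, classically known (Montgomery--Samelson, Borel): $SO(N)$; for $N=2k-1$, also $SU(k)$ and $U(k)$; for $N=4k-1$, also $Sp(k)$, $Sp(k)U(1)$ and $Sp(k)Sp(1)$; for $N=6$, $G_2$; for $N=7$, $Spin(7)$; for $N=15$, $Spin(9)$. For every such $G$ the $G$-invariant metrics on $S^N$ were listed in Section \ref{invest}. By Theorem \ref{body}, a normal homogeneous metric is generalized normal homogeneous, so Table 1 (reproduced from \cite[Table 2.4]{GZ}, which records up to homothety all normal homogeneous metrics on spheres) is contained, up to homothety, in Table 2 (assembled from Theorems \ref{u_main}, \ref{su_main_del}, \ref{sp_sp_main}, \ref{sp_u_main}, \ref{sp_main}, \ref{new_spin} and Proposition \ref{su(2)n}, together with the remaining cases $SO(N)$, $G_2$, $Spin(7)$, which contribute only round metrics). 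Since ``normal homogeneous with respect to some transitive connected Lie group'' means precisely ``appearing, up to homothety, in Table 1'', the theorem amounts to subtracting Table 1 from Table 2 and verifying that the difference is the displayed list.

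Then I would go through Table 2 family by family in the normalization $c^2=1$, so that $g_{\can}$ corresponds to the parameter value $t=1$, and match each entry against Table 1. For $S^{2n+3}$ this isolates the one-parameter family $(S^{2n+3},\xi_t)$ with $\frac{n+2}{2(n+1)}<t<1$; for $S^{4n+3}$ the family $(S^{4n+3},\mu_t)$ with $\frac12<t<1$ and the two-parameter family $(S^{4n+3},\mu_{t,s})$ with $\frac12<t<1$, $0<s<t$; for $S^{15}$ the family $(S^{15},\psi_t)$ with $\frac14<t<1$. In doing so one must control the overlaps between the families (each contains $g_{\can}$ as a boundary point, and $\mu_{t,s}$ restricts to $\mu_t$ along the appropriate slice) and the endpoints of the parameter intervals, where the metrics turn normal homogeneous or decomposable; and one must take into account the exception noted in the abstract, that $SU(k+1)$ with $k+1$ odd does not act on $(S^{2k+1},g_{\can})$ by Clifford--Wolf translations. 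The latter only removes a redundant description of metrics already obtained from larger transitive groups, hence does not affect the final list. Finally I would append $(\mathbb{C}P^{2n+1},\nu_t)$, $\frac12<t<1$, which is not a sphere and is the content of the classification in \cite{BNN}.

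The two concluding assertions are then immediate: every sphere in the list is odd-dimensional, of dimension $2n+3$, $4n+3$ or $15$, and $\chi(S^{2m+1})=0$; and among the dimensions $2n+3$ ($n\ge1$), $4n+3$ ($n\ge1$), $15$ the least is $5=2\cdot1+3$, realized only by $(S^5,\xi_t)$.

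The one genuinely delicate point is the bookkeeping in the comparison step. One has to be sure that no metric is listed as new that is, up to homothety, normal homogeneous with respect to some (possibly larger) transitive connected group, and, conversely, that no generalized normal homogeneous metric lying outside the four sphere-families is omitted; this is exactly where the completeness of Table 1 over \emph{all} transitive connected Lie groups and the completeness of Table 2 are used.
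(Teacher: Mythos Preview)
Your proposal is correct and follows exactly the approach the paper takes: the paper does not give a formal proof of this theorem but simply states that it is obtained by ``summing up and comparing Tables 1 and 2, applying the notation from Section~\ref{invest}, and adding at the end the mentioned result from paper~\cite{BNN}.'' Your write-up is a faithful and careful expansion of precisely this bookkeeping, including the reindexing $n\mapsto n+1$ needed to match the $(S^{2n+3},\xi_t)$ family, the exclusion of the endpoint $t=1$ (round metric, normal for $SO$) and of the lower endpoints (normal for the smaller groups), and the observation that odd-dimensional spheres have vanishing Euler characteristic.
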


\renewcommand{\arraystretch}{1.5}

\begin{table}[h]\label{table1}
{\bf Table 1. Normal homogeneous metrics on spheres}
\begin{center}
\begin{tabular}
{|p{0.03\linewidth}|p{0.18\linewidth}|p{0.13\linewidth}|p{0.2\linewidth}|p{0.24\linewidth}|p{0.07\linewidth}|}
\hline
&$G$&$H$&$(\cdot,\cdot)|_{\mathfrak{p}_3}$&$(\cdot,\cdot)|_{\mathfrak{p}_2}$&$(\cdot,\cdot)|_{\mathfrak{p}_1}$\\
\hline \hline
1 & $SO(n+1)$&$SO(n)$&\multicolumn{3}{|c|}{$c^2 g_{\can}$}\\
\hline
2 & $G_2$ & $SU(3)$ & \multicolumn{3}{|c|}{$c^2 g_{\can}$}\\
\hline
3 &$Spin(7)$& $G_2$ & \multicolumn{3}{|c|}{$c^2 g_{\can}$} \\
\hline
4 & $SU(2)$ & $\{e\}$ & \multicolumn{3}{|c|}{$c^2 g_{\can}$} \\
\hline\hline
5 & $SU(n+1)$& $SU(n)$ & \multicolumn{2}{|c|}{$\frac{n+1}{2n} \,c^2 g_{\can}$} & $c^2 g_{\can}$ \\
\hline
6 & $U(n+1)$& $U(n)$ & \multicolumn{2}{|c|}{$b^2 c^2 g_{\can},\,\, b^2<\frac{n+1}{2n}$} & $c^2 g_{\can}$ \\
\hline
7 & $Sp(n+1)Sp(1)$& $Sp(n)Sp(1)$ &\multicolumn{2}{|c|}{$b^2 c^2 g_{\can},\,\, b^2<\frac{1}{2}$} & $c^2 g_{\can}$  \\
\hline
8 & $Spin(9)$ & $Spin(7)$ & \multicolumn{2}{|c|}{$\frac{1}{4} c^2 g_{\can}$} & $c^2 g_{\can}$  \\
\hline
9 & $Sp(n+1)$ & $Sp(n)$ & \multicolumn{2}{|c|}{$\frac{1}{2} c^2 g_{\can}$} & $c^2 g_{\can}$  \\
\hline\hline
10& $Sp(n+1)S^1$ & $Sp(n) S^1$ & $a^2 c^2 g_{\can},\,\, a^2<\frac{1}{2} $ & $\frac{1}{2} c^2 g_{\can}$ & $c^2 g_{\can}$ \\
\hline
\end{tabular}
\end{center}
\end{table}

\begin{table}[h]\label{table2}
{\bf Table 2. Generalized normal homogeneous metrics on spheres}
\begin{center}
\begin{tabular}
{|p{0.03\linewidth}|p{0.18\linewidth}|p{0.13\linewidth}|p{0.2\linewidth}|p{0.24\linewidth}|p{0.07\linewidth}|}
\hline
&$G$&$H$&$(\cdot,\cdot)|_{\mathfrak{p}_3}$&$(\cdot,\cdot)|_{\mathfrak{p}_2}$&$(\cdot,\cdot)|_{\mathfrak{p}_1}$\\
\hline \hline
1 & $SO(n+1)$&$SO(n)$&\multicolumn{3}{|c|}{$c^2 g_{\can}$}\\
\hline
2 & $G_2$ & $SU(3)$ & \multicolumn{3}{|c|}{$c^2 g_{\can}$}\\
\hline
3 &$Spin(7)$& $G_2$ & \multicolumn{3}{|c|}{$c^2 g_{\can}$} \\
\hline
4 & $SU(2)$ & $\{e\}$ & \multicolumn{3}{|c|}{$c^2 g_{\can}$} \\
\hline\hline
5 & $SU(n+1)$& $SU(n)$ & \multicolumn{2}{|c|}{$ b^2\,c^2 g_{\can},\,\, \frac{n+1}{2n}\leq b^2\leq 1$}& $c^2 g_{\can}$ \\
\hline
6 & $U(n+1)$& $U(n)$ & \multicolumn{2}{|c|}{$b^2 c^2 g_{\can},\,\, b^2\leq 1$} & $c^2 g_{\can}$ \\
\hline
7 & $Sp(n+1)Sp(1)$& $Sp(n)Sp(1)$ &\multicolumn{2}{|c|}{$b^2 c^2 g_{\can},\,\, b^2\leq 1$} & $c^2 g_{\can}$  \\
\hline
8 & $Spin(9)$ & $Spin(7)$ & \multicolumn{2}{|c|}{$b^2 c^2 g_{\can},\,\,\frac{1}{4}\leq b^2 \leq 1$} & $c^2 g_{\can}$  \\
\hline
9 & $Sp(n+1)$ & $Sp(n)$ & \multicolumn{2}{|c|}{$b^2 c^2 g_{\can},\,\,\frac{1}{2}\leq b^2 \leq 1$} & $c^2 g_{\can}$  \\
\hline\hline
10& $Sp(n+1)S^1$ & $Sp(n)S^1$ & $a^2 c^2 g_{\can},\, a^2\leq b^2$ & $b^2 c^2 g_{\can},\,\frac{1}{2}\leq b^2 \leq 1$ & $c^2 g_{\can}$ \\
\hline
\end{tabular}
\end{center}
\end{table}

The structure of this paper is as follows.
In the first section we describe homogeneous spaces and invariant metrics on these spaces
that are interesting for us in this study.

In the second section we
use a suitable characterization of generalized normal homogeneous Riemannian metrics
and the notion of (dual) $2$-means in the sense of W.J.~Firey in order to establish a new (quite unexpected)
method for generating metrics of the above type.

The third section is devoted to a study of Killing vector fields of constant length on round (Euclidean) spheres.
In particular, we prove that $S^{2k+1}$ is Clifford-Wolf homogeneous with respect to $U(k+1)$
and $S^{4k+3}$ is Clifford-Wolf homogeneous with respect to $Sp(k+1)$.

In Section 4 we obtain some auxiliary results on $\delta$-vectors and on the generalized homogeneity
of some Riemannian spaces.

In Section 5 we obtain a lot of new examples of generalized normal homogeneous Riemannian spaces
and prove some classification theorems.

In Section 6 we describe the spaces of unit Killing fields on round spheres from the Lie algebras
$\mathfrak{so}(2n)$, $\mathfrak{u}(n+1)$, $\mathfrak{su}(2(n+1))$ and $\mathfrak{sp}(n+1)$ as some
symmetric spaces (union of symmetric spaces in the second case).

Section 7 is devoted to a study of invariant metrics on the homogeneous space $Spin(9)/Spin(7)=S^{15}$.
In particular, we prove that the round sphere $S^{15}$ is Clifford-Wolf homogeneous with respect to
$Spin(9)$. (Similarly, one can prove that the normal homogeneous sphere $Spin(7)/G_2=S^7$ is Clifford-Wolf homogeneous with respect to $Spin(7)$, see Remark \ref{spin7}.) Besides, we classify all $Spin(9)$-generalized normal homogeneous Riemannian metrics on $S^{15}$.

In Section 8 we describe the structure of the spaces of Killing vector fields of constant length on the round sphere
$S^{15}$ from the Lie algebra $\mathfrak{spin}(9)$ and its Lie subalgebra $\mathfrak{spin}(8)$ as real Grassmannians
and discuss some related questions.

In conclusion section we add some remarks that (in particular) complete the classification
of generalized normal homogeneous Riemannian metric on spheres and projective spaces.

\smallskip

We thank Professor Wolfgang Ziller for useful discussions and Natalia Berestovskaya for help in preparation of the text.

\section{Homogeneous Riemannian manifolds being investigated}
\label{invest}

In this section we shall give a unified detailed description, in terms of some Hopf fibrations and parameters,
of all homogeneous Riemannian manifolds being investigated (see also \cite{Zil82, GlZ}).
Corresponding invariant Riemannian metrics are often called ``diagonal'' \cite{Volp, GZ}.

Let us consider at first the following classical Hopf fibrations
$$
\begin{array}{rlr}
p: & S^{2n+1}\rightarrow \mathbb{C}P^n, &n\geq 2,\\
pr: &S^{4n+3}\rightarrow \mathbb{H}P^n, &n\geq 1,\\
pr_1: &S^{4n+3}\rightarrow \mathbb{C}P^{2n+1}, &n\geq 1,\\
pro: &S^{15}\rightarrow \mathbb{C}aP^1=S^8(1/2),&\\
pr_2: &\mathbb{C}P^{2n+1}\rightarrow \mathbb{H}P^n, & n\geq 1.\\
\end{array}
$$
All these fibrations are Riemannian submersions with respect to canonical homogene\-ous Riemannian metrics $g_{\ccan}$
on the fibers, the base, and the total space. In the first four cases all total spaces are spheres with constant
sectional curvature $1$, fibers are totally geodesic spheres of dimension $1$, $3$, $1$, and $7,$ respectively, while
the bases in the first three cases (in the second case for $n\geq 2$) are supplied with the Fubini-Study metrics
(with sectional curvatures in the interval $[1/4,1]$), while the base in fourth case has constant sectional curvature $1/4$. In the last case both base and total space have the mentioned Fubini-Study metrics, and fibers are totally geodesic 2-spheres
(in other words, $\mathbb{C}P^1$).

All metrics on total spaces being investigated, except the third case, are
such variations of the canonical metric that metric tensor is multiplied by parame\-ter $t>0$ on fibers, while
all fibrations are still Riemannian submersions. As a result we obtain the following homogeneous
Riemannian manifolds: $(S^{2n+1}=U(n+1)/U(n),\xi_t)$, $(S^{4n+3}=Sp(n+1)\times Sp(1)/(Sp(n)\times Sp(1)),\mu_t)$,
$(S^{15}=Spin(9)/Spin(7),\psi_t)$, $(\mathbb{C}P^{2n+1}=Sp(n+1)/(U(1)\times Sp(n)),\nu_t)$. Now, the metric
$\mu_{t,s}$, $s>0$, on $S^{4n+3}$ is defined such that
\begin{equation}
\label{pr1}
pr_1: (S^{4n+3},\mu_{t,s})\rightarrow (\mathbb{C}P^{2n+1},\nu_t)
\end{equation}
is a Riemannian submersion while the metric tensor is multiplied by $s$ on circles-fibers,
so all these circles have length $2\pi\sqrt{s}$. Thus we get homogeneous Riemannian manifolds
$(S^{4n+3}=Sp(n+1)\times U(1)/(Sp(n)\times U(1)),\mu_{t,s})$. Note that $\mu_{1,s}=\xi_s$.
This is connected with natural inclusions
\begin{equation}
\label{incl}
Sp(n+1)\subset SU(2(n+1)), \quad Sp(n+1)\times U(1)\subset U(2(n+1)).
\end{equation}

Note that all invariant Riemannian metrics on corresponding homogeneous spaces above are proportional to
mentioned metrics with one exception: left-invariant metrics on the group $SU(2)=S^3$ (see Proposition \ref{su(2)n}).
Moreover, all Riemannian manifolds $(S^{2n+1},\xi_t)$, $(S^{4n+3},\mu_t)$, $(S^{15},\psi_t)$
are homothetic to \textit{distance spheres} in $(\mathbb{C}P^{n+1}, g_{\ccan})$, $(\mathbb{H}P^{n+1}, g_{\ccan})$, and
Caley plane $(\mathbb{C}aP^{2}, g_{\ccan}),$ respectively; the converse statement is also true \cite{Zil82}.
Normal homogeneous spaces $(S^{2n+1},\xi_t)$, where $0<t\leq (n+1)/2n$, $n\geq 1$, 
are also known as \textit{Berger's spheres} \cite{Berg, Zil77, Zil82}.
Finally, note that both $Sp(4)$ and $Spin(9)$ have dimension $36$,
which is minimal among dimensions of all Lie groups, acting transitively on $S^{15}$.

\section{Characterization of generalized normal homogeneous spaces}\label{gensec}

Let $M= G/H$ be a homogeneous space of a compact connected Lie group $G$.
Let us denote by $\langle \cdot ,\cdot \rangle$ a fixed  $\Ad(G)$-invariant Euclidean metric on the Lie algebra
$\mathfrak{g}$ of $G$ (for example, the minus Killing form if $G$ is semisimple) and by
\begin{equation}\label{reductivedecomposition}
\mathfrak{g}=\mathfrak{h} \oplus \mathfrak{p}
\end{equation}
the  associated $\langle \cdot ,\cdot \rangle$-orthogonal  reductive decomposition, where
$\mathfrak{h} = {\rm Lie}(H) $. An invariant Riemannian metric $g$
on $M$  is determined by an $\Ad(H)$-invariant inner product
$g_o =(\cdot,\cdot)$ on  the space $\mathfrak{p}$ which is identified with
the tangent space $M_o$ at the initial point $o = eH$.
If $\mathfrak{p}=\mathfrak{p}_1\oplus \mathfrak{p}_2 \oplus \cdots \oplus \mathfrak{p}_l$,
where $\mathfrak{p}_i$ are irreducible $\Ad(H)$-invariant submodules of $\mathfrak{p}$,
then for every positive real $x_i$, $i=1,\dots,l$, we get $\Ad(H)$-invariant inner products
\begin{equation}\label{hommetr}
x_1 \langle \cdot ,\cdot \rangle|_{\mathfrak{p}_1}+x_2 \langle \cdot ,\cdot \rangle|_{\mathfrak{p}_2}+
\cdots +x_l \langle \cdot ,\cdot \rangle|_{\mathfrak{p}_l}
\end{equation}
on the space $\mathfrak{p}$. Moreover, if the submodules $\mathfrak{p}_i$ are mutually inequivalent, then every
$\Ad(H)$-invariant inner product
$(\cdot,\cdot)$ on  $\mathfrak{p}$ has the above form (\ref{hommetr}).

\textit{For compact matrix groups $G$ we shall always use $\Ad(G)$-invariant inner product}
\begin{equation}
\label{innerprod}
\langle U , V \rangle = \frac{1}{2}\operatorname{Re}(\trace(UV^*)),\quad U,V\in \mathfrak{g},
\end{equation}
where $V^*=\overline{V}^T$ and $\overline{(\cdot)},$ $(\cdot)^T$ are operations of transposition and complex or
quaternionic conjugation, respectively. In all cases we will use representations by skew-Hermitian matrices and,
therefore, we may suppose that
$\langle U , V \rangle=-\frac{1}{2}\operatorname{Re}(\trace(UV))$.
\bigskip

The main tool for our goal is the following

\begin{theorem}[Theorem 8 in \cite{BerNik}]\label{body}
A compact Riemannian manifold $(G/H,g)$ is
$G$-generalized normal homogeneous if and only if there
exists an $\Ad(G)$-invariant centrally symmetric (relative to
zero) convex body $B$ in $\mathfrak{g}$ such that
$P(B)=\{v\in \mathfrak{p}\,|\, (v,v)\leq 1\}$,
where $P:\mathfrak{g}\rightarrow \mathfrak{p}$ is $\langle \cdot,\cdot
\rangle$-orthogonal projection.
\end{theorem}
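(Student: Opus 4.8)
The plan is to pass from the metric-space definition to an infinitesimal condition on Killing fields, and then to exhibit the convex body explicitly. First I would fix notation: identify $T_oM$ with $\mathfrak{p}$ via the reductive decomposition (\ref{reductivedecomposition}), write $(\cdot,\cdot)=g_o$, and for $X\in\mathfrak{g}$ let $X^{*}$ be the Killing field on $M=G/H$ generated by $\exp(tX)$. A one-line computation with the isometries $cH\mapsto bcH$ gives, for all $b\in G$,
\[
\|X^{*}(bH)\|_{g}=\|P(\Ad(b^{-1})X)\|_{(\cdot,\cdot)} .
\]
Since $\Ad(\exp(-tX))X=X$, this already shows $\|X^{*}\|_{g}$ is constant on each orbit $\exp(tX)\cdot o$, so by the Killing identity $\nabla_{X^{*}}X^{*}=-\tfrac12\,\mathrm{grad}\,\|X^{*}\|^{2}_{g}$ such an orbit is a geodesic whenever $o$ is a point of global maximum of $\|X^{*}\|_{g}$ on $M$; I will call such an $X$ a \emph{$\delta$-vector}. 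Put $D=\{v\in\mathfrak{p}:(v,v)\le1\}$ and $M_{X}=\sup_{b\in G}\|X^{*}(bH)\|_{g}$, and assume (harmlessly) that $G$ acts almost effectively on $M$.

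The crux is the following reformulation, which I would prove first: $(G/H,g)$ is $G$-generalized normal homogeneous if and only if for every unit $v\in\mathfrak{p}$ (i.e.\ $v\in\partial D$) there is a $\delta$-vector $X$ with $P(X)=v$ and $M_{X}=\|X^{*}(o)\|_{g}=1$. The "if" half is direct: given $x,y$ I move $x$ to $o$, join $o$ to $y$ by a minimizing unit-speed geodesic $\gamma$ with $\gamma'(0)=v$, take the corresponding $X$; its orbit through $o$ is a unit-speed geodesic with initial velocity $v$, hence equals $\gamma$, so $f=\exp(\rho(o,y)X)\in G$ satisfies $f(o)=y$ and, for every $z$, $\rho(z,f(z))\le \rho(o,y)\cdot M_{X}=\rho(o,y)$, i.e.\ $f$ is a $\delta$-$o$-translation. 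The "only if" half is the single analytic point: taking $y_{j}\to o$ along the geodesic in direction $v$ and $\delta$-$o$-translations $f_{j}\in G$ with $f_{j}(o)=y_{j}$, I use compactness of $G$ to pass to a limit and the standard construction of a Killing field from a convergent family of isometries to obtain $X$ with $X^{*}(o)=v$; the maximal-displacement inequalities survive the limit and force $o$ to be a maximum of $\|X^{*}\|_{g}$.

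Granting this lemma, the theorem follows. For $(\Leftarrow)$: if $B$ is $\Ad(G)$-invariant, centrally symmetric and convex with $P(B)=D$, then for $v\in\partial D$ I pick any $X\in B$ with $P(X)=v$; $\Ad(G)$-invariance gives $P(\Ad(b)X)\in P(B)=D$ for all $b$, hence $M_{X}\le1=\|X^{*}(o)\|_{g}$, so $X$ is a $\delta$-vector of the kind the lemma needs, and $(G/H,g)$ is $G$-generalized normal homogeneous (note neither convexity nor boundedness of $B$ is used here). For $(\Rightarrow)$: I set $B_{0}=\bigcap_{b\in G}\Ad(b)\bigl(P^{-1}(D)\bigr)$. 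It is evidently closed, convex, centrally symmetric, $\Ad(G)$-invariant and contains a ball about $0$; it is bounded because the $\Ad(G)$-invariant quadratic form $\widetilde{Q}(X)=\int_{G}\|P(\Ad(b)X)\|_{(\cdot,\cdot)}^{2}\,db$ is positive definite (its null space is $\{X:X^{*}\equiv0\}=\{0\}$ by almost effectiveness) while $X\in B_{0}$ forces $\widetilde{Q}(X)\le1$; thus $B_{0}$ is a convex body. Clearly $P(B_{0})\subseteq D$; and for $v\in\partial D$ the lemma supplies a $\delta$-vector $X$ with $P(X)=v$ and $M_{X}\le1$, so $\|P(\Ad(b)X)\|_{(\cdot,\cdot)}=\|X^{*}(b^{-1}H)\|_{g}\le1$ for all $b$ and hence $X\in B_{0}$; since $0\in B_{0}$ and $P(B_{0})$ is convex, $D=\operatorname{conv}(\{0\}\cup\partial D)\subseteq P(B_{0})$. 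Therefore $P(B_{0})=D$, and $B=B_{0}$ works; in fact $B_{0}$ is the largest $\Ad(G)$-invariant centrally symmetric convex set projecting into $D$, so it is the canonical choice.

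I expect the main obstacle to be the "only if" half of the reformulation lemma — manufacturing, from a family of $\delta$-$x$-translations, a Killing field whose length is maximal at $x$ — together with the boundedness of $B_{0}$, which is precisely where the (almost) effectiveness of the action is needed. Everything else is formal once the identity $\|X^{*}(bH)\|_{g}=\|P(\Ad(b^{-1})X)\|_{(\cdot,\cdot)}$ and the geodesic property of the orbits of $\delta$-vectors are established.
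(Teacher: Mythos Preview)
The present paper does not contain a proof of this theorem; it is quoted as Theorem~8 from the authors' earlier paper \cite{BerNik} and used as a black box (most notably in the proof of Theorem~\ref{fireysum}). There is therefore no proof here to compare your argument against.

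That said, your proposal is sound and in fact recovers the architecture of \cite{BerNik}: your ``reformulation lemma'' is exactly Proposition~14 of \cite{BerNik}, which this paper also quotes without proof as Proposition~\ref{gennorm}, and your deduction of Theorem~\ref{body} from it via the explicit body $B_{0}=\bigcap_{b\in G}\Ad(b)\bigl(P^{-1}(D)\bigr)$ is correct. The boundedness argument for $B_{0}$ using the averaged quadratic form $\widetilde{Q}$ together with almost effectiveness is the natural one. The only place where genuine work remains is, as you note, the ``only if'' half of the lemma: when you normalize the logarithms of the $\delta$-translations $f_{j}$ you must check that these normalized vectors stay bounded in all of $\mathfrak{g}$, not just in $\mathfrak{p}$, before extracting a convergent subsequence; this is again exactly where almost effectiveness enters, since $Y\mapsto\max_{z}\|Y^{*}(z)\|_{g}$ is then a genuine norm on $\mathfrak{g}$, equivalent to any fixed one.
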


We also need the notion of (dual) $2$-means by W.J.~Firey (see p. 18 of \cite{F2}).

The following two facts are known from H.~Minkowski. For any convex body $K$ with inner zero point in Euclidean
vector space $\mathbb{R}^n$, supplied with the standard inner product $(\cdot, \cdot)$,
its \textit{support function}
$$
h(u)=\max \{(u,v) \,|\, v\in K \}
$$
is nonzero nonnegative, convex, and positively homogeneous. This means that
$$h((1-\theta)u_1+\theta u_2)\leq (1-\theta)h(u_1)+\theta h(u_2),\quad \theta\in [0,1],\quad u_1,u_2\in \mathbb{R}^n,$$
$$h(\lambda u)=\lambda h(u),\quad \lambda \geq 0,\quad u\in \mathbb{R}^n.$$
Conversely, any real-valued nonzero nonnegative, convex, and positively homogeneous function on $\mathbb{R}^n$
is the support function of a unique convex body $K\subset \mathbb{R}^n$ with inner zero point.

Consider two convex bodies $K_1$ and $K_2$ in $\mathbb{R}^n$ with zero interior point and the support functions
$h_i;$ $i=1,2$. Let us fix any real numbers $p \geq 1$ and $\theta \in [0,1]$. For every nonnegative numbers
$a_1$ and $a_2$ define the value $M_p(a_1,a_2)=\bigl((1-\theta)a_1^p+\theta a_2^p\bigr)^{1/p}$.
Then (for every $p \geq 1$ and $\theta \in [0,1]$) the function $u \mapsto M_p(h_1(u), h_2(u))$ is a support
function of some convex body in $\mathbb{R}^n$ with inner zero point since it is clear that this function
is nonzero nonnegative, convex, and positively homogeneous. We shall  denote this body by
$K_{\theta}^{(p)}=K_{\theta}^{(p)}(K_1,K_2)$ (see details in \cite{F2}). We shall call this body
$K_{\theta}^{(p)}$ as {\it the dual $p$-mean of the bodies $K_1$ and $K_2$
(with parameter $\theta \in [0,1]$)}. For $\theta=1/2$ it is also called {\it a dual $p$-sum of $K_1$ and $K_2$}.

Let $L_m$ be an $m$-dimensional subspace of $\mathbb{R}^n$, $m<n$. Denote by $K^*$ the orthogonal projection
of a convex body $K \subset \mathbb{R}^n$ to $L_m$. It is clear that the support function of $K^*$ is the restriction
of the support function of $K$ to $L_m$. Therefore, we obtain
\begin{equation}\label{prpr}
\bigl(K_{\theta}^{(p)}(K_1,K_2)\bigr)^*=K_{\theta}^{(p)}(K_1^*,K_2^*)
\end{equation}
for every $p \geq 1$ and $\theta \in [0,1]$
(see also p. 22 of \cite{F2}).
\smallskip

Let $\{\cdot,\cdot\}$ be any inner product on $\mathbb{R}^n.$ There is unique positively
definite symmetric (with respect to $(\cdot, \cdot)$) linear operator
$A: \mathbb{R}^n\rightarrow \mathbb{R}^n$ such that $\{\cdot,\cdot\}=(\cdot, A \cdot).$

The following statement has been proved on pp. 53 and 54 from \cite{F3}.

\begin{prop}
\label{eucl}
Let $E$ be unit ball in $\{\mathbb{R}^n,(\cdot,\cdot)\}.$ Then the support function of $E$ is
\begin{equation}
\label{eucl1}
h(u)=\sqrt{(u,A^{-1}u )}.
\end{equation}
\end{prop}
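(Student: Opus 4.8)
The statement to prove is Proposition \ref{eucl}: if $E$ is the unit ball of the inner product $\{\cdot,\cdot\} = (\cdot, A\cdot)$ on $\mathbb{R}^n$, then its support function is $h(u) = \sqrt{(u, A^{-1}u)}$. The plan is to compute the support function directly from its definition $h(u) = \max\{(u,v) : v \in E\}$, reducing everything to a constrained optimization problem on the ellipsoid $E = \{v : (v, Av) \le 1\}$.

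First I would set up the optimization: fix $u \neq 0$ and maximize the linear functional $v \mapsto (u,v)$ over $E$. Since the maximum of a linear functional over a convex body is attained on its boundary, the maximizer $v_0$ lies on $\partial E = \{v : (v,Av)=1\}$. I would introduce a Lagrange multiplier $\lambda$ and look for critical points of $(u,v) - \lambda\bigl((v,Av)-1\bigr)$. Differentiating and using the $(\cdot,\cdot)$-symmetry and positive-definiteness of $A$, one gets $u = 2\lambda A v_0$, hence $v_0 = \tfrac{1}{2\lambda} A^{-1}u$. Plugging this into the constraint $(v_0, Av_0)=1$ yields $\tfrac{1}{4\lambda^2}(A^{-1}u, u) = 1$, so $2\lambda = \sqrt{(u, A^{-1}u)}$ (taking the positive root, since we want the maximum rather than the minimum). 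Then $h(u) = (u, v_0) = \tfrac{1}{2\lambda}(u, A^{-1}u) = \sqrt{(u, A^{-1}u)}$, as claimed. A cleaner and fully rigorous alternative, avoiding any discussion of attainment, is to apply the Cauchy--Schwarz inequality for the inner product $\{\cdot,\cdot\}$: for any $v \in E$ one has $(u,v) = (A^{-1}u, Av) = \{A^{-1}u, v\} \le \{A^{-1}u, A^{-1}u\}^{1/2}\{v,v\}^{1/2} = (u, A^{-1}u)^{1/2}(v,Av)^{1/2} \le (u, A^{-1}u)^{1/2}$, with equality when $v$ is the appropriate positive multiple of $A^{-1}u$ lying on $\partial E$. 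This shows simultaneously that $h(u) \le \sqrt{(u,A^{-1}u)}$ and that the bound is achieved.

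There is essentially no substantial obstacle here; the only things to be careful about are: (i) verifying that $A^{-1}$ is again $(\cdot,\cdot)$-symmetric and positive definite, so that $(u, A^{-1}u)$ is nonnegative and its square root well-defined; (ii) checking that the multiple of $A^{-1}u$ needed for the equality case indeed lies in $E$ (it does, by the constraint computation); and (iii) noting that $E$ has $0$ as an interior point, so the support-function formalism applies and $h$ is indeed the support function of a convex body with inner zero point. One could also phrase the whole argument via an affine change of coordinates that sends $A$ to the identity (equivalently, writing $A = A^{1/2}A^{1/2}$ and substituting $w = A^{1/2}v$), reducing to the elementary fact that the support function of the Euclidean unit ball is $|u|$; this is perhaps the most transparent presentation. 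Since the paper attributes the result to pp.\ 53--54 of \cite{F3}, I would keep the proof short, most likely just citing it or giving the two-line Cauchy--Schwarz computation above.
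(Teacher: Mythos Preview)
Your proposal is correct, and your Lagrange-multiplier computation is essentially what the paper does by hand: it writes $v=\lambda A^{-1}u+w$ with $(u,w)=0$, observes $(u,v)=\lambda(u,A^{-1}u)$ and $(v,Av)=\lambda^{2}(u,A^{-1}u)+(w,Aw)$, and concludes the maximum occurs at $w=0$. Your Cauchy--Schwarz argument for $\{\cdot,\cdot\}$ is a slightly slicker packaging of the same idea and would serve equally well.
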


\begin{proof}
It follows from definition that
$$h(u)=\max \{(u,v) \,|\, \langle v, Av\rangle =1\}.$$
Any such $v$ can be represented in the form $v=\lambda A^{-1}u+w,$ where $\lambda\in \mathbb{R}$ and
$(u, w)=0.$ Then
$$(v, Av)= \lambda^2 (u, A^{-1}u) + (w, Aw);$$
$$(u,v)=\lambda (u, A^{-1}u) = \sqrt{(u, A^{-1}u)}\sqrt{1-(w,Aw)},$$
so the maximum of $(u,v)$ is attained for $w=0$ and is equal to (\ref{eucl1}).
\end{proof}

\begin{prop}
\label{twoeucl}
Let $\{\cdot,\cdot\}_1$, $\{\cdot,\cdot\}_2$ be two inner products on $(\mathbb{R}^n, (\cdot,\cdot))$
with corresponding operators $A_1$ and $A_2$ and unit balls  $E_1$ and $E_2.$ Then $K_{\theta}^{(2)}(E_1,E_2)$ is the
unit ball of inner product $\{\cdot,\cdot\}=(\cdot, A \cdot ),$ where
\begin{equation}\label{gel}
A=\bigl((1-\theta)A_1^{-1}+\theta A_2^{-1} \bigr)^{-1}.
\end{equation}
\end{prop}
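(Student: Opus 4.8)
The plan is to reduce everything to a statement about support functions, using Proposition \ref{eucl} and the definition of the dual $2$-mean with $p=2$. First I would recall that by Proposition \ref{eucl} the support functions of $E_1$ and $E_2$ are $h_i(u)=\sqrt{(u,A_i^{-1}u)}$ for $i=1,2$. By the definition of $K_{\theta}^{(2)}(E_1,E_2)$, its support function is
$$
u\mapsto M_2(h_1(u),h_2(u))=\bigl((1-\theta)h_1(u)^2+\theta h_2(u)^2\bigr)^{1/2}
=\sqrt{(1-\theta)(u,A_1^{-1}u)+\theta(u,A_2^{-1}u)}.
$$
Since $A_1^{-1}$ and $A_2^{-1}$ are symmetric with respect to $(\cdot,\cdot)$, the operator $(1-\theta)A_1^{-1}+\theta A_2^{-1}$ is symmetric, and it is positive definite because it is a convex combination of positive definite operators (here I use $\theta\in[0,1]$; for $\theta\in\{0,1\}$ the statement is trivial, so one may as well assume $\theta\in(0,1)$ and positive definiteness of both terms). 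Hence $A$ in \eqref{gel} is a well-defined positive definite symmetric operator, and
$$
\sqrt{(1-\theta)(u,A_1^{-1}u)+\theta(u,A_2^{-1}u)}=\sqrt{\bigl(u,((1-\theta)A_1^{-1}+\theta A_2^{-1})u\bigr)}=\sqrt{(u,A^{-1}u)}.
$$

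Next I would invoke Proposition \ref{eucl} in the other direction: the function $u\mapsto\sqrt{(u,A^{-1}u)}$ is precisely the support function of the unit ball $E$ of the inner product $\{\cdot,\cdot\}=(\cdot,A\cdot)$. By Minkowski's uniqueness theorem (a convex body with interior zero point is determined by its support function), $K_{\theta}^{(2)}(E_1,E_2)=E$, which is exactly the assertion.

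I do not foresee a genuine obstacle here; the only points requiring a word of care are checking that $A$ is positive definite (so that $(\cdot,A\cdot)$ really is an inner product and $A^{-1}$ exists) and noting that the whole argument is just the observation that squaring turns the $M_2$-combination of the two ellipsoidal support functions into an ordinary convex combination of the quadratic forms $u\mapsto(u,A_i^{-1}u)$. One might optionally remark that formula \eqref{gel} is a ``harmonic-type'' averaging of the operators $A_1,A_2$, and that the projection formula \eqref{prpr} shows this construction is compatible with restriction to subspaces, which is what will be used later when passing from $\mathfrak{g}$ to $\mathfrak{p}$ via Theorem \ref{body}.
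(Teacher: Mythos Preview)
Your proof is correct and follows essentially the same route as the paper: compute the support function of $K_{\theta}^{(2)}(E_1,E_2)$ via Proposition~\ref{eucl}, recognize it as $\sqrt{(u,A^{-1}u)}$ for $A$ as in \eqref{gel}, and conclude by Proposition~\ref{eucl} (and uniqueness of the body with a given support function). The closing remarks about harmonic averaging and compatibility with \eqref{prpr} are extraneous to this proposition but harmless.
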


\begin{proof}
Let $h_1,$ $h_2,$ $h$ be the support functions for $E_1,$ $E_2,$ $K_{\theta}^{(2)}(E_1,E_2).$ Then by definition of
$K_{\theta}^{(2)}(E_1,E_2)$ and Proposition \ref{eucl},
\begin{eqnarray*}
h(u)=\sqrt{(1-\theta)h_1^2(u)+\theta h_2^2(u)}=\\
\sqrt{(1-\theta)(u, A_1^{-1}u) +\theta (u, A_2^{-1}u)}=
\sqrt{(u, [(1-\theta)A_1^{-1}+\theta A_2^{-1}]u)}.
\end{eqnarray*}
It is clear that $(1-\theta)A_1^{-1}+\theta A_2^{-1}$ is a positively definite and symmetric linear operator on
$(\mathbb{R}^n,(\cdot, \cdot))$ and there is the operator (\ref{gel}) with the same properties.
Now the statement follows from above calculations for $h$ and Proposition \ref{eucl}.
\end{proof}

\begin{theorem}\label{fireysum}
Suppose that two $G$-invariant metrics $g_1=\langle \cdot, A_1 \cdot \rangle$ and
$g_2= \langle \cdot, A_2 \cdot \rangle$ on a compact homogeneous space
$G/H$ are $G$-generalized normal homogeneous. Then every $G$-invariant metric
$g_{\theta}=\langle \cdot, A \cdot \rangle,$ where $A=[(1-\theta)A_1^{-1}+\theta A_2^{-1}]^{-1}$,
$\theta\in [0,1]$, is also a $G$-generalized normal homogeneous metric on $G/H$.
\end{theorem}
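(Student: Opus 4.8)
The plan is to combine the characterization of $G$-generalized normal homogeneous metrics given in Theorem \ref{body} with the stability of that characterization under the dual $2$-mean construction, as encoded in Proposition \ref{twoeucl} and the projection formula \eqref{prpr}.

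First I would unwind the hypothesis using Theorem \ref{body}: since $g_i = \langle \cdot, A_i \cdot \rangle$ is $G$-generalized normal homogeneous for $i=1,2$, there exist $\Ad(G)$-invariant centrally symmetric convex bodies $B_1, B_2 \subset \mathfrak{g}$ such that $P(B_i) = \{v \in \mathfrak{p} \mid \langle v, A_i v\rangle \le 1\}$, the unit ball of $g_i$ on $\mathfrak{p}$, where $P: \mathfrak{g} \to \mathfrak{p}$ is the $\langle\cdot,\cdot\rangle$-orthogonal projection. Each $B_i$ contains $0$ in its interior (since its projection does), so it has a well-defined support function, and $\Ad(G)$-invariance of $B_i$ translates into $\Ad(G)$-invariance of that support function. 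Next I would form the candidate body $B_\theta := K_\theta^{(2)}(B_1, B_2) \subset \mathfrak{g}$, the dual $2$-mean of $B_1$ and $B_2$ as defined above via $M_2(a_1,a_2) = \bigl((1-\theta)a_1^2 + \theta a_2^2\bigr)^{1/2}$. Since the support function of $B_\theta$ is $u \mapsto M_2(h_{B_1}(u), h_{B_2}(u))$ and $M_2$ is symmetric in its nonnegative arguments and monotone, $B_\theta$ is again a centrally symmetric convex body with interior zero point, and its support function is again $\Ad(G)$-invariant because it is built from the two $\Ad(G)$-invariant functions $h_{B_i}$ by a fixed symmetric operation; hence $B_\theta$ is $\Ad(G)$-invariant.

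Then the heart of the argument is to identify $P(B_\theta)$. Taking $L_m = \mathfrak{p} \subset \mathfrak{g} = \mathbb{R}^n$ and using the projection identity \eqref{prpr} with $p=2$, we get
$$
P(B_\theta) = \bigl(K_\theta^{(2)}(B_1,B_2)\bigr)^* = K_\theta^{(2)}\bigl(B_1^*, B_2^*\bigr) = K_\theta^{(2)}\bigl(P(B_1), P(B_2)\bigr).
$$
But $P(B_i)$ is exactly the unit ball $E_i$ of the inner product $g_i = \langle \cdot, A_i \cdot\rangle$ restricted to $\mathfrak{p}$ (here $A_i$ should be read as its restriction to the relevant $\Ad(H)$-invariant subspace carrying the metric, with $\langle\cdot,\cdot\rangle$ playing the role of the background product $(\cdot,\cdot)$ in Proposition \ref{twoeucl}). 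Applying Proposition \ref{twoeucl} directly, $K_\theta^{(2)}(E_1, E_2)$ is the unit ball of the inner product with operator $A = \bigl((1-\theta)A_1^{-1} + \theta A_2^{-1}\bigr)^{-1}$, which is precisely the unit ball of $g_\theta$. Therefore $P(B_\theta) = \{v \in \mathfrak{p} \mid \langle v, A v\rangle \le 1\}$, and the converse direction of Theorem \ref{body} yields that $g_\theta$ is $G$-generalized normal homogeneous.

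The only genuine subtlety — the main obstacle — is bookkeeping about which Euclidean structure plays the role of the ambient $(\cdot,\cdot)$ in the Firey/Minkowski machinery: Propositions \ref{eucl} and \ref{twoeucl} are phrased with a fixed background inner product on $\mathbb{R}^n$, and here that role is taken by the $\Ad(G)$-invariant product $\langle\cdot,\cdot\rangle$ on $\mathfrak{g}$, while $\mathfrak{p}$ enters as the subspace $L_m$. One must check that the orthogonal projection $P$ in Theorem \ref{body} is indeed the $\langle\cdot,\cdot\rangle$-orthogonal projection so that the projection formula \eqref{prpr} applies verbatim, and that the operators $A_i$ are symmetric positive definite with respect to $\langle\cdot,\cdot\rangle$ so that Proposition \ref{twoeucl} applies verbatim; both are immediate from the setup in Section \ref{gensec}. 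A minor point is that $\theta=0$ and $\theta=1$ recover $g_1$ and $g_2$ themselves, so the endpoints are trivially covered. Beyond this identification, the proof is a direct substitution and requires no further estimates.
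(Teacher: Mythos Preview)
Your proof is correct and follows essentially the same route as the paper's: apply Theorem~\ref{body} to obtain bodies $B_1,B_2$, form their dual $2$-mean $K_\theta^{(2)}(B_1,B_2)$, use \eqref{prpr} to compute its projection, apply Proposition~\ref{twoeucl} to identify that projection as the unit ball of $g_\theta$, and conclude via Theorem~\ref{body}. If anything, you are slightly more careful than the paper in explicitly verifying the $\Ad(G)$-invariance of $K_\theta^{(2)}(B_1,B_2)$, which Theorem~\ref{body} requires but the paper leaves implicit.
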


\begin{proof}
Fix a decomposition (\ref{reductivedecomposition}). Let $(\cdot,\cdot)_1$ and $(\cdot,\cdot)_2$
be $\Ad(H)$-invariant inner products on $\mathfrak{p}$ that generate $G$-invariant Riemannian metrics $g_1$ and $g_2$
respectively. Consider $\Ad(H)$-equivariant symmetric (with respect to $\langle \cdot, \cdot \rangle$)
positive definite
operators $A_1, A_2 :\mathfrak{p} \rightarrow \mathfrak{p}$, such that
$(u,v)_1=\langle u, A_1v \rangle$ and $(u,v)_2=\langle v, A_2v \rangle$ for all $u, v\in \mathfrak{p}$.

Further, we will use Theorem \ref{body}. By this theorem, there
are centrally symmetric (relative to zero) convex bodies $B_1$ and $B_2$ in $\mathfrak{g}$ such that
$P(B_1)=\{v\in \mathfrak{p}\,|\, (v,v)_1=\langle v, A_1v \rangle \leq 1\}:=E_1$ and
$P(B_2)=\{v\in \mathfrak{p}\,|\, (v,v)_2=\langle v, A_2v \rangle \leq 1\}=:E_2$.

Let us fix $\theta \in [0,1]$. Consider
the dual $2$-mean $K_{\theta}^{(2)}(B_1,B_2)$ in $\mathfrak{g}$
and the dual $2$-mean $K_{\theta}^{(2)}(E_1,E_2)$ in $\mathfrak{p}$.
It is clear that $P\bigl(K_{\theta}^{(2)}(B_1,B_2)\bigr)=K_{\theta}^{(2)}(E_1,E_2)$ by (\ref{prpr}).
On the other hand, by Proposition \ref{twoeucl}, $K_{\theta}^{(2)}(E_1,E_2)$ is unit ball $E$ of (clearly,
$\Ad(H)$- invariant) inner product $(u,v)=\langle u, Av \rangle$ on $\mathfrak{p},$ where $A$
is defined by formula (\ref{gel}). It is also clear that $K_{\theta}^{(2)}(B_1,B_2)$ is centrally
symmetric convex bodies in $\mathfrak{g}$,
because both $B_1$ and $B_2$ have this property. Therefore, again by Theorem \ref{body}, a
$G$-invariant metric $g_{\theta}$ on $G/H$, generated by the inner product $(\cdot, \cdot)$ on $\mathfrak{p}$,
is $G$-generalized normal homogeneous.
\end{proof}

Note that the inner product $\langle \cdot ,\cdot\rangle|_{\mathfrak{p}}$ generates a normal metric on $G/H$.
Any invariant Riemannian metric $g$ on $G/H$ is generated by some $\Ad(H)$-invariant inner product
$(\cdot, \cdot)=\langle \cdot, A \cdot \rangle|_{\mathfrak{p}}$ on $\mathfrak{p}$ that corresponds to a symmetric
positively definite operator
$A:(\mathfrak{p}, \langle \cdot ,\cdot\rangle|_{\mathfrak{p}}) \rightarrow
(\mathfrak{p}, \langle \cdot ,\cdot\rangle|_{\mathfrak{p}})$. Now, consider the inner product
$(\cdot, \cdot)'= \langle \cdot, A^{-1} \cdot \rangle.$ A $G$-invariant metric $\widehat{g}$ on $G/H$
that corresponds to this new inner product, we will call
{\it dual to the metric $g$ {\rm (}with respect to
$\langle \cdot, \cdot \rangle|_{\mathfrak{p}}$ {\rm)}}. It is easy to see that $\widehat{\widehat{g}}=g$,
$\widehat{\alpha g}={\alpha}^{-1}\widehat{g}$ for $\alpha >0$.

From Theorem \ref{fireysum} we immediately obtain

\begin{theorem}\label{fireysum2}
Let $GN$  be the set of all $G$-generalized normal homogeneous metrics
on a compact homogeneous space
$G/H$. Let $\widehat{GN}$ be the set of all $G$-invariant metrics dual to metrics from $GN$.
Then $\widehat{GN}$ is a convex cone in the cone of all $G$-invariant metrics on $G/H$.
\end{theorem}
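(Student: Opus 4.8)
The plan is to translate Theorem \ref{fireysum} into a statement about dual metrics and then verify separately the two defining properties of a convex cone: invariance under positive scaling and closure under convex combinations.

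First I would set up the dictionary between $G$-invariant metrics and operators. Each $g\in GN$ has the form $g=\langle\cdot,A\cdot\rangle$ for a unique $\Ad(H)$-equivariant, $\langle\cdot,\cdot\rangle$-symmetric, positive definite operator $A:\mathfrak{p}\to\mathfrak{p}$, and its dual $\widehat{g}$ corresponds to $A^{-1}$, which has the same properties. Under this correspondence the pointwise sum of two $G$-invariant metrics corresponds to the sum of the associated operators, and multiplication of a metric by $\alpha>0$ corresponds to multiplication of the operator by $\alpha$; moreover every convex combination and every positive multiple of such operators is again $\Ad(H)$-equivariant, symmetric and positive definite, hence defines a genuine $G$-invariant metric. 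Thus $\widehat{GN}$ is automatically contained in the cone of all $G$-invariant metrics on $G/H$, and it remains to show it is stable under these two operations.

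For the scaling part I would use that $GN$ is itself a cone: $G$-generalized normal homogeneity is a property of the inner metric $\rho_{\mu}$, and replacing $\mu$ by $\alpha\mu$ with $\alpha>0$ multiplies all distances by $\sqrt{\alpha}$, which preserves all the inequalities $\rho(z,f(z))\le\rho(x,f(x))$ defining a $\delta$-$x$-translation $f\in G$; hence $\alpha g\in GN$ whenever $g\in GN$. Since $\widehat{\alpha g}=\alpha^{-1}\widehat{g}$, the set $\widehat{GN}$ is invariant under multiplication by every positive constant. For convexity, take $\widehat{g_1},\widehat{g_2}\in\widehat{GN}$, so that $g_1=\langle\cdot,A_1\cdot\rangle$ and $g_2=\langle\cdot,A_2\cdot\rangle$ lie in $GN$. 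Fix $\theta\in[0,1]$ and put $A=[(1-\theta)A_1^{-1}+\theta A_2^{-1}]^{-1}$. By Theorem \ref{fireysum} the metric $g_{\theta}=\langle\cdot,A\cdot\rangle$ belongs to $GN$, hence $\widehat{g_{\theta}}\in\widehat{GN}$; but $\widehat{g_{\theta}}$ corresponds to $A^{-1}=(1-\theta)A_1^{-1}+\theta A_2^{-1}$, which is exactly $(1-\theta)$ times the operator of $\widehat{g_1}$ plus $\theta$ times the operator of $\widehat{g_2}$, so $\widehat{g_{\theta}}=(1-\theta)\widehat{g_1}+\theta\widehat{g_2}$. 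Combining this with the scaling property, for arbitrary $\lambda_1,\lambda_2>0$ we get $\lambda_1\widehat{g_1}+\lambda_2\widehat{g_2}=(\lambda_1+\lambda_2)\bigl(\tfrac{\lambda_1}{\lambda_1+\lambda_2}\widehat{g_1}+\tfrac{\lambda_2}{\lambda_1+\lambda_2}\widehat{g_2}\bigr)\in\widehat{GN}$, so $\widehat{GN}$ is a convex cone.

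I do not expect a genuine obstacle here: the content is entirely in Theorem \ref{fireysum}, and the only points needing care are bookkeeping — that the duality operation sends $A$ to $A^{-1}$, that addition of metrics is addition of operators, and that the harmonic-type combination $[(1-\theta)A_1^{-1}+\theta A_2^{-1}]^{-1}$ in Theorem \ref{fireysum} becomes an ordinary convex combination after passing to inverses — together with the essentially definitional remark that $GN$ is scale-invariant. I would present the argument in exactly that order, deriving the cone property first so that the two-point convexity coming from Theorem \ref{fireysum} immediately upgrades to closure under all nonnegative linear combinations.
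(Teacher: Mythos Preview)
Your argument is correct and follows the same route the paper intends: in the paper the theorem is stated to follow ``immediately'' from Theorem~\ref{fireysum} together with the duality relations $\widehat{\widehat{g}}=g$ and $\widehat{\alpha g}=\alpha^{-1}\widehat{g}$, and you have simply spelled out those details --- the scale-invariance of $GN$ and the fact that the harmonic combination in Theorem~\ref{fireysum} becomes an ordinary convex combination after passing to inverse operators.
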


We can identify $G$-invariant metrics on $G/H$ with corresponding inner products on $\mathfrak{p}$.
It is easy to see that the metric
$$
x_1^{-1} \langle \cdot ,\cdot \rangle|_{\mathfrak{p}_1}+x_2^{-1} \langle \cdot ,\cdot \rangle|_{\mathfrak{p}_2}+
\cdots +x_l^{-1} \langle \cdot ,\cdot \rangle|_{\mathfrak{p}_l}
$$
is dual to the metric (\ref{hommetr}).

From this and Theorem \ref{fireysum} we get

\begin{corollary}\label{cor1}
Suppose that both
$$
x_1\langle \cdot ,\cdot \rangle|_{\mathfrak{p}_1}+x_2 \langle \cdot ,\cdot \rangle|_{\mathfrak{p}_2}+
\cdots +x_l \langle \cdot ,\cdot \rangle|_{\mathfrak{p}_l}
$$
and
$$
y_1 \langle \cdot ,\cdot \rangle|_{\mathfrak{p}_1}+y_2 \langle \cdot ,\cdot \rangle|_{\mathfrak{p}_2}+
\cdots +y_l \langle \cdot ,\cdot \rangle|_{\mathfrak{p}_l}
$$
are $G$-generalized normal homogeneous on a compact space $G/H$. Then for any $\theta \in [0,1],$
the metric
$$
\bigl((1-\theta)x_1^{-1}+\theta y_1^{-1} \bigr)^{-1} \langle \cdot ,\cdot \rangle|_{\mathfrak{p}_1}+
\cdots +\bigl((1-\theta)x_l^{-1}+\theta y_l^{-1} \bigr)^{-1} \langle \cdot ,\cdot \rangle|_{\mathfrak{p}_l}
$$
has the same property.
\end{corollary}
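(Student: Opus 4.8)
The plan is to derive the statement directly from Theorem~\ref{fireysum} by rewriting the two diagonal metrics in the operator form used there. First I would observe that the $\Ad(H)$-invariant inner product $\sum_{i=1}^{l} x_i\langle\cdot,\cdot\rangle|_{\mathfrak{p}_i}$ equals $\langle\cdot, A_1\cdot\rangle|_{\mathfrak{p}}$, where $A_1\colon\mathfrak{p}\to\mathfrak{p}$ is the $\langle\cdot,\cdot\rangle$-symmetric positive definite operator acting as multiplication by $x_i$ on the submodule $\mathfrak{p}_i$, and likewise the second metric is $\langle\cdot, A_2\cdot\rangle|_{\mathfrak{p}}$ with $A_2$ acting as $y_i$ on $\mathfrak{p}_i$. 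Since the decomposition $\mathfrak{p}=\mathfrak{p}_1\oplus\cdots\oplus\mathfrak{p}_l$ is $\Ad(H)$-invariant and both operators are scalar on each summand, every operator appearing below is simultaneously diagonalized by this decomposition, so the required computations reduce to elementary identities for the eigenvalues.

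Next I would compute $A:=\bigl[(1-\theta)A_1^{-1}+\theta A_2^{-1}\bigr]^{-1}$. On $\mathfrak{p}_i$ the operators $A_1^{-1}$ and $A_2^{-1}$ are multiplication by $x_i^{-1}$ and $y_i^{-1}$, hence $(1-\theta)A_1^{-1}+\theta A_2^{-1}$ is multiplication by the positive number $(1-\theta)x_i^{-1}+\theta y_i^{-1}$, and therefore $A$ acts on $\mathfrak{p}_i$ as multiplication by $\bigl((1-\theta)x_i^{-1}+\theta y_i^{-1}\bigr)^{-1}$. Consequently the $G$-invariant metric $g_\theta=\langle\cdot, A\cdot\rangle$ is exactly
$$
\sum_{i=1}^{l}\bigl((1-\theta)x_i^{-1}+\theta y_i^{-1}\bigr)^{-1}\langle\cdot,\cdot\rangle|_{\mathfrak{p}_i},
$$
the metric whose $G$-generalized normal homogeneity we must establish.

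Finally I would invoke Theorem~\ref{fireysum} applied to $g_1=\sum_i x_i\langle\cdot,\cdot\rangle|_{\mathfrak{p}_i}$ and $g_2=\sum_i y_i\langle\cdot,\cdot\rangle|_{\mathfrak{p}_i}$, which are $G$-generalized normal homogeneous by hypothesis: it states precisely that $g_\theta=\langle\cdot, A\cdot\rangle$ with the above $A$ is $G$-generalized normal homogeneous. Alternatively, one can avoid operators entirely and use Theorem~\ref{fireysum2}: the duals $\sum_i x_i^{-1}\langle\cdot,\cdot\rangle|_{\mathfrak{p}_i}$ and $\sum_i y_i^{-1}\langle\cdot,\cdot\rangle|_{\mathfrak{p}_i}$ lie in the convex cone $\widehat{GN}$, hence so does their convex combination $\sum_i\bigl((1-\theta)x_i^{-1}+\theta y_i^{-1}\bigr)\langle\cdot,\cdot\rangle|_{\mathfrak{p}_i}$, and its dual is the asserted metric. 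There is no genuine obstacle here; the only point deserving a line of care is that the operators in question are scalar on each $\mathfrak{p}_i$ (hence commute), which is immediate from the $\Ad(H)$-invariance of the decomposition and makes the eigenvalue bookkeeping rigorous.
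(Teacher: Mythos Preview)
Your proposal is correct and matches the paper's own argument: the paper simply observes that the dual of $\sum_i x_i\langle\cdot,\cdot\rangle|_{\mathfrak{p}_i}$ is $\sum_i x_i^{-1}\langle\cdot,\cdot\rangle|_{\mathfrak{p}_i}$ and then invokes Theorem~\ref{fireysum}, which is exactly what you do (with the operator computation spelled out more explicitly). Your alternative via Theorem~\ref{fireysum2} is also fine and amounts to the same thing.
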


As a simple partial case we have

\begin{corollary}\label{cor2}
Suppose that $l=2$ and metrics
$$
\alpha \langle \cdot ,\cdot \rangle|_{\mathfrak{p}_1}+\beta \langle \cdot ,\cdot \rangle|_{\mathfrak{p}_2}
\quad \mbox{and} \quad
\alpha \langle \cdot ,\cdot \rangle|_{\mathfrak{p}_1}+\gamma \langle \cdot ,\cdot \rangle|_{\mathfrak{p}_2},
$$
where $\alpha, \beta, \gamma >0$,
are $G$-generalized normal homogeneous on a compact space $G/H$. Then for any $r \in [0,1],$
the metric
$$
\alpha \langle \cdot ,\cdot \rangle|_{\mathfrak{p}_1}+
\bigl((1-r)\beta + r\gamma \bigr) \langle \cdot ,\cdot \rangle|_{\mathfrak{p}_2}
$$
has the same property.
\end{corollary}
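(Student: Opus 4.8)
The plan is to deduce this directly from Corollary \ref{cor1} by a one-parameter reparametrization. First I would apply Corollary \ref{cor1} with $x_1=y_1=\alpha$, $x_2=\beta$, $y_2=\gamma$: the two metrics in the hypothesis are then exactly the two $G$-generalized normal homogeneous metrics to which that corollary applies, so for every $\theta\in[0,1]$ the metric
$$
\alpha\,\langle \cdot ,\cdot \rangle|_{\mathfrak{p}_1}+h(\theta)\,\langle \cdot ,\cdot \rangle|_{\mathfrak{p}_2},
\qquad
h(\theta):=\bigl((1-\theta)\beta^{-1}+\theta\gamma^{-1}\bigr)^{-1},
$$
is $G$-generalized normal homogeneous on $G/H$; here the coefficient on $\mathfrak{p}_1$ simplifies to $\alpha$ precisely because $x_1=y_1=\alpha$.

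Next I would observe that $h\colon[0,1]\to\mathbb{R}$ is continuous with $h(0)=\beta$ and $h(1)=\gamma$. If $\beta=\gamma$ the statement is trivial, since then the metric displayed in the conclusion coincides with the given one for every $r$. If $\beta\neq\gamma$, then $h([0,1])$ is the whole closed interval with endpoints $\beta$ and $\gamma$, which contains the point $(1-r)\beta+r\gamma$ for every $r\in[0,1]$; hence for each such $r$ there is a $\theta\in[0,1]$ with $h(\theta)=(1-r)\beta+r\gamma$. Concretely, solving $h(\theta)^{-1}=\bigl((1-r)\beta+r\gamma\bigr)^{-1}$ for $\theta$ gives
$$
\theta=\frac{\beta^{-1}-\bigl((1-r)\beta+r\gamma\bigr)^{-1}}{\beta^{-1}-\gamma^{-1}},
$$
and a one-line monotonicity check (or the intermediate value theorem) shows $\theta\in[0,1]$: since $(1-r)\beta+r\gamma$ lies between $\beta$ and $\gamma$, its reciprocal lies between $\beta^{-1}$ and $\gamma^{-1}$, and the numerator and denominator above have the same sign with the numerator no larger in absolute value. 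Substituting this $\theta$ into the metric from the previous paragraph produces exactly
$$
\alpha\,\langle \cdot ,\cdot \rangle|_{\mathfrak{p}_1}+\bigl((1-r)\beta+r\gamma\bigr)\langle \cdot ,\cdot \rangle|_{\mathfrak{p}_2},
$$
which is therefore $G$-generalized normal homogeneous, as claimed.

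There is no genuine obstacle here: all the substance sits in Corollary \ref{cor1} (ultimately in Theorem \ref{fireysum} and Theorem \ref{body}). The only thing to notice is that the family of "harmonic-type" interpolations $h(\theta)$ between $\beta$ and $\gamma$, after reparametrizing $\theta$, sweeps out the same line segment as the affine interpolations $(1-r)\beta+r\gamma$; so the affine interpolation scheme on a single module produces nothing outside the class already covered by the harmonic scheme of Corollary \ref{cor1}.
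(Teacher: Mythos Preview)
Your argument is correct and follows essentially the same route as the paper: apply Corollary~\ref{cor1} with $x_1=y_1=\alpha$, $x_2=\beta$, $y_2=\gamma$, and observe that the image of $\theta\mapsto\bigl((1-\theta)\beta^{-1}+\theta\gamma^{-1}\bigr)^{-1}$ is the closed interval with endpoints $\beta$ and $\gamma$. Your explicit formula for $\theta$ simplifies to the one recorded in Remark~\ref{theta}, namely $\theta=r\gamma/[(1-r)\beta+r\gamma]$.
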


\begin{proof}
It suffices to use Corollary \ref{cor1} and the fact that
the image of the function $\theta \in [0,1] \mapsto \bigl((1-\theta)\beta^{-1} +\theta \gamma^{-1} \bigr)^{-1}$
is the closed interval of $\mathbb{R}$ with the endpoints $\beta$ and $\gamma$.
\end{proof}

\begin{remark}
\label{theta}
One can easily check that one needs to take $\theta=r\gamma/[(1-r)\beta+r\gamma]$.
\end{remark}

\section{Killing vector fields of constant length on round spheres}

In recent papers \cite{BerNik7, BerNik6, BerNik5} the authors obtained various results, connected with
characterizations of Killing vector fields of constant length on Riemannian manifolds. The structure of the set
of such fields on some classical Riemannian manifolds is rather complicated. Here we shall discuss corresponding
questions for Euclidean spheres $S^{n}$.

We shall need a unified notation. By $\mathbb{F}$ we mean one of three classical fields: $\mathbb{R}$ of real
numbers, $\mathbb{C}$ of complex numbers, and $\mathbb{H}$ of quaternions with usual
inclusions $\mathbb{R}\subset \mathbb{C}\subset \mathbb{H}$. Since $\mathbb{H}$ contains
both $\mathbb{R}$ and $\mathbb{C}$ as subfields, it
will be convenient sometimes to represent elements of any mentioned fields as \textit{quaternions}. By $\mathbb{F}^n$
we denote a $n$-dimensional vector space over the field $\mathbb{F}$ (left in the case $\mathbb{F}=\mathbb{H}$);
we understand vectors as \textit{vectors-columns}. Now we suggest that the vector space $\mathbb{F}^{n+1}$
is supplied with the standard inner (scalar) product $(v, w)=\operatorname{Re}(\overline{v}^Tw)$, generating the
norm $\|\cdot\|$.

The Lie group of all linear transformations of $\mathbb{F}^{n+1}$ over $\mathbb{F}$, preserving the inner
product $(\cdot,\cdot)$, is denoted
by $U_{\mathbb{F}}(n+1)$. The elements of $U_{\mathbb{F}}(n+1)$ are represented by $((n+1)\times (n+1))$-matrices
$A$ over $\mathbb{F}$ with the property $A\overline{A}^T=AA^*=\Id_{n+1}$, where $\Id_{n+1}$ is unit diagonal
$((n+1)\times (n+1))$-matrix; $A\in U_{\mathbb{F}}(n+1)$ acts on $\mathbb{F}^{n+1}$ in the usual way:
if $x\in \mathbb{F}^{n+1}$, then $A(x)=Ax$. The Lie algebra $\mathfrak{u}_{\mathbb{F}}(n+1)$ of Lie groups
consists of matrices $U,$
subject to equation $U+\overline{U}^T=0,$ or $U+U^*=0.$ Thus $U_{\mathbb{R}}(n+1)=O(n+1),$ $U_{\mathbb{C}}(n+1)=U(n+1)$,
$U_{\mathbb{H}}(n+1)=Sp(n+1)$, and $\mathfrak{u}_{\mathbb{R}}(n+1)=\mathfrak{so}(n+1)$,
$\mathfrak{u}_{\mathbb{C}}(n+1)=\mathfrak{u}(n+1)$, $\mathfrak{u}_{\mathbb{H}}(n+1)=\mathfrak{sp}(n+1)$.

It is clear that the Lie group $U_{\mathbb{F}}(n+1)$ acts transitively and isometrically on the unit sphere
$$
S^{d(\mathbb{F})(n+1)-1}=\{x\in \mathbb{F}^{n+1} \,|\,\|x\|=1\},
$$
where $d(\mathbb{F})$ is the dimension of $\mathbb{F}$ over $\mathbb{R}$.
Here and later, elements of the Lie algebra $\mathfrak{u}_{\mathbb{F}}(n+1)$ are identified with the Killing vector
fields on $S^{d(\mathbb{F})(n+1)-1}$.

\begin{prop}\label{so}
The following two assertions are equivalent:

1) A Killing vector field $U \in \mathfrak{u}_{\mathbb{F}}(n+1)$ has constant length $C\geq 0$ on $S^{d(\mathbb{F})(n+1)-1}$;

2) $U^2=-C^2 \Id$.

\noindent If $\mathbb{F}=\mathbb{R}$ or $\mathbb{F}=\mathbb{C}$ then 1) and 2) are equivalent to

3) All eigenvalues of $U$ have the form $\pm \sqrt{C} {\bf i}$.

\noindent Moreover, the following assertions hold:

4) $U \in \mathfrak{u}_{\mathbb{F}}(n+1)$ is a unit Killing vector field on $S^{d(\mathbb{F})(n+1)-1}$
if and only if
\begin{equation}
\label{ufUf}
U\in \mathfrak{u}_{\mathbb{F}}(n+1)\cap U_{\mathbb{F}}(n+1).
\end{equation}

5) If $\mathbb{F}=\mathbb{R}$ or $U\in \mathfrak{su}(n+1)$, then 4) is equivalent respectively to the inclusion
$$U\in \mathfrak{so}(n+1)\cap SO(n+1)\quad\mbox{or}\quad U\in \mathfrak{su}(n+1)\cap SU(n+1),$$
where $n+1$ must be even in both cases.
\end{prop}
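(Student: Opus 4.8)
The plan is to reduce all five assertions to a single elementary identity for the pointwise length of the Killing field determined by $U$. Since that field takes the value $U(x)=Ux$ at a point $x$ of the sphere $S^{d(\mathbb{F})(n+1)-1}$ and the round metric is the restriction of the standard real inner product $(\cdot,\cdot)$, one computes
$$
\|Ux\|^2=(Ux,Ux)=(x,U^*Ux)=-(x,U^2x),
$$
using the adjoint relation $(Uv,w)=(v,U^*w)$ — valid over each of $\mathbb{F}=\mathbb{R},\mathbb{C},\mathbb{H}$ because conjugation reverses products — together with $U^*=-U$. Note that $-U^2=U^*U$ is then a nonnegative operator, self-adjoint with respect to the real form $(\cdot,\cdot)$.

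First I would prove the equivalence of 1) and 2). If $U^2=-C^2\Id$, the identity above gives $\|Ux\|^2=C^2$ for every unit vector $x$, so $U$ has constant length $C$. Conversely, if $\|Ux\|^2=C^2$ on the unit sphere, then by homogeneity the real quadratic form $x\mapsto\bigl(x,(U^*U-C^2\Id)x\bigr)$ vanishes identically, and since $U^*U-C^2\Id$ is real-self-adjoint, polarization forces it to be $0$, i.e. $U^2=-C^2\Id$. For 3) (the cases $\mathbb{F}=\mathbb{R},\mathbb{C}$ only), one diagonalizes the skew-Hermitian matrix $U$ unitarily: its eigenvalues lie in $\mathbf{i}\mathbb{R}$ and, in the real case, occur in conjugate pairs; the equation $U^2=-C^2\Id$ then translates directly into the eigenvalue condition 3).

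Assertion 4) is the case $C=1$ of the equivalence of 1) and 2): $U$ is a unit Killing field iff $-U^2=\Id$, and since $U^*=-U$ this reads $U^*U=\Id$, which for a square matrix means exactly $U\in U_{\mathbb{F}}(n+1)$; together with $U\in\mathfrak{u}_{\mathbb{F}}(n+1)$ this is (\ref{ufUf}). For 5) it then remains to see that, under the extra hypotheses, the condition $U^2=-\Id$ already forces $n+1$ to be even and forces membership in the special subgroup. In the real case, $\det(U)^2=\det(-\Id)=(-1)^{n+1}$ shows $n+1$ is even; then the eigenvalues $\pm\mathbf{i}$ of $U$ occur with equal multiplicities $m=(n+1)/2$, so $\det U=\mathbf{i}^{m}(-\mathbf{i})^{m}=1$ and $U\in SO(n+1)$. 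In the case $U\in\mathfrak{su}(n+1)$, the eigenvalues are again $\pm\mathbf{i}$, now with multiplicities $p$ and $q$; the condition $\trace U=0$ gives $p=q$, hence $n+1=2p$ is even and $\det U=\mathbf{i}^{p}(-\mathbf{i})^{p}=1$, so $U\in SU(n+1)$. The reverse implications in 4) and 5) follow by running these computations backwards.

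I expect the only genuinely delicate point to be the quaternionic instance of the equivalence of 1) and 2): there one cannot argue through a Hermitian sesquilinear form because $\mathbb{H}$ is noncommutative, and one must instead treat $-U^2=U^*U$ purely as an $\mathbb{R}$-linear operator on $\mathbb{R}^{d(\mathbb{H})(n+1)}$ that is self-adjoint for $(\cdot,\cdot)$; after that, the step ``a quadratic form constant on the sphere comes from a scalar operator'' is identical to the real case. Everything else — unitary diagonalization and the determinant/trace bookkeeping — is routine.
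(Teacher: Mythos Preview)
Your proposal is correct and follows essentially the same route as the paper: both arguments rest on the identity $\|Ux\|^2=-(x,U^2x)$ with $-U^2=U^*U$ self-adjoint, read off 4) as the case $C=1$, and handle 3) and 5) via the eigenvalue structure of a skew-Hermitian matrix together with the trace/determinant constraint. Your treatment of 5) via $\det(U)^2=(-1)^{n+1}$ and explicit multiplicity counting is slightly more explicit than the paper's, which simply notes that the eigenvalues $\pm\mathbf{i}$ must sum to zero, but the substance is identical.
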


\begin{proof}
The value of the Killing vector field $U \in \mathfrak{u}_{\mathbb{F}}(n+1)$ at a point $x\in S^{d(\mathbb{F})(n+1)-1}$
is $U(x)=Ux$, and its length at this point is equal to $\|Ux\|$. It is clear that $U$ has constant length $C$
on $S^{d(\mathbb{F})(n+1)-1}$ if and only if $\|Ux\|=C\|x\|$ for all $x\in \mathbb{F}^{n+1}$.
This is equivalent to equalities
$$
C^2(x,x)=(Ux,Ux)=\operatorname{Re}(\overline{x}\overline{U}^TUx)=-(x,U^2x)=-(U^2x,x),
$$
where $U^2$ is a self-adjoint (symmetric) linear operator.
It is clear that the above equalities are equivalent to the equality
$U^2=-C^2 \Id$. So the assertions 1) and 2) are equivalent.

3) We can assume that $U$ is non-zero. Since $U$ is skew-Hermitian, 2) means that the matrix
$V:=\frac{1}{\sqrt{C}}U$ is \textit{orthogonal or unitary}. Hence, all eigenvalues of $V$ have unit norm.
On the other hand, $V$ is also \textit{skew-symmetric or skew-Hermitian} and all eigenvalues of $V$ should
be purely imaginary. Therefore, in these cases
1), 2), and 3) are pairwise equivalent.

4) Assertion 2) for $C=1$ has the form $\Id=-U^2=UU^*$ (since $U^*=-U \in \mathfrak{u}_{\mathbb{F}}(n+1)$),
that is equivalent to (\ref{ufUf}).

5) By statements 2) and 3), in both these cases $U$ is unit Killing vector field on $S^{d(\mathbb{F})(n+1)-1}$
if and only if all eigenvalues of $U$ have the form $\pm {\bf i},$ and additionally the sum of all these eigenvalues
is equal to zero. This proves all statements in 5).
\end{proof}

\smallskip

By the last statement in Proposition \ref{so}, \textit{nonzero} Killing vector fields $U \in \mathfrak{so}(n+1)$
(respectively $U\in \mathfrak{su}(n+1)$) of constant length on $S^{n}$ (respectively, on $S^{2n+1}$) can exist
only for odd $n$. Thus later we shall consider only odd-dimensional spheres $S^{2n+1}$. Note that the set of Killing
vector fields $U \in \mathfrak{so}(2(n+1))$ of constant length on odd-dimensional spheres $S^{2n+1}$ is very extensive
(see Proposition \ref{so} or the \cite{BerNik5} for a more detailed description).

\begin{theorem}\label{osn_sp}
Any unit sphere $S^{d(\mathbb{F})(n+1)-1},n\geq 1,$ is $U_{\mathbb{F}}(n+1)$-Clifford-Wolf homogeneous
{\rm (}in addition, for $\mathbb{F}=\mathbb{R}$, $n+1$ should be even {\rm )}.
\end{theorem}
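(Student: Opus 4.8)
The plan is to reduce the statement to a single linear-algebra construction. Recall first that the flow of any constant-length Killing field is a one-parameter group of Clifford--Wolf translations: if $U\in\mathfrak u_{\mathbb F}(n+1)$ satisfies $U^2=-C^2\Id$ (equivalently, by Proposition \ref{so}, has constant length $C$ on the sphere), then $\exp(tU)=\cos(Ct)\Id+C^{-1}\sin(Ct)\,U$, so the orbit of any point $x$ is the great circle $t\mapsto\cos(Ct)x+C^{-1}\sin(Ct)\,Ux$ traversed at constant speed $C$ (using $Ux\perp x$ and $\|Ux\|=C\|x\|$, which come out of the proof of Proposition \ref{so}). Since all great circles of the round sphere have length $2\pi$ and $\exp(tU)$ advances every orbit by the same arc length $Ct$, the displacement $\rho(z,\exp(tU)z)$ is independent of $z$, i.e. $\exp(tU)$ is a Clifford--Wolf translation for every $t$. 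Hence it suffices to show: for all $x,y$ on $S^{d(\mathbb F)(n+1)-1}$ there are a unit Killing field $U\in\mathfrak u_{\mathbb F}(n+1)$ and a number $t$ with $\exp(tU)x=y$; and since conjugating $U$ by an element of $U_{\mathbb F}(n+1)$ leaves it a unit Killing field, transitivity of $U_{\mathbb F}(n+1)$ lets me assume $x=e_1=(1,0,\dots,0)^{T}$.

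Next I would reduce to a prescribed-value problem. Write $y=(e_1,y)\,e_1+y^{\perp}$ with $y^{\perp}$ real-orthogonal to $e_1$, and set $\cos\alpha=(e_1,y)$ with $\alpha\in[0,\pi]$, so $\|y^{\perp}\|=\sin\alpha$. The cases $\alpha=0$ (take $\Id$) and $\alpha=\pi$ (take $-\Id=\exp(\pi U)$ for any unit Killing $U$, which exists iff $n+1$ is even when $\mathbb F=\mathbb R$ --- this is precisely the extra hypothesis, by item 5) of Proposition \ref{so}) are immediate. For $\alpha\in(0,\pi)$ put $z=y^{\perp}/\sin\alpha$, a unit vector with $(e_1,z)=0$; then any unit Killing field $U$ with $Ue_1=z$ yields $\exp(\alpha U)e_1=\cos\alpha\,e_1+\sin\alpha\,z=y$. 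So the whole theorem comes down to: for every unit $z\in\mathbb F^{n+1}$ with $(e_1,z)=0$, construct $U\in\mathfrak u_{\mathbb F}(n+1)$ with $U^2=-\Id$ and $Ue_1=z$.

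This construction is the heart of the argument, and I would carry it out by splitting $\mathbb F^{n+1}$ into two $U$-invariant, Hermitian-orthogonal $\mathbb F$-submodules each carrying an obvious compatible complex structure. If $z\in\mathbb F e_1$, then $z=\lambda e_1$ with $\lambda$ a purely imaginary unit (purely imaginary because $\operatorname{Re}\lambda=(e_1,z)=0$), and $U=\lambda\Id_{n+1}$ works. Otherwise $V:=\mathbb F e_1+\mathbb F z$ is two-dimensional over $\mathbb F$; I define $U$ on $V$ by $U(e_1\mu+z\nu)=z\mu-e_1\nu$ and on the Hermitian-orthogonal complement $V^{\perp}$ by multiplication by a fixed purely imaginary unit scalar. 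One checks that $U^2=-\Id$ on each summand and that $U$ is skew-Hermitian --- trivially on $V^{\perp}$, and on $V$ because the cross terms collapse thanks to $\operatorname{Re}\langle e_1,z\rangle=(e_1,z)=0$; hence $U\in\mathfrak u_{\mathbb F}(n+1)$, $U^2=-\Id$, $Ue_1=z$, so $U$ is a unit Killing field by item 4) of Proposition \ref{so}. Parity intervenes only in the existence of a complex structure on $V^{\perp}$: this is automatic for $\mathbb F=\mathbb C$ and $\mathbb F=\mathbb H$, while for $\mathbb F=\mathbb R$ it forces $\dim_{\mathbb R}V^{\perp}=n-1$ to be even, i.e. $n+1$ even, exactly matching the hypothesis (and conversely no unit Killing field exists on $S^{2m}$).

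The one point that needs genuine care --- and the likeliest source of slips --- is the quaternionic case: one must fix the convention under which $\mathfrak{sp}(n+1)$ consists of the $\mathbb H$-linear skew-Hermitian operators and keep track throughout of the side on which scalars act (for instance, $\lambda e_1=e_1\lambda$ holds only because $e_1$ has real entries, and the skew-Hermitian check on $V$ rests on $\langle z,e_1\rangle+\langle e_1,z\rangle=2(e_1,z)=0$). Once these conventions are pinned down no conceptual obstacle remains, and the real and complex cases are strict specializations of the same computation.
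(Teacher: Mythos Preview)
Your argument is correct and follows essentially the same route as the paper: reduce Clifford--Wolf homogeneity to the statement that every tangent direction at the basepoint is the value of some constant-length Killing field in $\mathfrak u_{\mathbb F}(n+1)$, then build such a field as a complex structure adapted to the given direction. The only differences are presentational: you supply a self-contained proof of the reduction (the paper cites \cite{BerNik5}), and your $V\oplus V^{\perp}$ construction is coordinate-free, whereas the paper first uses the isotropy action of $U_{\mathbb F}(n)$ to put the target vector in the form $(u_1,u_2,0,\dots,0)^T$ and then writes down explicit block-diagonal matrices.
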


\begin{proof}
By \cite{BerNik5}, it is enough to prove that for any tangent vector $v\in S^{d(\mathbb{F})(n+1)-1}_{x_0}$,
where $x_0\in S^{d(\mathbb{F})(n+1)-1},$ there exists a Killing vector field $K(x)=Ux$, $x\in S^{d(\mathbb{F})(n+1)-1}$,
where $U\in \mathfrak{u}_{\mathbb{F}}(n+1)$, of constant length on $S^{d(\mathbb{F})(n+1)-1},$ such that $Ux_0=v$. In view of
transitive action of the Lie group $G=U_{\mathbb{F}}(n+1)$ on $S^{d(\mathbb{F})(n+1)-1}$ by isometries one can assume
that
$x_0=(1,0,\dots,0)^T$. Then the Lie group $H=U_{\mathbb{F}}(n)$ is the isotropy subgroup of Lie group $G$ at the point
$x_0$, and one can consider $S^{d(\mathbb{F})(n+1)-1}$ as the homogeneous manifold
$(G/H=U_{\mathbb{F}}(n+1)/U_{\mathbb{F}}(n),\mu)$ with a suitable invariant Riemannian metric $\mu$.

One gets $\Ad(U_{\mathbb{F}}(n))$-invariant $\left\langle \cdot, \cdot \right\rangle$-orthogonal decomposition
\begin{equation}
\label{sum}
\mathfrak{u}_{\mathbb{F}}(n+1)=\mathfrak{p}\oplus \mathfrak{u}_{\mathbb{F}}(n)=
\mathfrak{p}_1\oplus \mathfrak{p}_2\oplus \mathfrak{u}_{\mathbb{F}}(n)=\mathfrak{p}_1\oplus \mathfrak{u}_{\mathbb{F}}(1)\oplus \mathfrak{u}_{\mathbb{F}}(n),
\end{equation}
where
\begin{eqnarray}
\mathfrak{p}_1&=&\left\{\left(\begin{array}{rr}
0 & -\overline{u}^T\\ u& 0_{nn}
\end{array} \right), u\in \mathbb{F}^n \right\},\label{p1}
\\
\mathfrak{p}_2&=&\left\{\left(\begin{array}{rr}
u_1 & 0_n^T\\ 0_n& 0_{nn}\end{array} \right), u_1\in \operatorname{Im}(\mathbb{F}) \right\}.\label{p2}
\end{eqnarray}
The low indices above indicate the size of zero block-matrices. Note that $\mathfrak{p}_2=\{0\}$ and $u_1=\{0\}$
for $\mathbb{F}=\mathbb{R}$.

In view of transitivity of Lie group $U_{\mathbb{F}}(n+1)$ on $S^{d(\mathbb{F})(n+1)-1},$ the tangent vector space
$S^{d(\mathbb{F})(n+1)-1}_{x_0}$ of $S^{d(\mathbb{F})(n+1)-1}$ at
the point $x_0$ is realized in the form $Ux_0,$ $U\in \mathfrak{u}_{\mathbb{F}}(n+1)$. In addition,
the correspondence
$$U\in \mathfrak{p}\rightarrow Ux_0=(u_1,u)^T\in S^{d(\mathbb{F})(n+1)-1}_{x_0}$$
defines an isomorphism of vector space $\mathfrak{p}$ onto vector space $S^{d(\mathbb{F})(n+1)-1}_{x_0}.$

Our aim is to find for every vector-matrix $X+Y\in \mathfrak{p}_1\oplus \mathfrak{p}_2$ a vector-matrix
$Z\in \mathfrak{u}_{\mathbb{F}}(n)$ of the form
\begin{equation}
\label{Z}
Z=\left\{\left(\begin{array}{rr}0 & 0_n^T\\ 0_n& U_{nn}\end{array} \right), U_{nn}\in \mathfrak{u}_{\mathbb{F}}(n) \right\},
\end{equation}
such that the Killing vector field $K(x):=Ux$, $x\in S^{d(\mathbb{F})(n+1)-1}$ on $S^{d(\mathbb{F})(n+1)-1}$
for the matrix $U:=X+Y+Z$ has constant length. Below we shall find such fields in different cases,
applying Proposition \ref{so}.

If $u\neq 0,$ we first consider the case

1) $u=(u_2,0,\dots,0)^T\in \mathbb{F}^n$, $u_2>0$.

a) If $\mathbb{F}=\mathbb{R}$, then $u_1=0,$ and it suffices to take as $U$ a block-diagonal matrix with the blocks
$\left(\begin{array}{rr}
0 & -u_2\\ u_2& 0
\end{array} \right)$
on the diagonal.

b) If $\mathbb{F}\neq \mathbb{R}$, it suffices to take as $U_{nn}$ a diagonal
$(n\times n)$-matrix with purely imaginary quaternions in $\mathbb{F}$ on the diagonal such that the element
in the left upper corner is equal to $-u_1$, and the squared modules of the others are equal to $u_2^2-u_1^2$.

2) Let us suppose that $u\in \mathbb{F}^n$, $|u|>0$. The group $U_{\mathbb{F}}(n)$ acts transitively on every sphere
in $\mathbb{F}^n$ with zero center. Therefore there exists an element $g\in U_{F}(n)$ such that
$g(|u|,0,\dots,0)^T=u$. In this case, instead of $U_{nn}$ above one needs to take the matrix
$U_{nn}'=\Ad(g)(U_{nn})$, where $U_{nn}$ is the same as above with $u_2:=|u|$.

The proof for the case $\mathbb{F}=\mathbb{R}$ is finished.

3) $u=0$, i.~e. $X=0$. In this case, it suffices to take as $U_{nn}$ (which determines the vector $Z$ by formula (\ref{Z})) a diagonal $(n\times n)$-matrix with arbitrary purely imaginary quaternions in $\mathbb{F}$ on the
diagonal, whose squared modules are equal to  $-u_1^2$.
\end{proof}

\begin{prop}\label{osn_u}
Any Euclidean sphere $S^{2n+1}$, $n\geq 1$, is $SU(n+1)$-Clifford-Wolf homogeneous for odd $n$.
\end{prop}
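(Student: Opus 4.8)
The plan is to re-run the proof of Theorem~\ref{osn_sp} in the case $\mathbb{F}=\mathbb{C}$, with one extra bookkeeping constraint: the skew-Hermitian matrix $U$ that we produce must be trace-free, so that it lies in $\mathfrak{su}(n+1)$ and not merely in $\mathfrak{u}(n+1)$. As in that proof, by the criterion of \cite{BerNik5} --- now applied to the compact group $G=SU(n+1)$ acting transitively on $S^{2n+1}=SU(n+1)/SU(n)$ --- it is enough to show that at the base point $x_0=(1,0,\dots,0)^T$ every tangent vector $v\in S^{2n+1}_{x_0}$ has the form $v=Ux_0$ for some $U\in\mathfrak{su}(n+1)$ whose Killing field has constant length on $S^{2n+1}$. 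By Proposition~\ref{so} the constant-length condition is $U^2=-C^2\Id$ for some $C\ge 0$, which, $U$ being skew-Hermitian, just says that all eigenvalues of $U$ lie in $\{\,C\,{\bf i},\,-C\,{\bf i}\,\}$.

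I would then construct $U$ essentially by the recipe from Theorem~\ref{osn_sp}. Write $v=(u_1,u)^T$ with $u_1\in\operatorname{Im}(\mathbb{C})$ and $u\in\mathbb{C}^n$; if $v=0$ take $U=0$. Otherwise, since $U(n)$ acts transitively on the spheres of $\mathbb{C}^n$ about the origin, and since conjugation by $\operatorname{diag}(1,g)$ with $g\in U(n)$ preserves $\mathfrak{su}(n+1)$, preserves the relation $U^2=-C^2\Id$, and fixes $x_0$, one may assume $u=(u_2,0,\dots,0)^T$ with $u_2\ge 0$ real. Take $U$ block diagonal, with upper-left $2\times2$ block having rows $(u_1,-u_2)$ and $(u_2,-u_1)$, so that $Ux_0=(u_1,u_2,0,\dots,0)^T=v$, and with remaining diagonal entries $d_3,\dots,d_{n+1}$ of the form $d_j=\varepsilon_j\,{\bf i}\sqrt{u_2^2-u_1^2}$, $\varepsilon_j\in\{\pm1\}$ (note $u_2^2-u_1^2=u_2^2+|u_1|^2>0$ when $v\neq 0$). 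Because $\mathbb{C}$ is commutative the $2\times2$ block squares to $(u_1^2-u_2^2)\Id_2$, so $U^2=-(u_2^2-u_1^2)\Id$ for every choice of the signs $\varepsilon_j$.

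The only genuinely new step is to arrange $\trace U=0$. The $2\times2$ block contributes $u_1+(-u_1)=0$ to the trace, so $\trace U={\bf i}\sqrt{u_2^2-u_1^2}\,(\varepsilon_3+\dots+\varepsilon_{n+1})$. There are $n-1$ of the signs $\varepsilon_j$, and since $n$ is odd this number is even, so half of them can be set to $+1$ and half to $-1$, which gives $\trace U=0$ and hence $U\in\mathfrak{su}(n+1)$, as required. (The case $n=1$ is covered as well --- then $d_3,\dots,d_{n+1}$ is empty and $\trace U=0$ automatically; alternatively, for $S^3=SU(2)$ the statement is immediate because by Cayley--Hamilton every $U\in\mathfrak{su}(2)$ satisfies $U^2=-(\det U)\Id$ with $\det U\ge 0$.) Together with the criterion of \cite{BerNik5}, this completes the argument.

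I do not expect a serious obstacle: everything is a rerun of Theorem~\ref{osn_sp}, and the whole content is the parity count just given. Passing from $\mathfrak{u}(n+1)$ to $\mathfrak{su}(n+1)$ adds exactly the single condition $\trace U=0$, which together with $U^2=-C^2\Id$ (forcing eigenvalues $\pm C\,{\bf i}$) requires $C\,{\bf i}$ and $-C\,{\bf i}$ to occur with equal multiplicity, and hence $n+1$ to be even. The only thing to see is that this is the sole obstruction, so that the explicit diagonal models above can always be balanced --- precisely when $n$ is odd.
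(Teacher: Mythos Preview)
Your proof is correct, but the paper takes a much shorter route. The paper simply observes that when $n$ is odd one may write $n+1=2(k+1)$, and then invokes the first inclusion in (\ref{incl}), namely $Sp(k+1)\subset SU(2(k+1))=SU(n+1)$: since $S^{2n+1}=S^{4k+3}$ is already $Sp(k+1)$-Clifford--Wolf homogeneous by the quaternionic case of Theorem~\ref{osn_sp}, it is \emph{a fortiori} $SU(n+1)$-Clifford--Wolf homogeneous. That is the entire argument.

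Your approach instead reruns the complex case of Theorem~\ref{osn_sp} directly and imposes the single additional constraint $\trace U=0$, which you discharge by the parity count on the signs $\varepsilon_j$. This is longer but has some advantages: it is self-contained (no detour through $Sp$ or quaternions), it makes completely explicit why odd $n$ is necessary (the eigenvalues $\pm C{\bf i}$ must occur in equal numbers, so $n+1$ must be even), and it handles $n=1$ on the same footing as the general case, whereas the paper's reduction to Theorem~\ref{osn_sp} formally uses $k\ge 1$ there and so leaves $S^3=SU(2)$ to the trivial bi-invariant observation. Conversely, the paper's argument is one line and shows that the result is really a corollary of the symplectic case rather than a parallel computation.
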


\begin{proof}
Proposition follows from the first inclusion in (\ref{incl}) and Theorem \ref{osn_sp}.
\end{proof}

\medskip

\section{On $\delta$-vectors and generalized normal homogeneity}

Let us suppose that $M=(G/H,\mu)$ be a compact homogeneous connected
Riemannian manifold with connected (compact) Lie group $G$. Let
$\mathfrak{g}=\mathfrak{h}\oplus \mathfrak{p}$,
$\langle \cdot,\cdot
\rangle$, and $(\cdot,\cdot)$ be the same as in Section \ref{gensec}.
We will need the following

\begin{prop}[Corollary 6 in \cite{BerNik}]\label{addsymm}
Let $M=(G/H,\mu)$ be $G$-generalized normal, and let $N_G(H)$ be a normalizer of $H$ in $G$.
Then the inner product $(\cdot, \cdot)$, generating $\mu$,
is $\Ad(N_G(H))$-invariant.
\end{prop}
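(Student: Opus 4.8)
The plan is to exploit Theorem~\ref{body}: since $M=(G/H,\mu)$ is $G$-generalized normal homogeneous, there exists an $\Ad(G)$-invariant centrally symmetric convex body $B\subset\mathfrak{g}$ with $P(B)=\{v\in\mathfrak{p}\mid (v,v)\le 1\}$, where $P:\mathfrak{g}\to\mathfrak{p}$ is the $\langle\cdot,\cdot\rangle$-orthogonal projection. The key observation is that the reductive decomposition $\mathfrak{g}=\mathfrak{h}\oplus\mathfrak{p}$ and the projection $P$ are already invariant under the (larger) group $\Ad(N_G(H))$: indeed $\Ad(n)\mathfrak{h}=\mathfrak{h}$ for $n\in N_G(H)$ by definition of the normalizer, and since $\langle\cdot,\cdot\rangle$ is $\Ad(G)$-invariant, the $\langle\cdot,\cdot\rangle$-orthogonal complement $\mathfrak{p}$ is also $\Ad(N_G(H))$-invariant; hence $\Ad(n)\circ P = P\circ\Ad(n)$ on $\mathfrak{g}$ for all $n\in N_G(H)$.

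First I would record that $B$, being $\Ad(G)$-invariant, is in particular $\Ad(N_G(H))$-invariant, so $\Ad(n)(B)=B$ for every $n\in N_G(H)$. Applying $P$ and using the commutation $P\circ\Ad(n)=\Ad(n)\circ P$, we get
\begin{equation}
\Ad(n)\bigl(P(B)\bigr)=P\bigl(\Ad(n)(B)\bigr)=P(B)
\end{equation}
for all $n\in N_G(H)$. Thus the convex body $E:=P(B)=\{v\in\mathfrak{p}\mid (v,v)\le 1\}$ is invariant under the restriction of $\Ad(N_G(H))$ to $\mathfrak{p}$ (note this restriction does map $\mathfrak{p}$ to itself, precisely because $\mathfrak{p}$ is $\Ad(N_G(H))$-invariant).

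Next I would translate invariance of the unit ball $E$ into invariance of the inner product. Since $(\cdot,\cdot)$ is a genuine inner product on $\mathfrak{p}$, its unit ball $E$ determines it uniquely (the body recovers the quadratic form via its Minkowski gauge, or simply: $(v,v)=\inf\{\lambda^2>0\mid v/\lambda\in E\}$). For $n\in N_G(H)$ the map $\Ad(n)|_{\mathfrak{p}}$ is a linear automorphism of $\mathfrak{p}$ preserving $E$, hence preserves the gauge, hence $(\Ad(n)v,\Ad(n)v)=(v,v)$ for all $v\in\mathfrak{p}$; polarization then gives $(\Ad(n)u,\Ad(n)v)=(u,v)$ for all $u,v\in\mathfrak{p}$, i.e. $(\cdot,\cdot)$ is $\Ad(N_G(H))$-invariant, which is the assertion.

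I do not expect a serious obstacle here; the only point requiring a little care is the commutation $P\circ\Ad(n)=\Ad(n)\circ P$, which rests on the two facts that $\Ad(n)$ preserves $\mathfrak{h}$ (definition of normalizer) and preserves $\mathfrak{p}$ ($\Ad(G)$-invariance of $\langle\cdot,\cdot\rangle$ together with $\mathfrak{p}=\mathfrak{h}^{\perp}$). Once that is in place, the argument is essentially the observation that an $\Ad(G)$-invariant object, pushed through an $\Ad(N_G(H))$-equivariant projection, yields an $\Ad(N_G(H))$-invariant object, plus the standard fact that a convex body with nonempty interior symmetric about the origin which happens to be an ellipsoid determines its inner product uniquely.
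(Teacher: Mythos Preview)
The paper does not actually prove this proposition; it is simply quoted as Corollary~6 from \cite{BerNik}, so there is no proof here to compare against. Your argument is correct and is the natural one: the $\Ad(G)$-invariance of the body $B$ from Theorem~\ref{body}, together with the $\Ad(N_G(H))$-equivariance of the projection $P$ (which, as you observe, follows from $\Ad(N_G(H))$ preserving both $\mathfrak{h}$ and its $\langle\cdot,\cdot\rangle$-orthogonal complement $\mathfrak{p}$), yields $\Ad(N_G(H))$-invariance of the unit ball $P(B)$, and hence of the inner product $(\cdot,\cdot)$ it determines.
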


We consider one of the equivalent characterizations of $\delta$-vectors (see details in Section 6 of \cite{BerNik}).
A vector $W\in \mathfrak{g}$ is called \textit{$\delta$-vector} on the
Riemannian homogeneous manifold $(M=G/H,\mu)$ if
$(W_{\mathfrak{p}},W_{\mathfrak{p}})\geq (\Ad(a)(W)|_{\mathfrak{p}},\Ad(a)(W)|_{\mathfrak{p}})$
for any $a\in G$.
The following fact is very important for our goals.

\begin{prop}[Proposition 14 in \cite{BerNik}]\label{gennorm}
A Riemannian homogeneous space $M=(G/H,\mu)$ is $G$-generalized normal homogeneous if and only if
for any $X\in \mathfrak{p}$ there is $Y\in \mathfrak{h}$ such that $X+Y$ is $\delta$-vector.
\end{prop}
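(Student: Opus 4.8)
The plan is to deduce this characterization from the convex-body description in Theorem \ref{body}, which converts the statement into a dictionary between $\delta$-vectors in $\mathfrak g$ and the preimages, under the $\langle\cdot,\cdot\rangle$-orthogonal projection $P\colon\mathfrak g\to\mathfrak p$, of the unit ball $E:=\{v\in\mathfrak p:(v,v)\le 1\}$ of $(\cdot,\cdot)$. I will use throughout that $W\in\mathfrak g$ is a $\delta$-vector precisely when $(P(\Ad(a)W),P(\Ad(a)W))\le(W_{\mathfrak p},W_{\mathfrak p})$ for all $a\in G$, a condition that is closed in $W$, centrally symmetric, and invariant under multiplication of $W$ by a nonnegative scalar.

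For the necessity, suppose $(G/H,\mu)$ is $G$-generalized normal homogeneous and let $B\subset\mathfrak g$ be the $\Ad(G)$-invariant, centrally symmetric convex body furnished by Theorem \ref{body}, so that $P(B)=E$. Let $X\in\mathfrak p$; if $X=0$ take $Y=0$. Otherwise put $X_0:=X/\sqrt{(X,X)}\in\partial E$ and, since $X_0\in E=P(B)$, choose $W_0\in B$ with $P(W_0)=X_0$; write $W_0=X_0+Y_0$ with $Y_0\in\mathfrak h$. For every $a\in G$ we have $\Ad(a)W_0\in B$ by invariance, hence $P(\Ad(a)W_0)\in E$, i.e. $(P(\Ad(a)W_0),P(\Ad(a)W_0))\le 1=(X_0,X_0)=((W_0)_{\mathfrak p},(W_0)_{\mathfrak p})$; so $W_0$ is a $\delta$-vector, and by scale invariance so is $\sqrt{(X,X)}\,W_0=X+\sqrt{(X,X)}\,Y_0$. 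Thus $Y:=\sqrt{(X,X)}\,Y_0\in\mathfrak h$ works.

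For the sufficiency one has to build $B$. Set $S:=\{W\in\mathfrak g:W\ \text{is a}\ \delta\text{-vector and}\ (W_{\mathfrak p},W_{\mathfrak p})=1\}$; it is closed and centrally symmetric, and by hypothesis together with scale invariance $P(S)=\partial E$. Let $B$ be the closed convex hull of $\bigcup_{a\in G}\Ad(a)S$. Each $\Ad(a)$ is a linear homeomorphism and $\bigcup_{a\in G}\Ad(a)S$ is $\Ad(G)$-invariant, centrally symmetric, and contained in $P^{-1}(E)$ (by the $\delta$-vector inequality); hence so is $B$, which is moreover closed and convex. Since $P^{-1}(E)$ is closed and convex, $P(B)\subseteq E$; conversely $B\supseteq S$, so $P(B)\supseteq P(S)=\partial E$, and, $P(B)$ being convex, this forces $P(B)\supseteq E$ (as $E$ is the convex hull of $\partial E$). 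Therefore $P(B)=E$, and by Theorem \ref{body} it remains only to check that $B$ is a genuine convex body; then $(G/H,\mu)$ is $G$-generalized normal homogeneous.

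The main obstacle is precisely this last verification — the boundedness of $B$ and, if one insists that a convex body have interior, its full-dimensionality. As $G$ is compact, $\{\Ad(a):a\in G\}$ is bounded, so it suffices that $S$ be bounded. If it were not, pick $W_n\in S$ with $\langle W_n,W_n\rangle\to\infty$; then $(W_n)_{\mathfrak p}$ stays bounded while $|(W_n)_{\mathfrak h}|\to\infty$, and the $\delta$-vector bound forces $P(\Ad(a)(W_n)_{\mathfrak h})$ to stay bounded uniformly in $a$ and $n$. Passing to a subsequence, $(W_n)_{\mathfrak h}/|(W_n)_{\mathfrak h}|$ tends to a unit vector $\bar W\in\mathfrak h$ with $\Ad(a)\bar W\in\mathfrak h$ for all $a\in G$; since $G$ is connected, the real linear span of $\Ad(G)\bar W$ is then a nonzero ideal of $\mathfrak g$ contained in $\mathfrak h$. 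For almost effective $(G,H)$, to which one always reduces, this is impossible, so $S$ and hence $B$ is bounded. In the remaining case one first replaces $S$ by $\{W\in S:W_{\mathfrak h}\perp\mathfrak i\}$, where $\mathfrak i$ is the largest ideal of $\mathfrak g$ contained in $\mathfrak h$ (this does not change $P(S)=\partial E$, since subtracting an element of $\mathfrak i$ from a $\delta$-vector produces a $\delta$-vector with the same $\mathfrak p$-component), reruns the argument to obtain boundedness, and — if interior is required — enlarges $B$ by a small $\langle\cdot,\cdot\rangle$-ball of the complementary ideal, which fills out $\mathfrak g$ without pushing $P(B)$ outside $E$. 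As an alternative to Theorem \ref{body}, one can argue directly that $X+Y$ is a $\delta$-vector if and only if the one-parameter isometries $\exp(t(X+Y))$ are $\delta$-$o$-translations, via the displacement identity $\rho(b\cdot o,\exp(tW)\,b\cdot o)=\rho(o,\exp(t\Ad(b^{-1})W)\cdot o)$ and the fact that trajectories of $\delta$-translations through the point of maximal displacement are geodesics; but with Theorem \ref{body} in hand the route above is shorter.
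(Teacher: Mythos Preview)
The paper does not prove this proposition; it is quoted from \cite{BerNik} and used as a black box, so there is no proof here to compare against. Your derivation from Theorem~\ref{body} is sound and is a nice observation: the two characterizations become equivalent once one notices that a $\delta$-vector $W$ with $(W_{\mathfrak p},W_{\mathfrak p})=1$ is exactly an element of $P^{-1}(\partial E)$ whose entire $\Ad(G)$-orbit lies in $P^{-1}(E)$. The necessity direction is clean. In the sufficiency direction your boundedness argument via almost-effectiveness is correct; the only soft spot is the full-dimensionality of $B$, but that is a technicality about what ``convex body'' means in Theorem~\ref{body} rather than a real gap, and your remedy (stripping off the maximal ideal $\mathfrak i\subset\mathfrak h$ and, if needed, thickening in the directions of an ideal complementary to $\operatorname{span}B$, which lies in $\mathfrak h$ and hence does not move $P(B)$) is adequate. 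The original argument in \cite{BerNik} presumably goes through the displacement interpretation you sketch at the end; your convex-body route is a legitimate and arguably tidier shortcut given that Theorem~\ref{body} is already at hand.
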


For any $X\in \mathfrak{p}$ we consider the set
$W(X)=\{Y\in \mathfrak{h}\,,\, X+Y\,\,\mbox{is}\,\, \delta\mbox{-vector}\}$.
If $W(X)\neq \emptyset$, then $W(X)$ is a compact and convex subset of
$\mathfrak{h}$, and there is a unique $\widetilde{Y} \in W(X)$ such that
$\langle \widetilde{Y}, \widetilde{Y} \rangle \leq \langle {Y}, {Y} \rangle$ for all $Y\in W(X)$
(see Proposition 11 in \cite{BerNik}). We will denote such $\widetilde{Y} \in W(X)$ by $w(X)$.

\begin{lemma}\label{deltal1}
Consider any $X\in \mathfrak{p}$ with $W(X)\neq \emptyset$ and
$Z_{\mathfrak{h}}(X)=\{Z\in \mathfrak{h}\,,[Z,X]=0\}$. Then for any $Z\in Z_{\mathfrak{h}}(X)$
the equality $[Z,w(X)]=0$ holds.
\end{lemma}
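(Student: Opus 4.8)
The statement asserts that the minimal-norm $\delta$-completion $w(X)$ commutes with everything in $\mathfrak{h}$ that commutes with $X$. The natural tool is an averaging (symmetrization) argument combined with the uniqueness of the minimal-norm element. Fix $X\in\mathfrak p$ with $W(X)\neq\emptyset$ and pick $Z\in Z_{\mathfrak h}(X)$. Since $\mathfrak h$ is the Lie algebra of the compact group $H$ (or at least sits inside the compact $\mathfrak g$), the one-parameter group $a_s=\exp(sZ)\in G$ is defined for all $s\in\mathbb R$, and because $[Z,X]=0$ we have $\Ad(a_s)(X)=X$. The idea is to show that $\Ad(a_s)$ preserves the set $W(X)$: if $X+Y$ is a $\delta$-vector then so is $\Ad(a_s)(X+Y)=X+\Ad(a_s)(Y)$.

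First I would verify that $\Ad(a_s)$ maps $\delta$-vectors to $\delta$-vectors in general. This is essentially by definition together with the $\Ad(N_G(H))$-invariance of $(\cdot,\cdot)$: recall $W\in\mathfrak g$ is a $\delta$-vector iff $(W_{\mathfrak p},W_{\mathfrak p})\geq(\Ad(a)(W)|_{\mathfrak p},\Ad(a)(W)|_{\mathfrak p})$ for all $a\in G$. The key observation is that $\mathfrak h$ is $\Ad(a_s)$-invariant (since $a_s\in H$ when $Z\in\mathfrak h$ — or more carefully, $a_s$ normalizes $H$, hence lies in $N_G(H)$, so $\Ad(a_s)$ preserves the orthogonal decomposition $\mathfrak g=\mathfrak h\oplus\mathfrak p$ and preserves $(\cdot,\cdot)$ on $\mathfrak p$ by Proposition \ref{addsymm}). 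Consequently $(\Ad(a_s)W)_{\mathfrak p}=\Ad(a_s)(W_{\mathfrak p})$, and for any $b\in G$, $(\Ad(b)\Ad(a_s)W)|_{\mathfrak p}=(\Ad(ba_s)W)|_{\mathfrak p}$; substituting $b'=ba_s$ running over all of $G$ shows the $\delta$-inequality for $\Ad(a_s)W$ follows from that for $W$ (using that $(\cdot,\cdot)$ restricted to $\mathfrak p$ is $\Ad(a_s)$-invariant to compare the left-hand sides). Hence $X+Y\in W(X)$ (written as $X+Y$ with $Y\in\mathfrak h$) implies $\Ad(a_s)(X+Y)=X+\Ad(a_s)(Y)$, and since $\Ad(a_s)(Y)\in\mathfrak h$, we get $\Ad(a_s)(Y)\in W(X)$, i.e. $\Ad(a_s)$ preserves $W(X)$.

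Next, because $\Ad(a_s)$ preserves $\langle\cdot,\cdot\rangle$, it is an isometry of $\mathfrak h$ carrying the compact convex set $W(X)$ onto itself; therefore it fixes the unique minimal-norm element $w(X)$ of $W(X)$ (whose existence and uniqueness is Proposition 11 of \cite{BerNik}, as recalled before Lemma \ref{deltal1}). Thus $\Ad(\exp(sZ))(w(X))=w(X)$ for all $s\in\mathbb R$. Differentiating at $s=0$ gives $\ad(Z)(w(X))=[Z,w(X)]=0$, which is exactly the claim.

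**Main obstacle.** The delicate point is the bookkeeping in the second paragraph: justifying cleanly that $\Ad(a_s)$ with $a_s=\exp(sZ)$, $Z\in\mathfrak h$, genuinely preserves the $\delta$-vector condition. One must be sure that $a_s$ lies in (the identity component of) $H$ — or at least in $N_G(H)$ — so that $\Ad(a_s)$ respects the reductive splitting $\mathfrak g=\mathfrak h\oplus\mathfrak p$ and, via Proposition \ref{addsymm}, the inner product $(\cdot,\cdot)$ on $\mathfrak p$; and then that the defining inequality, being a statement quantified over \emph{all} $a\in G$, is stable under precomposition of the argument with $\Ad(a_s)$ via the substitution $a\mapsto a\,a_s^{-1}$. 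Once this invariance of $W(X)$ is in hand, the rest is the soft uniqueness-of-minimizer argument plus differentiation. I would also remark that the same reasoning shows $w(\Ad(a)X)=\Ad(a)\,w(X)$ for any $a\in N_G(H)$ fixing $X$, of which the lemma is the infinitesimal form.
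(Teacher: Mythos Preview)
Your proof is correct and follows essentially the same approach as the paper: use $a_s=\exp(sZ)\in H$ to show $\Ad(a_s)$ fixes $X$ and preserves $W(X)$, invoke uniqueness of the minimal-norm element to conclude $\Ad(a_s)(w(X))=w(X)$, then differentiate. The only difference is cosmetic: since $Z\in\mathfrak h$ gives $a_s\in H$ directly, the detour through $N_G(H)$ and Proposition~\ref{addsymm} is unnecessary---$\Ad(H)$-invariance of $\mathfrak p$ and of $(\cdot,\cdot)$ already suffice.
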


\begin{proof}
Let us consider $a=\exp(tZ)\in H$ for small $t$.
It is clear that $\Ad(a)(X)=X$ and $\Ad(a)(w(X))\in W(X)$
(the set of $\delta$-vectors is invariant under the action of $\Ad(H):\mathfrak{g} \rightarrow \mathfrak{g}$).
But $\langle \Ad(a)(w(X)), \Ad(a)(w(X)) \rangle = \langle w(X),w(X) \rangle$ and, therefore,
$\Ad(a)(w(X))=w(X)$ by the discussion right before the lemma. On the other hand,
$\Ad(a)(w(X))=w(X)+t[Z,w(X)]+o(t)$ when $t\to 0$, hence $[Z,w(X)]=0$.
\end{proof}

\begin{corollary}\label{deltal2}
If $X\in \mathfrak{p}$ with $W(X)\neq \emptyset$ is such that
the Lie algebra $Z_{\mathfrak{h}}(X)$ has trivial center and $\rk(Z_{\mathfrak{h}}(X))=\rk(\mathfrak{h})$,
then $w(X)=0$, i.~e. $X$ is a $\delta$-vector. Moreover,
$(X,[U,[U,X]]_{\mathfrak{p}})+([U,X]_{\mathfrak{p}},[U,X]_{\mathfrak{p}})\leq 0$
for all $U\in \mathfrak{g}$.
\end{corollary}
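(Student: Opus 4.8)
The plan is to deduce $w(X)=0$ directly from Lemma \ref{deltal1}, and then to read off the inequality from the definition of $\delta$-vector. First I would observe that the element $w(X)\in\mathfrak{h}$ is fixed by $\Ad(\exp(tZ))$ for every $Z\in Z_{\mathfrak{h}}(X)$, hence $[Z,w(X)]=0$ for all such $Z$, by Lemma \ref{deltal1}. In other words, $w(X)$ lies in the centralizer $Z_{\mathfrak{h}}(Z_{\mathfrak{h}}(X))$ of the subalgebra $Z_{\mathfrak{h}}(X)$ inside $\mathfrak{h}$. The key step is to show that under the two hypotheses on $Z_{\mathfrak{h}}(X)$ this centralizer is trivial. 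Since $\rk(Z_{\mathfrak{h}}(X))=\rk(\mathfrak{h})$, a maximal torus $\mathfrak{t}$ of $Z_{\mathfrak{h}}(X)$ is also a maximal torus of $\mathfrak{h}$; anything commuting with all of $Z_{\mathfrak{h}}(X)$ in particular commutes with $\mathfrak{t}$, hence (by maximality of $\mathfrak{t}$ in the compact algebra $\mathfrak{h}$) lies in $\mathfrak{t}\subset Z_{\mathfrak{h}}(X)$. But then $w(X)$ belongs to the center of $Z_{\mathfrak{h}}(X)$, which is trivial by hypothesis. Therefore $w(X)=0$, and since $w(X)\in W(X)$ this says precisely that $X$ itself is a $\delta$-vector on $(M=G/H,\mu)$.

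For the inequality, I would unwind the definition of $\delta$-vector applied to $X$: for every $a\in G$ we have $(X,X)\geq (\Ad(a)(X)|_{\mathfrak{p}},\Ad(a)(X)|_{\mathfrak{p}})$. Fix $U\in\mathfrak{g}$ and put $a=\exp(tU)$, so that $\Ad(a)(X)=X+t[U,X]+\tfrac{t^2}{2}[U,[U,X]]+o(t^2)$ as $t\to 0$. Projecting to $\mathfrak{p}$ and expanding $(\Ad(a)(X)|_{\mathfrak{p}},\Ad(a)(X)|_{\mathfrak{p}})$ in powers of $t$, the constant term is $(X,X)$ and the linear term is $2t(X,[U,X]_{\mathfrak{p}})$. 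Because the function $t\mapsto (\Ad(\exp(tU))(X)|_{\mathfrak{p}},\Ad(\exp(tU))(X)|_{\mathfrak{p}})$ attains its maximum over $\mathbb{R}$ at $t=0$, its first derivative vanishes there (so the linear term is $0$) and its second derivative is nonpositive at $t=0$. Computing that second derivative gives $2(X,[U,[U,X]]_{\mathfrak{p}})+2([U,X]_{\mathfrak{p}},[U,X]_{\mathfrak{p}})\leq 0$, which is the claimed inequality after dividing by $2$.

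The main obstacle is the rank/center argument showing $Z_{\mathfrak{h}}(Z_{\mathfrak{h}}(X))=0$: one must be careful that $Z_{\mathfrak{h}}(X)$ is a (closed, hence compact) subalgebra of $\mathfrak{h}$ containing a maximal torus of $\mathfrak{h}$, so that the double-centralizer computation goes through cleanly; the equal-rank hypothesis is exactly what guarantees a common maximal torus, and the trivial-center hypothesis is exactly what kills the surviving toral part. Everything else is a routine Taylor expansion of $\Ad(\exp(tU))$ together with the first-order consequence of Lemma \ref{deltal1} already established in its proof.
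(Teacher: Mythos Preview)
Your proof is correct and follows essentially the same approach as the paper. For $w(X)=0$, the paper argues by contradiction that if $w(X)\neq 0$ then $w(X)\notin Z_{\mathfrak{h}}(X)$ (trivial center) so $\mathbb{R}\,w(X)+Z_{\mathfrak{h}}(X)\subset\mathfrak{h}$ would have rank $\rk(\mathfrak{h})+1$, while you reach the same conclusion via the equivalent double-centralizer/maximal-torus argument; for the inequality, the paper simply cites Proposition~3 of \cite{BerNik}, whose proof is exactly the Taylor expansion of $\Ad(\exp(tU))$ you wrote out, so your version is a self-contained re-derivation of that cited fact.
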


\begin{proof}
Assume that $w(X)\neq \emptyset$. Note that $w(X)\not\in Z_{\mathfrak{h}}(X)$ since $[w(X),Z_{\mathfrak{h}}(X)]=0$ by
Lemma \ref{deltal1} and $Z_{\mathfrak{h}}(X)$ has trivial center. Therefore,
the Lie algebra $\mathbb{R}w(X)+Z_{\mathfrak{h}}(X)\subset \mathfrak{h}$
has rank $\rk(Z_{\mathfrak{h}}(X))+1=\rk(\mathfrak{h})+1$ that is impossible. Hence, $w(X)=0$.

In Proposition 3 of \cite{BerNik}, it is proved that for any $\delta$-vector $X+Y$
($X\in \mathfrak{p}$ and $Y\in \mathfrak{h}$) the inequality
$(X,[U,[U,X+Y]]_{\mathfrak{p}})+([U,X+Y]_{\mathfrak{p}},[U,X+Y]_{\mathfrak{p}})\leq 0$
holds for any $U\in \mathfrak{g}$. If we put $Y=0$ in this inequality,
then we get the last assertion of the corollary.
\end{proof}

Now we prove the following

\begin{theorem}\label{delta_su}
Any Euclidean sphere $S^{2n+1}, n\geq 1$, is
$SU(n+1)$-generalized homogeneous.
\end{theorem}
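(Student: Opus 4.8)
The plan is to apply the criterion of Proposition \ref{gennorm}: the round sphere $S^{2n+1}=SU(n+1)/SU(n)$ is $SU(n+1)$-generalized normal homogeneous if and only if for every $X\in\mathfrak p$ one can find $Y\in\mathfrak h=\mathfrak{su}(n)$ so that $X+Y$ is a $\delta$-vector. Since the round metric is the restriction to $\mathfrak p$ of the bi-invariant inner product $\langle\cdot,\cdot\rangle$ (case $n$ odd gives the genuinely multiparameter family, but the round metric itself is always available), the condition on $X+Y$ to be a $\delta$-vector is that $(X,X)=\langle X,X\rangle$ dominates $\langle\Ad(a)(X+Y)|_{\mathfrak p},\Ad(a)(X+Y)|_{\mathfrak p}\rangle$ for all $a\in G$. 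The natural candidate is to take $Y$ so that $X+Y$ is exactly (a multiple of) a unit Killing field in $\mathfrak{u}_{\mathbb C}(n+1)=\mathfrak u(n+1)$ restricted appropriately, using the constant-length machinery of Section 3.

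First I would fix $x_0=(1,0,\dots,0)^T$ and use the decomposition (\ref{sum})--(\ref{p2}) with $\mathbb F=\mathbb C$, so $\mathfrak p=\mathfrak p_1\oplus\mathfrak p_2$ with $\mathfrak p_2=\mathfrak u_{\mathbb C}(1)=\mathbb R\mathbf i$. Given $X\in\mathfrak p$, write $X=X_1+X_2$ accordingly and, exactly as in the proof of Theorem \ref{osn_sp} (case $\mathbb F=\mathbb C$, steps 1--3), construct $Z\in\mathfrak{u}(n)$ of the form (\ref{Z}) with block $U_{nn}$ a diagonal matrix of purely imaginary entries chosen so that $U:=X+Z$ satisfies $U^2=-C^2\Id$ for an appropriate $C\ge 0$; by Proposition \ref{so} this $U$ is a Killing field of constant length $C$ on $S^{2n+1}$. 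The point is then that a constant-length Killing field has $\|\Ad(a)(U)x\|=\|U(\Ad(a)^{-1}... )\|$ constant, so in particular $\langle\Ad(a)(U)|_{\mathfrak p},\Ad(a)(U)|_{\mathfrak p}\rangle=\|\Ad(a)(U)x_0\|^2\le\|\Ad(a)(U)\|^2$... more precisely, one checks directly that for such $U$, $\langle\Ad(a)(U)|_{\mathfrak p},\Ad(a)(U)|_{\mathfrak p}\rangle\le(U,U)$ for all $a\in G$, i.e. $U$ is a $\delta$-vector: the value of the Killing field $\Ad(a)(U)$ at $x_0$ has length equal to the value of $U$ at $\Ad(a^{-1})(x_0)$, which is $C\|x_0\|=C$, while $(U,U)=\|Ux_0\|^2=C^2$. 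Hence $W(X)\ne\emptyset$ for every $X\in\mathfrak p$, with $Z$ (its $\mathfrak{su}(n)$-component, after subtracting the trace) serving as the required $Y$.

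There is one subtlety to address: $\mathfrak h=\mathfrak{su}(n)$, not $\mathfrak u(n)$, so the block $Z$ produced above a priori lies in $\mathfrak u(n)\oplus\mathfrak u(1)$ rather than in $\mathfrak p\oplus\mathfrak{su}(n)$. I would handle this by absorbing the trace of $U_{nn}$: the scalar part of $Z$ together with $X_2$ lies in the centre, and one adjusts $U_{nn}$ (which has $n$ free imaginary parameters, constrained only by $n$ prescribed squared moduli) to arrange $\trace(U)=0$, i.e. $U\in\mathfrak{su}(n+1)$, so that $Z\in\mathfrak{su}(n)=\mathfrak h$. For $n\ge 2$ this is clearly possible since the moduli conditions leave enough freedom; for $n=1$ one treats $S^3=SU(2)$ directly (the round metric is bi-invariant, hence normal, hence generalized normal, by Theorem \ref{body}).

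The main obstacle I expect is precisely this bookkeeping — verifying that the diagonal entries of $U_{nn}$ can simultaneously be chosen with the prescribed squared moduli (coming from $|u|^2-u_1^2\ge 0$, which requires checking $|u_1|\le|u|$ on $\mathfrak p$, automatic here) \emph{and} with vanishing total trace, so that the correction vector genuinely lands in $\mathfrak h$ and the resulting $U$ is a bona fide unit (or scaled-unit) Killing field in $\mathfrak{su}(n+1)$. Once that is arranged, the $\delta$-vector property is immediate from the constant-length property established via Proposition \ref{so}, and Proposition \ref{gennorm} finishes the proof.
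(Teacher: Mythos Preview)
Your strategy---produce $Z\in\mathfrak h$ so that $U=X+Z\in\mathfrak{su}(n+1)$ has \emph{constant length} on $S^{2n+1}$, then observe that constant-length Killing fields are automatically $\delta$-vectors---is sound in outline, and for $n+1$ even it works (this is essentially Proposition~\ref{osn_u}). But it breaks down when $n+1$ is odd, and this is not a bookkeeping issue: by Proposition~\ref{so}(5), $\mathfrak{su}(n+1)$ contains \emph{no} nonzero Killing fields of constant length on $S^{2n+1}$ when $n+1$ is odd (the eigenvalues must all be $\pm\mathbf i$ and sum to zero, forcing even dimension). Concretely, in your construction the entries $d_2,\dots,d_n$ of $U_{nn}$ must each have modulus $r=\sqrt{|u_1|^2+u_2^2}$ and be purely imaginary, so $d_l=\pm\mathbf i r$; the trace condition then reads $\sum_{l=2}^{n}\pm 1=0$, which is impossible when $n-1$ is odd and $r>0$. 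Thus for $n=2,4,6,\dots$ your proposed $U$ simply cannot lie in $\mathfrak{su}(n+1)$.

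The paper avoids this obstruction by \emph{not} insisting on constant length. It first characterizes $\delta$-vectors for the round sphere directly: $U\in\mathfrak u(n+1)$ is a $\delta$-vector at $x_0$ iff $-U^2=\diag(\lambda^2,B)$ with $\lambda^2\Id_n-B$ positive semi-definite (i.e.\ the Killing field has its maximal length at $x_0$, not equal length everywhere). Then for $X=X_1+X_2$ with $u=(u_2,0,\dots,0)^T$ it takes $U_{nn}$ to be the matrix with $-u_1$ in the $(1,1)$-slot and zeros elsewhere; this gives $-U^2=\diag(|u_1|^2+u_2^2,\,|u_1|^2+u_2^2,\,0,\dots,0)$, which satisfies the $\delta$-criterion, and $\trace(U)=u_1-u_1=0$ regardless of parity. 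The general $u$ is obtained by $\Ad(U(n))$-conjugation. So the key idea you are missing is precisely this weakening from ``$U^2=-C^2\Id$'' to ``$x_0$ is a point of maximal length for $U$''; without it the theorem is false by your method for half of all $n$.
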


\begin{proof}
By Proposition \ref{gennorm} it is sufficient to prove that for any tangent vector
\linebreak
$v\in S^{2n+1}_{x_0}$, $x_0=(1,0,\dots,0)^T \in S^{2n+1}$
there exists a Killing vector field $K(x)=Ux$, $x\in S^{2n+1}$,
where $U\in  \mathfrak{su}(n+1)$, such that $Ux_0=v$ and which is a $\delta$-vector.

As in the proof of Theorem \ref{osn_sp}, we have
the group $G=U(n+1),$ transitive on the sphere $S^{2n+1}$ with the isotropy subgroup $H=U(n)$ at the point $x_0$. Therefore, one can consider $S^{2n+1}$ as homogeneous manifold $(G/H,\mu)$ with corresponding
invariant Riemannian metric $\mu$. We will use decomposition (\ref{sum}) and the notation from the proof
of Theorem \ref{osn_sp}.

Our goal is to find for every vector-matrix $X+Y\in \mathfrak{p}_1\oplus \mathfrak{p}_2$ a vector-matrix
$Z\in \mathfrak{u}(n)$ of the form (\ref{Z}),
such that $U:=X+Y+Z \in \mathfrak{su}(n+1)$ and the Killing vector field $K(x):=Ux$, $x\in S^{2n+1}$,
is a $\delta$-vector.

We characterize all $\delta$-vectors
$U \in \mathfrak{u}(n+1)$ for the point $x_0\in S^{2n+1}$. If $x\in S^{2n+1}$, then
$$
(Ux,Ux)=(U^*Ux,x)=-(U^2 x,x).
$$
A matrix $U \in \mathfrak{u}(n+1)$ is a $\delta$-vector for the point $x_0\in S^{2n+1}$
if and only if
$$
(Ux_0,Ux_0)=-(U^2 x_0,x_0)\geq -(U^2 x,x)=(Ux,Ux)
$$
for all $x\in S^{2n+1}$.
Since the matrix
$-U^2$ is symmetric and positively definite, we can reformulate this in the following form:
$$
-U^2=\diag(\lambda^2,B),
$$
where $\lambda \in \mathbb{R}$ and symmetric matrix $B$ is such that
the matrix $\lambda^2 \cdot \Id_{n}-B$ is positive semi-definite. We will use this characterization in the
last part of the proof.

Below we shall find such fields $U$ in two cases. In both these cases we should choose a matrix
$U_{nn}$ that defines the vector $Z$ by formula (\ref{Z}).
Denote by $B(u_1)$ a $(n\times n)$-matrix such that its $(1,1)$-entry
is equal to $-u_1$ and all other entries are zero.

1) Suppose that $u=(u_2,0,\dots,0)^T\in \mathbb{C}^n$, $u_2\geq 0$.
In this case we may take $U_{nn}=B(u_1)$. Then
$-U^2=\diag(|u_1|^2+|u_2|^2, |u_1|^2+|u_2|^2, 0,0,\dots,0)$.
Obviously, $U\in \mathfrak{su}(n+1)$.

2) Now, suppose that $u\in \mathbb{C}^n$ is arbitrary and consider $|u|=\sqrt{(u,u)}\geq 0$.
The group $U(n)$ acts transitively on every sphere in $\mathbb{C}^n$
with zero center. Therefore, there exists an element $g\in U(n)$ such that $g(|u|,0,\dots,0)^T=u$. In this case
it suffices to take the matrix $U_{nn}=\Ad(g)(B(u_1))$. Let us take $U=\Ad(\widetilde{g})(\widetilde{U})$, where
$\tilde{g}=\diag(1,g)\in U(n+1)$, and $\widetilde{U}$ is the matrix $U$ constructed in the previous case with
$u_2=|u|$. Since the group $\Ad(U(n))$ preserves the set of $\delta$-vectors, then
$U$ is really a $\delta$-vector.
It is also clear that $U\in \mathfrak{su}(n+1)$, since $\trace(U_{nn})=\trace(B(u_1))=-u_1$.
\end{proof}

\section{New examples}

Let $G/H$ be one of the following homogeneous spaces $SO(n+1)/SO(n)=S^n$, $G_2/SU(3)=S^6$, and $Spin(7)/G_2=S^7$.
In the first case $G/H$ is irreducible symmetric and in the other two cases it is
isotropy irreducible, therefore, in all these cases the homogeneous space $G/H$ admits only one (up to scaling)
$G$-invariant metric that is $G$-normal (see e.~g. Chapter 7 in \cite{Bes}).

Now, we consider $SU(n+1)$-invariant metrics on the  homogeneous sphere $S^{2n+1}=SU(n+1)/SU(n)$.
Consider $\Ad(SU(n))$-invariant $\left\langle \cdot, \cdot \right\rangle$-orthogonal decomposition
\begin{equation}
\label{sum_su}
\mathfrak{g}=\mathfrak{su}(n+1)=\mathfrak{p}\oplus \mathfrak{h}=
\mathfrak{p}_1\oplus \mathfrak{p}_2\oplus \mathfrak{h},
\end{equation}
where
$\mathfrak{p}_1$ is the same as in (\ref{sum}), $\mathfrak{h}$ consists of element of the form (\ref{Z}) with
$U_{nn}\in \mathfrak{su}(n),$ and
\begin{equation}
\label{p_su2} \mathfrak{p}_2=\left\{{\bf i}a \cdot
\diag(n,-1,\dots,-1), a\in \mathbb{R}\right\}.
\end{equation}
It should be noted that the module $\mathfrak{p}_1$ is $\Ad(SU(n))$ irreducible only for $n \geq 2$,
because $SU(1)$ is a trivial group.

Now we consider the family $\xi_t$ of $SU(n+1)$-invariant metrics on the  sphere $S^{2n+1}$
that correspond to the inner products
\begin{equation}
\label{mu_s_su}
(\cdot, \cdot)_t=\langle \cdot, \cdot \rangle |_{\mathfrak{p}_1}+
\frac{2nt}{n+1}\langle \cdot, \cdot \rangle |_{\mathfrak{p}_2}
\end{equation}
on $\mathfrak{p}$ for $t>0$.

\begin{theorem}\label{new_su_m}
The homogeneous Riemannian space $(S^{2n+1}=SU(n+1)/SU(n),\xi_t)$ is
$U(n+1)$-generalized normal homogeneous for all $t \in (0,1]$.
It is also $SU(n+1)$-generalized normal homogeneous for all $t \in [(n+1)/2n,1]$.
\end{theorem}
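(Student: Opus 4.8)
The plan is to assemble three facts that are already at our disposal: (i) the classical classification of \emph{normal} homogeneous metrics on $S^{2n+1}$, reproduced in Table 1; (ii) the fact that the round metric $g_{\can}=\xi_1$ is $SU(n+1)$-generalized normal homogeneous (Theorem \ref{delta_su}) and $U(n+1)$-Clifford-Wolf homogeneous (Theorem \ref{osn_sp} with $\mathbb F=\mathbb C$), hence also $U(n+1)$-generalized normal homogeneous; and (iii) the interpolation principle of Corollary \ref{cor2}. Throughout I use the normalization (\ref{mu_s_su}): the inner product of $\xi_t$ has weight $1$ on $\mathfrak p_1$ and weight $\frac{2nt}{n+1}$ on $\mathfrak p_2$; in particular this $\mathfrak p_2$-weight equals $1$ precisely when $t=\frac{n+1}{2n}$, and equals $\frac{2n}{n+1}>1$ when $t=1$.

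\emph{The $SU(n+1)$-case.} At $t=\frac{n+1}{2n}$ the $\mathfrak p_2$-weight of $\xi_t$ is $1$, so $\xi_{(n+1)/(2n)}$ is the metric generated by $\langle\cdot,\cdot\rangle|_{\mathfrak p}$, i.e.\ the $SU(n+1)$-normal metric coming from a multiple of the Killing form of $\mathfrak{su}(n+1)$ (line 5 of Table 1); being $SU(n+1)$-normal homogeneous, it is $SU(n+1)$-generalized normal homogeneous (consequence of Theorem \ref{body}, cf.\ the Introduction). By Theorem \ref{delta_su}, $\xi_1$ is $SU(n+1)$-generalized normal homogeneous as well. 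Assume $n\geq 2$, so that $\mathfrak p_1$ is $\Ad(SU(n))$-irreducible and Corollary \ref{cor2} applies with $\alpha=1$, $\beta=1$, $\gamma=\frac{2n}{n+1}$: every $SU(n+1)$-invariant metric whose $\mathfrak p_1$-weight is $1$ and whose $\mathfrak p_2$-weight lies between $1$ and $\frac{2n}{n+1}$ is $SU(n+1)$-generalized normal homogeneous. Since $\frac{2nt}{n+1}\in[1,\frac{2n}{n+1}]$ is equivalent to $t\in[\frac{n+1}{2n},1]$, this gives the assertion for all such $t$. For $n=1$ the interval $[\frac{n+1}{2n},1]$ collapses to $\{1\}$, where $\xi_1$ is the bi-invariant metric on $S^3=SU(2)$ and the claim is exactly Theorem \ref{delta_su}.

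\emph{The $U(n+1)$-case.} For $0<t<\frac{n+1}{2n}$, line 6 of Table 1 says that $\xi_t$ is $U(n+1)$-normal homogeneous, hence $U(n+1)$-generalized normal homogeneous. At $t=1$, $\xi_1=g_{\can}$ is $U(n+1)$-generalized normal homogeneous (by Theorem \ref{osn_sp}, or simply because it is $SU(n+1)$-generalized normal homogeneous and $SU(n+1)\subset U(n+1)$). It remains to cover $t\in[\frac{n+1}{2n},1)$, which is empty for $n=1$; for $n\geq 2$ fix any $t_0\in(0,\frac{n+1}{2n})$ and apply Corollary \ref{cor2} to the $U(n+1)$-generalized normal homogeneous metrics $\xi_{t_0}$ and $\xi_1$ — they have the common $\mathfrak p_1$-weight $1$ and $\mathfrak p_2$-weights $\frac{2nt_0}{n+1}<1$ and $\frac{2n}{n+1}>1$, so every $\xi_t$ with $t\in[t_0,1]\supseteq[\frac{n+1}{2n},1]$ is $U(n+1)$-generalized normal homogeneous. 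Combining the two ranges yields $t\in(0,1]$.

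The only genuinely nontrivial inputs are the identifications of the endpoints of the two interpolations: on one side the classical normal metrics of Table 1 (for $SU(n+1)$ this is the Killing-form metric, landing exactly at $t=\frac{n+1}{2n}$), and on the other the round metric $\xi_1$, whose generalized normal homogeneity with respect to $SU(n+1)$ is the substantive Theorem \ref{delta_su}. Once both endpoints are in hand, Corollary \ref{cor2} supplies the whole interval, and the rest is just the bookkeeping of translating the $\mathfrak p_2$-weight $\frac{2nt}{n+1}$ back into the parameter $t$. The one point deserving care is the exceptional case $n=1$, where $\mathfrak p_1$ is not $\Ad(SU(1))$-irreducible; there, however, both interpolation ranges are empty, so Corollary \ref{cor2} is not invoked and the statement reduces to its endpoint cases.
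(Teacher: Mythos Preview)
Your proof is correct and follows essentially the same route as the paper's: establish generalized normal homogeneity at the two endpoints (the normal homogeneous metrics from Table~1 via Theorem~\ref{body}, and the round metric $\xi_1$ via Theorems~\ref{osn_sp} and~\ref{delta_su}), then interpolate with Corollary~\ref{cor2}. Your additional care about the case $n=1$ and the irreducibility of $\mathfrak p_1$ is harmless but not strictly needed---Corollary~\ref{cor2} (through Theorem~\ref{fireysum}) does not actually rely on irreducibility of the summands, and for $n=1$ the interpolation intervals degenerate anyway.
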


\begin{proof}
By Table 1, $\xi_t$ is $SU(n+1)$-normal homogeneous iff $t=\frac{n+1}{2n}$, and
$U(n+1)$-normal homogeneous iff $t<\frac{n+1}{2n}$. The metric $\xi_t$ has constant sectional
curvature 1 on $S^{2n+1}$ for $t=1.$ This and Theorem~\ref{osn_sp} imply that
$(S^{2n+1}=SU(n+1)/SU(n),\xi_1)$ is $U(n+1)$-generalized
normal homogeneous. The space $(S^{2n+1}=SU(n+1)/SU(n),\xi_t),$ $t<\frac{n+1}{2n}$, is
$U(n+1)$-normal homogeneous, hence $U(n+1)$-generalized normal homogeneous by Theorem \ref{body}.
It is also $SU(n+1)$-generalized normal homogeneous for $t=1$ by Theorem \ref{delta_su}
and for $t=\frac{n+1}{2n}$ by Theorem \ref{body}.

Now, the theorem follows directly from Corollary \ref{cor2}.
\end{proof}

\begin{remark}
It follows from equality $\mu_{1,s}=\xi_s$ and the second inclusion in (\ref{incl}) that Theorem \ref{new_sp_ms}
contains a stronger statement for odd $n$ than the first statement in Theorem \ref{new_su_m}.
\end{remark}

We can also consider representation of the Riemannian manifold $(S^{2n+1},\xi_t)$ as the homogeneous space
$U(n+1)/U(n)$ with the ($U(n+1)$-invariant) metric $\xi_t$ generated with the inner product
\begin{equation}\label{u_alg_dec}
(\cdot, \cdot)_t=\langle \cdot, \cdot \rangle |_{\mathfrak{p}_1}+
2t\langle \cdot, \cdot \rangle |_{\mathfrak{p}_2}
\end{equation}
where we have used decomposition (\ref{sum}) and $\langle \cdot, \cdot \rangle$ is the
$\Ad(U(n+1))$-invariant inner product (\ref{innerprod}) on the Lie algebra $u(n+1)$.

\begin{theorem}\label{u_main}
The homogeneous Riemannian manifold $(S^{2n+1},\xi_t)$ is
$U(n+1)$-gene\-ralized normal homogeneous if and only if $t \in (0,1]$.
\end{theorem}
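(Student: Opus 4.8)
The plan is to prove the two directions separately, leaning heavily on the machinery already developed. For the ``if'' direction ($t\in(0,1]$), the work is essentially done: by Theorem \ref{new_su_m}, $(S^{2n+1},\xi_t)$ is $U(n+1)$-generalized normal homogeneous for every $t\in(0,1]$. (This in turn rests on the fact that $\xi_1=g_{\can}$, which is $U(n+1)$-Clifford-Wolf homogeneous by Theorem \ref{osn_sp} and hence $U(n+1)$-generalized normal homogeneous, together with $U(n+1)$-normal homogeneity of $\xi_t$ for $t<(n+1)/2n$ and Corollary \ref{cor2} to interpolate across the whole interval $(0,1]$.) One should note that the inner product \eqref{u_alg_dec} used here, with the $\mathfrak{p}_2$-factor $2t$ relative to the decomposition \eqref{sum} of $\mathfrak{u}(n+1)$, describes the same metric $\xi_t$ as \eqref{mu_s_su} does relative to the decomposition \eqref{sum_su} of $\mathfrak{su}(n+1)$; a brief remark reconciling the two normalizations (the $\mathfrak{su}$-module $\mathfrak{p}_2$ from \eqref{p_su2} sits at an angle inside $\mathfrak{u}(1)\oplus\mathfrak{su}(n+1)$, accounting for the factor $n/(n+1)$) suffices.

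For the ``only if'' direction, I would suppose $t>1$ and derive a contradiction with Theorem \ref{body}. The idea is to use the characterization: $(S^{2n+1},\xi_t)$ is $U(n+1)$-generalized normal homogeneous iff there is an $\Ad(U(n+1))$-invariant centrally symmetric convex body $B\subset\mathfrak{u}(n+1)$ whose $\langle\cdot,\cdot\rangle$-orthogonal projection onto $\mathfrak{p}=\mathfrak{p}_1\oplus\mathfrak{p}_2$ equals the unit ball $E_t$ of $(\cdot,\cdot)_t$. The key geometric fact is that for $t>1$ the metric $\xi_t$ is ``longer'' than $g_{\can}$ in the fiber direction $\mathfrak{p}_2$ while agreeing with it on $\mathfrak{p}_1$, so $E_t\supsetneq E_1$ in the $\mathfrak{p}_2$-direction. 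But $B$ is contained in the unit ball of the bi-invariant metric $\langle\cdot,\cdot\rangle$ on $\mathfrak{u}(n+1)$ up to the constraint that $B$ project onto $E_t$ — more precisely, any centrally symmetric $\Ad$-invariant body projecting onto $\mathfrak{p}$ is forced to lie inside a definite region, and one checks that the largest such projection attainable in the $\mathfrak{p}_2$-direction corresponds exactly to $t=1$. I would make this precise by examining the support function: $h_B$ restricted to $\mathfrak{p}_2$ must equal the support function of $E_t$, i.e. equal $\sqrt{2t}\,|\cdot|$ on $\mathfrak{p}_2$ (up to the normalization constant), while $\Ad$-invariance of $B$ together with the fact that the $\mathfrak{p}_2$-direction is conjugate (under $\Ad(U(n+1))$) into directions that must stay within $\mathfrak{p}_1\oplus\mathfrak{p}_2$ and have $E_t$-controlled length forces $h_B|_{\mathfrak{p}_2}\le|\cdot|$, hence $t\le 1$.

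The main obstacle is making the last inequality airtight: one must show that $\Ad(U(n+1))$-invariance genuinely obstructs $B$ from bulging out in the $\mathfrak{p}_2$-direction beyond the round-metric bound. Concretely, $\mathfrak{p}_2=\mathbb{R}\cdot{\bf i}\,\Id_{n+1}$ — wait, more precisely the $\mathfrak{u}(1)$-part — is the center of $\mathfrak{u}(n+1)$, so $\Ad$-invariance fixes it pointwise and imposes no constraint there directly; the constraint must instead come from $\mathfrak{p}_1$: a unit vector $X\in\mathfrak{p}_1$ is $\Ad(U(n))$-conjugate to vectors whose $\mathfrak p_2$-component (after adding a compensating $\mathfrak h$-term, as in the proof of Theorem \ref{delta_su}) realizes the fiber direction, and the Clifford–Wolf / $\delta$-vector analysis shows the ``trade-off'' between base and fiber length is governed by $U^2=-C^2\Id$. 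I would therefore route the argument through Proposition \ref{gennorm} and the $\delta$-vector inequality in Corollary \ref{deltal2}: take $X\in\mathfrak{p}_1$ a unit vector (so $X^2=-\Id$ on the relevant $2$-plane after the constructions above), compute that the only way $X+Y$ can be a $\delta$-vector for suitable $Y\in\mathfrak h$ while the metric weights $\mathfrak{p}_2$ by factor $2t$ is to have $t\le 1$, the borderline $t=1$ being exactly the round sphere where $U$ is an honest Clifford–Wolf field. That eigenvalue computation — pinning down $-U^2=\diag(\lambda^2,B)$ with $\lambda^2\Id_n-B\succeq 0$ and reading off the fiber constraint — is the technical heart and the step I expect to require the most care.
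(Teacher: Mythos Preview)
Your ``if'' direction is fine and matches the paper's approach exactly: invoke Theorem~\ref{new_su_m}.

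The ``only if'' direction, however, has a genuine gap. Neither of your two suggested routes goes through as stated. First, your support-function sketch contains an orientation error: for $t>1$ the inner product on $\mathfrak{p}_2$ is $2t\langle\cdot,\cdot\rangle$, so the unit ball $E_t$ is \emph{smaller}, not larger, in the $\mathfrak{p}_2$-direction than $E_1$. More importantly, your identification of $\mathfrak{p}_2$ with the center of $\mathfrak{u}(n+1)$ is wrong: in the decomposition~\eqref{sum}, $\mathfrak{p}_2=\{\diag({\bf i}a,0,\dots,0):a\in\mathbb{R}\}$, which is not $\Ad(U(n+1))$-fixed. You notice this yourself (``wait'') but never recover a precise constraint. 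Second, your fallback via Corollary~\ref{deltal2} applied to $X\in\mathfrak{p}_1$ cannot work: here $\mathfrak{h}=\mathfrak{u}(n)$, and for any nonzero $X\in\mathfrak{p}_1$ one computes $Z_{\mathfrak{h}}(X)\cong\mathfrak{u}(n-1)$, which has nontrivial center. So the hypothesis of Corollary~\ref{deltal2} fails and you cannot conclude that $X$ itself is a $\delta$-vector.

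The paper instead uses a direct necessary condition for $G$-generalized normal homogeneity (Proposition~22 of \cite{BerNik}): for every $X\in\mathfrak{p}_1$, $Y\in\mathfrak{p}_2$ one must have
\[
x_1\,\langle [[Y,X],X]_{\mathfrak{h}},[[Y,X],X]_{\mathfrak{h}}\rangle \;\ge\;(x_2-x_1)\,\langle [[Y,X],X]_{\mathfrak{p}_2},[[Y,X],X]_{\mathfrak{p}_2}\rangle,
\]
with $x_1=1$, $x_2=2t$. Taking $Y=\diag({\bf i},0,\dots,0)$ and $X$ the matrix with a single $2\times 2$ block $\bigl(\begin{smallmatrix}0&{\bf i}\\ {\bf i}&0\end{smallmatrix}\bigr)$ in the upper left, one gets $[[Y,X],X]=-2Y+2Z$ with $Z=\diag(0,{\bf i},0,\dots,0)\in\mathfrak{h}$, and the inequality reads $4\ge(2t-1)\cdot 4$, i.e.\ $t\le 1$. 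This single bracket computation replaces your entire eigenvalue discussion; your proposal never identifies this inequality or any concrete test vectors.
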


\begin{proof}
It follows from Theorem \ref{new_su_m} that
the homogeneous Riemannian space
\linebreak
$(S^{2n+1}=SU(n+1)/SU(n),\xi_t)$ is
$U(n+1)$-generalized normal homogeneous for all $t \in (0,1]$.

Now, suppose that the metric $\xi_t$ is $U(n+1)$-generalized normal homogeneous
on $S^{2n+1}=U(n+1)/U(n)$.
By Proposition 22 in \cite{BerNik} we know that
for every $X \in
\mathfrak{p}_1$, $Y \in \mathfrak{p}_2$ holds the inequality
$$
x_1 \langle [[Y,X],X]_\mathfrak{h},[[Y,X],X]_\mathfrak{h} \rangle
\geq (x_2-x_1) \langle
[[Y,X],X]_{\mathfrak{p}_2},[[Y,X],X]_{\mathfrak{p}_2} \rangle,
$$
where $x_1=1$ and $x_2=2t$ (see (\ref{u_alg_dec})). If we take
$$
Y=\diag({\bf i},0,\dots,0)\quad \mbox{and}\quad
X=\diag \left( \left(
\begin{array}{cc}
0&{\bf i}\\{\bf i}&0\\
\end{array}
\right),0,\dots,0) \right),
$$
then $[[Y,X],X]=-2Y+2Z$, where
$Z=\diag(0,{\bf i},0,\dots,0) \in \mathfrak{h}$. Since $\langle
-2Y, -2Y \rangle= \langle 2Z, 2Z \rangle=4$, then we get $4 \geq
(2t-1)\cdot 4$, i.~e. $t\leq 1$.
\end{proof}

\begin{remark}
For $n=1$ the assertion of Theorem \ref{u_main} also follows from the results of paper \cite{BerNik2009ch},
where the authors proved (in particular) that all $U(2)$-generalized normal homogeneous metrics on $S^3$
are either $U(2)$-normal homogeneous, or $SU(2)$-normal homogeneous.
\end{remark}

\begin{theorem}\label{su_main_del}
The homogeneous Riemannian manifold $(S^{2n+1},\xi_t)$ is
$SU(n+1)$-gene\-ralized normal homogeneous if and only if $t\in [(n+1)/2n,1]$.
\end{theorem}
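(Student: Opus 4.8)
The plan is to prove the two implications separately. The direction ``$t\in[(n+1)/2n,1]$ $\Rightarrow$ $(S^{2n+1},\xi_t)$ is $SU(n+1)$-generalized normal homogeneous'' is exactly the second assertion of Theorem \ref{new_su_m}, so the real work is the converse, which I would split into an upper bound $t\le 1$ and a lower bound $t\ge(n+1)/2n$. The upper bound I would get from the inclusion $SU(n+1)\subset U(n+1)$: the Riemannian manifold $(S^{2n+1},\xi_t)$ is also the homogeneous space $U(n+1)/U(n)$ carrying the $U(n+1)$-invariant metric $\xi_t$ of (\ref{u_alg_dec}), so $SU(n+1)$ is a subgroup of $U(n+1)\subset\Isom(S^{2n+1},\xi_t)$; every $\delta$-$x$-translation lying in $SU(n+1)$ lies a fortiori in $U(n+1)$, hence $SU(n+1)$-generalized normal homogeneity implies $U(n+1)$-generalized normal homogeneity, and Theorem \ref{u_main} then forces $t\le 1$.

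For the lower bound, the substantive part, I would apply Corollary \ref{deltal2} to the vector $P:={\bf i}\cdot\diag(n,-1,\dots,-1)\in\mathfrak{p}_2\subset\mathfrak{p}$ of (\ref{p_su2}). If $\xi_t$ is $SU(n+1)$-generalized normal homogeneous, Proposition \ref{gennorm} gives $W(P)\ne\emptyset$; and since $P$ acts on the last $n$ coordinates — precisely those on which $\mathfrak{h}=\mathfrak{su}(n)$ acts — as the central scalar $-{\bf i}\,\Id_n$, one has $[Z,P]=0$ for all $Z\in\mathfrak{h}$, so $Z_{\mathfrak{h}}(P)=\mathfrak{h}$, which has trivial center and $\rk(Z_{\mathfrak{h}}(P))=\rk(\mathfrak{h})$. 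Corollary \ref{deltal2} then gives that $P$ is itself a $\delta$-vector and that
$$
(P,[U,[U,P]]_{\mathfrak{p}})_t+([U,P]_{\mathfrak{p}},[U,P]_{\mathfrak{p}})_t\le 0\qquad\text{for every }U\in\mathfrak{g}.
$$
I would then test this with the real matrix $U\in\mathfrak{p}_1$ corresponding to $u=(1,0,\dots,0)^{T}$ in (\ref{p1}), i.e. $U_{12}=-1$, $U_{21}=1$ and all other entries zero. A short matrix computation gives $[U,P]=(n+1)S$, where $S\in\mathfrak{p}_1$ has $S_{12}=S_{21}={\bf i}$, so $([U,P]_{\mathfrak{p}},[U,P]_{\mathfrak{p}})_t=(n+1)^{2}\langle S,S\rangle=(n+1)^{2}$; and $[U,[U,P]]=2(n+1)\,\diag(-{\bf i},{\bf i},0,\dots,0)$, a diagonal traceless skew-Hermitian matrix whose $\mathfrak{p}_1$-component vanishes and whose $\mathfrak{p}_2$-component equals $-\tfrac1n P$, so $[U,[U,P]]_{\mathfrak{p}}=-\tfrac{2(n+1)}{n}P$ and, using $(P,P)_t=\tfrac{2nt}{n+1}\langle P,P\rangle=n^{2}t$ (see (\ref{mu_s_su})), one finds $(P,[U,[U,P]]_{\mathfrak{p}})_t=-2n(n+1)t$. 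Substituting into the displayed inequality gives $(n+1)^{2}\le 2n(n+1)t$, that is $t\ge(n+1)/2n$. The same argument works verbatim for $n=1$, where the interval degenerates to $\{1\}$, in accordance with Proposition \ref{su(2)n}.

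The step I expect to be the real obstacle is the reduction just used: Proposition \ref{gennorm} only supplies \emph{some} $Y\in\mathfrak{su}(n)$ with $P+Y$ a $\delta$-vector, and dealing with all such $Y$ by hand would be unpleasant. What saves the day is that the centrality of $P$ on the last $n$ coordinates makes $Z_{\mathfrak{h}}(P)$ equal to all of $\mathfrak{h}$, whence Lemma \ref{deltal1} — in the packaged form of Corollary \ref{deltal2} — pins $w(P)$ inside the (trivial) center of $\mathfrak{su}(n)$, so $w(P)=0$ and $P$ is genuinely a $\delta$-vector; everything after that is the routine commutator computation above.
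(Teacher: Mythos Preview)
Your proof is correct and follows essentially the same route as the paper's: sufficiency via Theorem~\ref{new_su_m}, the upper bound $t\le 1$ via the inclusion $SU(n+1)\subset U(n+1)$ and Theorem~\ref{u_main}, and the lower bound by applying Corollary~\ref{deltal2} to the generator of $\mathfrak{p}_2$ and testing the resulting inequality with a vector in $\mathfrak{p}_1$. The only cosmetic difference is that the paper shortcuts your explicit projection of $[U,[U,P]]$ onto $\mathfrak{p}_2$ by using $\Ad$-invariance of $\langle\cdot,\cdot\rangle$ to write $\langle P,[U,[U,P]]\rangle=-\langle[U,P],[U,P]\rangle$ directly.
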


\begin{proof}
It follows from Theorem \ref{new_su_m} that
the homogeneous Riemannian space
\linebreak
$(S^{2n+1}=SU(n+1)/SU(n),\xi_t)$ is
$SU(n+1)$-generalized normal homogeneous for $t\in [(n+1)/2n,1]$.

Now, suppose that the metric $\xi_t$ is $SU(n+1)$-generalized normal homogeneous.
This metric is generated by the inner product (\ref{mu_s_su}).
For any non-trivial $X \in \mathfrak{p}_2$ we see that $Z_{\mathfrak{h}}(X)=\mathfrak{h}$ (see Lemma \ref{deltal1}).
By Corollary \ref{deltal2} we get that $X$ is $\delta$-vector and
$(X,[U,[U,X]]_{\mathfrak{p}})+([U,X]_{\mathfrak{p}},[U,X]_{\mathfrak{p}})\leq 0$
for all $U\in \mathfrak{su}(n+1)$.

Let us take $X={\bf i} \cdot \diag(n,-1,\dots,-1)\in \mathfrak{p}_2$
and any nonzero $U\in \mathfrak{p}_1$ as in (\ref{p1}) with
$\mathbb{F}=\mathbb{C}.$ Then
$$
0\neq [U,X]=(n+1) \left(\begin{array}{rr} 0 & {\bf i}
\overline{u}^T\\{\bf i}  u& 0_{nn}
\end{array} \right)\in \mathfrak{p}_1.
$$
Therefore
\begin{eqnarray*}
0\geq (X,[U,[U,X]]_{\mathfrak{p}})+([U,X]_{\mathfrak{p}},[U,X]_{\mathfrak{p}})=\\
\frac{2nt}{n+1}\langle X,[U,[U,X]]\rangle+\langle [U,X]_{\mathfrak{p}},[U,X]_{\mathfrak{p}}\rangle=\\
-\frac{2nt}{n+1}\langle [U,X],[U,X] \rangle+\langle [U,X],[U,X]\rangle=\\
\left(1-\frac{2nt}{n+1}\right)
\langle [U,X],[U,X] \rangle.
\end{eqnarray*}
Since $[U,X]\neq 0$, we get $0\geq 1-\frac{2nt}{n+1}$ that is equivalent to $t \geq (n+1)/2n$.

The inequality $t\leq 1$ follows from Theorem \ref{u_main}, since every
$SU(n+1)$-generalized normal homogeneous Riemannian manifold is
also $U(n+1)$-generalized normal homogeneous.
\end{proof}

Very special is the case $SU(2)=S^3$. There is a $6$-dimensional space of left-invariant
Riemannian metrics on $SU(2)$. But we have the following

\begin{prop}\label{su(2)n}
If a left-invariant metric $\mu$ on $S^3=SU(2)$ is $SU(2)$-generalized normal homogeneous, then
it is a metric of constant sectional curvature.
\end{prop}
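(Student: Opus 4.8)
The plan is to use that $SU(2)$ acts \emph{simply transitively} on $S^3 = SU(2)$ by left translations. Hence we may take $G = SU(2)$ and $H = \{e\}$, so that the reductive decomposition (\ref{reductivedecomposition}) degenerates to $\mathfrak{g} = \mathfrak{p} = \mathfrak{su}(2)$ with $\mathfrak{h} = 0$, the fixed $\Ad(G)$-invariant inner product $\langle \cdot, \cdot \rangle$ on $\mathfrak{su}(2)$ is (a multiple of) minus the Killing form, and the orthogonal projection $P \colon \mathfrak{g} \to \mathfrak{p}$ appearing in Theorem \ref{body} is the identity.

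First I would apply Theorem \ref{body}. Let $(\cdot, \cdot)$ be the inner product on $\mathfrak{p} = \mathfrak{su}(2)$ generating $\mu$ and let $B = \{v \in \mathfrak{su}(2) : (v,v) \le 1\}$ be its (solid ellipsoidal) unit ball. Since $P = \Id$, Theorem \ref{body} says that $\mu$ is $SU(2)$-generalized normal homogeneous if and only if $B$ itself is an $\Ad(SU(2))$-invariant centrally symmetric convex body, and since $B$ is an ellipsoid this is equivalent to $\Ad(SU(2))$-invariance of the inner product $(\cdot, \cdot)$. (One can reach the same conclusion through $\delta$-vectors: by Proposition \ref{gennorm} with $\mathfrak{h} = 0$, the metric is $SU(2)$-generalized normal homogeneous iff every $X \in \mathfrak{su}(2)$ is a $\delta$-vector, i.e. $(X,X) \ge (\Ad(a)X, \Ad(a)X)$ for all $a \in SU(2)$; replacing $a$ by $a^{-1}$ and $X$ by $\Ad(a)X$ promotes this inequality to an equality, i.e. to $\Ad(SU(2))$-invariance of $(\cdot, \cdot)$.)

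Next I would invoke the structure of the adjoint representation: $\Ad \colon SU(2) \to SO(\mathfrak{su}(2), \langle \cdot, \cdot \rangle)$ is the classical two-fold cover $SU(2) \to SO(3)$, so its image is all of $SO(3)$ and $\mathfrak{su}(2) \cong \mathbb{R}^3$ is an irreducible $\Ad(SU(2))$-module. By Schur's lemma any two $\Ad(SU(2))$-invariant inner products on $\mathfrak{su}(2)$ are proportional, hence $(\cdot, \cdot) = c\,\langle \cdot, \cdot \rangle$ for some $c > 0$, so $\mu$ is a positive multiple of the bi-invariant metric on $SU(2)$ determined by $\langle \cdot, \cdot \rangle$. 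Since the bi-invariant metric on $SU(2) = S^3$ is isometric to a round $3$-sphere, $\mu$ has constant sectional curvature, which completes the argument.

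I do not anticipate a real obstacle: the only points needing care are the degeneration $H = \{e\}$ in the hypotheses of Theorem \ref{body} (so that $P = \Id$), and the elementary fact that an $\Ad(SU(2))$-invariant centrally symmetric convex body in $\mathbb{R}^3 \cong \mathfrak{su}(2)$ — equivalently, an $SO(3)$-invariant inner product — is a round ball, which is where the simplicity (irreducibility) of $\mathfrak{su}(2)$ enters.
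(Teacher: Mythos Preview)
Your proof is correct and follows essentially the same route as the paper: both arguments show that the inner product on $\mathfrak{su}(2)$ must be $\Ad(SU(2))$-invariant (hence bi-invariant), and then conclude constant curvature. The only difference is cosmetic: the paper invokes Proposition~\ref{addsymm} (with $H=\{e\}$, so $N_G(H)=G$) to obtain $\Ad(SU(2))$-invariance in one line, whereas you derive this special case directly from Theorem~\ref{body} (or equivalently from Proposition~\ref{gennorm}); and the paper phrases the final step via $SU(2)\times SU(2)/\diag(SU(2))=SO(4)/SO(3)$ rather than via Schur's lemma.
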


\begin{proof}
By Proposition \ref{addsymm}, such left-invariant metric
$\mu$ should be bi-invariant.
\linebreak
Therefore, $(S^3,\mu)$
has constant curvature, since there are only metrics of constant
\linebreak
curvature
among invariant Riemannian metrics
on the homogeneous space
\linebreak
$SU(2)\times SU(2)/\diag(SU(2))=SO(4)/SO(3)$.
\end{proof}

\bigskip

Now we consider metrics $\mu_t$ on $S^{4n+3}=Sp(n+1)/Sp(n)$. Such metrics are generated by inner products
\begin{equation}
\label{mut}
(\cdot,\cdot)_{t}:= \left\langle \cdot, \cdot \right\rangle|_{\mathfrak{p}_1}+
2t\left\langle \cdot, \cdot \right\rangle|_{\mathfrak{p}_{2}},
\end{equation}
where we have used
the $\Ad(Sp(n))$-invariant $\left\langle \cdot, \cdot \right\rangle$-orthogonal decomposition
\linebreak
$\mathfrak{sp}(n+1)=\mathfrak{p}\oplus \mathfrak{sp}(n)= \mathfrak{p}_1\oplus \mathfrak{p}_2\oplus \mathfrak{sp}(n)$, and
the modules $\mathfrak{p}_1$ and $\mathfrak{p}_2=\mathfrak{sp}(1)$ are defined
by formulas (\ref{p1}) and (\ref{p2})  (see (\ref{sum})).

\begin{theorem}\label{new_sp_m}
The homogeneous Riemannian space $(S^{4n+3}=Sp(n+1)/Sp(n),\mu_t)$
\linebreak
is
$Sp(n+1)\times Sp(1)$-generalized normal homogeneous for all $t \in (0,1]$
and
\linebreak
$Sp(n+1)$-generalized normal homogeneous for all $t \in [1/2,1]$.
\end{theorem}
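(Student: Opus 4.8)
The plan is to mirror the proof of Theorem \ref{new_su_m}, treating the quaternionic sphere $S^{4n+3}=Sp(n+1)/Sp(n)$ in place of $S^{2n+1}=SU(n+1)/SU(n)$ and keeping careful track of the two relevant transitive groups, namely $Sp(n+1)\times Sp(1)$ and $Sp(n+1)$. The key point is to exhibit $\mu_t$ as $G$-generalized normal homogeneous at the three distinguished values $t=1/2$, $t=(\text{the normal value})$, and $t=1$, and then fill in the whole interval by the convexity argument of Corollary \ref{cor2} applied to the module $\mathfrak{p}_2=\mathfrak{sp}(1)$ (with $\alpha$ the fixed weight on $\mathfrak{p}_1$, and $\beta,\gamma$ the endpoint weights on $\mathfrak{p}_2$).

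First I would handle the endpoint $t=1$: the inner product \eqref{mut} with $t=1$ gives the round metric $g_{\can}$ of constant curvature $1$ on $S^{4n+3}$, so by Theorem \ref{osn_sp} (with $\mathbb{F}=\mathbb{H}$) the sphere is $Sp(n+1)$-Clifford-Wolf homogeneous, hence $Sp(n+1)$-generalized normal homogeneous; a fortiori it is $Sp(n+1)\times Sp(1)$-generalized normal homogeneous, since enlarging the group of allowed isometries only helps. Next, for the $Sp(n+1)\times Sp(1)$ claim I would invoke Table 1, line 7: the metric $\mu_t$ with $t$ equal to the entry there (in the present normalization, $t$ small enough, in particular some specific value $t_0<1$) is $Sp(n+1)Sp(1)$-\emph{normal} homogeneous, hence $Sp(n+1)Sp(1)$-generalized normal homogeneous by Theorem \ref{body}; more simply, for every $t$ in the ``normal'' range of Table 1 line 7 the metric is already normal homogeneous with respect to $Sp(n+1)Sp(1)$. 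Combining the normal-homogeneous sub-range with the endpoint $t=1$ via Corollary \ref{cor2} (using that $\mathfrak{p}=\mathfrak{p}_1\oplus\mathfrak{p}_2$ with $\mathfrak{p}_1$ carrying the fixed weight in both metrics) yields $Sp(n+1)\times Sp(1)$-generalized normal homogeneity for all $t\in(0,1]$.

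For the $Sp(n+1)$ statement, the extra input needed is the value $t=1/2$: by Table 1, line 9, the metric $\mu_{1/2}$ is $Sp(n+1)$-normal homogeneous, hence $Sp(n+1)$-generalized normal homogeneous by Theorem \ref{body}. Then Corollary \ref{cor2}, applied to the two $Sp(n+1)$-generalized normal homogeneous metrics $\mu_{1/2}$ and $\mu_1$ (again the weight on $\mathfrak{p}_1$ is common, and only the weight on $\mathfrak{p}_2$ varies between $1$ and $2$ in the normalization of \eqref{mut}), gives $Sp(n+1)$-generalized normal homogeneity for every $t$ in the closed interval between $1/2$ and $1$, i.e. for all $t\in[1/2,1]$. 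This completes the proof.

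The main obstacle I anticipate is not conceptual but bookkeeping: one must make sure that the normalization constant in \eqref{mut} (the factor $2t$ on $\mathfrak{p}_2=\mathfrak{sp}(1)$) is matched correctly against the Table 1 normalization, where the fiber weight is written as $b^2c^2$ with $b^2<1/2$ for line 7 and $b^2=1/2$ for line 9, and against the constant-curvature value; a misidentification of these constants would shift the endpoints of the interval. A secondary point to check is that Corollary \ref{cor2} genuinely applies: one needs both input metrics to share the \emph{same} coefficient $\alpha$ on $\mathfrak{p}_1$, which is indeed the case here since all metrics $\mu_t$ in \eqref{mut} have coefficient $1$ on $\mathfrak{p}_1$; and one needs $\mathfrak{p}_1$, $\mathfrak{p}_2$ to be the $\Ad(Sp(n))$-irreducible modules, which holds because $\mathfrak{p}_1\cong\mathbb{H}^n$ and $\mathfrak{p}_2=\mathfrak{sp}(1)$ are inequivalent irreducible $\Ad(Sp(n))$-modules for $n\geq 1$.
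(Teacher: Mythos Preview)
Your proposal is correct and follows essentially the same route as the paper's proof: identify $\mu_1$ as the round metric and use Theorem~\ref{osn_sp} for the endpoint $t=1$, read off the normal-homogeneous ranges from Table~1 (line~7 for $Sp(n+1)\times Sp(1)$ with $t\in(0,1/2)$, line~9 for $Sp(n+1)$ with $t=1/2$), upgrade via Theorem~\ref{body}, and interpolate with Corollary~\ref{cor2}. The bookkeeping worries you raise are handled exactly as you describe, and the paper's argument is in fact slightly terser than yours on these points.
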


\begin{proof}
The metric $\mu_t$ is defined by $\Ad(Sp(n))$-invariant inner product (\ref{mut}).
By Theorem \ref{osn_sp}, the sphere $(S^{4n+3},\mu_1)$ is
$Sp(n+1)$-generalized normal homogeneous, and hence $Sp(n+1)\times Sp(1)$-generalized normal homogeneous.
By Table 1 $(S^{4n+3},\mu_t)$ is $Sp(n+1)\times Sp(1)$-normal homogeneous for all
$t \in (0,1/2)$ and $Sp(n+1)$-normal homogeneous for $t=1/2$. The theorem follows from Theorem \ref{body}
and Corollary \ref{cor2}.
\end{proof}

\begin{theorem}\label{sp_sp_main}
The homogeneous Riemannian space $(S^{4n+3}=Sp(n+1)/Sp(n),\mu_t)$ is
$Sp(n+1)\times Sp(1)$-generalized normal homogeneous if and only if $t \in (0,1]$.
\end{theorem}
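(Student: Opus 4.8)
The plan is to prove Theorem \ref{sp_sp_main} by combining the positive result already contained in Theorem \ref{new_sp_m} with a sharp obstruction showing that $Sp(n+1)\times Sp(1)$-generalized normal homogeneity of $(S^{4n+3},\mu_t)$ forces $t\le 1$. The forward direction ($t\in(0,1]$ implies $G$-generalized normal homogeneity) is exactly the first assertion of Theorem \ref{new_sp_m}, so nothing new is needed there. The whole content lies in the reverse implication: assume $\mu_t$ is $Sp(n+1)\times Sp(1)$-generalized normal homogeneous and deduce $t\le 1$.

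For the obstruction I would mimic the argument used for Theorem \ref{u_main}, applying Proposition~22 of \cite{BerNik} (the same inequality invoked there) to the reductive decomposition associated to $G=Sp(n+1)\times Sp(1)$ acting on $S^{4n+3}$. Here the relevant modules are $\mathfrak{p}_1$ (the ``horizontal'' quaternionic module, formula (\ref{p1}) with $\mathbb{F}=\mathbb{H}$) carrying weight $x_1=1$, and $\mathfrak{p}_2$ (the fiber module, $=\mathfrak{sp}(1)$ diagonally embedded via $pr$) carrying weight $x_2=2t$ in the inner product (\ref{mut}). The key step is to select concrete elements $X\in\mathfrak{p}_1$ and $Y\in\mathfrak{p}_2$ and compute the double bracket $[[Y,X],X]$, splitting it into its $\mathfrak{h}$-component and its $\mathfrak{p}_2$-component. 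Just as in the unitary case, a good choice is $Y$ a unit purely imaginary quaternion sitting in the $(1,1)$ slot (combined with its copy in the $\mathfrak{sp}(1)$ factor so that $Y$ genuinely lies in $\mathfrak{p}_2\cong\mathfrak{sp}(1)$), and $X$ the off-diagonal quaternionic vector with a single nonzero entry equal to that same imaginary unit; then a short computation should give $[[Y,X],X]$ proportional to $-Y$ plus a term in $\mathfrak{h}=\mathfrak{sp}(n)\oplus\mathfrak{sp}(1)$ of equal $\langle\cdot,\cdot\rangle$-norm. Plugging these norms into the Proposition~22 inequality $x_1\langle[[Y,X],X]_{\mathfrak h},[[Y,X],X]_{\mathfrak h}\rangle\ge(x_2-x_1)\langle[[Y,X],X]_{\mathfrak{p}_2},[[Y,X],X]_{\mathfrak{p}_2}\rangle$ yields $1\ge 2t-1$, i.e. $t\le1$.

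The combination of the two directions then gives the ``if and only if''. I would also remark that this matches line 7 of Table~2, and is consistent with Theorem \ref{nonnormal}: for $\tfrac12<t<1$ the metric $\mu_t$ is $Sp(n+1)\times Sp(1)$-generalized normal homogeneous but not normal homogeneous with respect to any transitive group.

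The step I expect to be the main obstacle is the explicit bracket computation in the quaternionic setting: unlike the complex case, quaternionic multiplication is noncommutative, so one must be careful about the order of factors when evaluating $[Y,X]$ and $[[Y,X],X]$ inside $\mathfrak{sp}(n+1)$, and one must correctly track how the $\mathfrak{sp}(1)$-factor of $\mathfrak{g}$ (which acts on $S^{4n+3}$ on the other side) enters $\mathfrak{p}_2$ and contributes to the $\mathfrak h$- versus $\mathfrak{p}_2$-components of the double bracket. Once the test vectors are chosen so that $[[Y,X],X]$ decomposes cleanly with equal-norm pieces in $\mathfrak h$ and in $\mathfrak p_2$, the inequality collapses to $t\le1$ exactly as in the proof of Theorem \ref{u_main}; verifying that such a clean choice exists (and that $[U,X]\ne0$, so the inequality is non-vacuous) is the only delicate point.
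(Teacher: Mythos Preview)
Your strategy is exactly the paper's: the forward implication is Theorem~\ref{new_sp_m}, and the obstruction comes from Proposition~22 of \cite{BerNik} applied to a well-chosen pair $X\in\mathfrak{p}_1$, $Y$ in the fiber module. Your proposed test vectors (or equally the paper's choice with a real off-diagonal entry) give $[[Y',X'],X']=-2Y'+2Z'$ in $\mathfrak{sp}(n+1)$, with $Z'=\diag(0,{\bf i},0,\dots,0)\in\mathfrak{sp}(n)$; that part is fine.

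Your numerical predictions, however, are based on the $Sp(n+1)$-decomposition $(\ref{sum})$--$(\ref{mut})$, and that is not the decomposition you are entitled to use. Since you are assuming only $Sp(n+1)\times Sp(1)$-generalized normal homogeneity (which does \emph{not} imply $Sp(n+1)$-generalized normal homogeneity), Proposition~22 must be applied with $\mathfrak{g}=\mathfrak{sp}(n+1)\oplus\mathfrak{sp}(1)$, $\mathfrak{h}=\mathfrak{sp}(n)\oplus\diag(\mathfrak{sp}(1))$, and fiber module $\overline{\mathfrak{p}}_2=\{(X,-X):X\in\mathfrak{sp}(1)\}$; see the paper's decomposition~$(\ref{spspsum})$. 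In that decomposition the metric weight on $\overline{\mathfrak{p}}_2$ is $x_2=4t$, not $2t$ (equation~$(\ref{spsosum2})$), and the double bracket splits as $-U_1-U_2+2U_3$ with $U_1\in\overline{\mathfrak{p}}_2$ and $U_2\in\diag(\mathfrak{sp}(1))$, $U_3\in\mathfrak{sp}(n)$ both in $\mathfrak{h}$. The $\mathfrak{h}$- and $\overline{\mathfrak{p}}_2$-pieces therefore do \emph{not} have equal norm: $\langle U_1,U_1\rangle=\langle U_2,U_2\rangle=1$, $\langle U_3,U_3\rangle=\tfrac12$, so the ratio is $3:1$. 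The inequality then reads $1\cdot 3\ge(4t-1)\cdot 1$, which is again $t\le 1$. So your two errors (weight $2t$ instead of $4t$; ratio $1{:}1$ instead of $3{:}1$) happen to cancel, but you should carry out the computation in the correct reductive decomposition.
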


\begin{proof}
The metric $\mu_t$ is $Sp(n+1)\times Sp(1)$-generalized normal homogeneous for $t \in (0,1]$
by Theorem \ref{new_sp_m}.

Now, let $(S^{4n+3}=Sp(n+1)/Sp(n),\mu_t)$ be
$Sp(n+1)\times Sp(1)$-generalized normal homogeneous.
We have $\Ad(Sp(n))$-invariant $\left\langle \cdot, \cdot \right\rangle$-orthogonal decomposition
$$
\mathfrak{sp}(n+1)=\mathfrak{p}\oplus \mathfrak{sp}(n)= \mathfrak{p}_1\oplus \mathfrak{p}_2\oplus \mathfrak{sp}(n);\quad
\mathfrak{p}_2=\mathfrak{sp}(1),
$$
(see (\ref{sum})). Now we consider $\Ad(Sp(n)\times Sp(1))$-invariant decomposition
\begin{equation}\label{spspsum}
\mathfrak{sp}(n+1)\oplus \mathfrak{sp}(1)=\overline{\mathfrak{p}}\oplus \mathfrak{sp}(n)\oplus \diag(\mathfrak{sp}(1)),
\end{equation}
where (we identify elements $(X,0)\in \mathfrak{sp}(n+1)\oplus \mathfrak{sp}(1)$ with $X\in \mathfrak{sp}(n+1)$)
$$
\mathfrak{sp}(n)\oplus \diag(\mathfrak{sp}(1))\subset
(\mathfrak{sp}(1)\oplus \mathfrak{sp}(n))\oplus \mathfrak{sp}(1)\subset \mathfrak{sp}(n+1)\oplus \mathfrak{sp}(1),
$$
$$
\overline{\mathfrak{p}}=\mathfrak{p}_1\oplus \overline{\mathfrak{p}}_2,
\quad
\overline{\mathfrak{p}}_2=\{(X,-X)\,,\, X\in \mathfrak{sp}(1) \}\subset
\mathfrak{p}_2\oplus \mathfrak{sp}(1).
$$
It should be noted that any vector $(X,-X)\in \overline{\mathfrak{p}}_2$ is projected to the vector
$2X\in \mathfrak{p}_2$ when we project $\mathfrak{sp}(n+1)\oplus \mathfrak{sp}(1)$ to $\mathfrak{sp}(n+1)$ along
$\diag(\mathfrak{sp}(1))$.
Elements of the Lie algebra
$\mathfrak{sp}(n+1)\oplus \mathfrak{sp}(1)$ we consider as $\bigl((n+2)\times(n+2)\bigr)$-matrices.
Then
the metric $\mu_t$ is generated by the inner product
\begin{equation}\label{spsosum2}
(\cdot, \cdot)=\langle \cdot, \cdot \rangle |_{\mathfrak{p}_1}+
4t\langle \cdot, \cdot \rangle |_{\overline{\mathfrak{p}}_2}
\end{equation}
on $\overline{\mathfrak{p}}$ (compare with (\ref{mut})).

By Proposition 22 in \cite{BerNik} we know that
for every $X \in
\mathfrak{p}_1$, $Y \in \overline{\mathfrak{p}}_2$ the inequality
$$
x_1 \langle [[Y,X],X]_\mathfrak{h},[[Y,X],X]_\mathfrak{h} \rangle
\geq (x_2-x_1) \langle
[[Y,X],X]_{\overline{\mathfrak{p}}_2},[[Y,X],X]_{\overline{\mathfrak{p}}_2} \rangle.
$$
holds, where $x_1=1$, $x_2=4t$ and $\mathfrak{h}=\mathfrak{sp}(n)\oplus \diag(\mathfrak{sp}(1))$.
Now, if we take
$$
X=\left(\diag \left( \left(
\begin{array}{cc}
0&1\\-1&0\\
\end{array}
\right),0,\dots,0\right), \diag(0,0,\dots,0) \right)\in \mathfrak{p}_1
$$
and $Y=(\diag({\bf i},0,\dots,0),-\diag({\bf i},0,\dots,0))\in \overline{\mathfrak{p}}_2$,
then we get
$$[[Y,X],X]=-U_1-U_2+2U_3, \quad \mbox{where}$$
\begin{eqnarray*}
U_1&=&(\diag({\bf i},0,\dots,0),-\diag({\bf i},0,\dots,0))\in \overline{\mathfrak{p}}_2,\\
U_2&=&(\diag({\bf i},0,\dots,0),\diag({\bf i},0,\dots,0))\in \mathfrak{h},\\
U_3&=&(\diag(0,{\bf i},0,\dots,0),\diag(0,0,\dots,0))\in \mathfrak{sp}(n)\subset \mathfrak{h}.
\end{eqnarray*}
Since $\langle U_1,U_1 \rangle=\langle U_2,U_2 \rangle=1$, $\langle U_3,U_3 \rangle=1/2$ and $\langle U_2,U_3 \rangle=0$
we get
$$
1\cdot (1+4\cdot 1/2) \geq (4t-1) \cdot 1
$$
that proves the last assertion of the theorem.
\end{proof}

\begin{theorem}\label{new_sp_ms}
The homogeneous Riemannian space $(S^{4n+3}=Sp(n+1)/Sp(n),\mu_{t,s})$ for $s\neq t$ is
$Sp(n+1)\times U(1)$-generalized normal homogeneous for all $t \in [1/2,1]$ and $s\in (0,t)$.
\end{theorem}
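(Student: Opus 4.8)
The plan is to reduce everything to the two "pure" cases $s=t$ (where $\mu_{t,t}=\mu_t$ viewed through a coarser group) and $s=1$ (where $\mu_{t,1}$ is the metric $\xi_t$ after restricting circles to unit length), and then to interpolate with Corollary \ref{cor2}. First I would fix the $\Ad(Sp(n)\times U(1))$-invariant orthogonal decomposition of $\mathfrak{sp}(n+1)\oplus\mathfrak{u}(1)$ analogous to \eqref{spspsum}, writing $\overline{\mathfrak p}=\mathfrak p_1\oplus\overline{\mathfrak p}_2\oplus\overline{\mathfrak p}_3$, where $\mathfrak p_1$ is as in \eqref{p1} (with $\mathbb F=\mathbb H$), the module $\overline{\mathfrak p}_2$ is a complement accounting for the $\mathbb HP^n$-directions in the base of $pr_1$, and $\overline{\mathfrak p}_3$ corresponds to $\mathfrak p_3=\mathfrak u(1)$, the tangent space to the circle fiber of $pr_1$. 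Here one uses the inclusion $Sp(n+1)\times U(1)\subset U(2(n+1))$ from \eqref{incl} to see that $S^{4n+3}$ carries the metric $\mu_{t,s}$ as an invariant metric of the homogeneous space $Sp(n+1)\times U(1)/(Sp(n)\times U(1))$, and that $\mu_{t,s}$ is given by an inner product of the form $\langle\cdot,\cdot\rangle|_{\mathfrak p_1}+\alpha(t)\langle\cdot,\cdot\rangle|_{\overline{\mathfrak p}_2}+\beta(s)\langle\cdot,\cdot\rangle|_{\overline{\mathfrak p}_3}$ for suitable positive scalars $\alpha(t),\beta(s)$ — I would compute these normalization constants from the Riemannian submersion conditions in \eqref{pr1}, exactly as the factor $4t$ arose in \eqref{spsosum2}.

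Next I would establish the two endpoints. For $s=t$: the metric $\mu_{t,t}$ is, up to the extra direct $\mathfrak u(1)$-summand, just $\mu_t$, and since $(S^{4n+3},\mu_t)$ is $Sp(n+1)\times Sp(1)$-generalized normal homogeneous for $t\in(0,1]$ by Theorem \ref{sp_sp_main}, and $Sp(n+1)\times U(1)\subset Sp(n+1)\times Sp(1)$ (acting with the same orbits on $S^{4n+3}$), one gets that $(S^{4n+3},\mu_{t,t})$ is $Sp(n+1)\times U(1)$-generalized normal homogeneous for $t\in(0,1]$ — one must check that restricting to the smaller group does not destroy the property, which follows from Theorem \ref{body} by projecting the $\Ad(Sp(n+1)\times Sp(1))$-invariant convex body further, or alternatively by exhibiting the needed $\delta$-vectors directly inside $\mathfrak{sp}(n+1)\oplus\mathfrak u(1)$ as in the proof of Theorem \ref{sp_sp_main}. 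For $s=1$: since $\mu_{1,1}=\xi_1=g_{\can}$ has constant curvature $1$, and more generally $\mu_{t,1}$ corresponds to the family $\mu_t$ with the fiber circle kept at canonical length, the needed statement for $s=1$ follows from Theorem \ref{new_sp_m} (restricting $Sp(n+1)\times Sp(1)$ to $Sp(n+1)\times U(1)$ as above) for $t\in[1/2,1]$.

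Finally, I would fix $t\in[1/2,1]$ and apply Corollary \ref{cor2} with $\mathfrak p_1\oplus\overline{\mathfrak p}_2$ playing the role of the first module and $\overline{\mathfrak p}_3$ the second: having shown that both $\mu_{t,\beta(t)}$ and $\mu_{t,\beta(1)}$ (i.e. the $s=t$ and $s=1$ metrics, which share the same $\mathfrak p_1\oplus\overline{\mathfrak p}_2$-part) are $Sp(n+1)\times U(1)$-generalized normal homogeneous, the corollary yields the same for every convex combination of the $\overline{\mathfrak p}_3$-coefficients, hence for all $s$ in the closed interval with endpoints $t$ and $1$; discarding the endpoint $s=t$ (excluded by hypothesis $s\neq t$) and noting $s=1$ is outside the open range $(0,t)$ asked for, one obtains precisely $s\in(0,t)$ — wait, this only gives the subinterval between $t$ and $1$, so one additionally needs the case $0<s<t$, which is handled by interpolating instead between $s=t$ (giving the upper endpoint) and a small-$s$ endpoint: here one uses that $(S^{4n+3},\mu_{t,s})$ is $Sp(n+1)\times U(1)$-normal homogeneous for $s$ sufficiently small (line 10 of Table 1, with $a^2$ small), hence $Sp(n+1)\times U(1)$-generalized normal homogeneous by Theorem \ref{body}, and then Corollary \ref{cor2} fills in the whole interval $(0,t)$. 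The main obstacle is the bookkeeping of the normalization constants $\alpha(t),\beta(s)$ and verifying that passing from the group $Sp(n+1)\times Sp(1)$ to its subgroup $Sp(n+1)\times U(1)$ genuinely preserves $\delta$-homogeneity — this requires care because the centralizer structure and the module decomposition change, and the convex body in Theorem \ref{body} must be re-examined after the extra orthogonal projection.
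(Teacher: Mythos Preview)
Your approach has a genuine gap at the ``small-$s$ endpoint'' step. You claim that for a fixed $t\in[1/2,1]$ the metric $\mu_{t,s}$ is $Sp(n+1)\times U(1)$-normal homogeneous for $s$ sufficiently small, citing line~10 of Table~1. But that line requires $b^2=1/2$, i.e.\ $t=1/2$; for $t>1/2$ there is \emph{no} value of $s$ making $\mu_{t,s}$ normal homogeneous with respect to $Sp(n+1)\times U(1)$. Hence, for $t\in(1/2,1]$, the only endpoint you actually have with the correct $\mathfrak{p}_1\oplus\overline{\mathfrak p}_2$-part is $s=t$, and Corollary~\ref{cor2} (varying only the $\overline{\mathfrak p}_3$-coefficient) produces nothing new.

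The paper's proof avoids this by interpolating in \emph{both} parameters at once, using Corollary~\ref{cor1} (three modules) rather than Corollary~\ref{cor2}. The two anchor metrics are $\mu_{1,1}$ (the round metric, $Sp(n+1)$-generalized normal homogeneous by Theorem~\ref{osn_sp}) and $\mu_{1/2,\,s_1}$ with $s_1\in(0,1/2)$ (normal homogeneous by Table~1). Given a target pair $(t,s)$ with $1/2<t<1$ and $0<s<t$, one first reads off the interpolation parameter $\theta=(2t-1)/t$ from the $\mathfrak{p}_{2,1}$-coefficient, and then solves
\[
s_1=\frac{s(1-t)}{t-(2t-1)s}\,;
\]
the hypothesis $s<t$ is precisely what guarantees $0<s_1<1/2$. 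This simultaneous two-parameter Firey interpolation is the missing idea.

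A secondary point: your passage from $Sp(n+1)\times Sp(1)$ \emph{down} to the subgroup $Sp(n+1)\times U(1)$ goes in the wrong direction. Generalized normal homogeneity passes to \emph{larger} transitive groups (the same $\delta$-translations still work), not to smaller ones, and intersecting or projecting the convex body of Theorem~\ref{body} does not repair this. The clean route for the $s=t$ endpoint is instead to invoke Theorem~\ref{new_sp_m}, which gives $Sp(n+1)$-generalized normal homogeneity of $\mu_t=\mu_{t,t}$ for $t\in[1/2,1]$, and then enlarge to $Sp(n+1)\times U(1)\supset Sp(n+1)$.
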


\begin{proof}
The metric $\mu_{t,s}$ is defined by $\Ad(Sp(n))$-invariant inner product
\begin{equation}
\label{muts}
(\cdot,\cdot)_{t,s}:= \left\langle \cdot, \cdot \right\rangle|_{\mathfrak{p}_1}+
2t\left\langle \cdot, \cdot \right\rangle|_{\mathfrak{p}_{2,1}}+
2s\left\langle \cdot, \cdot \right\rangle|_{\mathfrak{p}_{2,2}}
\end{equation}
on $\mathfrak{p}$ for $\Ad(Sp(n))$-invariant $\langle \cdot,\cdot \rangle$-orthogonal decomposition, which is defined
by decomposition (\ref{sum}) and formulas (\ref{p1}), (\ref{p2}), with additional subdivision
\linebreak
$\mathfrak{p}_2=\mathfrak{p}_{2,1} \oplus \mathfrak{p}_{2,2}$. Formula (\ref{p2}) shows that any element $U\in \mathfrak{p}_2$
has a form of $((n+1)\times (n+1))$-matrix with unique non-zero element $U_{11}=u_1\in \operatorname{Im}(\mathbb{H})$ if
$U\neq 0$. Similarly, any element $U$ in the subspace $\mathfrak{p}_{2,1}\subset \mathfrak{p}_{2}$ (respectively,
$\mathfrak{p}_{2,2}\subset \mathfrak{p}_{2}$) is defined uniquely by $U_{11}=u_1\in \mathbb{R}{\bf j}\oplus \mathbb{R}{\bf k}$
(respectively, $U_{11}=u_1\in \mathbb{R}{\bf i}$).

By Theorem \ref{osn_sp}, the sphere $(S^{4n+3},\mu_{t,s})$ for $s=t=1$ is $Sp(n+1)$-generalized normal homogeneous,
and hence $Sp(n+1)\times U(1)$-generalized normal homogeneous. By Table~1 $(S^{4n+3},\mu_{t,s})$
is $Sp(n+1)\times U(1)$-normal homogeneous iff $0< s < t=1/2$. Hence, by Theorem \ref{body}, it is
$Sp(n+1)\times U(1)$-generalized normal homogeneous if $t=1/2$ and $0 <s <t$. So we can assume that $1/2 < t$. If
$t=1$, the statement follows from Corollary \ref{cor2}.

So we can suppose that we are given arbitrary pair $(t,s)$, where $1/2 < t < 1$ and $0 < s < t$.
Then $t=(1-r)\frac{1}{2}+r\cdot1$ for $r=2t-1.$ It follows from Remark \ref{theta} that
$t=\bigl((1-\theta)(\frac{1}{2})^{-1} +\theta 1^{-1} \bigr)^{-1}$ for $\theta=(2t-1)/t$. Now it is enough to prove
that there exists $s_1\in (0,1/2)$ such that $s=\bigl((1-\theta)s_1^{-1} +\theta 1^{-1} \bigr)^{-1}$ for
$\theta=(2t-1)/t$,
i.~e.
$$s=\left(\frac{(1-t)}{ts_1} + \frac{2t-1}{t}\right)^{-1}=\frac{ts_1}{(1-t)+(2t-1)s_1}.$$
From this equality it is not difficult to find that
$$s_1=\frac{s(1-t)}{t-(2t-1)s}.$$
It follows from conditions for the pair $(t,s)$ that both the numerator and the denominator of the above fraction are
positive, so $s_1>0.$ Now the inequality $s_1 < 1/2$ is equivalent to inequality $2(1-t)s < t-(2t-1)s$ or
$s < t$, which is satisfied by conditions of the theorem.
\end{proof}

We need the following general proposition.

\begin{prop}
\label{subm1}
Let $p: (M,\mu)\rightarrow (N,\nu)$ be a Riemannian submersion, which is a homogeneous fibration with respect to some
isometry Lie group $G$ of the space $(M,\mu),$ and the space $(M,\mu)$ is $G$-$\delta$-homogeneous. Then $(N,\nu)$ is
also $G$-$\delta$-homogeneous.
\end{prop}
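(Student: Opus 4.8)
The plan is to combine the $G$-equivariance of $p$ with two standard metric properties of Riemannian submersions. First, since $p$ is a homogeneous fibration with respect to $G$, every $g\in G$ permutes the fibres of $p$, hence descends to a well-defined map $\bar g$ of $N$; because $p$ is a Riemannian submersion, $\bar g$ is an isometry of $(N,\nu)$, and $g\mapsto\bar g$ is a homomorphism of $G$ onto a group $\bar G\subset\Isom(N,\nu)$ satisfying $p(g(z))=\bar g(p(z))$ for all $z\in M$. So it suffices to show that for arbitrary $\bar x,\bar y\in N$ there is $g\in G$ with $\bar g(\bar x)=\bar y$ and $\bar g$ of maximal displacement at $\bar x$; the required $\delta$-$\bar x$-translations will then automatically come from (the image of) $G$.

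The two submersion facts I will use are: (i) $p$ is distance nonincreasing, $\rho_{\nu}(p(a),p(b))\le\rho_{\mu}(a,b)$ for all $a,b\in M$, since $dp$ is norm nonincreasing on each tangent space; and (ii) every geodesic of $(N,\nu)$ has, through each point of the fibre over its origin, a horizontal lift which is a geodesic of $(M,\mu)$ of the same length (O'Neill). Note that being $\delta$-homogeneous, $(M,\mu)$ is homogeneous, hence complete, and then $(N,\nu)$ is complete as well, so minimizing geodesics between any two points of $N$ exist and the horizontal lifts in (ii) are defined on the whole interval.

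Now carry out the argument. Fix $\bar x,\bar y\in N$ and choose any $x_0\in p^{-1}(\bar x)$. Take a minimizing geodesic in $(N,\nu)$ from $\bar x$ to $\bar y$ and lift it horizontally starting at $x_0$; by (ii) this yields a geodesic of $(M,\mu)$ of length $\rho_{\nu}(\bar x,\bar y)$ ending at a point $y_0\in p^{-1}(\bar y)$, whence $\rho_{\mu}(x_0,y_0)\le\rho_{\nu}(\bar x,\bar y)$. Together with (i), $\rho_{\nu}(\bar x,\bar y)\le\rho_{\mu}(x_0,y_0)$, this gives $\rho_{\mu}(x_0,y_0)=\rho_{\nu}(\bar x,\bar y)$. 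Applying $G$-$\delta$-homogeneity of $(M,\mu)$ to the pair $x_0,y_0$, we obtain $g\in G$ with $g(x_0)=y_0$ and $\rho_{\mu}(z,g(z))\le\rho_{\mu}(x_0,g(x_0))=\rho_{\mu}(x_0,y_0)$ for all $z\in M$. Then $\bar g(\bar x)=p(g(x_0))=p(y_0)=\bar y$, so the displacement of $\bar g$ at $\bar x$ equals $\rho_{\nu}(\bar x,\bar y)=\rho_{\mu}(x_0,y_0)$. Finally, for arbitrary $\bar z\in N$ pick $z\in p^{-1}(\bar z)$ and use (i) and the displacement bound for $g$:
\[
\rho_{\nu}(\bar z,\bar g(\bar z))=\rho_{\nu}(p(z),p(g(z)))\le\rho_{\mu}(z,g(z))\le\rho_{\mu}(x_0,y_0)=\rho_{\nu}(\bar x,\bar g(\bar x)).
\]
Thus $\bar g$ is a $\delta$-$\bar x$-translation taking $\bar x$ to $\bar y$, and it lies in $\bar G$; since $\bar x,\bar y$ were arbitrary, $(N,\nu)$ is $G$-$\delta$-homogeneous.

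The only delicate point — and the main obstacle — is that the displacement of $\bar g$ at $\bar x$ must equal $\rho_{\nu}(\bar x,\bar y)$, not merely be bounded by $\rho_{\mu}(x_0,y_0)$ for some arbitrary lift; this is precisely what the careful choice of $x_0$ and $y_0$ with $\rho_{\mu}(x_0,y_0)=\rho_{\nu}(\bar x,\bar y)$, obtained by horizontally lifting a minimizing geodesic, accomplishes. Everything else is the soft interplay of $G$-equivariance with the distance-nonincreasing property of the submersion.
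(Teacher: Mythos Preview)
Your proof is correct and follows essentially the same route as the paper: find lifts $x_0\in p^{-1}(\bar x)$, $y_0\in p^{-1}(\bar y)$ with $\rho_{\mu}(x_0,y_0)=\rho_{\nu}(\bar x,\bar y)$, apply $G$-$\delta$-homogeneity upstairs to get $g\in G$ sending $x_0$ to $y_0$, and push $g$ down to $\bar g$ using $G$-equivariance and the distance-nonincreasing property of $p$. The chain of inequalities at the end is identical to the paper's.

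The only genuine difference is in how the equidistant lifts are produced. The paper appeals to finite compactness of $(M,\rho_{\mu})$ (a homogeneous Riemannian manifold is proper) to conclude that the infimum of $\rho_{\mu}$ over $p^{-1}(\bar x)\times p^{-1}(\bar y)$ is attained and equals $\rho_{\nu}(\bar x,\bar y)$. You instead horizontally lift a minimizing geodesic via O'Neill, which is more constructive and makes the equality $\rho_{\mu}(x_0,y_0)=\rho_{\nu}(\bar x,\bar y)$ transparent without invoking compactness of the fibres. Either method works; yours has the mild advantage of making explicit why such lifts exist rather than relying on an implicit compactness/Hermann-type argument.
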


\begin{proof}
Let $\rho_M$ and $\rho_N$ be the inner metrics on $M$ and $N$ (induced by the metric tensors $\mu$ and $\nu$).
Consider any points $x, y $ in $N$. In view of homogeneity, the space $(M,\rho_M)$ is finitely compact.
Then there exist points $\tilde{x}\in p^{-1}(x),$ $\tilde{y}\in p^{-1}(y)$ such that
$\rho_M(\tilde{x},\tilde{y})=\rho_N(x,y).$ Since $(M,\rho_M)$ is $G$-$\delta$-homogeneous there is some
$\delta(\tilde{x})$-translation $\tilde{g}\in G$ of the space $(M,\rho_M)$ such that
$\tilde{g}(\tilde{x})=\tilde{y}.$ As far as $G$ preserves the fibers of Riemannian submersion $p$ then
$\tilde{g}$ induces some isometry $g$ of the space  $(N,\rho_N).$ Let $z$ be an arbitrary point in $M$, $\tilde{z}$
is any point in the fiber $p^{-1}(z).$ Then $g(x)=y$ and
$$\rho_N(x,g(x))=\rho_N(x,y)=\rho_M(\tilde{x},\tilde{y})=\rho_M(\tilde{x},\tilde{g}(\tilde{x}))\geq$$
$$\rho_M(\tilde{z},\tilde{g}(\tilde{z}))\geq \rho_N(p(\tilde{z}),p(\tilde{g}(\tilde{z})))=\rho_N(z,g(z)).$$
Therefore $(N,\nu)$ $G$-$\delta$-homogeneous.
\end{proof}

The following proposition is a partial case of Proposition \ref{subm1}.

\begin{prop}
\label{subm}
Let $(M=G/H,\mu)$ and $(M_1=G/H_1,\nu)$ be homogeneous connected compact Riemannian manifolds,
$H\subset H_1,$ and the canonical projection $p: (M,\mu)\rightarrow (M_1,\nu),$
induced by the inclusion $H\subset H_1,$ is a Riemannian submersion. Then the space $(G/H_1,\nu)$
is $G$-generalized normal homogeneous if the space $(G/H,\mu)$ is $G$-generalized normal homogeneous.
\end{prop}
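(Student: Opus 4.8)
The plan is to deduce this proposition directly from Proposition \ref{subm1}, of which it is indeed a particular instance. Recall that, by the definitions in the Introduction, ``$G$-generalized normal homogeneous'' is merely another name for ``$G$-$\delta$-homogeneous''; hence it suffices to check that the hypotheses of Proposition \ref{subm1} hold with $(M,\mu)=(G/H,\mu)$ playing the role of the total space and $(N,\nu)=(G/H_1,\nu)$ the role of the base, and then to invoke that proposition.

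First I would verify that $G$ acts by isometries on both spaces and that the map in question is a fibration of the required type. Since $\mu$ is a $G$-invariant metric on $G/H$ and $\nu$ is a $G$-invariant metric on $G/H_1$, the group $G$ acts isometrically (and transitively) on $(G/H,\mu)$ and on $(G/H_1,\nu)$. Because $H\subset H_1$, the canonical projection $p\colon gH\mapsto gH_1$ is well defined, and it is obviously $G$-equivariant, since $a\cdot(gH)=(ag)H\mapsto (ag)H_1=a\cdot(gH_1)$ for all $a,g\in G$. Consequently $G$ carries fibers of $p$ to fibers of $p$ (indeed $a\bigl(p^{-1}(gH_1)\bigr)=p^{-1}(agH_1)$), so $p$ is a homogeneous fibration with respect to the isometry Lie group $G$ in the sense of Proposition \ref{subm1}; and by hypothesis $p$ is a Riemannian submersion. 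The manifolds $M$ and $M_1$ are connected and compact by assumption, which is more than enough to run the argument of Proposition \ref{subm1} (in particular, to guarantee the finite compactness used there).

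Having checked all this, I would conclude: since $(M,\mu)=(G/H,\mu)$ is $G$-$\delta$-homogeneous --- which is exactly the hypothesis that it is $G$-generalized normal homogeneous --- Proposition \ref{subm1} applies and yields that $(N,\nu)=(G/H_1,\nu)$ is $G$-$\delta$-homogeneous, that is, $G$-generalized normal homogeneous, as claimed. There is essentially no hard step here: the whole content is the already established Proposition \ref{subm1}, and the only point needing a line of verification is that the canonical projection attached to an inclusion $H\subset H_1$ is a $G$-equivariant (hence $G$-homogeneous) fibration, which is immediate.
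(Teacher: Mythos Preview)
Your proof is correct and follows exactly the paper's approach: the paper simply states that this proposition is a partial case of Proposition~\ref{subm1} and gives no separate proof. Your explicit verification that the canonical projection $p:gH\mapsto gH_1$ is $G$-equivariant (hence a $G$-homogeneous fibration) fills in the only detail the paper leaves implicit.
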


\begin{corollary}
\label{cp}
The space $(\mathbb{C}P^{2n+1}=Sp(n+1)/Sp(n)\cdot U(1),\nu_t)$ is $Sp(n+1)$-generalized normal homogeneous for
$t \in [1/2, 1].$
\end{corollary}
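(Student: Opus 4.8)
The plan is to obtain this as an immediate application of Proposition \ref{subm} to the Hopf fibration $pr_1$. First I would recall the setup from Section \ref{invest}: with $G=Sp(n+1)$, the sphere $S^{4n+3}$ is the homogeneous space $G/H$ with $H=Sp(n)$, while $\mathbb{C}P^{2n+1}$ is $G/H_1$ with $H_1=Sp(n)\cdot U(1)\supset H$, and the Hopf fibration $pr_1\colon S^{4n+3}\to\mathbb{C}P^{2n+1}$ is precisely the canonical projection $G/H\to G/H_1$ induced by the inclusion $H\subset H_1$. The key identification to note is that the metric $\mu_t$ on $S^{4n+3}$ coincides with $\mu_{t,s}$ for $s=t$, since both scale the whole module $\mathfrak{p}_2=\mathfrak{sp}(1)$ by the same factor and no further subdivision of $\mathfrak{p}_2$ is involved; and by construction (see (\ref{pr1})) the map $pr_1\colon(S^{4n+3},\mu_{t,t})\to(\mathbb{C}P^{2n+1},\nu_t)$ is a Riemannian submersion. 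Hence $pr_1\colon(S^{4n+3},\mu_t)\to(\mathbb{C}P^{2n+1},\nu_t)$ is a Riemannian submersion between connected compact homogeneous Riemannian manifolds satisfying the hypotheses of Proposition \ref{subm}.

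Next I would invoke Theorem \ref{new_sp_m}, which states that $(S^{4n+3}=Sp(n+1)/Sp(n),\mu_t)$ is $Sp(n+1)$-generalized normal homogeneous for every $t\in[1/2,1]$. Combining this with the previous paragraph and applying Proposition \ref{subm} with $G=Sp(n+1)$, $\mu=\mu_t$, $\nu=\nu_t$, I conclude that $(\mathbb{C}P^{2n+1}=Sp(n+1)/(Sp(n)\cdot U(1)),\nu_t)$ is $Sp(n+1)$-generalized normal homogeneous for all $t\in[1/2,1]$, which is the desired statement.

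The only genuine verification is the one carried out in the first paragraph — that $pr_1$, equipped with the metrics $\mu_t$ and $\nu_t$, really is the canonical projection of $G/H$ onto $G/H_1$ and a Riemannian submersion — and this is immediate from the description of the Hopf fibrations and the diagonal metrics in Section \ref{invest}. So I do not expect any serious obstacle: once the hypotheses of Proposition \ref{subm} are in place, the conclusion follows automatically.
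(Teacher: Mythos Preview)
Your proposal is correct and follows essentially the same route as the paper: the paper's proof likewise notes that the Hopf fibration (\ref{pr1}), with $\mu_t=\mu_{t,s=t}$, is a Riemannian submersion and then applies Theorem~\ref{new_sp_m} together with Proposition~\ref{subm}.
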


\begin{proof}
As it was said in section \ref{invest}, the Hopf fibration (\ref{pr1}) (also for $\mu_t=\mu_{t,s=t}$)
is a Riemannian submersion. Now the statement follows from Theorem \ref{new_sp_m} and Proposition~\ref{subm}.
\end{proof}

We will need the following
\begin{prop}[\cite{Al68}]\label{NWAL}
Let $(M=G/H, \mu)$ be any homogeneous Riemannian manifold and $T$
be any torus in $H$, $C(T)$ is its centralizer in $G$. Then
the orbit $M_T=C(T)(eH)$ is a totally geodesic submanifold of $(M, \mu)$.
\end{prop}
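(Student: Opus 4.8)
The plan is to realize the orbit $M_T=C(T)(eH)$ as a connected component of the fixed point set in $M$ of the torus $T$ acting by isometries, and then to invoke the classical fact that such fixed point sets are totally geodesic. Put $o=eH$. Since $T\subseteq H$, every $t\in T$ acts on $M=G/H$ as an isometry $x\mapsto t\cdot x$ of $(M,\mu)$ fixing $o$; let $F$ be the connected component through $o$ of the closed set $\operatorname{Fix}(T)=\{x\in M:\ t\cdot x=x\ \text{for all}\ t\in T\}$.

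The first step is the standard lemma that $\operatorname{Fix}(T)$ is a totally geodesic submanifold whose tangent space at a point $p$ is the subspace $V_p\subseteq T_pM$ fixed by the linear isotropy action of $T$. Indeed, any isometry $\gamma$ with $\gamma(p)=p$ satisfies $\gamma\circ\exp_p=\exp_p\circ\, d\gamma_p$ on the domain of $\exp_p$, so $\exp_p$ sends $V_p$ (intersected with a small ball) into $\operatorname{Fix}(T)$; conversely a fixed point near $p$ is of the form $\exp_p(v)$ with $d\gamma_p v=v$, hence $v\in V_p$. Thus $\operatorname{Fix}(T)$ coincides near $p$ with $\exp_p(V_p)$, and every geodesic of $M$ tangent to $\operatorname{Fix}(T)$ stays in it, which gives total geodesy. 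Under $T_oM\cong\mathfrak p$, coming from the reductive decomposition $\mathfrak g=\mathfrak h\oplus\mathfrak p$, the linear isotropy action of $T$ is $\Ad(T)|_{\mathfrak p}$ (legitimate since $\mathfrak p$ is $\Ad(H)$- hence $\Ad(T)$-invariant), so $T_oF=\mathfrak p^{T}$, the $\Ad(T)$-fixed subspace of $\mathfrak p$.

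The second step compares $M_T$ with $F$. The inclusion $M_T\subseteq\operatorname{Fix}(T)$ is immediate: for $c\in C(T)$ and $t\in T$ one has $t\cdot(c\cdot o)=(ct)\cdot o=c\cdot(t\cdot o)=c\cdot o$ because $t\in H$; and since $C(T)$ is connected (the centralizer of a torus in a connected compact group is connected), the orbit $M_T$ is connected and lies in $F$. For the tangent spaces, $\mathfrak g=\mathfrak h\oplus\mathfrak p$ being $\Ad(T)$-invariant gives $\mathfrak g^{T}=\mathfrak h^{T}\oplus\mathfrak p^{T}$, and connectedness of $T$ gives $\mathfrak g^{T}=\{X\in\mathfrak g:[X,\mathfrak t]=0\}=\operatorname{Lie}(C(T))$; hence the tangent space at $o$ to $M_T=C(T)\cdot o$, which is the image of $\operatorname{Lie}(C(T))$ under the projection $\mathfrak g\to\mathfrak p$ along $\mathfrak h$, equals $\mathfrak p^{T}=T_oF$. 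Translation by any $c\in C(T)$ commutes with the $T$-action, so it carries $T_oF$ to $T_{c\cdot o}F$ and $T_oM_T$ to $T_{c\cdot o}M_T$, whence $T_xM_T=T_xF$ at every $x\in M_T$. Therefore the orbit $M_T$ is open in $F$; being an orbit of the compact group $C(T)$ it is also closed and embedded, so $M_T=F$ is a totally geodesic submanifold of $(M,\mu)$.

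The only nonformal ingredient is the lemma on fixed point sets of isometries, which is classical; everything else is bookkeeping with the reductive decomposition and the identity $\operatorname{Lie}(C(T))=\mathfrak z_{\mathfrak g}(\mathfrak t)$ for connected $T$. If one wishes to avoid compactness assumptions, one replaces $C(T)$ by its identity component $C(T)^{0}$ and notes that, since $T$ is compact, at every $x\in\operatorname{Fix}(T)$ one may split $\mathfrak g$ $\Ad(T)$-equivariantly along the isotropy subalgebra, so every $C(T)^{0}$-orbit meeting $\operatorname{Fix}(T)$ is full-dimensional, hence open in $F$; a connected manifold partitioned into open orbits is a single orbit, giving $F=C(T)^{0}\cdot o$ again.
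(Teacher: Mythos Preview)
The paper does not give its own proof of this proposition; it is simply quoted from Alekseevski\u\i\ \cite{Al68} and used as a black box. Your argument is correct and is in fact the standard proof: identify $M_T$ with the connected component through $o$ of $\operatorname{Fix}(T)$, use the classical lemma that fixed-point sets of isometries are totally geodesic with tangent space the invariant subspace of the linear isotropy, and match tangent spaces via $\operatorname{Lie}(C(T))=\mathfrak{g}^{T}=\mathfrak{h}^{T}\oplus\mathfrak{p}^{T}$. The only point where you implicitly use the ambient hypotheses of the paper (compact connected $G$) is the connectedness of $C(T)$; you correctly flag this and sketch the workaround via $C(T)^{0}$ for the general case. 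One cosmetic remark: when you compute $T_oM_T$ as ``the image of $\operatorname{Lie}(C(T))$ under the projection $\mathfrak{g}\to\mathfrak{p}$'', it would be cleaner to say it is $\operatorname{Lie}(C(T))/\bigl(\operatorname{Lie}(C(T))\cap\mathfrak{h}\bigr)$ and then observe that this equals $\mathfrak{p}^{T}$ because $\operatorname{Lie}(C(T))=\mathfrak{h}^{T}\oplus\mathfrak{p}^{T}$; the two formulations agree here, but the quotient description is what literally gives the tangent space to an orbit.
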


\begin{theorem}
\label{sp_u_main}
The Riemannian manifold $(S^{4n+3},\mu_{t,s})$ {\rm(}in particular, $\mu_{t,t}=\mu_t${\rm)} is
$Sp(n+1)\times U(1)$-generalized normal homogeneous if and only if $t \in[1/2,1]$ and $s\in (0,t]$.
\end{theorem}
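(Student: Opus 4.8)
The plan is to split the statement into a "sufficiency" part and a "necessity" part. For sufficiency, I would first collect the cases already known. By Theorem \ref{new_sp_ms} the metric $\mu_{t,s}$ is $Sp(n+1)\times U(1)$-generalized normal homogeneous for $t\in[1/2,1]$ and $s\in(0,t)$, and by Theorem \ref{new_sp_m} (together with the identification $\mu_{t,t}=\mu_t$ and the inclusions (\ref{incl})) the diagonal case $s=t$ is covered for $t\in[1/2,1]$ as well. Combining these gives the full closed region $t\in[1/2,1]$, $s\in(0,t]$, which is exactly the "if" direction. I would state this as a short paragraph rather than repeating the computations.

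The substantive work is the necessity direction: if $(S^{4n+3},\mu_{t,s})$ is $Sp(n+1)\times U(1)$-generalized normal homogeneous, then $t\in[1/2,1]$ and $s\in(0,t]$. The idea is to exploit totally geodesic subspheres via Proposition \ref{NWAL}. Take the torus $T$ inside $H=Sp(n)\times U(1)$ to be a circle lying in $\mathfrak{sp}(n)$ (acting on the last $n$ quaternionic coordinates); its centralizer $C(T)$ in $Sp(n+1)\times U(1)$ contains a factor acting on the first quaternionic coordinate together with the $U(1)$, so the orbit $M_T$ is a totally geodesic $S^3$ (or a three-dimensional homogeneous sphere) carrying the induced metric, which in the parameters is exactly a metric of the type $\mu_{t,s}$ on $S^3=SU(2)=Sp(1)$ — more precisely, a left-invariant metric of Berger type with the fiber parameters $t$ (on the two-dimensional $\mathfrak{p}_{2,1}$-direction) and $s$ (on the circle $\mathfrak{p}_{2,2}$). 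A totally geodesic submanifold of a $\delta$-homogeneous space is $\delta$-homogeneous with respect to the induced group action; hence this $S^3$ must be $Sp(1)\times U(1)$-generalized normal homogeneous. Then I would invoke Proposition \ref{su(2)n}, or rather its refinement via \cite{BerNik2009ch} (cited in the remark after Theorem \ref{u_main}): a generalized normal homogeneous left-invariant metric on $S^3$ must be either $U(2)$-normal or $SU(2)$-normal homogeneous; unwinding which of these is compatible with the group $Sp(1)\times U(1)=U(2)$ acting here forces $s\le t$ and $s>0$ is automatic (it's a metric), while an independent constraint is still needed for $t\le 1$ and $t\ge 1/2$.

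For the bounds on $t$ alone I would use the Hopf fibration $pr_1:(S^{4n+3},\mu_{t,s})\to(\mathbb{C}P^{2n+1},\nu_t)$, which by construction in Section \ref{invest} is a Riemannian submersion compatible with $Sp(n+1)\times U(1)$, so by Proposition \ref{subm1} the base $(\mathbb{C}P^{2n+1},\nu_t)$ is $Sp(n+1)$-generalized normal homogeneous; then the already-established classification of such metrics on $\mathbb{C}P^{2n+1}$ from \cite{BNN} (quoted in the introduction: exactly $\tfrac12<t<1$, plus the normal endpoint $t=1/2$ and $t=1$) yields $t\in[1/2,1]$. Alternatively, one can run the Proposition 22 in \cite{BerNik} inequality directly as in the proofs of Theorems \ref{u_main} and \ref{sp_sp_main}, choosing $X\in\mathfrak p_1$ and $Y\in\mathfrak p_{2,1}$ to isolate the coefficient $2t$ and get $t\le 1$, and $Y$ the diagonal-type vector to get $t\ge 1/2$. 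The main obstacle I anticipate is the bookkeeping in the totally geodesic $S^3$ argument: one must identify precisely which $Sp(1)\times U(1)$-invariant metric appears on the orbit $M_T$ and check that the parameters $(t,s)$ transfer without an extra constant, so that the $S^3$ classification really delivers $s\le t$; the inequality $s>0$ and $t\in[1/2,1]$ are comparatively routine given the submersion and curvature-type arguments above.
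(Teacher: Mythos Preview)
Your outline matches the paper's proof: sufficiency from Theorems \ref{new_sp_ms} and \ref{new_sp_m}, the bound $t\in[1/2,1]$ via the Riemannian submersion $pr_1$ onto $(\mathbb{C}P^{2n+1},\nu_t)$ and Proposition \ref{subm1}, and the bound $s\le t$ via Proposition \ref{NWAL} producing a totally geodesic $S^3\cong U(2)/U(1)$ together with the classification of generalized normal homogeneous metrics on that sphere. The paper carries out the last step by identifying the induced inner product on the orbit explicitly as $t\langle\cdot,\cdot\rangle|_{\mathfrak{p}_1}+2s\langle\cdot,\cdot\rangle|_{\mathfrak{p}_2}$ and invoking Theorem \ref{u_main} directly (your first suggestion, Proposition \ref{su(2)n}, would overshoot since the acting group on the orbit is $U(2)$ rather than $SU(2)$; your fallback to the $U(2)$ result quoted in the remark after Theorem \ref{u_main} is exactly what is used).
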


\begin{proof}
It follows from Theorem \ref{new_sp_ms} that
the homogeneous Riemannian space $(S^{4n+3}=Sp(n+1)/Sp(n),\mu_{t,s})$ is
$Sp(n+1)\times U(1)$-generalized normal homogeneous for all $t \in [1/2,1]$ and $s\in (0,t)$.
The space $(S^{4n+3}=Sp(n+1)/Sp(n),\mu_t=\mu_{t,s})$ for $s=t$ is even
$Sp(n+1)$-generalized normal homogeneous for all $t \in [1/2,1]$ by Theorem \ref{new_sp_m}
(the assertion for the case $s=t$ could be also easily obtained by passing to the limit).

Now, let the Riemannian space $(S^{4n+3}=Sp(n+1)/Sp(n),\mu_{t,s})$ be
$Sp(n+1)\times U(1)$-generalized normal homogeneous for some $t>0$ and $s>0$.

Suppose that $t \notin [1/2,1]$.
Clearly, (\ref{pr1}) is a homogeneous fibration with respect to $Sp(n+1)\times U(1),$ and also a Riemannian
submersion, as it was said in Section \ref{invest}. Moreover, the subgroup
$\Id\times U(1)\subset Sp(n+1)\times U(1)$ induces trivial action on the base
$\mathbb{C}P^{2n+1}$ of the fibration (\ref{pr1}). Then $(\mathbb{C}P^{2n+1}, \nu_t)$ is
$Sp(n+1)$-generalized normal homogeneous by Proposition \ref{subm1}. But this, together with (\ref{muts}),
contradicts Proposition 28 in \cite{BerNik}. Therefore, $t \in[1/2,1]$.

Now, let us prove that $s\leq t$. For this we consider $S^{4n+3}=G/H$, where $G=Sp(n+1)\times U(1),$
$H=Sp(n)\times U(1)$, and the embeddings of $Sp(1)$ and $Sp(n)$ in $G$
are defined by the symmetric pair $(Sp(n+1), Sp(1)\times Sp(n))$, the embedding of $U(1)\subset H$ in $G$ is diagonal:
$a\mapsto (a,a)\subset Sp(1)\times U(1) \subset G$. Let $T$ be a maximal torus in $H$, consider its centralizer
$C(T)$ in $G$. It is easy to see that $U(1)\times U(1)\in C(T)$ and $Sp(1)\in C(T)$. Moreover,
the orbit $M_T=C(T)(eH)$ with induced Riemannian metrics is totally geodesic in $(S^{4n+3},\mu_{t,s})$ by Proposition
\ref{NWAL} and, therefore, is generalized normal homogeneous itself by Theorem~11 in \cite{BerNik}.
But it is easy to see, that this orbit is isometric to the Riemannian space $(U(2)/U(1), \mu)$,
where $\mu$ is generated by the inner product
$t\langle \cdot, \cdot \rangle |_{\mathfrak{p}_1}+
2s\langle \cdot, \cdot \rangle |_{\mathfrak{p}_2}$
(we have used decomposition (\ref{sum}) and $\Ad(U(2))$-invariant inner product (\ref{innerprod})
$\langle \cdot, \cdot \rangle$
on the Lie algebra $u(2)$).
Using a homothety and Theorem \ref{u_main}, we get $s\leq t$.
\end{proof}

\begin{theorem}
\label{sp_main}
The Riemannian manifold $(S^{4n+3},\mu_{t})$ is
$Sp(n+1)$-generalized normal homogeneous if and only if $t \in[1/2,1]$.
\end{theorem}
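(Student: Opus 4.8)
The plan is to obtain both directions from results already established, essentially without new computation. The ``if'' part is immediate: Theorem~\ref{new_sp_m} asserts precisely that $(S^{4n+3}=Sp(n+1)/Sp(n),\mu_t)$ is $Sp(n+1)$-generalized normal homogeneous for every $t\in[1/2,1]$, so nothing remains there.

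For the ``only if'' part I would invoke the elementary monotonicity of generalized normal homogeneity under enlargement of the group: if $(M,\mu)$ is $G$-generalized normal homogeneous and $G$ is a subgroup of a Lie group $G'$ acting on $M$ by isometries, then $(M,\mu)$ is $G'$-generalized normal homogeneous, since the $\delta$-$x$-translations realizing the definition for $G$ already lie in $G'$. Here $Sp(n+1)$ sits inside $G'=Sp(n+1)\times U(1)$ (the extra $U(1)$ acting along the fibres of the Hopf fibration $pr_1$ and commuting with $Sp(n+1)$, as in Section~\ref{invest}), and under the identification $S^{4n+3}=(Sp(n+1)\times U(1))/(Sp(n)\times U(1))$ the metric $\mu_t$ coincides with $\mu_{t,t}$. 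Hence $Sp(n+1)$-generalized normal homogeneity of $(S^{4n+3},\mu_t)$ implies $Sp(n+1)\times U(1)$-generalized normal homogeneity of $(S^{4n+3},\mu_{t,t})$, and Theorem~\ref{sp_u_main} then forces $t\in[1/2,1]$ (its side condition $s\in(0,t]$ being automatic when $s=t$).

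I do not expect any genuine obstacle. The only points needing a sentence of care are that $Sp(n+1)\subset Sp(n+1)\times U(1)$ acts on $S^{4n+3}$ in the standard way and that $\mu_t=\mu_{t,t}$; both are recorded in Section~\ref{invest} and in the statement of Theorem~\ref{sp_u_main}. If one preferred an argument not routed through Theorem~\ref{sp_u_main}, one could argue directly instead: $\mathfrak{p}_2=\mathfrak{sp}(1)$ is centralized in $\mathfrak{h}=\mathfrak{sp}(n)$ by all of $\mathfrak{h}$, so Corollary~\ref{deltal2} shows every $X\in\mathfrak{p}_2$ is a $\delta$-vector with $(X,[U,[U,X]]_{\mathfrak{p}})+([U,X]_{\mathfrak{p}},[U,X]_{\mathfrak{p}})\le 0$ for all $U\in\mathfrak{sp}(n+1)$; testing this on a suitable pair $X\in\mathfrak{p}_2$, $U\in\mathfrak{p}_1$ (exactly as in the proof of Theorem~\ref{su_main_del}) yields $t\ge 1/2$, while $t\le 1$ follows because $Sp(n+1)$-generalized normal homogeneity entails $Sp(n+1)\times Sp(1)$-generalized normal homogeneity, hence $t\in(0,1]$ by Theorem~\ref{sp_sp_main}. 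The route through Theorem~\ref{sp_u_main} is shorter, so I would present that one.
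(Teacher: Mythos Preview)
Your proposal is correct. The ``if'' direction matches the paper exactly (both just cite Theorem~\ref{new_sp_m}). For the ``only if'' direction the approaches differ slightly: the paper passes to the base of the Riemannian submersion $pr_1\colon (S^{4n+3},\mu_t)\to(\mathbb{C}P^{2n+1},\nu_t)$ via Proposition~\ref{subm}, obtaining that $(\mathbb{C}P^{2n+1},\nu_t)$ is $Sp(n+1)$-generalized normal homogeneous, and then invokes the argument from the proof of Theorem~\ref{sp_u_main} (ultimately Proposition~28 of \cite{BerNik}) to force $t\in[1/2,1]$. You instead enlarge the group from $Sp(n+1)$ to $Sp(n+1)\times U(1)$ and quote the statement of Theorem~\ref{sp_u_main} directly. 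Since that theorem's proof itself goes through the same submersion to $\mathbb{C}P^{2n+1}$, your route is essentially a clean repackaging of the paper's argument: rather than re-running the submersion step, you cite the result that already encapsulates it. Your alternative route via Corollary~\ref{deltal2} is also correct (indeed $Z_{\mathfrak{h}}(X)=\mathfrak{sp}(n)$ is simple for $n\ge 1$, so the hypotheses are met) and gives $t\ge 1/2$ by the same computation pattern as in Theorem~\ref{su_main_del}; combined with Theorem~\ref{sp_sp_main} for $t\le 1$ this yields an independent self-contained proof.
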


\begin{proof}
It follows from Theorem \ref{new_sp_m} that
the homogeneous Riemannian space $(S^{4n+3}=Sp(n+1)/Sp(n),\mu_t)$ is
$Sp(n+1)$-generalized normal homogeneous for all $t \in [1/2,1]$.
Let us suppose that $(S^{4n+3},\mu_{t})$ is $Sp(n+1)$-generalized normal homogeneous.
Then we have the canonical Riemannian submersion
$$pr_1: (S^{4n+3}=Sp(n+1)/Sp(n),\mu_{t})\rightarrow (\mathbb{C}p^{2n+1}=Sp(n+1)/U(1)\cdot Sp(n),\nu_t),$$
and by Proposition \ref{subm}, $(\mathbb{C}P^{2n+1},\nu_t)$ is also $Sp(n+1)$-generalized normal homogeneous.
Hence, as in the proof of Theorem \ref{sp_u_main}, we must have $t \in[1/2,1]$.
\end{proof}

\section{The spaces of unit Killing vector fields on spheres}

Let $\mathfrak{g}$ be the Lie algebra of a Lie group $G$ acting transitively on some sphere $S^n$ and
let $UKVF(\mathfrak{g},n)$ be the set of all unit Killing vector fields, lying in $\mathfrak{g}$.
The set $UKVF(\mathfrak{g},n)$, supplied
with the induced topology from $\mathfrak{g}$, becomes a topological space.
It is interesting to find topological structure of these spaces. Obviously,
$UKVF(\mathfrak{g},n)=\emptyset$ for any $\mathfrak{g}$ if $n$ is even, and $UKVF(\mathfrak{\mathfrak{so}}(2),1)$
has exactly two points. In this and in the last sections, we shall study  the spaces $UKVF(\mathfrak{g},n)$
for spheres $S^n$ with odd $n\geq 3$ and some connected transitive Lie groups $G$ on $S^n$.

\begin{prop}
\label{int1}
The space $O(2n)/U(n)$ (with symmetric space $SO(2n)/U(n)$ as a connected component)
can be considered as the space $UKVF(\mathfrak{so}(2n),2n-1)$ for $n\geq 1$.
\end{prop}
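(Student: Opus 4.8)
The plan is to identify $UKVF(\mathfrak{so}(2n),2n-1)$ explicitly as a homogeneous space of $O(2n)$ and then recognize it as $O(2n)/U(n)$. By Proposition \ref{so} (statements 4 and 5 with $\mathbb{F}=\mathbb{R}$), a matrix $U\in\mathfrak{so}(2n)$ is a unit Killing vector field on $S^{2n-1}$ if and only if $U\in\mathfrak{so}(2n)\cap SO(2n)$, equivalently $U^2=-\Id_{2n}$, equivalently $U$ is skew-symmetric and orthogonal. The first step is therefore to observe that such $U$ is precisely an orthogonal complex structure on $\mathbb{R}^{2n}$: the condition $U^2=-\Id$ makes $\mathbb{R}^{2n}$ into a complex vector space with $\mathbf{i}$ acting as $U$, and skew-symmetry together with $U^{-1}=-U=U^T$ makes the standard inner product Hermitian for this complex structure. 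Conversely every orthogonal complex structure on $\mathbb{R}^{2n}$ arises this way. So as a set, $UKVF(\mathfrak{so}(2n),2n-1)$ is exactly the set $\mathcal{J}$ of orthogonal complex structures on $\mathbb{R}^{2n}$.

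Next I would let $O(2n)$ act on $\mathcal{J}$ by conjugation, $g\cdot U=gUg^{-1}=gUg^T$; this clearly preserves the defining conditions $U^T=-U$, $U^2=-\Id$, and it is continuous, so it restricts to an action on $UKVF(\mathfrak{so}(2n),2n-1)$ with the subspace topology from $\mathfrak{so}(2n)$. The key step is transitivity of this action: any $U\in\mathcal{J}$, being a normal matrix with eigenvalues $\pm\mathbf{i}$ each of multiplicity $n$, can be brought by a real orthogonal change of basis to a fixed standard form $J_0$ (the block-diagonal matrix with $n$ blocks $\left(\begin{smallmatrix}0&-1\\1&0\end{smallmatrix}\right)$) — this is the real normal form for skew-symmetric orthogonal matrices. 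Thus $O(2n)$ acts transitively, and the stabilizer of $J_0$ is the subgroup of orthogonal matrices commuting with $J_0$, which is exactly $U(n)$ embedded in $O(2n)$ in the standard way. Since $O(2n)$ is compact and the action is continuous and transitive on the closed subset $\mathcal{J}\subset\mathfrak{so}(2n)$, the orbit map induces a homeomorphism $O(2n)/U(n)\to UKVF(\mathfrak{so}(2n),2n-1)$. Finally, $SO(2n)$ acts on $\mathcal{J}$ with two orbits (distinguished, e.g., by the orientation $J$ induces on $\mathbb{R}^{2n}$, or equivalently by $\det$ of the intertwiner to $J_0$), each orbit being a copy of the Hermitian symmetric space $SO(2n)/U(n)$; these are the two connected components, and this yields the parenthetical claim.

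The main obstacle is the transitivity/normal-form step: one must justify carefully that a skew-symmetric orthogonal real matrix is conjugate by an element of $O(2n)$ (not merely of $GL(2n,\mathbb{R})$ or $U(2n)$) to the standard $J_0$, and that the stabilizer is exactly $U(n)$ rather than something larger. This is standard linear algebra (simultaneous block-diagonalization via an orthonormal basis adapted to the $U$-invariant planes), but it is where the real content lies; everything else — continuity of the action, compactness giving a homeomorphism onto the orbit, and the count of $SO(2n)$-orbits — is routine. I would also remark that the identification is compatible with the topology inherited from $\mathfrak{so}(2n)$ because the orbit, being the image of a compact group under a continuous map, is automatically closed and the bijection from the compact quotient is automatically a homeomorphism.
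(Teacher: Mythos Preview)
Your proof is correct and follows essentially the same strategy as the paper: identify $UKVF(\mathfrak{so}(2n),2n-1)$ as a single $\Ad(O(2n))$-orbit (splitting into two $\Ad(SO(2n))$-orbits) and compute the stabilizer of a standard representative as $U(n)$. The only difference is that where the paper cites external references (Proposition~10 of \cite{BerNik5} for the orbit count and \cite{On} for the centralizer), you supply the arguments directly via the real normal form for skew-symmetric orthogonal matrices and the characterization of $U(n)\subset O(2n)$ as the commutant of $J_0$; your framing in terms of orthogonal complex structures is exactly the interpretation the paper records in the remark immediately following the proposition.
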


\begin{proof}
By Proposition 10 in \cite{BerNik5}, the space $UKVF(\mathfrak{so}(2n),2n-1)$ is a union of two
disjoint orbits with respect to the adjoint action of $SO(2n)$ and one orbit with respect to the adjoint action of $O(2n)$. As a unit Killing vector field on $S^{2n-1},$ one can take the matrix $U=\diag(C,...,C)\in SO(2n)\cap \mathfrak{so}(2n)$, where $C$ is one of two elements
in $SO(2)\cap so(2)$ (compare with Proposition \ref{so}). It is clear that $U\in U(n)\cap \mathfrak{u}(n)$ and $U$ lies in the center of $U(n)$ as well as in the center
of $\mathfrak{u}(n)$. Then the centralizer of $U$ in $O(2n)$, and therefore,
the stabilizer of $U\in \mathfrak{so}(2n)$ relative to adjoint action of $O(2n)$ is exactly $U(n)$
(see e.~g. Examples 22 and 23 in Section 1.2 of book \cite{On} by A.L.~Onishchik).
This implies the statement of proposition.
\end{proof}

\begin{remark}
As it was stated in \cite{Bes}, the symmetric space $SO(2n)/U(n)$ is the space of all
complex structures on $\mathbb{R}^{2n}$, compatible with standard Euclidean structure or is the space of metric-compatible fibrations $S^1\rightarrow \mathbb{R}P^{2n-1}\rightarrow \mathbb{C}P^{n-1}$.
\end{remark}

The spheres $S^{2n+1}$ are Clifford-Wolf homogeneous even with respect to $U(n+1)$ and
spheres $S^{4n+3}$ are also Clifford-Wolf homogeneous with respect to $SU(2(n+1))$ and $Sp(n+1)$
(see Proposition \ref{osn_u} and Theorem \ref{osn_sp}). We shall describe in this section the spaces
$UKVF(\mathfrak{u}(n+1),2n+1)$, $UKVF(\mathfrak{su}(n+1),2n+1)$, $UKVF(\mathfrak{sp}(n+1), 4n+3).$
Let $G$ be one of the groups $U(n+1)$, $SU(2(k+1))$ and $Sp(n+1)$. Then the Lie group $G$ acts by conjugation
on the space $L$ of all unit Killing vector fields from $\mathfrak{g}$ on corresponding sphere.
Therefore, $L$ is a disjoint union of some orbits of $\Ad(G)$.
In what follows, we will use the results of Section H of Chapter 8 in \cite{Bes}, where
one can find a detailed description of the structure of adjoint orbits that we need.

\begin{prop}
\label{uhlensp}
The K\"ahler symmetric space $Sp(n+1)/U(n+1)$ could be interpreted as the space $UKVF(\mathfrak{sp}(n+1),4n+3),$
where $\mathfrak{sp}(n+1)\subset \mathfrak{so}(4(n+1))$.
\end{prop}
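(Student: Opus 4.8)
The plan is to realize $UKVF(\mathfrak{sp}(n+1),4n+3)$ as a single adjoint orbit of $Sp(n+1)$ and then to identify that orbit, with its induced geometry, with $Sp(n+1)/U(n+1)$. First I would apply Proposition \ref{so} with $\mathbb{F}=\mathbb{H}$: a Killing vector field $U\in\mathfrak{sp}(n+1)$ on $S^{4n+3}$ has unit length if and only if $U^2=-\Id$, equivalently $U\in\mathfrak{sp}(n+1)\cap Sp(n+1)$. Hence
$$
UKVF(\mathfrak{sp}(n+1),4n+3)=\{\,U\in\mathfrak{sp}(n+1)\;:\;U^2=-\Id\,\},
$$
and this set is invariant under the adjoint action of $Sp(n+1)$, since conjugation preserves both $\mathfrak{sp}(n+1)$ and the relation $U^2=-\Id$.

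Next I would show that $Sp(n+1)$ acts transitively on this set. Any such $U$ is a semisimple element of the compact Lie algebra $\mathfrak{sp}(n+1)$, so its orbit meets a fixed maximal torus, which we may take to be the quaternionic diagonal matrices $\diag(\mathbf{i}\theta_1,\dots,\mathbf{i}\theta_{n+1})$, $\theta_k\in\mathbb{R}$; the condition $U^2=-\Id$ forces $\theta_k=\pm1$. Since $\mathbf{i}$ and $-\mathbf{i}$ are conjugate in the group $Sp(1)$ of unit quaternions (for instance $\mathbf{j}\,\mathbf{i}\,\mathbf{j}^{-1}=-\mathbf{i}$), and $Sp(1)^{\,n+1}$ embeds block-diagonally in $Sp(n+1)$, we may further conjugate all the signs to $+1$; thus every element of the set is $\Ad(Sp(n+1))$-conjugate to the fixed one $U_0:=\mathbf{i}\cdot\Id_{n+1}$. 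Consequently $UKVF(\mathfrak{sp}(n+1),4n+3)=\Ad(Sp(n+1))\cdot U_0$ is connected (in contrast with the $\mathfrak{so}(2n)$ case of Proposition \ref{int1}) and $Sp(n+1)$-equivariantly diffeomorphic to $Sp(n+1)/Z_{Sp(n+1)}(U_0)$.

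Then I would compute the isotropy group: an element $A\in Sp(n+1)$ centralizes $U_0$ precisely when $A$ commutes with multiplication by $\mathbf{i}$, i.e.\ when $A$ is $\mathbb{C}$-linear for the complex structure $U_0$ on $\mathbb{H}^{n+1}$; a quaternionic-unitary matrix with this property is exactly an element of $U(n+1)$ under the usual inclusion $U(n+1)\subset Sp(n+1)$. Hence $Z_{Sp(n+1)}(U_0)=U(n+1)$ and the set of unit Killing fields is $Sp(n+1)/U(n+1)$. Finally, $U_0$ spans the centre of $\mathfrak{u}(n+1)$, so $\ad(U_0)$ vanishes on $\mathfrak{u}(n+1)$ and, on its orthogonal complement $\mathfrak{m}$, satisfies $\ad(U_0)|_{\mathfrak{m}}^2=-\alpha^2\Id$ for some $\alpha>0$; by the description of such adjoint orbits in Section H of Chapter 8 of \cite{Bes}, the orbit, with the metric induced from $\langle\cdot,\cdot\rangle$, is precisely the irreducible K\"ahler (Hermitian) symmetric space $Sp(n+1)/U(n+1)$ (of type CI).

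The step I expect to be most delicate is the centralizer computation $Z_{Sp(n+1)}(U_0)=U(n+1)$: it must be carried out with care about the convention for left versus right quaternionic scalar multiplication, so that $U_0=\mathbf{i}\cdot\Id_{n+1}$ is indeed an $\mathbb{H}$-linear complex structure and the conjugation argument of the previous paragraph is legitimate. Once the conventions are fixed this is the classical fact underlying the embedding $U(n+1)\hookrightarrow Sp(n+1)$, and the remaining symmetric-space bookkeeping is standard and may be quoted from \cite{Bes}.
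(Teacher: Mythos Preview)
Your proposal is correct and follows essentially the same approach as the paper: use Proposition~\ref{so} to characterize $UKVF(\mathfrak{sp}(n+1),4n+3)$ by $U^2=-\Id$, show the set is the single $\Ad(Sp(n+1))$-orbit of $\diag(\mathbf{i},\dots,\mathbf{i})$, and identify this orbit as $Sp(n+1)/U(n+1)$. The only difference is that the paper delegates the orbit and centralizer computations entirely to Example~8.116 in \cite{Bes}, whereas you carry them out explicitly (diagonalization in the maximal torus, sign-flip via $\mathbf{j}\,\mathbf{i}\,\mathbf{j}^{-1}=-\mathbf{i}$, and the centralizer calculation); your concern about the left/right quaternionic convention is well placed but ultimately harmless, since the centralizer condition $A\mathbf{i}=\mathbf{i}A$ is a matrix identity independent of how scalars act on vectors.
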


\begin{proof}
It follows from Proposition \ref{so} and Example 8.116 in \cite{Bes} that any matrix
$U\in UKVF(\mathfrak{sp}(n+1),4n+3)$ is in $\Ad(Sp(n+1))$-orbit of a matrix of the type
$\widetilde{U}=\diag({\bf i}, {\bf i}, \dots, {\bf i})$. Hence, by the same Example 8.116 in \cite{Bes},
$$
UKVF(\mathfrak{sp}(n+1),4n+3)=\Ad(Sp(n+1))\bigl(\diag({\bf i}, {\bf i} , \dots, {\bf i})\bigr)=Sp(n+1)/U(n+1).
$$
\end{proof}

\begin{remark}
Note that $Sp(n+1)/U(n+1)$ could be identified with a manifold of totally isotropic complex
$(n+1)$-dimensional subspaces in $\mathbb{C}^{2(n+1)},$ (see e.g. 8.116 and 8.86 in \cite{Bes}) or
the set of all planes $\mathbb{C}P^{n+1}$ in $\mathbb{H}P^{n+1}$ or the set of all complex
structures in $\mathbb{H}^{n+1}$ (see e.g. Table 10.125 in \cite{Bes}).
\end{remark}

\begin{prop}
\label{uhlensu}
The complex Grassmannian $SU(2(k+1))/S(U(k+1)\times U(k+1))$ could be interpreted as the space
$UKVF(\mathfrak{su}(2(k+1)),4k+3),$
where $\mathfrak{su}(2(k+1))\subset \mathfrak{so}(4(k+1)$.
\end{prop}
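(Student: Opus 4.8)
The plan is to follow the same route as in the proof of Proposition~\ref{uhlensp}. First I would use Proposition~\ref{so} (with $\mathbb{F}=\mathbb{C}$ and $n+1=2(k+1)$, so that the relevant sphere is $S^{4k+3}$) to describe $UKVF(\mathfrak{su}(2(k+1)),4k+3)$ explicitly: a matrix $U\in\mathfrak{su}(2(k+1))$ is a unit Killing field on $S^{4k+3}$ if and only if $U^{2}=-\Id$, equivalently all eigenvalues of $U$ equal $\pm\mathbf{i}$; and since $\trace U=0$ (because $U\in\mathfrak{su}$), exactly $k+1$ of these eigenvalues equal $\mathbf{i}$ and $k+1$ equal $-\mathbf{i}$. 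Thus each such $U$ is a unitary complex structure on $\mathbb{C}^{2(k+1)}$ of signature $(k+1,k+1)$, determined by the $\langle\cdot,\cdot\rangle$-orthogonal splitting $\mathbb{C}^{2(k+1)}=V_{+}(U)\oplus V_{-}(U)$ into its $(\pm\mathbf{i})$-eigenspaces, each of complex dimension $k+1$. As the model point I would take $\widetilde{U}=\diag(\mathbf{i}\Id_{k+1},-\mathbf{i}\Id_{k+1})$.

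Second, I would check that $\Ad(SU(2(k+1)))$ acts transitively on this set. Given $U$ as above, pick $\langle\cdot,\cdot\rangle$-orthonormal complex bases of $V_{+}(U)$ and $V_{-}(U)$; the resulting change of basis is an element $g\in U(2(k+1))$ with $g\widetilde{U}g^{-1}=U$. Multiplying $g$ by a scalar $\lambda\Id$ with $|\lambda|=1$ and $\lambda^{2(k+1)}=(\det g)^{-1}$ produces an element of $SU(2(k+1))$ that still conjugates $\widetilde{U}$ to $U$, since conjugation by a scalar matrix is trivial. Hence
\[
UKVF(\mathfrak{su}(2(k+1)),4k+3)=\Ad(SU(2(k+1)))\bigl(\widetilde{U}\bigr),
\]
a single adjoint orbit, whose structure is described in Section~H of Chapter~8 of \cite{Bes}.

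Third, I would identify the isotropy. The stabilizer of $\widetilde{U}$ under the adjoint action is its centralizer in $SU(2(k+1))$; since $\widetilde{U}$ has the two distinct eigenvalues $\pm\mathbf{i}$, each of multiplicity $k+1$, this centralizer consists of the block-diagonal matrices preserving $V_{+}(\widetilde{U})$ and $V_{-}(\widetilde{U})$, that is, exactly $S(U(k+1)\times U(k+1))$. Therefore $UKVF(\mathfrak{su}(2(k+1)),4k+3)$ is equivariantly diffeomorphic to $SU(2(k+1))/S(U(k+1)\times U(k+1))$, the complex Grassmannian of $(k+1)$-planes in $\mathbb{C}^{2(k+1)}$ (the assignment $U\mapsto V_{+}(U)$ being the explicit identification). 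Because the orbit of a compact group is an embedded submanifold of $\mathfrak{su}(2(k+1))$, this identification also respects the subspace topology, which is what the statement asks for.

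The only genuinely delicate point is the bookkeeping between $U(2(k+1))$ and $SU(2(k+1))$: one must observe that the trace-zero condition forces the eigenvalue multiplicities to be $(k+1,k+1)$, so that $UKVF(\mathfrak{su}(2(k+1)),4k+3)$ really is a single orbit equal to the full Grassmannian rather than a disjoint union of Grassmannians of mixed signatures, and that transitivity is not lost on passing from the full unitary group to $SU$ -- both of which follow from the triviality of conjugation by scalar matrices. Everything else is a direct application of Proposition~\ref{so} together with the description of adjoint orbits in \cite{Bes}, exactly as in the proof of Proposition~\ref{uhlensp}.
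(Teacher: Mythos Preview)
Your proposal is correct and follows essentially the same approach as the paper: use Proposition~\ref{so} to see that the eigenvalues are $\pm\mathbf{i}$ with equal multiplicities, take $\widetilde{U}=\diag(\mathbf{i}\Id_{k+1},-\mathbf{i}\Id_{k+1})$ as the model point, and identify the orbit as $SU(2(k+1))/S(U(k+1)\times U(k+1))$ via the centralizer. The only difference is that the paper outsources the transitivity of the $\Ad(SU(2(k+1)))$-action to Example~8.111 in \cite{Bes}, whereas you supply the elementary direct argument (diagonalize by $U(2(k+1))$ and then correct the determinant by a scalar); your version is slightly more self-contained but otherwise identical in spirit.
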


\begin{proof}
It follows from Proposition \ref{so} and Example 8.111 in \cite{Bes} that any matrix
$U\in UKVF(\mathfrak{su}(2(k+1)),4k+3)$ is in $\Ad(SU(2(k+1)))$-orbit of a matrix of the type
$\widetilde{U}=\diag(\underbrace{{\bf i},  \dots, {\bf i}}_{k+1}\,, \underbrace{{\bf -i},  \dots, {\bf -i}}_{k+1}\,)$. Then
$$
UKVF(\mathfrak{su}(2(k+1)),4k+3)=SU(2(k+1))/S(U(k+1)\times U(k+1)),
$$
since $S(U(k+1)\times U(k+1))$ is the centralizer of $\widetilde{U}$ in $SU(2(k+1)).$
\end{proof}

The description of  the space $UKVF(\mathfrak{u}(n+1),2n+1)$ is a little more complicated.

\begin{prop}
\label{uhlenu} The space $UKVF(\mathfrak{u}(n+1),2n+1),$ $\mathfrak{u}(n+1)\subset \mathfrak{so}(2(n+1))$
could be naturally identified with the following disjoint union of complex Grassmannians
\begin{equation}
\label{orb}
\bigcup_{i=0}^{n+1} SU(n+1)/S(U(i)\times U(n+1-i)).
\end{equation}
\end{prop}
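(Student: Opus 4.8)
The plan is to follow the same scheme as in the proofs of Propositions \ref{uhlensp} and \ref{uhlensu}: first use Proposition \ref{so} to parametrize $UKVF(\mathfrak{u}(n+1),2n+1)$ by conjugacy classes of skew-Hermitian matrices $U$ with $U^2=-\Id$, then invoke the theory of adjoint orbits (Section H of Chapter 8 in \cite{Bes}) to identify each such conjugacy class and its stabilizer. By part 3) of Proposition \ref{so}, $U\in\mathfrak{u}(n+1)$ is a unit Killing field on $S^{2n+1}$ iff all eigenvalues of $U$ are $\pm{\bf i}$. Unlike the $\mathfrak{su}$ and $\mathfrak{sp}$ cases, in $\mathfrak{u}(n+1)$ there is no trace constraint forcing equal multiplicities, so the multiplicity of the eigenvalue ${\bf i}$ can be any $i\in\{0,1,\dots,n+1\}$ (and then $-{\bf i}$ has multiplicity $n+1-i$). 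Each value of $i$ gives exactly one $U(n+1)$-conjugacy class, so $UKVF(\mathfrak{u}(n+1),2n+1)$ is a disjoint union indexed by $i=0,\dots,n+1$.

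Next I would compute, for the representative $U_i=\diag(\underbrace{{\bf i},\dots,{\bf i}}_{i},\underbrace{-{\bf i},\dots,-{\bf i}}_{n+1-i})$, its centralizer in $U(n+1)$, which is $U(i)\times U(n+1-i)$ (block-diagonal unitary matrices preserving the two eigenspaces). Hence the $i$-th orbit is $U(n+1)/(U(i)\times U(n+1-i))$. To rewrite this in the stated form, note that $U(n+1)/(U(i)\times U(n+1-i))$ is diffeomorphic to the complex Grassmannian $\mathrm{Gr}_i(\mathbb{C}^{n+1})$, which can equally be written as $SU(n+1)/S(U(i)\times U(n+1-i))$: the central circle of $U(n+1)$ lies in every isotropy subgroup $U(i)\times U(n+1-i)$ and acts trivially on the quotient, so passing to $SU(n+1)$ with isotropy $S(U(i)\times U(n+1-i))=(U(i)\times U(n+1-i))\cap SU(n+1)$ gives the same homogeneous space. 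Taking the disjoint union over $i$ yields formula (\ref{orb}).

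The points that need a little care, rather than a genuine obstacle, are: (i) verifying that two representatives $U_i$ and $U_j$ with $i\neq j$ are genuinely not conjugate (immediate from eigenvalue multiplicities, which are conjugation invariants) and that the parametrization is exhaustive (again from Proposition \ref{so}.3); (ii) checking that the topology induced from $\mathfrak{u}(n+1)$ on each orbit agrees with the manifold topology of the Grassmannian — this follows since each orbit is a compact embedded submanifold of $\mathfrak{u}(n+1)$ and the orbit map $U(n+1)/(U(i)\times U(n+1-i))\to\mathfrak{u}(n+1)$ is a smooth embedding; and (iii) confirming that the $n+2$ orbits are pairwise disjoint and open-closed in $UKVF(\mathfrak{u}(n+1),2n+1)$, so that the identification is one of topological spaces and not merely of sets. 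I expect the main subtlety — though still routine given Proposition \ref{so} — to be the bookkeeping of the eigenvalue multiplicities and the passage from $U(n+1)$ to $SU(n+1)$ in the description of the isotropy group; everything else reduces to the standard description of adjoint orbits in \cite{Bes} already used in the preceding propositions.
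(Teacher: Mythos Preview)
Your proposal is correct and follows essentially the same approach as the paper: both use Proposition~\ref{so} to reduce to the eigenvalue condition $\pm{\bf i}$, index the resulting conjugacy classes by the multiplicity of ${\bf i}$, and identify each orbit as the complex Grassmannian via its centralizer. The only cosmetic difference is that the paper first splits $U=U_1+U_2$ into its central and $\mathfrak{su}(n+1)$ parts in order to invoke Example~8.111 of \cite{Bes} (stated for $\mathfrak{su}$) directly, whereas you work with the $\Ad(U(n+1))$-orbit of $U$ itself and pass to $SU(n+1)$ only at the level of the quotient; both routes are equivalent and yield the same identification.
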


\begin{proof}
Since $\mathfrak{u}(n+1)=\mathbb{R} \oplus \mathfrak{su}(n+1)$, then for any $U \in \mathfrak{u}(n+1)$ we get
$U=U_1+U_2$,
where $U_1 \in \mathbb{R}$ and
$U_2\in \mathfrak{su}(n+1)$. By Example 8.111 in \cite{Bes}, $U_2$ lies in an
$\Ad(U(n+1))$-orbit (or, equivalently, $\Ad(SU(n+1))$-orbit) of a
matrix of the type
$\widetilde{U}=\diag({\bf i} \lambda_1, {\bf i} \lambda_2, \dots, {\bf i} \lambda_{n+1})$, where $\lambda_1
\geq \lambda_2\geq \cdots \geq \lambda_{n+1}$ and $\sum_{i=1}^{n+1} \lambda_i =0$. Note also that
$U_1=\alpha  \diag ({\bf i}, {\bf i},\dots, {\bf i})$ for some
$\alpha \in \mathbb{R}$ and $\Ad(a)(U_1)=U_1$ for any $a\in U(n+1)$.
From Proposition \ref{so} we get that for some integer number $l\in [0,n+1],$ $U$ has $l$ eigenvalues, equal
to ${\bf i},$ and $n+1-l$ eigenvalues, equal to $-{\bf i}.$ Therefore, $\lambda_i=1-\alpha$ for $i\leq l$ and
$\lambda_i=-1-\alpha$ for $i \geq l+1$. Since
$\sum_{i=1}^{n+1}\lambda_i =0$, we get $\alpha =2l/(n+1)-1$.
Let $V_l$ be the matrix $\widetilde{U}$ for a given $l$, $l=0,\dots,n+1$.
Therefore, $U$ is a sum of the matrix $\frac{2l-n-1}{n+1}\diag
({\bf i}, {\bf i},\dots,{\bf i})$ and a matrix from
$\Ad(U(n+1))$-orbit of the matrix $V_l$ (for $l=0$ and $l=n+1$ the matrix $V_l$ is zero).
This orbit is represented
as the homogeneous space $SU(n+1)/S(U(l)\times U(n+1-l))$. For
different $l$ such orbits are disjoint (see Example 8.111 in
\cite{Bes}), which proves the proposition.
\end{proof}

\begin{remark}
Note that for any given vector $v\in S^{2n+1}_{x_0}$ it is always possible to choose in the proof of Theorem
\ref{osn_sp} a unit Killing vector field from any principal orbit
(i.~e. of maximal dimension) among orbits in (\ref{orb}), projecting to $v$.
If $n+1=2(k+1)$, then the unique principal orbit is exactly
the complex Grassmannian from Proposition \ref{uhlensu}.
\end{remark}

\begin{remark}
\label{spin7}
Applying an argument, similar to the one in Sections \ref{S} and \ref{Grassmann}, and the fact that the Clifford
algebra $Cl_6$ is isomorphic to the algebra $\mathbb{R}(8)$ of real $(8\times 8)$-matrices \cite{Hus}, one can prove
that normal homogeneous space $(S^7=Spin(7)/G_2, g_{\ccan})$ in Tables 1 and 2 is $Spin(7)$-Clifford-Wolf homogeneous
and $UKVF(\mathfrak{spin}(7),7)$ is homeomorphic to the real Grassmannian $G_+(7,2)$.
\end{remark}

\newpage

\section{The spheres $(S^{15}=Spin(9)/Spin(7),\psi_t)$}
\label{S}

We study here the most difficult, but at the same time the most interesting case, which involves essentially,
besides other tools, the Clifford algebras $Cl^n$ and the Cayley algebra $\mathbb{C}a$ of octonions. At first
we shall discuss briefly only notions and properties of very general nature, which we really need here.

Let $(\mathbb{R}^n, (\cdot, \cdot))$ be $n$-dimensional Euclidean space with standard inner (scalar) product
$(\cdot, \cdot)$ and orthonormal basis $\{e_1,\dots, e_n\}$. Then the Clifford algebra $Cl^n$ for
\linebreak
$(\mathbb{R}^n, (\cdot, \cdot))$ $\bigl($more exactly, for $(\mathbb{R}^n, -(\cdot, \cdot))$$\bigr)$ is an (unique)
associative
algebra over field $\mathbb{R}$, containing as a subalgebra the field $\mathbb{R}$, with product operation $\cdot\,$,
an extension of a bilinear product $\cdot$ over
$\mathbb{R}^n\times \mathbb{R}^n$ with relation
\begin{equation}
\label{proper}
x\cdot y + y\cdot x=-2(x,y)1,\quad 1\in \mathbb{R},
\end{equation}
such that any possible relation in $(Cl^n,\cdot)$ is a corollary of the relation (\ref{proper}). Note that
\begin{equation}
\label{ex}
Cl^1 \cong \mathbb{C},\quad Cl^2 \cong \mathbb{H},\quad Cl^8 \cong \mathbb{R}(16),
\end{equation}
where $\mathbb{R}(16)$ is the algebra of real $(16\times 16)$-matrices \cite{Hus}. It is clear from definition that
for any $m,$ where $1\leq m\leq n,$ $Cl^m$ is a subalgebra of $Cl^n.$ The algebra $Cl^n$ admits
$\mathbb{Z}_2$-grading $Cl^n=Cl^n_{\overline{0}}\oplus Cl^n_{\overline{1}},$ where $Cl^n_{\overline{0}}$ is its
subalgebra, generated by elements $x\cdot y,$ where $x,y \in \mathbb{R}^n$. For $n>1$ there exists a unique isomorphism
$I_n: Cl^{n-1} \cong Cl^n_{\overline{0}}$ such that $I_n(x)=x\cdot e_n$ if $x\in \mathbb{R}^{n-1}$ and $I_n$ coincides
on $Cl^{n-1}_{\overline{0}}$ with composition of natural inclusions $Cl^{n-1}_{\overline{0}}\subset Cl^{n-1}$ and
$Cl^{n-1}\subset Cl^n.$

\begin{prop}
\label{bivector}
Let $v,w$ and $\{f_1,\dots, f_n\}$ be respectively two orthogonal vectors and an orthonormal basis in
$(\mathbb{R}^n,(\cdot,\cdot)).$ Then
\begin{equation}
\label{minus}
v\cdot w=-w\cdot v,
\end{equation}
the product $v\cdot w$ is uniquely represented in the form
\begin{equation}
\label{compon}
v\cdot w=\sum_{1\leq i< j\leq n}\gamma_{ij}(f_i\cdot f_j),\quad \alpha_{ij}\in \mathbb{R},
\end{equation}
and the components $\gamma_{ij}$ are calculated by the same rule as the components of the bivector
$v \wedge w$.
\end{prop}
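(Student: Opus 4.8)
The plan is to read off all three assertions from the single defining relation (\ref{proper}) together with one expansion of $v\cdot w$ in the given orthonormal basis. The relation (\ref{minus}) is immediate: orthogonality of $v$ and $w$ means $(v,w)=0$, so (\ref{proper}) applied with $x=v$, $y=w$ gives $v\cdot w+w\cdot v=-2(v,w)1=0$.

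For the representation (\ref{compon}) and the rule for its coefficients I would write $v=\sum_{i=1}^{n}a_if_i$ and $w=\sum_{j=1}^{n}b_jf_j$ and expand $v\cdot w=\sum_{i,j}a_ib_j\,(f_i\cdot f_j)$ bilinearly. Splitting the sum into the diagonal part $i=j$, the part $i<j$, and the part $i>j$, then using (\ref{proper}) in the form $f_i\cdot f_i=-1$ for the diagonal terms and (\ref{minus}) to replace $f_i\cdot f_j$ by $-f_j\cdot f_i$ when $i>j$, and finally relabelling indices in that last sum, one collects
\[
v\cdot w=-(v,w)1+\sum_{1\le i<j\le n}(a_ib_j-a_jb_i)\,(f_i\cdot f_j).
\]
Since $(v,w)=0$ the scalar term drops out, giving (\ref{compon}) with $\gamma_{ij}=a_ib_j-a_jb_i$. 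But $a_ib_j-a_jb_i$ is by definition the $(i,j)$-component of the bivector $v\wedge w$ with respect to the basis $\{f_i\wedge f_j\}_{1\le i<j\le n}$ of $\Lambda^2\mathbb{R}^n$, so the last assertion of the proposition follows at once from this same computation.

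What remains is uniqueness in (\ref{compon}), i.e. linear independence over $\mathbb{R}$ of the elements $f_i\cdot f_j$, $1\le i<j\le n$, inside $Cl^n$. Here I would invoke the standard structural fact — which is precisely what the universal property built into the definition of $Cl^n$ provides (see e.g. \cite{Hus}) — that the ordered monomials $f_{i_1}\cdot f_{i_2}\cdots f_{i_k}$ with $i_1<i_2<\cdots<i_k$, $0\le k\le n$, form a basis of the $2^n$-dimensional real vector space $Cl^n$. The degree-two monomials among these are exactly the $f_i\cdot f_j$ with $i<j$, so they are linearly independent and the $\gamma_{ij}$ in (\ref{compon}) are uniquely determined. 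I expect that recalling (or citing) this Clifford-algebra basis theorem is the only nontrivial ingredient; the existence part, the coefficient formula, and the comparison with $v\wedge w$ are all contained in the one elementary expansion above.
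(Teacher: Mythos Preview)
Your proof is correct and follows essentially the same approach as the paper: both derive (\ref{minus}) directly from (\ref{proper}) and then expand $v\cdot w$ bilinearly in the basis $\{f_i\}$ to obtain $\gamma_{ij}=a_ib_j-a_jb_i$. The only difference is that you explicitly justify uniqueness via the standard $2^n$-dimensional basis of $Cl^n$, whereas the paper leaves this implicit.
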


\begin{proof}
Formula (\ref{minus}) follows from the relation (\ref{proper}). Let us suppose that
$$v=\sum_{i=1}^n \alpha_if_i,\quad w=\sum_{j=1}^n \beta_jf_j.$$
Then, using the rules (\ref{minus}) and (\ref{proper}), we get
$$v\cdot w = \left(\sum_{i=1}^n \alpha_if_i\right)\cdot \left(\sum_{j=1}^n \beta_jf_j\right)=$$
$$\sum_{1\leq i< j\leq n}(\alpha_i\beta_j(f_i\cdot f_j)+\beta_i\alpha_j(f_j\cdot f_i))+
\sum_{i=1}^n\alpha_i\beta_i(f_i\cdot f_i)=$$
$$\sum_{1\leq i< j\leq n}(\alpha_i\beta_j-\beta_i\alpha_j)(f_i\cdot f_j)-\sum_{i=1}^n \alpha_i\beta_i=$$
$$\sum_{1\leq i< j\leq n}(\gamma_{ij}:=\alpha_i\beta_j-\beta_i\alpha_j)(f_i\cdot f_j).$$
\end{proof}

The following (known) proposition easily follows from Proposition \ref{spso}.

\begin{prop}
\label{spin}
The linear span in $Cl^n$ of elements $x\cdot y,$ where $x,y$ are orthogonal elements in $(\mathbb{R}^n,(\cdot,\cdot))$,
is a Lie algebra with respect to operation
$$[W,V]=W\cdot V-V\cdot W,$$
isomorphic to the Lie algebra $\mathfrak{spin}(n)\cong \mathfrak{so}(n).$
\end{prop}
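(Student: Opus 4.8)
The plan is to exhibit an explicit linear isomorphism between the linear span $\mathfrak{L}\subset Cl^n$ of the products $x\cdot y$ (with $x\perp y$) and the standard Lie algebra $\mathfrak{so}(n)$ of skew-symmetric matrices, and to check that it intertwines the commutator bracket $[W,V]=W\cdot V-V\cdot V$ on $Cl^n$ with the matrix commutator. The natural candidate map is the one sending the "bivector" $e_i\cdot e_j$ (for $i<j$, where $\{e_1,\dots,e_n\}$ is the fixed orthonormal basis used to define $Cl^n$) to $2E_{ij}$, where $E_{ij}=e_ie_j^T-e_je_i^T$ is the elementary skew-symmetric matrix; Proposition \ref{bivector} guarantees that the elements $e_i\cdot e_j$ with $i<j$ indeed form a basis of $\mathfrak{L}$ and that a general product $v\cdot w$ decomposes in this basis with exactly the coefficients of the bivector $v\wedge w$, so the map is well defined and is a linear isomorphism onto $\mathfrak{so}(n)$ (both spaces having dimension $\binom{n}{2}$).

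The first concrete step is to record the multiplication rules in $Cl^n$ among the $e_i\cdot e_j$: using the relations (\ref{minus}) and (\ref{proper}), one computes $(e_i\cdot e_j)\cdot(e_k\cdot e_l)$ by cancelling any repeated index via $e_m\cdot e_m=-1$ and anticommuting distinct generators. Concretely, when $\{i,j\}\cap\{k,l\}=\emptyset$ the two bivectors commute, and when they share exactly one index (say $j=k$) one gets $(e_i\cdot e_j)\cdot(e_j\cdot e_l)=-e_i\cdot e_l$, from which $[e_i\cdot e_j,\ e_j\cdot e_l]=-2\,e_i\cdot e_l$ (up to signs that match the $\mathfrak{so}(n)$ structure constants). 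The second step is the parallel computation in $\mathfrak{so}(n)$: $[E_{ij},E_{kl}]=\delta_{jk}E_{il}-\delta_{ik}E_{jl}-\delta_{jl}E_{ik}+\delta_{il}E_{jk}$, the textbook bracket of elementary skew-symmetric matrices. Comparing the two structure-constant tables (with the factor of $2$ absorbed correctly by the choice of normalization $e_i\cdot e_j\mapsto 2E_{ij}$, or equivalently $e_i\cdot e_j\mapsto E_{ij}$ together with the bracket $[W,V]=W\cdot V-V\cdot W$ which already carries a factor of $2$ relative to the "half-commutator") shows the map is a Lie algebra homomorphism, hence an isomorphism. Finally, one invokes the standard fact $\mathfrak{so}(n)\cong\mathfrak{spin}(n)$ (the Lie algebra of $Spin(n)$ being isomorphic to that of $SO(n)$ since the two groups are locally isomorphic), which yields the claimed $\mathfrak{spin}(n)$.

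Since the excerpt's sentence "easily follows from Proposition \ref{spso}" signals that an auxiliary statement Proposition \ref{spso} (not shown in the excerpt, presumably identifying the $\mathrm{Spin}$ group inside $Cl^n$ and its Lie algebra) is meant to do most of the work, the cleanest write-up would simply differentiate the known description of $\mathrm{Spin}(n)\subset Cl^n$ at the identity: elements of $\mathrm{Spin}(n)$ are even-degree products of unit vectors, so its Lie algebra is generated by $\frac{d}{dt}\big|_{t=0}(\cos t + \sin t\, u\cdot v)$ type curves with $u\perp v$, whose derivatives are exactly the $u\cdot v$; that $\mathfrak{L}$ is closed under the commutator then comes for free from being a Lie algebra of a Lie group, and the isomorphism type is pinned down by the covering $\mathrm{Spin}(n)\to SO(n)$.

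The main obstacle, and the only place care is genuinely needed, is bookkeeping of signs: the relation $e_i\cdot e_j=-e_j\cdot e_i$ for $i\neq j$ combined with $e_i\cdot e_i=-1$ makes the Clifford structure constants differ from the $\mathfrak{so}(n)$ ones by signs that depend on the ordering of indices, and one must fix a convention (e.g.\ always write $e_i\cdot e_j$ with $i<j$, as in Proposition \ref{bivector}) and verify the four-term bracket identity holds on the nose. I would verify it on the generators $e_i\cdot e_j$ and then extend by bilinearity, using Proposition \ref{bivector} to guarantee that the identification of a general $v\cdot w$ with $v\wedge w$ is consistent with this generator-level check; everything else is routine.
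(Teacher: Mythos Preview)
Your approach is essentially identical to the paper's: the paper simply invokes Proposition~\ref{spso}, which is exactly the explicit linear isomorphism you outline, verified on the basis $f_i\cdot f_j$ against the elementary skew-symmetric matrices by comparing commutators in the two cases (disjoint index sets, and one shared index).

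One correction worth noting, since you flagged the sign bookkeeping as the delicate point: the map that works is $f_i\cdot f_j\mapsto 2(E_{ji}-E_{ij})$, i.e.\ with the indices transposed relative to what you wrote. With your convention $e_i\cdot e_j\mapsto 2(e_ie_j^T-e_je_i^T)$, your own computations give $[e_i\cdot e_j,\,e_j\cdot e_l]=-2\,e_i\cdot e_l$, which should map to $-4(e_ie_l^T-e_le_i^T)$, whereas your bracket formula $[E_{ij},E_{jl}]=E_{il}$ gives $+4(e_ie_l^T-e_le_i^T)$ on the image side; and negating a linear map does not preserve Lie brackets. This is purely a normalization fix, not a gap in the argument.
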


In force of Propositions \ref{spin} and \ref{bivector}, we shall denote the linear span, mentioned in
Proposition \ref{spin},
as $\mathfrak{spin}(n)$ and sometimes call its elements \textit{bivectors}.

\begin{definition}
An element $W\in \mathfrak{spin}(n)$ is called \textit{simple} if it can be represented in the form $W=v\cdot w$,
where $v,w$
are orthogonal vectors in $(\mathbb{R}^n,(\cdot,\cdot))$.
\end{definition}

\begin{lemma}
\label{simple}
An element $W\in \mathfrak{spin}(n)$ is simple if and only if $W^2=-C^2\cdot 1$ for some real number $C\geq 0$.
In addition, if $W=v\cdot w,$ then $C$ is equal to the volume of the rectangle constructed on the vectors $v,w$.
\end{lemma}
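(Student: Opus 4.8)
The plan is to prove Lemma~\ref{simple} directly from the defining relation (\ref{proper}) of the Clifford algebra, exploiting the orthonormal-basis representation established in Proposition~\ref{bivector}. The statement has two directions, plus a numerical refinement of the ``if'' direction, and I would handle them in that order.

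\textbf{The ``only if'' direction.} Suppose $W$ is simple, so $W=v\cdot w$ with $v\perp w$. By (\ref{minus}) we have $v\cdot w=-w\cdot v$, hence
$$
W^2=(v\cdot w)\cdot(v\cdot w)=-(v\cdot v)\cdot(w\cdot w).
$$
Now apply (\ref{proper}) with $x=y=v$ and with $x=y=w$ to get $v\cdot v=-(v,v)\,1=-\|v\|^2\,1$ and $w\cdot w=-\|w\|^2\,1$. Substituting,
$$
W^2=-\|v\|^2\,\|w\|^2\cdot 1=-C^2\cdot 1,\qquad C:=\|v\|\,\|w\|\geq 0,
$$
which is exactly the area of the rectangle on $v$ and $w$ (they are orthogonal, so the rectangle has sides $\|v\|$ and $\|w\|$). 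This simultaneously proves the ``only if'' implication and the ``In addition'' clause.

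\textbf{The ``if'' direction.} This is the substantive part. Assume $W\in\mathfrak{spin}(n)$ satisfies $W^2=-C^2\cdot 1$ with $C\geq 0$. If $C=0$ I would argue $W=0$ (and $W=v\cdot 0$ is vacuously simple, or one treats $C=0$ as the trivial case); so assume $C>0$. The natural route is to invoke Proposition~\ref{spin}: under the isomorphism $\mathfrak{spin}(n)\cong\mathfrak{so}(n)$, the equation $W^2=-C^2\,\mathrm{Id}$ forces $W$ (up to scaling by $1/C$) to be a skew-symmetric matrix squaring to $-\mathrm{Id}$, i.e.\ a complex structure on $\mathbb{R}^n$; such a matrix is conjugate under $SO(n)$ to a block-diagonal form $\diag(C\cdot J,\dots,C\cdot J)$ where $J=\left(\begin{smallmatrix}0&-1\\1&0\end{smallmatrix}\right)$, and in particular $n$ is even. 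However, I must be careful: simplicity of $W$ as a \emph{bivector} does not correspond to being a complex structure on all of $\mathbb{R}^n$ — a simple bivector $f_1\cdot f_2$ squares to $-1$ and is \emph{rank-two}, not a complex structure on $\mathbb{R}^n$ for $n>2$. So the matrix characterization alone is the wrong tool. Instead I would work inside $Cl^n$ with the expansion (\ref{compon}): write $W=\sum_{i<j}\gamma_{ij}\,f_i\cdot f_j$, compute $W^2$ using the anticommutation rules $(f_i\cdot f_j)(f_k\cdot f_l)$, and show that $W^2\in\mathbb{R}\cdot 1$ forces all the ``non-commuting-cross-terms'' to cancel, which happens precisely when the antisymmetric matrix $(\gamma_{ij})$ has rank $2$ — equivalently, when the associated $2$-form $\sum\gamma_{ij}\,f_i\wedge f_j$ is decomposable. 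A decomposable $2$-form is $v\wedge w$ for orthogonal $v,w$, and then by Proposition~\ref{bivector} the bivector is $v\cdot w$, i.e.\ $W$ is simple. Along the way, matching $W^2=-(\sum_{i<j}\gamma_{ij}^2)\cdot 1$ against $-C^2\cdot 1$ pins down $C^2=\|v\|^2\|w\|^2$, reconfirming the area formula.

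\textbf{The main obstacle.} The delicate step is the ``if'' direction's linear-algebra core: showing that $W^2\in\mathbb{R}\cdot 1$ inside $Cl^n$ implies the coefficient matrix $(\gamma_{ij})$ has rank at most $2$. The clean way to see this is to bring $(\gamma_{ij})$ to canonical form by an orthogonal change of basis $\{f_i\}$ — which is legitimate because $\mathfrak{spin}(n)$ and the map $W\mapsto W^2$ are equivariant under the $SO(n)$-action, and the canonical form of a real antisymmetric matrix is $\bigoplus_k \mu_k J$ with $\mu_k\geq 0$. In the new basis $W=\sum_k \mu_k\, g_{2k-1}\cdot g_{2k}$, and since the factors from distinct $2$-planes anticommute in pairs but their products commute with each other, one computes
$$
W^2=-\Bigl(\sum_k \mu_k^2\Bigr)\cdot 1 \;-\; 2\sum_{k<l}\mu_k\mu_l\,(g_{2k-1}\cdot g_{2k}\cdot g_{2l-1}\cdot g_{2l}),
$$
and the degree-four terms $g_{2k-1}\cdot g_{2k}\cdot g_{2l-1}\cdot g_{2l}$ for $k<l$ are linearly independent in $Cl^n$ and not scalars; hence $W^2\in\mathbb{R}\cdot 1$ forces $\mu_k\mu_l=0$ for all $k<l$, i.e.\ at most one $\mu_k$ is nonzero. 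Then $W=\mu\, g_1\cdot g_2$ is manifestly simple with $C=\mu$, and the rectangle on $v=\mu g_1$, $w=g_2$ (or on $g_1,\mu g_2$) has area $\mu=C$, completing the proof.
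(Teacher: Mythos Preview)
Your proof is correct and follows essentially the same approach as the paper: both reduce $W$ to the canonical block form $W=\sum_k \mu_k\, g_{2k-1}\cdot g_{2k}$ (the paper cites \cite{Koz} for this, you invoke the orthogonal normal form of a skew-symmetric matrix) and then observe that $W^2$ acquires nonzero degree-four terms unless at most one $\mu_k$ survives. One small typo: in your displayed formula for $W^2$ the cross-term coefficient should be $+2$ rather than $-2$ (the blocks $g_{2k-1}\cdot g_{2k}$ commute pairwise, so the cross terms simply double), matching the paper's computation $W^2=-(a_1^2+a_2^2)+2a_1a_2\,f_1\cdot f_2\cdot f_3\cdot f_4$; this does not affect the argument.
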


\begin{proof}
Let us suggest at first that $W=v\cdot w$ for orthogonal vectors $v,w\in (\mathbb{R}^n,(\cdot,\cdot))$. Then, using the
relation (\ref{proper}), we get
$$W^2=(v\cdot w)\cdot(v\cdot w)=-(v\cdot v)\cdot(w\cdot w)=-(-(v,v))(-(w,w))\cdot 1 = - (v,v)(w,w)1.$$
This proves the necessity in proposition and its last statement.

Let $W\in \mathfrak{spin}(n)$ be not simple. Then by a statement from paper \cite{Koz}, formulated there
for bivectors,
there exists an orthonormal basis $f_1, \dots, f_n$ in $(\mathbb{R}^n,(\cdot,\cdot))$ such that
$$
W=\sum_{l=1}^{m} a_l (f_{2l-1}\cdot  f_{2l}),\quad\mbox{where}\quad a_l\neq 0\quad\mbox{and}\quad m>1.
$$
Then one can easily check that $W^2\neq -C^2\cdot 1$ for any real $C,$  since $W^2$ will contain a
nonzero ``four-vector'' besides a real number. For example, if $m=2$ then
$$W^2=-(a_1^2+a_2^2)+2a_1a_2 (f_1\cdot f_2\cdot f_3\cdot f_4).$$
\end{proof}

The algebra $Cl^n$ contains in itself not only the Lie algebra $\mathfrak{spin}(n)\subset Cl^n_{\overline{0}}$,
but also the spinor group $(Spin(n),\cdot)$ as a Lie subgroup in the group of all invertible elements
$((Cl^n)^{\times},\cdot)$, see details, for example, in Onishchik's book \cite{On}.

Using special action of the Caley algebra $\mathbb{C}a=(\mathbb{R}^8,(\cdot,\cdot))$ on
$\mathbb{R}^{16}=\mathbb{C}a \oplus \mathbb{C}a,$ one gets
(see, for example, \cite{On}) special isomorphism $\phi: \quad Cl^8 \cong \mathbb{R}(16)$. The
composition of isomor\-phisms $(I_9)^{-1}: Cl^9_{\overline{0}}\cong Cl^{8}$ and $\phi$ naturally
induces an exact representation $\theta: \mathfrak{spin}(9)\rightarrow \mathfrak{gl}(16)$ of Lie algebra
$\mathfrak{spin}(9)$.
It is very important that

1) $\theta(\mathfrak{spin}(9))\subset \mathfrak{so}(16);$

2) $\theta$ is a \textit{spinor representation}, i.e. $\theta$ is induced by (unique) exact representation
$\Theta: Spin(9)\rightarrow SO(16)$;

3) $Spin(9):=\Theta(Spin(9))$ acts transitively on $S^{15};$

4) the isotropy subgroup $H$ of $Spin(9)$ at the point $x_0=(1,0,\dots, 0)^T\in S^{15}$ is isomorphic to the Lie
group $Spin(7)$ \cite{On};

5) the Lie algebra $\mathfrak{h}:=\mathfrak{spin}(7)$ of Lie subgroup $H$ is not the standard inclusion of
$\mathfrak{so}(7)$ into
$\mathfrak{spin}(9)=\mathfrak{so}(9)$, but its image $\tau(\mathfrak{so}(7))$ under an outer automorphism $\tau$
of Lie algebra
$\mathfrak{so}(8)$, so-called \textit{the triality automorphism} of order $3$, where
$\mathfrak{so}(7)\subset \mathfrak{so}(8)\subset \mathfrak{so}(9)=\mathfrak{spin}(9)$ are standard inclusions;

6) $\tau$ is induced by a rotation symmetry $\sigma \in S_3$ of Dynkin diagram $D_4$ (which is a tripod) of the
Lie algebra $\mathfrak{so}(8)$;

7) $\tau$ can be defined, using the Cartan's \textit{triality principle} (see e.~g. a very clear presentation
in paper \cite{GlZ} by H.~Gluck and W.~Ziller), which is also based on
$\mathbb{C}a$.

Thus one gets the homogeneous space $S^{15}=Spin(9)/Spin(7)$.

\begin{lemma}
\label{simple1}
An element $U\in \mathfrak{spin}(9)\subset \mathfrak{so}(16)$ is a unit Killing vector field on $S^{15}$ if and only
if it can be represented as a product $U=v\cdot w$ of orthonormal vectors in $(\mathbb{R}^9,(\cdot,\cdot))$.
\end{lemma}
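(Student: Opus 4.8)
The plan is to read the lemma off from Proposition \ref{so} and Lemma \ref{simple}, the only genuine work being to check that the representation $\theta$ turns ``squaring in $\mathfrak{so}(16)$'' into ``the Clifford square in $Cl^9$''. First I would use the identification of $\mathfrak{spin}(9)$ with $\theta(\mathfrak{spin}(9))\subset\mathfrak{so}(16)=\mathfrak{u}_{\mathbb{R}}(16)$ (established in items 1)--3) preceding the lemma). Applying Proposition \ref{so} with $\mathbb{F}=\mathbb{R}$, $n+1=16$ and $C=1$, an element $U\in\mathfrak{spin}(9)\subset\mathfrak{so}(16)$ is a unit Killing vector field on $S^{15}$ if and only if $U^2=-\Id_{16}$, where the square is taken in the matrix algebra $\mathbb{R}(16)$.

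Next I would recall that, by its very definition, $\theta$ is the restriction to $\mathfrak{spin}(9)$ of the algebra isomorphism $\phi\circ (I_9)^{-1}\colon Cl^9_{\overline{0}}\to\mathbb{R}(16)$. In particular $\theta$ is injective, it sends $1\in\mathbb{R}\subset Cl^9$ to $\Id_{16}$, and for the bivector $U\in\mathfrak{spin}(9)\subset Cl^9_{\overline{0}}$ (so that $U\cdot U\in Cl^9_{\overline{0}}$ as well) it satisfies $\theta(U)\theta(U)=\theta(U\cdot U)$, where $U\cdot U$ denotes the Clifford square. Consequently the relation $U^2=-\Id_{16}$ in $\mathbb{R}(16)$ is equivalent to the relation $U\cdot U=-1$ in $Cl^9$.

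Finally I would apply Lemma \ref{simple} with $C=1$: the equality $U\cdot U=-1=-C^2\cdot 1$ holds exactly when $U$ is simple, i.e. $U=v\cdot w$ for some orthogonal vectors $v,w\in(\mathbb{R}^9,(\cdot,\cdot))$, and in that case the same lemma gives $\|v\|\,\|w\|=C=1$. A trivial rescaling then finishes the proof: replacing $(v,w)$ by $(v/\|v\|,\ \|v\|\,w)$ (here $v,w\neq 0$ since $C\neq 0$) produces orthonormal vectors with the same product $v\cdot w=U$; conversely, if $v,w$ are orthonormal then the computation in the proof of Lemma \ref{simple} gives $(v\cdot w)^2=-1$, so $v\cdot w$ is a unit Killing field. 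This chain of equivalences is precisely the assertion of the lemma.

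There is no real obstacle beyond this bookkeeping. The one point deserving care is that $\theta$ genuinely intertwines Clifford multiplication with matrix multiplication and is injective, so that the identity $\theta(U)^2=-\Id_{16}$ can be transported back to $U\cdot U=-1$ in $Cl^9$; the remaining normalization step linking orthogonal vectors with $\|v\|\,\|w\|=1$ to orthonormal ones is elementary.
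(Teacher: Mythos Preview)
Your proof is correct and follows essentially the same route as the paper's own argument: translate ``unit Killing field'' into $U^2=-\Id_{16}$ via Proposition~\ref{so}, transport this through the algebra isomorphism $\phi\circ(I_9)^{-1}$ to $U\cdot U=-1$ in the Clifford algebra, and then invoke Lemma~\ref{simple}. You are in fact slightly more careful than the paper, which skips both the explicit observation that $\theta$ is multiplicative and the rescaling from orthogonal vectors with $\|v\|\,\|w\|=1$ to orthonormal ones.
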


\begin{proof}
By Lemma \ref{simple}, $U=v\cdot w$ for orthonormal vectors in $(\mathbb{R}^n,(\cdot,\cdot))$ if and only if
$U^2=-1$.  We identify $U$ with its image $\theta(U)\in \mathfrak{so}(16)$.
Since $\phi: Cl^8\cong \mathbb{R}(16)$ is isomorphism and $(I_9)^{-1}(\mathfrak{spin}(9))\subset Cl^8$,
then previous equality is equivalent to the equality $U^2=-\Id$, which in turn by Proposition \ref{so}
is equivalent to the statement that $U$ is a unit Killing vector field on $S^{15}$.
\end{proof}

Since we work with the representation of $\mathfrak{spin}(9)$ in $\mathfrak{so}(16)$,
we can consider the ($\Ad(Spin(9))$-invariant) restriction to $\mathfrak{spin}(9)$
of the inner product (\ref{innerprod}) $\langle \cdot, \cdot \rangle$ on
$\mathfrak{so}(16)$. Then we have $\Ad(Spin(7))$-invariant $\langle\cdot, \cdot\rangle$-orthogonal decomposition
\begin{equation}\label{spdec}
 \mathfrak{g}=\mathfrak{spin}(9)=\mathfrak{spin}(8)\oplus \mathfrak{p}_1=
 \mathfrak{spin}(7)\oplus \mathfrak{p}_2\oplus \mathfrak{p}_1=\mathfrak{h}\oplus \mathfrak{p},
\end{equation}
where the modules $\mathfrak{p}_i$ are $\Ad(Spin(7))$-irreducible,
$[\mathfrak{p}_2,\mathfrak{p}_1]\subset \mathfrak{p}_1$ and
$[\mathfrak{p}_2,\mathfrak{p}_2]\subset \mathfrak{spin}(7)$.
Some convenient $\langle\cdot, \cdot\rangle$-ortogonal basis in $\mathfrak{spin}(9)$, compatible with this
decomposition, which we shall discuss shortly and use later, can be found in \cite{Volp1}.

\begin{theorem}\label{CWhom_spin}
The Euclidean sphere $S^{15}$ is Clifford-Wolf homogeneous with respect to
the Lie group $Spin(9)\subset SO(16)$.
\end{theorem}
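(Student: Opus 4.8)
The plan is to apply the criterion from \cite{BerNik5} already used in the proof of Theorem~\ref{osn_sp}: $S^{15}$ is $Spin(9)$-Clifford-Wolf homogeneous if and only if for some (hence any) point $x_0\in S^{15}$ and every tangent vector $v\in S^{15}_{x_0}$ there is a Killing vector field $K(x)=Ux$, $U\in\mathfrak{spin}(9)\subset\mathfrak{so}(16)$, of constant length on $S^{15}$ with $Ux_0=v$. By Proposition~\ref{so} (applied for $\mathbb{F}=\mathbb{R}$ on $S^{15}\subset\mathbb{R}^{16}$), $U$ has constant length $C$ exactly when $U^2=-C^2\Id$; since $(I_9)^{-1}(\mathfrak{spin}(9))\subset Cl^8$ and $\phi\colon Cl^8\cong\mathbb{R}(16)$ is an algebra isomorphism, Lemmas~\ref{simple} and \ref{simple1} transform this into the condition that $U$ be a simple bivector $U=v\cdot w$ with $v\perp w$ in $\mathbb{R}^9$ and $\|v\|\,\|w\|=C$. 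Rescaling, it therefore suffices to prove that every unit $v\in S^{15}_{x_0}$ equals $(p\cdot q)x_0$ for suitable orthonormal $p,q\in\mathbb{R}^9$. I fix $x_0=(1,0,\dots,0)^T$, so $S^{15}=Spin(9)/Spin(7)$ and $S^{15}_{x_0}$ is identified with $\mathfrak{p}=\mathfrak{p}_1\oplus\mathfrak{p}_2$ from (\ref{spdec}) via $U\mapsto Ux_0$, where $\dim\mathfrak{p}_1=8$ and $\dim\mathfrak{p}_2=7$.

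I then pass to the Cayley model $\mathbb{R}^{16}=\mathbb{C}a\oplus\mathbb{C}a$ given by $\phi$, in which $x_0$ corresponds to $(1,0)$. Because $\mathfrak{h}=\mathfrak{spin}(7)=\tau(\mathfrak{so}(7))\subset\tau(\mathfrak{so}(8))=\mathfrak{so}(8)$, the summand $\mathfrak{spin}(8)$ of (\ref{spdec}) is the standard $\mathfrak{so}(8)$; hence $\mathfrak{p}_1$ is spanned by the simple bivectors $e_i\cdot e_9$, and one obtains $\mathfrak{p}_1 x_0=0\oplus\mathbb{C}a$, $\mathfrak{p}_2 x_0=\operatorname{Im}\mathbb{C}a\oplus 0$, so that $S^{15}_{x_0}=\operatorname{Im}\mathbb{C}a\oplus\mathbb{C}a$. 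Writing $p=p'+p_9e_9$, $q=q'+q_9e_9$ with $p',q'\in\mathbb{R}^8=\mathbb{C}a$, a direct computation using the relations (\ref{proper}), (\ref{minus}) and the explicit form of $\theta$ (equivalently, the adapted orthogonal basis of $\mathfrak{spin}(9)$ from \cite{Volp1}) yields, up to the sign conventions fixed by $\theta$,
$$(p\cdot q)\,x_0=\bigl(\operatorname{Im}(\overline{q'}\,p'),\ q_9 p'-p_9 q'\bigr)\in\operatorname{Im}\mathbb{C}a\oplus\mathbb{C}a .$$
So the problem becomes: given $v=(v_2,v_1)$ with $v_2\in\operatorname{Im}\mathbb{C}a$, $v_1\in\mathbb{C}a$, $\|v_2\|^2+\|v_1\|^2=1$, find orthonormal $p,q\in\mathbb{R}^9$ solving $\operatorname{Im}(\overline{q'}p')=v_2$ and $q_9p'-p_9q'=v_1$.

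To minimize the case analysis I use equivariance. The set of simple bivectors in $\mathfrak{spin}(9)$ is $\Ad(Spin(9))$-invariant, and for $h\in H=Spin(7)$ one has $\Theta(h)\bigl((p\cdot q)x_0\bigr)=\bigl(\Ad(h)(p\cdot q)\bigr)x_0$; hence it suffices to realize a single representative of each $Spin(7)$-orbit on the unit sphere of $\mathfrak{p}$. Now $Spin(7)$ acts on $\mathfrak{p}_1\cong\mathbb{C}a$ by the spinor representation, transitively on the unit sphere, with the stabilizer of the unit octonion $1$ equal to $G_2=\operatorname{Aut}(\mathbb{C}a)$, and $G_2$ acts on $\mathfrak{p}_2\cong\operatorname{Im}\mathbb{C}a$ as the standard representation, transitively on $S^6$; moreover $Spin(7)$ is transitive on the unit sphere of $\mathfrak{p}_2$. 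Consequently every orbit contains a representative $v=(a\varepsilon,\,b\cdot 1)$ with a fixed imaginary unit octonion $\varepsilon$ and $a,b\ge 0$, $a^2+b^2=1$. For such $v$ one checks at once that the vectors $q=e_1$ (corresponding to the unit octonion $1$) and $p=a\varepsilon-b\,e_9$ form an orthonormal pair in $\mathbb{R}^9$ with $(p\cdot q)x_0=(a\varepsilon,b)=v$; this exhausts all orbits and completes the proof.

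The main obstacle is making the second and third paragraphs precise, since $\mathfrak{h}=\mathfrak{spin}(7)$ embeds in $\mathfrak{spin}(9)=\mathfrak{so}(9)$ as the triality-twisted copy $\tau(\mathfrak{so}(7))$, not as the standard $\mathfrak{so}(7)$. Thus the $\Ad(Spin(7))$-module structure of $\mathfrak{p}_1,\mathfrak{p}_2$ and $S^{15}_{x_0}$, and the displayed formula for $(p\cdot q)x_0$, cannot be read off from coordinates directly; they require the explicit spinor realization $\theta\colon\mathfrak{spin}(9)\to\mathfrak{so}(16)$ coming from the Cayley action on $\mathbb{R}^{16}=\mathbb{C}a\oplus\mathbb{C}a$, the adapted basis of \cite{Volp1}, and careful bookkeeping of octonion multiplication. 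Once these identifications are set up, the cohomogeneity-one reduction and the verification for the one-parameter family $v=(a\varepsilon,b\cdot 1)$ are routine.
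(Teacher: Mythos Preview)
Your proposal is correct but proceeds along a genuinely different route from the paper. Both arguments share the opening move: by Lemma~\ref{simple1}, constant-length Killing fields in $\mathfrak{spin}(9)$ are exactly the simple bivectors $v\cdot w$, so one must show that every tangent vector at $x_0$ is the value of some simple bivector. From there the two diverge.

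The paper avoids any explicit octonion computation. It observes that $\mathfrak{p}_1=W_{e_9}=\{v\cdot e_9:v\in\mathbb{R}^8\}$ is itself a Clifford--Killing space, and that for each unit $w\in\mathbb{R}^8$ the $7$-dimensional space $V_w=\{w\cdot u:u\in w^{\perp}\cap\mathbb{R}^8\}\subset\mathfrak{spin}(8)$ projects \emph{isomorphically} onto $\mathfrak{p}_2$ under $p$, because a nonzero kernel element would be a constant-length Killing field lying in $\mathfrak{h}=\mathfrak{spin}(7)$, which is impossible. Given $W=W_1+W_2\in\mathfrak{p}_1\oplus\mathfrak{p}_2$ with $W_1\neq 0$, one writes $W_1=w\cdot(-|v|e_9)$ for $w=v/|v|$, lifts $W_2$ to $Z_2\in V_w$, and then $W_1+Z_2$ lies in the single Clifford--Killing space $W_w$ and is therefore simple. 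This is a pure dimension-and-kernel argument; the explicit form of $\theta$, the Volper basis, and octonion multiplication never enter.

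Your route instead writes down an explicit formula for $(p\cdot q)x_0$ in the Cayley model, then uses the cohomogeneity-one $Spin(7)$-action on the unit sphere of $\mathfrak{p}$ (via $Spin(7)\to S^7$ with stabilizer $G_2$ acting transitively on $S^6\subset\mathfrak{p}_2$) to reduce to the one-parameter family $v=(a\varepsilon,b)$, which you solve by hand. This is perfectly valid and more concrete, but as you yourself note, it front-loads the difficulty into verifying the octonion formula for $(p\cdot q)x_0$ against the triality-twisted embedding of $\mathfrak{spin}(7)$. The paper's argument sidesteps exactly that obstacle: it needs only that $\mathfrak{h}$ contains no nonzero constant-length Killing field and that $\dim V_w=\dim\mathfrak{p}_2=7$, both of which are immediate.
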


\begin{proof}
We should prove that for every tangent vector $u\in S_{x_0}^{15}$ there is a Killing vector field
$U \in \mathfrak{spin}(9)$ of constant length on $S^{15}$ such that $U(x_0)=u$. We can identify
$S_{x_0}^{15}$ with $\mathfrak{p}$. Let $p:\mathfrak{g}\rightarrow \mathfrak{p}$ be corresponding linear projection.

For any unit vector $v\in \mathbb{R}^8$ let $v^{\perp}$ be the orthogonal compliment to $v$ in $\mathbb{R}^8$.
Then we get $7$-dimensional linear subspace
$V_v=\{v\cdot u, u\in v^{\perp}\}\subset \mathfrak{spin}(8)\subset \mathfrak{spin}(9)$.
Analogously we define $8$-dimensional subspace $W_w\subset \mathfrak{spin}(9)$
for any unit vector $w$ in $\mathbb{R}^9$. It is clear that $V_v$ and $W_w$ consist of simple bivectors.
Therefore by Lemma~\ref{simple1}, the linear space $V_v$ consists of Killing vector fields in $\mathfrak{so}(16)$
of constant length on $S^{15},$ in other words, $V_v$ is some $7$-dimensional \textit{Clifford-Killing space}
in $\mathfrak{so}(16)$ \cite{BerNik5}. Analogously, $W_w$ is some
$8$-dimensional Clifford-Killing space in $\mathfrak{so}(16)$.

It is clear that
\begin{equation}
\label{isom}
p: V_v\rightarrow \mathfrak{p}_2
\end{equation}
has zero kernel. Otherwise the intersection $\mathfrak{h}\cap V_v$ has non-zero Killing vector field of constant
length on $S^{15},$ which is impossible. Since $\mathfrak{p}_2$ and $V_v$ are both 7-dimensional, we get a linear
isomorphism $p:=p_v$ in (\ref{isom}). So, for any vector $u\in \mathfrak{p}_2$ and any $v\in S^{7}\subset \mathbb{R}^8$
there is a Killing vector field $K(v,u)\in V_v$ such that $p(K(v,u))=u.$ In fact, $\mathfrak{p}_1=W_{e_9}$
(see e.~g. \cite{On, Volp1}), so it is itself a Clifford-Killing space in $\mathfrak{so}(16)$.

Now let $W\in \mathfrak{p}$ be any vector. Then $W=W_1+W_2,$ where $W_i\in \mathfrak{p}_i, i=1,2.$ We can suggest that
$W_1\neq 0$ and $W_2\neq 0.$ Then $W_1=e_9\cdot v=w\cdot (-|v|)e_9\in W_{w},$ where $w=v/|v|$ for some
$v\in \mathbb{R}^8$. By the previous consideration, there is a vector $Z_2=w\cdot u\in V_w\subset W_w$ such that
$p(Z_2)=W_2.$ Then the element $W_1+Z_2\in W_w \subset \mathfrak{spin}(9)$ is a Killing vector field of constant
length on $S^{15}$ such that $p(W_1+Z_2)=W.$
\end{proof}

\begin{remark}
\label{comp}
At first, we proved Theorem \ref{CWhom_spin} with the help of a computer, presenting an explicit expression
for Killing vector fields from $\mathfrak{spin}(9)$ of constant length on $S^{15}$, projecting to any given tangent
vector in $S^{15}_{x_0}$ under additional requirement that may be only one nonzero vector component in $\mathfrak{p}_2$.
This is sufficient, since $Spin(7)$ acts transitively on spheres with zero center in $\mathfrak{p}_2$.  For vectors in
$\mathfrak{p}_1,$ only vectors themselves were obtained. We used an explicit expression for the embedding monomorphism
$\theta: \mathfrak{spin}(9)\rightarrow \mathfrak{so}(16)$, provided by T.~Friedrich in his paper \cite{Fried}. (Another
expression for $\theta$ is given in \cite{J} in terms of so-called \textit{Clifford cross-section}
$\eta: S^{15}\rightarrow V^{16}_9$, see pp. 3 and 4 in \cite{J}.)
The obtained expressions for Killing vector fields turned out to be rather complicated but verifiable by hand with some
difficulties. To get a required Killing vector field $U\in \mathfrak{spin}(9)\subset \mathfrak{so}(16)$ of constant
length on $S^{15}$ one needs to solve the matrix equation $U^2=- s^2\Id$ for desired skew-symmetric $(16\times 16)$-matrix
$U$.
This gives $(14\cdot 15)/2+14=119$ scalar equations. It is also not difficult to trace on a computer the condition
for skew-symmetric matrix $U$ to be a $\delta$-vector at the point $(1,0,\dots, 0)$ of any round Euclidean sphere,
used in the proof of Theorem \ref{delta_su}.
\end{remark}

\begin{remark}
\label{unique}
Note that the Clifford-Killing space $\mathfrak{p}_1$ in $\mathfrak{so}(16)$ has maximal possible dimension $8$;
also $15$ is minimal dimension for Euclidean spheres having Clifford-Killing spaces
of dimension more than $7$ \cite{Hus}.
\end{remark}

\begin{prop}
\label{spso}
Let $\{f_1,\dots, f_n\}$ be any orthonormal basis in $(\mathbb{R}^n,(\cdot,\cdot))$. Then a unique linear map
$L: \mathfrak{spin}(n)\rightarrow \mathfrak{so}(n)$ such that $L(f_i\cdot f_j)=2F_{ji}:=2(E_{ji}-E_{ij})$
{\rm(}where $E_{ji}$ is a $(n\times n)$-matrix, composed of zeros and only one unit at the place $(j,i)${\rm)},
is an isomorphism of
Lie algebras.
\end{prop}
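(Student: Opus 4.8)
The plan is to realize $\{f_i\cdot f_j : 1\le i<j\le n\}$ as a basis of $\mathfrak{spin}(n)$ on which $L$ acts by rescaling a permuted basis of $\mathfrak{so}(n)$, and then to check that $L$ preserves brackets by a short case analysis carried out directly inside $Cl^n$. First I would put $e_{ij}:=f_i\cdot f_j$. By Proposition \ref{bivector} every product $v\cdot w$ of orthogonal vectors lies in the linear span of the $e_{ij}$ with $i<j$, so $\mathfrak{spin}(n)$ is exactly that span. These elements are linearly independent in $Cl^n$: by the standard basis theorem for Clifford algebras (see e.g.\ \cite{Hus}) the ordered square-free monomials $f_{i_1}\cdot\cdots\cdot f_{i_k}$, $i_1<\cdots<i_k$, form a basis of $Cl^n$, and the $e_{ij}$ are precisely the degree-two ones; here one must not invoke Proposition \ref{spin}, which is derived from the present statement. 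Hence $L$ is a well-defined linear map, and since $e_{ji}=-e_{ij}$ by (\ref{minus}) and $F_{ij}=-F_{ji}$, one gets $L(e_{ab})=2F_{ba}$ for all $a\ne b$. As the matrices $F_{ba}$ with $a<b$ form the standard basis of $\mathfrak{so}(n)$, $L$ carries a basis to a basis and is therefore a linear isomorphism; in particular $\dim\mathfrak{spin}(n)=n(n-1)/2$.

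Next I would compute $[e_{ij},e_{kl}]=e_{ij}\cdot e_{kl}-e_{kl}\cdot e_{ij}$ using only $f_a\cdot f_a=-1$ and $f_a\cdot f_b=-f_b\cdot f_a$ ($a\ne b$), in three cases: (i) if $\{i,j\}\cap\{k,l\}=\emptyset$, commuting the two factors past one another needs four transpositions, so the bracket vanishes; (ii) if $\{i,j\}=\{k,l\}$ the bracket vanishes trivially; (iii) if the two pairs share exactly one index, then using $e_{ab}=-e_{ba}$ to normalize one reduces to $[e_{ab},e_{bc}]$ with $a,b,c$ pairwise distinct, and a one-line computation gives $[e_{ab},e_{bc}]=-2\,e_{ac}$. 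In particular the bracket of two basis elements stays in the span of the $e_{ij}$, so $\mathfrak{spin}(n)$ is a Lie subalgebra of $(Cl^n,[\cdot,\cdot])$, which is what makes the statement of the proposition meaningful.

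Finally I would compare with the familiar relations $[F_{ba},F_{cb}]=-F_{ca}$ in $\mathfrak{so}(n)$ (for $a,b,c$ distinct; the bracket is zero in the analogues of cases (i) and (ii)). Using $L(e_{ab})=2F_{ba}$, case (iii) yields $L([e_{ab},e_{bc}])=L(-2e_{ac})=-4F_{ca}$ on one side and $[L(e_{ab}),L(e_{bc})]=[2F_{ba},2F_{cb}]=4[F_{ba},F_{cb}]=-4F_{ca}$ on the other, while cases (i) and (ii) give $0=0$; by bilinearity $L$ then intertwines the brackets on all of $\mathfrak{spin}(n)$, and being a linear bijection it is an isomorphism of Lie algebras. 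Note that the factor $2$ is exactly what is needed here: for $L(e_{ij})=c\,F_{ji}$ the two sides of case (iii) become $-2c\,F_{ca}$ and $-c^2F_{ca}$, which match only for $c=2$ (or the trivial $c=0$).

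The only genuinely delicate point is the sign bookkeeping in the Clifford-algebra bracket computation together with this normalization match; the rest is linear algebra. A secondary subtlety is to keep the argument non-circular by deducing the linear independence of the $e_{ij}$ from the intrinsic monomial basis of $Cl^n$ rather than from the isomorphism $\mathfrak{spin}(n)\cong\mathfrak{so}(n)$, which is a consequence of this proposition.
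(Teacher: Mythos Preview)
Your proof is correct and follows essentially the same route as the paper's own argument: establish that $L$ is a linear isomorphism by sending a basis to a basis, and then verify bracket preservation by a short case analysis on the number of shared indices, computing directly in $Cl^n$ on one side and with the elementary-matrix relations in $\mathfrak{so}(n)$ on the other. Your version is in fact slightly more careful than the paper's, which simply invokes Proposition~\ref{bivector} for the basis claim (this gives only spanning), whereas you explicitly appeal to the monomial basis of $Cl^n$ to get linear independence of the $e_{ij}$ without circularly using Proposition~\ref{spin}; your remark that the factor $2$ is forced by the bracket comparison is also a nice addition not present in the paper.
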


\begin{proof}
It follows from Proposition \ref{bivector} that $f_i\cdot f_j$, where $1\leq i<j\leq n$, constitute a basis in
$\mathfrak{spin}(n);$ $F_{ji}$ for the same indices constitute a basis in $\mathfrak{so}(n)$. Therefore, $L$ is a
linear isomorphism of vector spaces. Since $f_i\cdot f_j=-f_j\cdot f_i$ for $i\neq j$, we don't need
to care further about the order
of indices $i,j$. It is known that if for all indices $i,j,k,l$, where $i\neq j$ and $k\neq l$, there are no
equal indices or there are two pairs of equal indices, then $[F_{ji},F_{lk}]=0$; one can check directly that
in this case
also $[f_i\cdot f_j,f_k\cdot f_l]=0.$ So it is sufficient to consider the case of indices $i,j,i,k.$ Then
$$[f_i\cdot f_j,f_i\cdot f_k]=f_i f_j f_i f_k-f_i f_kf_i f_j=f_jf_k-f_kf_j=2f_j\cdot f_k,$$
$$[2F_{ji},2F_{ki}]=4[(E_{ji}-E_{ij})(E_{ki}-E_{ik})-(E_{ki}-E_{ik})(E_{ji}-E_{ij})]=$$
$$4(-E_{jk}+E_{kj})=4F_{kj}=L(2(f_j\cdot f_k)).$$
These calculations imply the proposition.
\end{proof}

\begin{prop}
\label{scalar}
Let $\{f_1,\dots, f_9\}$ be an orthonormal basis in $(\mathbb{R}^9,(\cdot,\cdot))$ and
$$U=\sum_{1\leq i<j\leq 9}\gamma^U_{ij}(f_i\cdot f_j),\quad V=\sum_{1\leq i<j\leq 9}\gamma^V_{ij}(f_i\cdot f_j)$$
are two elements in $\mathfrak{spin}(9).$ Then
\begin{equation}
\label{sc}
\langle U, V \rangle=8\sum_{1\leq i<j\leq 9}\gamma^U_{ij}\gamma^V_{ij}.
\end{equation}
\end{prop}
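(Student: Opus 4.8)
The plan is to compute, by bilinearity of $\langle\cdot,\cdot\rangle$, the pairings $\langle f_i\cdot f_j,\, f_k\cdot f_l\rangle$ for all index pairs $i<j$ and $k<l$, and to show this equals $8$ when $(i,j)=(k,l)$ and $0$ otherwise; formula (\ref{sc}) then follows at once by expanding $U$ and $V$ in the basis $\{f_i\cdot f_j\colon 1\le i<j\le 9\}$ of $\mathfrak{spin}(9)$ (which is a basis by Proposition \ref{bivector}). Recall that $\langle\cdot,\cdot\rangle$ here is the restriction to $\mathfrak{spin}(9)$ of the form (\ref{innerprod}) on $\mathfrak{so}(16)$, so $\langle X,Y\rangle=-\tfrac12\trace(XY)$ for $X,Y\in\mathfrak{so}(16)$, and that $\theta$ is the restriction to $\mathfrak{spin}(9)$ of the algebra isomorphism $\phi\circ(I_9)^{-1}\colon Cl^9_{\overline{0}}\xrightarrow{\ \sim\ }\mathbb{R}(16)$. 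Hence $\langle f_i\cdot f_j,\, f_k\cdot f_l\rangle=-\tfrac12\trace\bigl(\theta(f_i\cdot f_j\cdot f_k\cdot f_l)\bigr)$, where the four-fold product is formed inside $Cl^9$.

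For the diagonal terms I would invoke Lemma \ref{simple1}: since $f_i,f_j$ are orthonormal, $W:=f_i\cdot f_j$ is a unit Killing field on $S^{15}$, so $\theta(W)^2=-\Id_{16}$, whence $\langle W,W\rangle=-\tfrac12\trace(-\Id_{16})=8$. For the off-diagonal terms I would first use the defining relation (\ref{proper}) to simplify $z:=f_i\cdot f_j\cdot f_k\cdot f_l$ in $Cl^9$: the indices occurring an odd number of times form the symmetric difference $\{i,j\}\triangle\{k,l\}$, which is empty precisely when $(i,j)=(k,l)$ (the diagonal case, $z=\pm1$), and otherwise $z$ equals $\pm1$ times a product of either two or four pairwise distinct basis vectors of $\mathbb{R}^9$. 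So it remains to prove $\trace(\theta(z))=0$ whenever $z=f_{m_1}\cdot\dots\cdot f_{m_r}$ is a product of $r\ge1$ pairwise distinct basis vectors (here $r\in\{2,4\}$, both even).

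This vanishing is the only non-formal point, and I would argue it as follows. Conjugation in $Cl^9$ by $f_{m_1}$ sends $z$ to $(-1)^{r-1}z$ (move $f_{m_1}$ past the remaining $r-1$ distinct factors after the cancellation $f_{m_1}\cdot f_{m_1}=-1$), and it restricts to an algebra automorphism of $Cl^9_{\overline{0}}\ni z$; transported through $\phi\circ(I_9)^{-1}$ it becomes an algebra automorphism of $\mathbb{R}(16)$, hence inner by the Skolem--Noether theorem, hence trace-preserving. When $r$ is even this gives $\trace(\theta(z))=\trace(\theta(-z))=-\trace(\theta(z))$, so $\trace(\theta(z))=0$. (Equivalently, the linear functional on $Cl^9_{\overline{0}}$ equal to $1$ on $1$ and to $0$ on every other Clifford basis element is a trace, hence proportional to $\trace\circ\theta$ by uniqueness of the trace on $\mathbb{R}(16)$, so it equals $\tfrac1{16}\trace\circ\theta$.)

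Combining the two cases gives $\langle f_i\cdot f_j,\, f_k\cdot f_l\rangle=8$ for $(i,j)=(k,l)$ and $0$ otherwise, whence $\langle U,V\rangle=\sum_{i<j}\sum_{k<l}\gamma^U_{ij}\gamma^V_{kl}\,\langle f_i\cdot f_j,\, f_k\cdot f_l\rangle=8\sum_{1\le i<j\le 9}\gamma^U_{ij}\gamma^V_{ij}$, which is (\ref{sc}). I note a shorter alternative that avoids the trace bookkeeping: by Proposition \ref{spso}, $L$ identifies $\mathfrak{spin}(9)$ with the simple Lie algebra $\mathfrak{so}(9)$, so the $\Ad$-invariant inner product $\langle\cdot,\cdot\rangle|_{\mathfrak{spin}(9)}$ is a positive multiple $\lambda$ of the pull-back under $L$ of the form (\ref{innerprod}) on $\mathfrak{so}(9)$; an elementary computation with the matrix units $E_{ab}$ gives $\langle F_{ji},F_{ji}\rangle=1$ and $\langle F_{ji},F_{lk}\rangle=0$ for $(i,j)\ne(k,l)$, so evaluating at $W=f_1\cdot f_2$ (where $L(W)=2F_{21}$ and $\langle W,W\rangle=8$ from Lemma \ref{simple1} as above) forces $\lambda=2$, and since $L(f_i\cdot f_j)=2F_{ji}$ one again obtains (\ref{sc}). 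In either approach the genuinely substantive input is the normalization $\langle f_i\cdot f_j, f_i\cdot f_j\rangle=8$ coming from Lemma \ref{simple1}.
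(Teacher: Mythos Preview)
Your proposal is correct. Your ``shorter alternative'' at the end is precisely the paper's own argument: it uses simplicity of $\mathfrak{so}(9)$ via Proposition~\ref{spso} to conclude that the $f_i\cdot f_j$ are pairwise $\langle\cdot,\cdot\rangle$-orthogonal (since the $F_{ji}$ are), and then fixes the normalization $\langle f_i\cdot f_j,f_i\cdot f_j\rangle=8$ from Lemma~\ref{simple1} exactly as you do.

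Your main approach, by contrast, bypasses the simplicity/invariance argument entirely and computes the off-diagonal traces directly in the Clifford algebra: you reduce $f_i\cdot f_j\cdot f_k\cdot f_l$ to $\pm$ a product of $r\in\{2,4\}$ distinct generators, then use that conjugation by one generator is transported through $\phi\circ(I_9)^{-1}$ to an inner automorphism of $\mathbb{R}(16)$ (Skolem--Noether) and flips the sign of such a product when $r$ is even, forcing the trace to vanish. This is a genuinely different route. It is more self-contained in that it never leaves the Clifford/matrix-algebra picture and does not appeal to the uniqueness of invariant forms on simple Lie algebras; the price is the extra Skolem--Noether step and the need to extend $\theta$ from $\mathfrak{spin}(9)$ to all of $Cl^9_{\overline{0}}$ (which you correctly note is just the algebra isomorphism $\phi\circ(I_9)^{-1}$). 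The paper's route is shorter and uses only Lie-theoretic facts already in play, but yours has the virtue of making the constant $8$ and the orthogonality both come from the same source, namely the trace on $\mathbb{R}(16)$.
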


\begin{proof}
Since $SO(9)$ is a simple Lie group, then any two of $\Ad(SO(9))$-invariant inner products on $\mathfrak{so}(9)$ are
proportional. Then it follows from known fact that $F_{ji}$ and $F_{lk}$ are $\langle \cdot, \cdot \rangle$-
orthogonal if and only if $\{j,i\}\neq \{l,k\}.$ This, together with Proposition~\ref{spso}, imply that
$f_i\cdot f_j$ and $f_k\cdot f_l$, where $1\leq i< j\leq 9$ and $1\leq k<l\leq 9,$ are not orthogonal if and
only if they coincide. Then we need to check (\ref{sc}) only for the cases when $U=V=f_i\cdot f_j.$ Let us identify
$f_i\cdot f_j$ with $\theta(f_i\cdot f_j).$ Then by Lemma \ref{simple1} and Proposition \ref{so}
$(f_i\cdot f_j)^2=-\Id.$ Therefore by definition (\ref{innerprod}) of $\langle \cdot, \cdot \rangle,$
$$\langle f_i\cdot f_j, f_i\cdot f_j \rangle = - \frac{1}{2}\trace ((f_i\cdot f_j)^2)=
\frac{1}{2}\trace (\Id)=8.$$
Here we have only one nonzero component $\gamma^U_{ij}=\gamma^V_{ij}=1,$ so the equality (\ref{sc}) and proposition
are proved.
\end{proof}

For the following proposition and Lemma \ref{new_spin2} we need to know bases of subspaces $\mathfrak{h}$
and $\mathfrak{p}_2.$ For this goal we use very symmetric orthogonal bases,
constructed by the first author's former student
D.E.~Volper in paper \cite{Volp1} (see pp. 226 and 227 respectively).
We need only add $1$ to all low indices for bivectors there and interchange $\mathfrak{p}_1$ and $\mathfrak{p}_2$. These
bases have interesting property: if one multiplies the vector $X_4\in \mathfrak{p}_2$ by -1,
then, after adding to any vector of the basis with number $i, 1\leq i \leq 7,$ for $\mathfrak{p}_2$ all three vectors
in $i$-th line for the basis in $\mathfrak{h}$, one gets a simple bivector, i.~e. an element in
$\mathfrak{spin}(8)\subset \mathfrak{so}(9),$ which gives a Killing vector field of constant length on $S^{15}$.

\begin{prop}
\label{vp2}
For any vector $X$ in $\mathfrak{p}_2,$
$$(X,X)_t=\frac{t}{2}\langle X, X\rangle,$$
where $(\cdot,\cdot)_t$ is inner product on $\mathfrak{p},$ corresponding to Riemannian metric
$\psi_t$ on $S^{15}$.
\end{prop}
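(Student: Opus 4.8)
The plan is to transport everything to the tangent space $T_{x_0}S^{15}$ via the identification $X\mapsto Xx_0$, which is legitimate because $Spin(9)\subset SO(16)$ acts linearly on $\mathbb R^{16}$, and under which $g_{\can}$ is the Euclidean inner product of $\mathbb R^{16}$ restricted to $T_{x_0}S^{15}$, i.e. $g_{\can}(Xx_0,Xx_0)=\|Xx_0\|^2$. The subspace $\mathfrak p_2$ is precisely the vertical space at $x_0$ of the octonionic Hopf fibration $pro:S^{15}\to S^8(1/2)$ (its fiber through $x_0$ is the totally geodesic $S^{7}$); by definition $\psi_t$ makes $pro$ a Riemannian submersion with the fiber metric multiplied by $t$, so $(X,X)_t=t\,g_{\can}(Xx_0,Xx_0)=t\|Xx_0\|^2$ for every $X\in\mathfrak p_2$. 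Hence the proposition reduces to the single identity $\langle X,X\rangle=2\|Xx_0\|^2$ on $\mathfrak p_2$. Since both sides are $\Ad(Spin(7))$-invariant positive quadratic forms and $\mathfrak p_2$ is $\Ad(Spin(7))$-irreducible, they are proportional, and it remains only to compute the constant, for which one nonzero vector suffices.

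To compute the constant I would use the $\langle\cdot,\cdot\rangle$-orthogonal bases of $\mathfrak h=\mathfrak{spin}(7)$ and $\mathfrak p_2$ from \cite{Volp1} described just before the statement. After replacing the vector $X_4$ by $-X_4$, every basis vector $Y$ of $\mathfrak p_2$ has the property that $\widetilde U:=Y+Z$ is a simple bivector of $\mathfrak{spin}(8)\subset\mathfrak{spin}(9)$, where $Z\in\mathfrak h$ is the sum of the three basis vectors of $\mathfrak h$ lying in the same ``line'' as $Y$. By Lemma \ref{simple1} this $\widetilde U$ is then a unit Killing field on $S^{15}$, so $\widetilde U^2=-\Id_{16}$ and $\langle\widetilde U,\widetilde U\rangle=-\tfrac12\trace(\widetilde U^2)=8$. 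Since $Y\perp Z$ (as $\mathfrak p_2\perp\mathfrak h$) this gives $\langle Y,Y\rangle=8-\langle Z,Z\rangle$, and a short direct computation with Volper's explicit basis together with Proposition \ref{scalar} yields $\langle Z,Z\rangle=6$, hence $\langle Y,Y\rangle=2$.

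Finally, since $Z\in\mathfrak h$ is in the isotropy subalgebra at $x_0$ we have $Zx_0=0$, so $Yx_0=\widetilde U x_0$; as $\widetilde U$ is a unit Killing field, $\|Yx_0\|=1$, and therefore $\langle Y,Y\rangle=2=2\|Yx_0\|^2$. By the proportionality observed above this forces $\langle X,X\rangle=2\|Xx_0\|^2=2\,g_{\can}(Xx_0,Xx_0)$ for all $X\in\mathfrak p_2$, and combining with $(X,X)_t=t\,g_{\can}(Xx_0,Xx_0)$ we get $(X,X)_t=\tfrac t2\langle X,X\rangle$. The main obstacle I anticipate is purely bookkeeping: correctly identifying $\mathfrak p_2$ with the vertical space of $pro$ and carrying out the evaluation $\langle Z,Z\rangle=6$ from Volper's basis; everything else is formal. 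As an independent check, the constant $2$ also follows conceptually from the triality splitting $\mathbb R^{16}=\Delta_8^{+}\oplus\Delta_8^{-}$: the restriction of $\langle\cdot,\cdot\rangle$ to $\mathfrak{spin}(8)$ is the sum of the two half-spinor trace forms, each equal — by triality — to the vector trace form of $\mathfrak{so}(8)$, and on the isotropy complement $\mathfrak p_2$ of $\mathfrak{spin}(7)$ in $\mathfrak{spin}(8)$ that form computes to $2\|Xx_0\|^2$.
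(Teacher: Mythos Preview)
Your approach is the same as the paper's --- reduce by $\Ad(Spin(7))$-irreducibility to a single Volper basis vector, add the three companion $\mathfrak h$-vectors to produce a simple bivector, and compare the two quadratic forms --- but your numbers are all off by a factor of $16$, and only a lucky cancellation saves the conclusion.

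The error is the step ``by Lemma~\ref{simple1} this $\widetilde U$ is then a unit Killing field''. Lemma~\ref{simple1} requires a product of \emph{orthonormal} vectors, whereas ``simple'' (Definition before Lemma~\ref{simple}) means only a product of orthogonal ones. Concretely, with $X_1=e_7e_8-e_1e_2-e_3e_4-e_5e_6$ and $Z=(e_1e_2+e_7e_8)+(e_3e_4+e_7e_8)+(e_5e_6+e_7e_8)$ one gets $\widetilde U=X_1+Z=4\,e_7e_8$, so $\widetilde U^{\,2}=-16\,\Id$, $\langle\widetilde U,\widetilde U\rangle=128$, and $\|\widetilde Ux_0\|=4$. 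Proposition~\ref{scalar} gives directly $\langle X_1,X_1\rangle=8\cdot4=32$ and $\langle Z,Z\rangle=8\cdot12=96$ (note the three summands of $Z$ share the term $e_7e_8$ and are \emph{not} pairwise $\langle\cdot,\cdot\rangle$-orthogonal, so your value $6$ could not have come from a clean computation in any case). With the correct figures you still obtain $\langle X_1,X_1\rangle=32=2\cdot16=2\|X_1x_0\|^2$, hence $(X_1,X_1)_t=16t=\tfrac{t}{2}\langle X_1,X_1\rangle$, which is exactly the paper's computation. The paper sidesteps $\langle Z,Z\rangle$ entirely: it reads $\langle X_1,X_1\rangle=32$ straight from Proposition~\ref{scalar} and gets $\|X_1x_0\|^2=16$ from $\widetilde U^{\,2}=-16\,\Id$ via Proposition~\ref{so}.
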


\begin{proof}
Since $\Ad(Spin(7))$ acts irreducibly on $\mathfrak{p}_2,$ it is sufficient to check the statement for any nonzero
vector $X\in \mathfrak{p}_2.$ Let us take the first vector
$$X_1=e_7e_8-e_1e_2-e_3e_4-e_5e_6,$$
of the basis in $\mathfrak{p}_2,$ constructed in \cite{Volp1}, see also \cite{Fried}. Using the equality (\ref{sc}),
we get $\langle X_1, X_1\rangle=32.$ Adding to $X_1$ the sum
$$Y:=(e_1e_2+e_7e_8)+(e_3e_4+e_7e_8)+(e_5e_6+e_7e_8)$$
of three vectors from the basis in $\mathfrak{h}=\mathfrak{spin}(7),$ we get the vector
$$V=X_1+Y=4e_7e_8\in \mathfrak{spin}(8)\subset \mathfrak{spin}(9)$$
of constant length $C$ on $S^{15}.$ By Lemma \ref{simple1}, $C^2=16.$ It is clear that $X_1 x_0=V x_0$
for initial point $x_0\in S^{15}.$ Taking into account that $X_1 x_0$ is tangent to the ($7$-dimensional)
fiber at $x_0$ of the Hopf fibration
$pro: S^{15}\rightarrow S^8$, we get that $(X_1,X_1)_t=16t$. This proves the proposition.
\end{proof}

\begin{corollary}
\label{scpsi}
The inner product $(\cdot,\cdot)_t$ on $\mathfrak{p}$, corresponding to the Riemannian metric $\psi_t$ on
$Spin(9)/Spin(7)$,
is defined by formula
\begin{equation}\label{spingen}
(\cdot, \cdot)_t=\frac{1}{8}\langle \cdot , \cdot \rangle |_{\mathfrak{p}_1}+
\frac{t}{2} \langle \cdot , \cdot \rangle |_{\mathfrak{p}_2}.
\end{equation}
\end{corollary}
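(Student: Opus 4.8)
The plan is to combine Proposition~\ref{vp2} with a single length computation on the module $\mathfrak{p}_1$. Proposition~\ref{vp2} already gives $(X,X)_t=\frac{t}{2}\langle X,X\rangle$ for every $X\in\mathfrak{p}_2$, so, in view of the decomposition (\ref{spdec}), it remains only to show that $(\cdot,\cdot)_t|_{\mathfrak{p}_1}=\frac{1}{8}\langle\cdot,\cdot\rangle|_{\mathfrak{p}_1}$. Since $\Ad(Spin(7))$ acts irreducibly on $\mathfrak{p}_1$ and both bilinear forms in question are $\Ad(Spin(7))$-invariant, it suffices to verify this identity on one nonzero vector of $\mathfrak{p}_1$.

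The key observation is that $\mathfrak{p}_1=W_{e_9}=\{e_9\cdot u:\ u\in e_9^{\perp}=\mathbb{R}^8\}$ consists of simple bivectors, and that $\mathfrak{p}_1$, being the $\langle\cdot,\cdot\rangle$-orthogonal complement of $\mathfrak{p}_2$ inside $\mathfrak{p}$, is exactly the horizontal subspace at $x_0$ of the Hopf fibration $pro\colon S^{15}\to S^8(1/2)$ (whose fiber at $x_0$ is tangent to $\mathfrak{p}_2$, as already used in the proof of Proposition~\ref{vp2}). By the very definition of $\psi_t$ in Section~\ref{invest} --- the canonical metric $g_{\ccan}$ with metric tensor rescaled by the parameter $t$ only along the fibers of $pro$ --- the restriction of $\psi_t$ to $\mathfrak{p}_1$ is independent of $t$ and coincides with the restriction of $g_{\ccan}$; in particular $(\cdot,\cdot)_t|_{\mathfrak{p}_1}=(\cdot,\cdot)_1|_{\mathfrak{p}_1}$.

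Now take $W=e_9\cdot e_1\in\mathfrak{p}_1$. Since $e_9$ and $e_1$ are orthonormal, $W^2=-1$, so by Lemma~\ref{simple1} (equivalently, by Proposition~\ref{so}) the Killing field $W$ has constant length $1$ on $(S^{15},g_{\ccan})$; hence $(W,W)_t=(W,W)_1=\|W x_0\|^2_{g_{\ccan}}=1$. On the other hand $W=-(e_1\cdot e_9)$ has exactly one nonzero component in the basis $\{f_i\cdot f_j\}$ of $\mathfrak{spin}(9)$, so formula (\ref{sc}) of Proposition~\ref{scalar} gives $\langle W,W\rangle=8$. Therefore $(W,W)_t=\frac{1}{8}\langle W,W\rangle$, and by the irreducibility remark above $(\cdot,\cdot)_t|_{\mathfrak{p}_1}=\frac{1}{8}\langle\cdot,\cdot\rangle|_{\mathfrak{p}_1}$. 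Combined with Proposition~\ref{vp2}, this is precisely formula (\ref{spingen}).

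The only point that needs some care is the identification of $\mathfrak{p}_1$ with the horizontal distribution of $pro$ and the ensuing $t$-independence of $\psi_t|_{\mathfrak{p}_1}$; once this is granted (it follows at once from the construction of $\psi_t$ and from the structure of (\ref{spdec})), the rest is a one-line evaluation. Alternatively, one can avoid the submersion language entirely and simply observe that in all metrics $\psi_t$ the module $\mathfrak{p}_1$ carries one and the same inner product, namely the one it inherits from the round metric $\psi_1=g_{\ccan}$, on which the computation with $W=e_9\cdot e_1$ applies verbatim.
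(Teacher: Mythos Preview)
Your proof is correct and follows essentially the same route as the paper's: invoke Proposition~\ref{vp2} for $\mathfrak{p}_2$, then test a single simple bivector in $\mathfrak{p}_1=W_{e_9}$ (the paper picks $e_8\cdot e_9$, you pick $e_9\cdot e_1$), compute $\langle W,W\rangle=8$ via (\ref{sc}), and use that $W$ is a unit Killing field orthogonal to the fiber of $pro$ to get $(W,W)_t=1$. Your version is slightly more explicit about the irreducibility of $\mathfrak{p}_1$ under $\Ad(Spin(7))$ and about the $t$-independence of $\psi_t|_{\mathfrak{p}_1}$, but the argument is the same.
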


\begin{proof}
In view of Proposition \ref{vp2}, we need to check equality (\ref{spingen}) only for any nonzero vector
$W\in \mathfrak{p}_1.$ Let us take $W=e_8e_9.$ Then by formula (\ref{sc}), $\langle W,W\rangle=8.$ On the
other hand, in view of Lemma \ref{simple1}, $W$ defines unit Killing vector field $W$ on $S^{15},$ and $Wx_0$
is orthogonal to the fiber of the Hopf fibration $pro$ at the point $x_0$. Therefore, $(W,W)_t=1$. This
implies equality (\ref{spingen}) for $W$.
\end{proof}

\begin{remark}
It follows from Propositions \ref{spso} and \ref{scalar} that one needs to multiply the coefficients in (\ref{spingen})
by 2 when using the inner product (\ref{innerprod}) for the Lie algebra $\mathfrak{spin}(9)\cong \mathfrak{so}(9)$
itself.
\end{remark}

\smallskip

Any $Spin(9)$-invariant metric on $S^{15}=Spin(9)/Spin(7)$ is generated (up to homothety) by the inner product on
$\mathfrak{p}$ of the form (\ref{spingen}) for some $t>0$. Note that for $t=1$ we get a metric of constant curvature $1$
on $S^{15}$ and for $t=1/4$ we get a $Spin(9)$-normal homogeneous metric on $S^{15}=Spin(9)/Spin(7)$.
Now, we obtain directly from Corollary \ref{scpsi}, Theorems \ref{body}, \ref{CWhom_spin} and Corollary \ref{cor2}

\begin{prop}\label{new_spin1}
The homogeneous Riemannian space $(S^{15}=Spin(9)/Spin(7),\psi_t)$ is
$Spin(9)$-generalized normal homogeneous for all $t \in [1/4,1]$.
\end{prop}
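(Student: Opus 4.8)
The plan is to realize $\psi_t$ for the two endpoint values $t=1/4$ and $t=1$ as $Spin(9)$-generalized normal homogeneous metrics and then fill in the whole interval $[1/4,1]$ by the convexity statement of Corollary \ref{cor2}.

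First I would record, via Corollary \ref{scpsi}, that the $Spin(9)$-invariant metric $\psi_t$ on $S^{15}=Spin(9)/Spin(7)$ is generated by the $\Ad(Spin(7))$-invariant inner product $(\cdot,\cdot)_t=\frac18\langle\cdot,\cdot\rangle|_{\mathfrak{p}_1}+\frac t2\langle\cdot,\cdot\rangle|_{\mathfrak{p}_2}$ on $\mathfrak{p}=\mathfrak{p}_1\oplus\mathfrak{p}_2$, where the decomposition is (\ref{spdec}) and each $\mathfrak{p}_i$ is $\Ad(Spin(7))$-irreducible. The key structural point is that the coefficient on $\mathfrak{p}_1$ is fixed at $1/8$ for all $t$, and only the coefficient $t/2$ on $\mathfrak{p}_2$ varies; this is precisely the form to which Corollary \ref{cor2} applies.

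Second, I would dispose of the two endpoints. For $t=1/4$ the inner product $(\cdot,\cdot)_{1/4}=\frac18\bigl(\langle\cdot,\cdot\rangle|_{\mathfrak{p}_1}+\langle\cdot,\cdot\rangle|_{\mathfrak{p}_2}\bigr)$ is a rescaling of the restriction to $\mathfrak{p}$ of the $\Ad(Spin(9))$-invariant inner product (\ref{innerprod}), so $\psi_{1/4}$ is $Spin(9)$-normal homogeneous (also visible in Table 1) and hence, by Theorem \ref{body}, $Spin(9)$-generalized normal homogeneous. For $t=1$ the metric $\psi_1$ has constant sectional curvature $1$; by Theorem \ref{CWhom_spin} the round sphere $S^{15}$ is $Spin(9)$-Clifford-Wolf homogeneous, and since any Clifford-Wolf homogeneous metric space is $\delta$-homogeneous, $\psi_1$ is $Spin(9)$-generalized normal homogeneous.

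Finally I would apply Corollary \ref{cor2} with $l=2$, $\alpha=1/8$, taking the first metric with $\beta=1/8$ (this is $\psi_{1/4}$) and the second with $\gamma=1/2$ (this is $\psi_1$): for every $r\in[0,1]$ the metric $\frac18\langle\cdot,\cdot\rangle|_{\mathfrak{p}_1}+\bigl((1-r)\tfrac18+r\tfrac12\bigr)\langle\cdot,\cdot\rangle|_{\mathfrak{p}_2}$ is again $Spin(9)$-generalized normal homogeneous. As $r$ ranges over $[0,1]$ the value $(1-r)\tfrac18+r\tfrac12$ sweeps the interval $[\tfrac18,\tfrac12]$, i.e. $t/2$ sweeps $[\tfrac18,\tfrac12]$ and $t$ sweeps $[1/4,1]$, which gives the proposition. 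I do not expect a genuine obstacle here: the substantive input is hidden in Theorem \ref{CWhom_spin}, which is already proved, and in the normality of $\psi_{1/4}$; the only thing that must be checked is that the two endpoint metrics share the common $\mathfrak{p}_1$-coefficient $1/8$, which is exactly what Corollary \ref{scpsi} supplies and what makes the one-parameter interpolation of Corollary \ref{cor2} legitimate.
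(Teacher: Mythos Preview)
Your proposal is correct and follows essentially the same approach as the paper, which states that the proposition follows directly from Corollary \ref{scpsi}, Theorems \ref{body} and \ref{CWhom_spin}, and Corollary \ref{cor2}. You have simply made explicit the interpolation argument that the paper leaves implicit.
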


\begin{lemma}\label{new_spin2}
The metric $\psi_t$ with $t>1$ is not $Spin(9)$-generalized normal homogeneous.
\end{lemma}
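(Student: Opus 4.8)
The plan is to apply the necessary condition for $G$-generalized normal homogeneity given by Proposition~22 in \cite{BerNik}, exactly as it was used in the proofs of Theorems~\ref{u_main} and~\ref{sp_sp_main}. By Corollary~\ref{scpsi} (with Proposition~\ref{scalar} for the normalization), the metric $\psi_t$ is generated by the $\Ad(Spin(7))$-invariant inner product $x_1\langle\cdot,\cdot\rangle|_{\mathfrak{p}_1}+x_2\langle\cdot,\cdot\rangle|_{\mathfrak{p}_2}$ on $\mathfrak{p}$ with $x_1=1/8$, $x_2=t/2$, where $\langle\cdot,\cdot\rangle$ is the restriction to $\mathfrak{spin}(9)\subset\mathfrak{so}(16)$ of the inner product~(\ref{innerprod}) and the decomposition is~(\ref{spdec}). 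Thus, if $(S^{15},\psi_t)$ were $Spin(9)$-generalized normal homogeneous, then for every $X\in\mathfrak{p}_1$ and $Y\in\mathfrak{p}_2$
$$x_1\langle[[Y,X],X]_{\mathfrak{h}},[[Y,X],X]_{\mathfrak{h}}\rangle\ \geq\ (x_2-x_1)\,\langle[[Y,X],X]_{\mathfrak{p}_2},[[Y,X],X]_{\mathfrak{p}_2}\rangle,$$
and it suffices to exhibit one pair $(X,Y)$ violating this whenever $t>1$.

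Concretely, I would take $Y=X_1=e_7e_8-e_1e_2-e_3e_4-e_5e_6\in\mathfrak{p}_2$ (the first basis vector used in the proof of Proposition~\ref{vp2}) and $X=e_7e_9\in\mathfrak{p}_1=W_{e_9}$. Using the bivector bracket of Proposition~\ref{spin}, the identity $[f_if_j,f_if_k]=2f_jf_k$ from the proof of Proposition~\ref{spso}, and the vanishing of $[f_if_j,f_kf_l]$ for disjoint index pairs, a short computation gives $[Y,X]=[e_7e_8,e_7e_9]=2e_8e_9\in\mathfrak{p}_1$ and then $[[Y,X],X]=[2e_8e_9,e_7e_9]=-4\,e_7e_8\in\mathfrak{spin}(8)$. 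The point is that the $\mathfrak{h}$-$\mathfrak{p}_2$ splitting of $e_7e_8$ is already available: by the ``simple bivector'' property of Volper's adapted bases recalled just before Proposition~\ref{vp2}, $4e_7e_8=X_1+Y_1$ with $Y_1=e_1e_2+e_3e_4+e_5e_6+3e_7e_8\in\mathfrak{h}=\mathfrak{spin}(7)$, and since $\mathfrak{p}_2\perp\mathfrak{h}$ we read off $[[Y,X],X]_{\mathfrak{p}_2}=-X_1$ and $[[Y,X],X]_{\mathfrak{h}}=-Y_1$. By formula~(\ref{sc}), $\langle X_1,X_1\rangle=32$ and $\langle Y_1,Y_1\rangle=96$, so the displayed inequality reduces to $\tfrac18\cdot96\geq(\tfrac t2-\tfrac18)\cdot32$, i.e. $t\leq1$; hence $\psi_t$ with $t>1$ is not $Spin(9)$-generalized normal homogeneous.

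The only delicate point — the ``hard part'' — is choosing $X,Y$ that hit the boundary of the inequality, i.e. for which $\|[[Y,X],X]_{\mathfrak{h}}\|^2/\|[[Y,X],X]_{\mathfrak{p}_2}\|^2$ equals exactly the critical value $3$. Because $\psi_1$ is the constant-curvature-$1$ metric, which is $Spin(9)$-Clifford--Wolf homogeneous by Theorem~\ref{CWhom_spin} and hence $Spin(9)$-generalized normal homogeneous, the inequality is necessarily tight at $t=1$; a careless choice of $(X,Y)$ gives a ratio strictly greater than $3$ and therefore only a non-sharp bound. The device forcing sharpness is to arrange $[[Y,X],X]$ to be a scalar multiple of a single ``rectangular'' bivector $e_ae_b$ with $1\le a<b\le7$, whose triality-twisted $\mathfrak{spin}(7)\oplus\mathfrak{p}_2$-decomposition is precisely what Volper's basis provides; $e_7e_8$ above is one such choice, any other working by $\Ad(Spin(7))$-symmetry. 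Everything else — the two bracket computations and the two norm evaluations via~(\ref{sc}) — is routine.
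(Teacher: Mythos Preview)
Your proof is correct and follows essentially the same approach as the paper's: both apply Proposition~22 of \cite{BerNik} with $x_1=1/8$, $x_2=t/2$, and exhibit a specific pair $(X,Y)\in\mathfrak{p}_1\times\mathfrak{p}_2$ for which $[[Y,X],X]$ is a multiple of a single bivector $e_ae_b\in\mathfrak{spin}(8)$, then split it as $\mathfrak{p}_2\oplus\mathfrak{h}$ via Volper's basis to obtain the norms $32$ and $96$ and hence $t\le1$. The only difference is cosmetic: the paper takes $X=e_2e_9$, $Y=-X_1$ and lands on $-4e_1e_2$, while you take $X=e_7e_9$, $Y=X_1$ and land on $-4e_7e_8$; the numerics and conclusion are identical. (One tiny slip in your closing discussion: you write ``$e_ae_b$ with $1\le a<b\le7$'' but your own $e_7e_8$ has $b=8$; presumably $b\le8$ was intended.)
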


\begin{proof}
Suppose that the metric $\psi_t$ is $Spin(9)$-generalized normal homogeneous
on $S^{15}=Spin(9)/Spin(7)$.
By Proposition 22 in \cite{BerNik} we know that
for every $X \in
\mathfrak{p}_1$, $Y \in \mathfrak{p}_2$ the inequality
$$
x_1 \langle [[Y,X],X]_\mathfrak{h},[[Y,X],X]_\mathfrak{h} \rangle
\geq (x_2-x_1) \langle
[[Y,X],X]_{\mathfrak{p}_2},[[Y,X],X]_{\mathfrak{p}_2} \rangle.
$$
holds, where $x_1=\frac{1}{8}$ and $x_2=\frac{t}{2}$ by Corollary \ref{scpsi}.

Now we consider $X=e_2\cdot e_9 \in \mathfrak{p}_1$ and
$Y=e_1\cdot e_2+e_3\cdot e_4+e_5\cdot e_6-e_7\cdot e_8 \in \mathfrak{p}_2$ (this is the vector $-X_1$ on p. 227 in
\cite{Volp1}, see also \cite{Fried}). It is easy to check that $[[Y,X],X]]=-4 e_1\cdot e_2=Z-Y$, where the vector
$Z=-3e_1\cdot e_2+e_3\cdot e_4+e_5\cdot e_6-e_7\cdot e_8=
-3(e_1\cdot e_2+e_7\cdot e_8)+(e_3\cdot e_4+e_7\cdot e_8)+(e_5\cdot e_6+e_7\cdot e_8)\in \mathfrak{spin}(7)$
(all vectors appeared here in brackets are basis vectors for $\mathfrak{h}$ in the first line of basis vectors for
$\mathfrak{h}$ on p. 226 in \cite{Volp1}, see also \cite{Fried}).
Since $\langle -Y, -Y \rangle=32$ and $\langle Z, Z \rangle=96$, then we get
$\frac{1}{8}\cdot 96 \geq \frac{4t-1}{8}\cdot 32$, i.~e. $t\leq 1$.
\end{proof}

\bigskip

\begin{lemma}\label{new_spin3}
The metric $\psi_t$ with $t<1/4$ is not $Spin(9)$-generalized normal homogeneous.
\end{lemma}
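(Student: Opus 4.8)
The plan is to obtain the bound $t \geq 1/4$ by invoking Proposition 22 in \cite{BerNik} once more, but with the modules $\mathfrak{p}_1$ and $\mathfrak{p}_2$ playing the opposite roles to the ones they play in the proof of Lemma \ref{new_spin2} (this general form of Proposition 22 is the same one used in the proofs of Theorems \ref{u_main} and \ref{sp_sp_main}). So suppose $\psi_t$ is $Spin(9)$-generalized normal homogeneous. Then, taking $X \in \mathfrak{p}_2$ and $Y \in \mathfrak{p}_1$, Proposition 22 in \cite{BerNik} yields
$$
x_2 \langle [[Y,X],X]_{\mathfrak{h}},[[Y,X],X]_{\mathfrak{h}} \rangle \geq (x_1 - x_2) \langle [[Y,X],X]_{\mathfrak{p}_1},[[Y,X],X]_{\mathfrak{p}_1} \rangle,
$$
where this time $x_2 = t/2$ is the coefficient of the module $\mathfrak{p}_2$ containing $X$ and $x_1 = 1/8$ is the coefficient of the module $\mathfrak{p}_1$ containing $Y$ (see Corollary \ref{scpsi}).

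The point that makes this work is the inclusion $[\mathfrak{p}_2,\mathfrak{p}_1] \subseteq \mathfrak{p}_1$ from (\ref{spdec}): for $X \in \mathfrak{p}_2$ and $Y \in \mathfrak{p}_1$ one has $[[Y,X],X] = (\ad X)^2 Y \in \mathfrak{p}_1$, so the $\mathfrak{h}$-component of $[[Y,X],X]$ is zero while its $\mathfrak{p}_1$-component is all of it. Hence the displayed inequality degenerates to $0 \geq (x_1 - x_2)\,\langle [[Y,X],X],[[Y,X],X]\rangle$, which forces $x_1 \leq x_2$, i.e. $t \geq 1/4$, provided we can point to a single pair $X \in \mathfrak{p}_2$, $Y \in \mathfrak{p}_1$ for which $[[Y,X],X] \neq 0$.

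For that I would reuse the two vectors from the proof of Lemma \ref{new_spin2}, with their roles swapped: put $X = e_1\cdot e_2 + e_3\cdot e_4 + e_5\cdot e_6 - e_7\cdot e_8 \in \mathfrak{p}_2$ and $Y = e_2\cdot e_9 \in \mathfrak{p}_1$. Using that bivectors with disjoint index sets commute and the identity $[f_i\cdot f_j,\, f_i\cdot f_k] = 2\, f_j\cdot f_k$ from the proof of Proposition \ref{spso}, one computes $[Y,X] = 2\, e_1\cdot e_9$ and then $[[Y,X],X] = -4\, e_2\cdot e_9$, which is nonzero and lies in $\mathfrak{p}_1$, with $\langle [[Y,X],X],[[Y,X],X]\rangle = 16\cdot 8 = 128$ by Proposition \ref{scalar}. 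The inequality above then reads $0 \geq 128\,(1/8 - t/2)$, i.e. $t \geq 1/4$; hence $\psi_t$ with $t < 1/4$ is not $Spin(9)$-generalized normal homogeneous.

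I do not expect a real obstacle here: the only nontrivial step is noticing that reversing the two modules in Proposition 22 kills the $\mathfrak{h}$-term identically and thereby turns the estimate into the one-sided bound $x_1 \leq x_2$; the bracket computations are then elementary and are essentially the mirror image of those carried out in the proof of Lemma \ref{new_spin2}.
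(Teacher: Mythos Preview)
Your argument is correct. The bracket computations are right, the inclusion $[\mathfrak{p}_2,\mathfrak{p}_1]\subset\mathfrak{p}_1$ does kill the $\mathfrak{h}$-term identically, and Proposition~22 in \cite{BerNik} is indeed symmetric in the two modules, so swapping the roles of $\mathfrak{p}_1$ and $\mathfrak{p}_2$ is legitimate.

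The paper, however, proceeds differently: instead of reusing Proposition~22, it invokes Corollary~\ref{deltal2}. The point there is that for any nonzero $X\in\mathfrak{p}_2$ the centralizer $Z_{\mathfrak{h}}(X)$ is isomorphic to $\mathfrak{so}(6)$ (since $(\mathfrak{spin}(7)\oplus\mathfrak{p}_2,\mathfrak{spin}(7))$ is the symmetric pair $(\mathfrak{so}(8),\mathfrak{so}(7))$), hence has trivial center and full rank in $\mathfrak{h}$; so $X$ itself is forced to be a $\delta$-vector, and the $\delta$-vector inequality $(X,[U,[U,X]]_{\mathfrak{p}})+([U,X]_{\mathfrak{p}},[U,X]_{\mathfrak{p}})\leq 0$ then gives $t\geq 1/4$ after taking $U\in\mathfrak{p}_1$ with $[U,X]\neq 0$. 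Your route is more elementary and pleasingly mirror-symmetric to the upper-bound argument of Lemma~\ref{new_spin2}; the paper's route is more structural, in that it actually identifies $X\in\mathfrak{p}_2$ as a $\delta$-vector and does not require computing a specific double bracket. Both ultimately exploit the same inclusion $[\mathfrak{p}_2,\mathfrak{p}_1]\subset\mathfrak{p}_1$, and this parallel is also visible in the proof of Theorem~\ref{su_main_del}, where the paper uses Corollary~\ref{deltal2} for the lower bound in the $SU(n+1)$ case in the same way.
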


\begin{proof}
The metric $\psi_t$ is generated by the inner product (\ref{spingen}).
Since the pair $(\mathfrak{spin}(7)\oplus \mathfrak{p}_2,\mathfrak{spin}(7))$ is the symmetric pair
$(\mathfrak{so}(8),\mathfrak{so}(7))$
(that corresponds to a two-point homogeneous space $S^7=SO(8)/SO(7)$),
then for any non-trivial $X \in \mathfrak{p}_2$ we see that $Z_{\mathfrak{h}}(X)$ (see Lemma \ref{deltal1})
is isomorphic to $\mathfrak{so}(6)$.
By Corollary \ref{deltal2} we get that $X$ is $\delta$-vector and
$(X,[U,[U,X]]_{\mathfrak{p}})+([U,X]_{\mathfrak{p}},[U,X]_{\mathfrak{p}})\leq 0$
for all $U\in \mathfrak{spin}(9)$. Take any $U\in \mathfrak{p}_1$ such that $[U,X]\neq 0$,
then $[U,X] \in \mathfrak{p}_1$ and
\begin{eqnarray*}
0\geq (X,[U,[U,X]]_{\mathfrak{p}})+([U,X]_{\mathfrak{p}},[U,X]_{\mathfrak{p}})=\\
\frac{t}{2}\langle X,[U,[U,X]]\rangle+\frac{1}{8}\langle [U,X]_{\mathfrak{p}},[U,X]_{\mathfrak{p}}\rangle=\\
-\frac{t}{2}\langle [U,X],[U,X] \rangle+\frac{1}{8}\langle [U,X],[U,X]\rangle=\frac{1-4t}{8}\langle [U,X],[U,X] \rangle.
\end{eqnarray*}
Since $[U,X]\neq 0$, we get $t \geq 1/4$.
\end{proof}

From Proposition \ref{new_spin1}, Lemma \ref{new_spin2}, and Lemma \ref{new_spin3} we obviously get

\begin{theorem}\label{new_spin}
The homogeneous Riemannian space $(S^{15}=Spin(9)/Spin(7),\psi_t)$ is
$Spin(9)$-generalized normal homogeneous if and only if $t \in [1/4,1]$.
\end{theorem}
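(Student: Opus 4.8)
The plan is to prove the two inclusions separately, in each case reducing to results already available in this section. For the ``if'' direction I would simply invoke Proposition \ref{new_spin1}, which gives that $\psi_t$ is $Spin(9)$-generalized normal homogeneous for every $t\in[1/4,1]$. Unwinding that proof, it rests on three facts: first, $\psi_1$ is the round metric, so by Theorem \ref{CWhom_spin} the sphere $S^{15}$ is $Spin(9)$-Clifford-Wolf homogeneous and hence $\psi_1$ is $Spin(9)$-generalized normal homogeneous; second, $\psi_{1/4}$ is $Spin(9)$-normal homogeneous by Table 1, hence $Spin(9)$-generalized normal homogeneous by Theorem \ref{body}; third, by Corollary \ref{scpsi} the family $\psi_t$ differs from these two endpoints only by a rescaling on the single irreducible module $\mathfrak{p}_2$, so the dual $2$-mean construction of Theorem \ref{fireysum}, in the packaged form of Corollary \ref{cor2}, fills in the whole closed interval $[1/4,1]$.

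For the ``only if'' direction I would prove the two necessary bounds via the two obstruction lemmas. To obtain $t\le 1$ I use Lemma \ref{new_spin2}: the necessary condition ``Proposition 22 in \cite{BerNik}'' requires $x_1\langle[[Y,X],X]_{\mathfrak h},[[Y,X],X]_{\mathfrak h}\rangle\ge (x_2-x_1)\langle[[Y,X],X]_{\mathfrak p_2},[[Y,X],X]_{\mathfrak p_2}\rangle$ for all $X\in\mathfrak p_1$, $Y\in\mathfrak p_2$, where by Corollary \ref{scpsi} the coefficients are $x_1=1/8$ and $x_2=t/2$; choosing the explicit bivectors $X=e_2\cdot e_9\in\mathfrak p_1$ and $Y=e_1\cdot e_2+e_3\cdot e_4+e_5\cdot e_6-e_7\cdot e_8\in\mathfrak p_2$ (read off from Volper's symmetric orthogonal bases of $\mathfrak p_1,\mathfrak p_2,\mathfrak{spin}(7)$), one computes $[[Y,X],X]=-4\,e_1\cdot e_2$, splits it into its $\mathfrak h$- and $\mathfrak p_2$-components, evaluates the norms by Proposition \ref{scalar}, and reads off $t\le 1$. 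To obtain $t\ge 1/4$ I use Lemma \ref{new_spin3}: since $(\mathfrak{spin}(7)\oplus\mathfrak p_2,\mathfrak{spin}(7))$ is the symmetric pair $(\mathfrak{so}(8),\mathfrak{so}(7))$, for any nonzero $X\in\mathfrak p_2$ one has $Z_{\mathfrak h}(X)\cong\mathfrak{so}(6)$, which has trivial center and full rank in $\mathfrak h=\mathfrak{spin}(7)$, so Corollary \ref{deltal2} forces $X$ to be a $\delta$-vector and gives $(X,[U,[U,X]]_{\mathfrak p})+([U,X]_{\mathfrak p},[U,X]_{\mathfrak p})\le 0$ for all $U\in\mathfrak{spin}(9)$; taking $U\in\mathfrak p_1$ with $[U,X]\ne 0$ (so $[U,X]\in\mathfrak p_1$) and substituting the coefficients $1/8$ and $t/2$ from Corollary \ref{scpsi} yields $\frac{1-4t}{8}\langle[U,X],[U,X]\rangle\le 0$, i.e.\ $t\ge 1/4$. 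Combining Proposition \ref{new_spin1}, Lemma \ref{new_spin2}, and Lemma \ref{new_spin3} then gives the theorem.

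The genuine difficulty is not the assembly above but the ingredients it depends on. The hardest step is Theorem \ref{CWhom_spin}: producing, for every tangent vector at $x_0$, a Killing field in $\mathfrak{spin}(9)\subset\mathfrak{so}(16)$ of constant length on $S^{15}$ that realizes it, which requires the Clifford-algebra identification $Cl^8\cong\mathbb R(16)$, the simple-bivector criterion of Lemma \ref{simple1}, the recognition of $\mathfrak p_1$ and of the $7$-dimensional Clifford--Killing spaces $V_v\subset\mathfrak{spin}(8)$, and the triality description of $\mathfrak h=\mathfrak{spin}(7)$. Close behind it in difficulty are the explicit bracket computations in Lemmas \ref{new_spin2} and \ref{new_spin3}, which are only feasible once Volper's symmetric bases for $\mathfrak p_1,\mathfrak p_2,\mathfrak{spin}(7)$ are in hand and once Corollary \ref{scpsi} has pinned down the exact coefficients $1/8$ and $t/2$ in $\psi_t$.
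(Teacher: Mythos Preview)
Your proposal is correct and follows exactly the paper's approach: the theorem is obtained by combining Proposition \ref{new_spin1} (for sufficiency), Lemma \ref{new_spin2} (for $t\le 1$), and Lemma \ref{new_spin3} (for $t\ge 1/4$), and you have accurately unwound the contents of each of these ingredients. The paper's own proof is a single sentence invoking precisely these three results.
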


\section{Spaces of unit Killing vector fields on $S^{15},$ connected with $\mathfrak{spin}(9)$}
\label{Grassmann}

\begin{prop}
\label{int3}
The Grassmannian $G_+(9,2)=SO(9)/(SO(2)\times SO(7))$ of oriented real 2-planes in $\mathbb{R}^9$ can be
interpreted as the space $UKVF(\mathfrak{spin}(9),15)$ of all Killing vector fields of unit length on $S^{15}$,
lying in the Lie algebra $\mathfrak{spin}(9)\subset \mathfrak{so}(16)$.
\end{prop}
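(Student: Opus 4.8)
The plan is to read off $UKVF(\mathfrak{spin}(9),15)$ from Lemma \ref{simple1}, translate it through the isomorphism of Propositions \ref{bivector} and \ref{spso} into a set of decomposable $2$-vectors, and then apply the classical correspondence between decomposable $2$-vectors and oriented $2$-planes. Concretely, by Lemma \ref{simple1} the space $UKVF(\mathfrak{spin}(9),15)$ consists precisely of the simple bivectors $v\cdot w$ with $v,w$ orthonormal in $(\mathbb{R}^9,(\cdot,\cdot))$. By Propositions \ref{bivector} and \ref{spso}, the linear isomorphism $L:\mathfrak{spin}(9)\to\mathfrak{so}(9)$ sends such a $v\cdot w$ to $2$ times the skew-symmetric operator determined by the decomposable $2$-vector $v\wedge w\in\Lambda^2\mathbb{R}^9$, since the components $\gamma_{ij}$ of $v\cdot w$ are computed by the bivector rule. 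Thus, up to the harmless global factor $2$, the isomorphism $L$ identifies $UKVF(\mathfrak{spin}(9),15)$ with the set $D=\{v\wedge w:\ v,w\ \text{orthonormal in}\ \mathbb{R}^9\}$ of decomposable unit $2$-vectors.

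Next I would use the standard fact that, for orthonormal pairs $(v,w)$ and $(v',w')$, one has $v\wedge w=v'\wedge w'$ if and only if $\operatorname{span}(v,w)=\operatorname{span}(v',w')$ and the two ordered orthonormal bases induce the same orientation on that plane, and that every oriented $2$-plane in $\mathbb{R}^9$ arises in this way. This produces a canonical bijection between $D$ and the oriented real Grassmannian $G_+(9,2)$, sending $v\wedge w$ to the plane $\operatorname{span}(v,w)$ together with the orientation of $(v,w)$.

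It remains to make these identifications equivariant and to extract the homogeneous-space description. The adjoint action of $Spin(9)$ on $\mathfrak{spin}(9)$ factors through the double cover $Spin(9)\to SO(9)$, and under $L$ it becomes the natural action of $SO(9)$ on $\Lambda^2\mathbb{R}^9$ (the second exterior power of the standard representation), hence the usual transitive action of $SO(9)$ on $G_+(9,2)$. Therefore $UKVF(\mathfrak{spin}(9),15)$ is a single $\Ad(Spin(9))$-orbit, and computing the isotropy subgroup at the oriented plane $\operatorname{span}(e_1,e_2)$ yields exactly $SO(2)\times SO(7)$: an element $g\in SO(9)$ fixes $e_1\wedge e_2$ only if it preserves $\operatorname{span}(e_1,e_2)$ and its orthogonal complement, hence $g=g_1\oplus g_2$ with $g_1\in O(2)$ and $g_2\in O(7)$; fixing the oriented area element forces $\det g_1=1$, and $\det g=1$ then forces $\det g_2=1$. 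Since an orbit of a compact group (with the subspace topology) is homeomorphic to the corresponding coset space, all the bijections above are homeomorphisms, which gives $UKVF(\mathfrak{spin}(9),15)\cong SO(9)/(SO(2)\times SO(7))=G_+(9,2)$.

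I expect no serious obstacle: the argument is an assembly of Lemma \ref{simple1} with classical multilinear algebra and the structure of $SO(9)$-orbits. The one point that deserves care is that the $SO(2)$-rotations inside a $2$-plane fix the associated decomposable $2$-vector whereas reflections negate it, so that one is forced onto the \emph{oriented} Grassmannian $G_+(9,2)$ rather than the unoriented one; this is precisely why the isotropy group contains the $SO(2)$-, rather than the $O(2)$-, factor.
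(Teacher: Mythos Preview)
Your proposal is correct and follows essentially the same approach as the paper: both arguments use Lemma~\ref{simple1} to identify $UKVF(\mathfrak{spin}(9),15)$ with the set of Clifford products $v\cdot w$ of orthonormal pairs in $\mathbb{R}^9$, and then invoke the standard correspondence between such products (equivalently, decomposable unit $2$-vectors) and oriented $2$-planes. The paper's proof is terser---it simply asserts that $v_1\cdot w_1=v_2\cdot w_2$ iff the pairs span the same oriented plane---whereas you spell out the passage through $\Lambda^2\mathbb{R}^9$ and the orbit--stabilizer computation, but these are just details the paper leaves to the reader.
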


\begin{proof}
It follows from Lemmas \ref{simple} and \ref{simple1} that an element $U\in \mathfrak{spin}(9)\subset \mathfrak{so}(16)$
gives a unit Killing vector field on $S^{15}$ if and only if it can be presented in the form $U=v\cdot w,$ where $v,w$
are orthonormal vectors in $(\mathbb{R}^9,(\cdot,\cdot))$.

Now one can check easily, directly or using Proposition \ref{bivector}, that two products
\linebreak
$v_1\cdot w_1$ and
$v_2\cdot w_2$ for pairs of orthonormal vectors in $(\mathbb{R}^9,(\cdot,\cdot))$ coincide if and only if these pairs
define one and the same oriented 2-plane in $\mathbb{R}^9.$ This finishes the proof of proposition.
\end{proof}

Now we want to describe the space of unit Killing vector fields on $S^{15},$ lying in the Lie
algebra $\mathfrak{spin}(9),$ whose image under above projection $p: \mathfrak{spin}(9)\rightarrow \mathfrak{p}$
is situated in $\mathfrak{p}_2$ (respectively, $\mathfrak{p}_1$).

\begin{prop}
\label{int4}
The Grassmannian $G_+(8,2)=SO(8)/(SO(2)\times SO(6))$ of oriented real 2-planes in $\mathbb{R}^8$ can be interpreted
as the space of all Killing vector fields of unit length on $S^{15}$, lying in the Lie subalgebra
$\mathfrak{spin}(8)\subset \mathfrak{spin}(9)\subset \mathfrak{so}(16).$ This also can be considered as the space
of all unit Killing vector fields on $S^{15},$ lying in $\mathfrak{spin}(9)$ and projecting under $p$ into
$\mathfrak{p}_2.$
\end{prop}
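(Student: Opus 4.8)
The plan is to obtain this from Lemmas \ref{simple} and \ref{simple1} in the same way that Proposition \ref{int3} was proved, and then to reconcile the resulting algebraic description with the projection-theoretic one by means of the decomposition (\ref{spdec}).

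First I would identify the unit Killing vector fields on $S^{15}$ contained in $\mathfrak{spin}(8)$. For $U\in\mathfrak{spin}(8)\subset\mathfrak{so}(16)$, the argument of Lemma \ref{simple1} shows that $U$ is a unit Killing vector field on $S^{15}$ if and only if $U^2=-\Id$, equivalently (via the algebra isomorphism $\phi\colon Cl^8\cong\mathbb{R}(16)$, with which $\theta$ agrees on $\mathfrak{spin}(8)\subset Cl^8$) $U^2=-1$ in $Cl^8$; and by Lemma \ref{simple} applied in $Cl^8$ the latter means $U=v\cdot w$ with $v,w$ orthogonal in $\mathbb{R}^8$ and $|v|\,|w|=1$, so after rescaling $v,w$ form an orthonormal pair in $\mathbb{R}^8$. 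Hence the unit Killing vector fields on $S^{15}$ lying in $\mathfrak{spin}(8)$ are exactly the products $v\cdot w$ of orthonormal pairs $v,w\in(\mathbb{R}^8,(\cdot,\cdot))$. Now I would repeat verbatim the elementary argument from the proof of Proposition \ref{int3}, with $9$ replaced by $8$: using Proposition \ref{bivector}, two such products $v_1\cdot w_1$ and $v_2\cdot w_2$ coincide if and only if the pairs determine the same oriented $2$-plane in $\mathbb{R}^8$. Since $SO(8)$ acts transitively on the oriented $2$-planes in $\mathbb{R}^8$ with isotropy $SO(2)\times SO(6)$ at a fixed plane, this identifies the set of unit Killing vector fields lying in $\mathfrak{spin}(8)$ with $G_+(8,2)=SO(8)/(SO(2)\times SO(6))$.

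It then remains to see that, among unit Killing vector fields in $\mathfrak{spin}(9)$, those lying in $\mathfrak{spin}(8)$ are precisely those whose $p$-image falls in $\mathfrak{p}_2$. This is pure bookkeeping with (\ref{spdec}): there $\mathfrak{spin}(9)=\mathfrak{spin}(7)\oplus\mathfrak{p}_2\oplus\mathfrak{p}_1$ with $\mathfrak{spin}(7)\oplus\mathfrak{p}_2=\mathfrak{spin}(8)$ and $\mathfrak{p}=\mathfrak{p}_2\oplus\mathfrak{p}_1$, and $p\colon\mathfrak{spin}(9)\to\mathfrak{p}$ is the $\langle\cdot,\cdot\rangle$-orthogonal projection, which kills $\mathfrak{h}=\mathfrak{spin}(7)$. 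So for any $U\in\mathfrak{spin}(9)$ one has $p(U)\in\mathfrak{p}_2$ exactly when the $\mathfrak{p}_1$-component of $U$ vanishes, that is, exactly when $U\in\mathfrak{spin}(7)\oplus\mathfrak{p}_2=\mathfrak{spin}(8)$. Combining this with the previous paragraph gives the proposition. I do not expect any real difficulty: the statement is almost a corollary of Lemmas \ref{simple}, \ref{simple1} and of Proposition \ref{int3}. The one point worth a moment's attention is the last equivalence, which relies on the specific placement of $\mathfrak{p}_1$ in (\ref{spdec}) --- that $\mathfrak{spin}(8)$ is exactly the $\langle\cdot,\cdot\rangle$-orthogonal complement of $\mathfrak{p}_1$ inside $\mathfrak{spin}(9)$ and contains the isotropy subalgebra $\mathfrak{h}$ --- together with the small observation that a simple bivector belonging to $\mathfrak{spin}(8)$ can always be written with both of its factors in $\mathbb{R}^8$.
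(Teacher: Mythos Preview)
Your proposal is correct and follows essentially the same route as the paper: the first assertion is obtained exactly as in Proposition \ref{int3} (via Lemmas \ref{simple} and \ref{simple1}, applied inside $Cl^8$), and the second is deduced from the orthogonal decomposition (\ref{spdec}), which the paper records as the relations $p(\mathfrak{spin}(8))=\mathfrak{p}_2$ and $p^{-1}(\mathfrak{p}_2)\subset\mathfrak{spin}(8)$. Your extra care in noting that $\theta$ restricts to $\phi$ on $\mathfrak{spin}(8)\subset Cl^8$ (so the factors $v,w$ may indeed be taken in $\mathbb{R}^8$) is a sound way to make the paper's ``proved in the same way'' precise.
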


\begin{proof}
The first statement is proved in the same way as Proposition \ref{int3}. The second statement follows from the first
statement and relations $p(\mathfrak{spin}(8))=\mathfrak{p}_2,$ $p^{-1}(\mathfrak{p}_2)\subset \mathfrak{spin}(8)$.
\end{proof}

The following proposition gives a scheme for the search of \textit{all} (unit) Killing vector fields on $S^{15},$ lying
in $\mathfrak{spin}(9)$ and projecting into $\mathfrak{p}_2$ (which actually always lie in $\mathfrak{spin}(8)$
by Proposition \ref{int4}), applied in the proof of Theorem \ref{CWhom_spin}.

\begin{prop}
\label{Kil}
Let $S^7$ and $S^6$ be unit spheres respectively in $(\mathbb{R}^8,(\cdot,\cdot))$ and
\linebreak
$(\mathfrak{p}_2,\frac{1}{2}\langle \cdot, \cdot \rangle|_{\mathfrak{p}_2})$.
Then there is the following sequence of real-analytic maps
\begin{equation}
\label{seq}
S^7\times S^6\stackrel{(\Id\circ p_1)\times K}{\longrightarrow}
V_2^8\stackrel{q}{\longrightarrow} G_+(8,2)\stackrel{\incl}{\longrightarrow}
\mathfrak{spin}(8)\stackrel{p}{\longrightarrow} S^6.
\end{equation}
Here $V_2^8=SO(8)/SO(6)$ is homogeneous Stiefel manifold, consisting of all orthonormal 2-frames in
$\mathbb{R}^8$ \cite{Hus}, $q$ is the canonical projection, $\incl$ is a natural inclusion map given by
Proposition \ref{int4}, $K$ is the map from the proof of Theorem \ref{CWhom_spin}. The first map
$(\Id\circ p_1)\times K$ associates the pair $(v,K(v,u))\in V_2^8$ to a pair $(v,u)$ of unit vectors in
$S^7\times S^6$ ; it is a diffeomorphism. Moreover, for any point
$(v,u)\in S^7\times S^6,$ $\incl(q(((\Id\circ p_1)\times K)(v,u)))$ there is some unit Killing vector field on
$S^{15}$, lying in $\mathfrak{spin}(8)$, which $p$ projects to $u$. Any unit Killing vector vector field
on $S^{15},$ lying in $\mathfrak{spin}(9)$ and projecting to $u\in S^6,$ has this form.
\end{prop}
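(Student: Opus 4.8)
The plan is to deduce each assertion by assembling facts already at hand: the description of unit Killing fields on $S^{15}$ as simple bivectors (Lemmas \ref{simple}, \ref{simple1}), the identification in Proposition \ref{int4} of $G_+(8,2)$ with the set of unit Killing fields lying in $\mathfrak{spin}(8)$, the linear isomorphisms $p_v\colon V_v\to\mathfrak p_2$ and the map $K$ from the proof of Theorem \ref{CWhom_spin}, and the normalisation $(\cdot,\cdot)_1=\frac18\langle\cdot,\cdot\rangle|_{\mathfrak p_1}+\frac12\langle\cdot,\cdot\rangle|_{\mathfrak p_2}$ of the round metric $g_{\ccan}=\psi_1$ from Corollary \ref{scpsi}.

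I would begin with two normalisation remarks, which is where the one real subtlety lies. First, since $p\colon\mathfrak{spin}(9)\to\mathfrak p$ is evaluation of a Killing field at $x_0$ under $\mathfrak p\cong S^{15}_{x_0}$, a Killing field $U$ of constant length on $(S^{15},g_{\ccan})$ has this length equal to the $(\cdot,\cdot)_1$-length of $p(U)$; by Corollary \ref{scpsi}, when $p(U)\in\mathfrak p_2$ this equals $\sqrt{\frac12\langle p(U),p(U)\rangle}$, so $U$ is a \emph{unit} field precisely when $p(U)\in S^6$. Second, by Lemmas \ref{simple} and \ref{simple1}, a simple bivector $v\cdot w$ with $v\perp w$ is a Killing field of constant length $|v|\,|w|$. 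Granting these, the last three arrows are routine: $\incl\circ q$ is well defined because $v\cdot w$ depends only on the oriented $2$-plane spanned by an orthonormal frame $(v,w)$ (as in the proof of Proposition \ref{int3}) and sends such a frame to the unit Killing field $v\cdot w\in\mathfrak{spin}(8)$; $q$ is the canonical projection of the Stiefel fibration and $\incl$ the inclusion of the submanifold $G_+(8,2)$ from Proposition \ref{int4}; and the restriction of the linear map $p$ to $\incl(G_+(8,2))$ is real-analytic and, by the first remark, takes values in $S^6$.

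Next I would treat the first map. For $(v,u)\in S^7\times S^6$ the element $K(v,u)\in V_v$ with $p(K(v,u))=u$ from the proof of Theorem \ref{CWhom_spin} is a simple bivector, say $K(v,u)=v\cdot w_0$ with $w_0\in v^{\perp}$; as a Killing field of constant length with value $p(v\cdot w_0)=u\in S^6$ at $x_0$, it is a \emph{unit} field, so $|w_0|=|v|\,|w_0|=1$ and $(v,w_0)\in V_2^8$. Identifying $V_v$ with $v^{\perp}$ via $v\cdot w\leftrightarrow w$, I would conclude that $(\Id\circ p_1)\times K\colon(v,u)\mapsto(v,w_0)$ is a well-defined map into $V_2^8$, real-analytic because $V_v$, and hence $p_v$ and $p_v^{-1}$, depend real-analytically on $v$. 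Its inverse is $(v,w)\mapsto(v,p(v\cdot w))$: here $v\cdot w$ is a unit field so $p(v\cdot w)\in S^6$, and $v\cdot w\in V_v$ gives $p_v(v\cdot w)=p(v\cdot w)$, whence $v\cdot w=K(v,p(v\cdot w))$; this inverse is also real-analytic, so the first map is a diffeomorphism (consistently, $\dim(S^7\times S^6)=13=\dim V_2^8$).

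Finally I would identify the composite. By construction $\incl(q(((\Id\circ p_1)\times K)(v,u)))=v\cdot w_0$ is a unit Killing field lying in $\mathfrak{spin}(8)$ with $p(v\cdot w_0)=p_v(v\cdot w_0)=u$. Conversely, given a unit Killing field $U\in\mathfrak{spin}(9)$ on $S^{15}$ with $p(U)=u\in S^6$, Proposition \ref{int4} places $U$ in $\mathfrak{spin}(8)$ and writes $U=v\cdot w$ for an orthonormal $2$-frame $(v,w)$ in $\mathbb R^8$ (Lemmas \ref{simple}, \ref{simple1}); choosing $v$ in the $2$-plane of $U$ makes $w\in v^{\perp}$, so $U\in V_v$, $p_v(U)=p(U)=u$, $w=K(v,u)$, and $U=\incl(q(((\Id\circ p_1)\times K)(v,u)))$. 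So every unit Killing field in $\mathfrak{spin}(9)$ projecting to a given $u\in S^6$ has the stated form. The only point requiring care throughout is the normalisation bookkeeping of the first two remarks — matching $\frac12\langle\cdot,\cdot\rangle|_{\mathfrak p_2}$ with the round metric and tracking which simple bivectors have unit length; everything else is a formal assembly of the cited results.
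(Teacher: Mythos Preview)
Your proof is correct and follows essentially the same route as the paper: both exhibit the inverse map $f(v,w)=(v,p(v\cdot w))$ and argue that the first arrow is a real-analytic diffeomorphism, with the remaining assertions following from Proposition \ref{int4} and Lemmas \ref{simple}, \ref{simple1}. The one technical difference is in justifying that $(\Id\circ p_1)\times K$ is real-analytic (equivalently, that $f$ has an analytic inverse): you assert directly that $p_v^{-1}$ depends real-analytically on $v$, which is true but tacitly uses a trivialisation of the family $\{v^{\perp}\}_{v\in S^7}$; the paper makes this step explicit by introducing an auxiliary diffeomorphism $F\colon V_2^8\to S^7\times S^6$ built from a $7$-dimensional Clifford--Killing space on $S^7$ (i.e.\ a global parallelisation), then checks that $D(f\circ F^{-1})$ is nondegenerate and invokes the inverse function theorem. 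Your shortcut is legitimate, but the paper's device is the concrete mechanism behind the analytic dependence you invoke.
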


\begin{proof}
It is clear that there exists the inverse map $f$ to $(\Id\circ p_1)\times K$, which is defined by formula
$f(v,w)=(v,p(\incl(q(v,w)))).$ Obviously, this map is real-analytic. It is enough to prove that $f$ is a diffeomorphism.
At first we define another diffeomorphism $F: V_2^8\rightarrow S^7\times S^6.$ As a corollary of classical results of
Hurwitz-Radon, there is a 7-dimensional Clifford-Killing space $CK_7$ on $S^7$ \cite{BerNik5} with some orthonormal basis
$\{Y_1,\dots , Y_7\}$. For any pair $(v,w)\in V_2^8,$ $w$ is a tangent vector to $S^7$ at the point $v\in S^7.$ So
it can be presented in the form $w=s_1Y_1(v)+\dots + s_7Y_7(v),$ where $s_1^2+\dots +s_7^2=1,$ thus we can identify
$S=(s_1,\dots,s_7)$ with a point in $S^6.$ By definition, $F(v,w)=(v,S)\in S^7\times S^6.$ It is clear that $F$ is
a diffeomorphism.

Now we see that the first component $p_1(f(F^{-1}(v,S)))=v$ of $f\circ F^{-1}$ is identical by $v$ for fixed $S$,
while its second component $p_2(f(F^{-1}(v,S)))=p_2(f(v,w))$ isometrically depends on $w\in v^\perp$, hence on $S$,
under fixed $v,$ because $v\cdot w$ remains in fixed Clifford-Killing space $V_v\subset \mathfrak{spin}(8)$,
while the map (\ref{isom}) is nondegenerate linear. Therefore the differential $D(f\circ F^{-1})$ of $f\circ F^{-1}$
is nondegenerate at any point $(v,S)\in S^7\times S^9.$ Therefore the differential $Df$ of $f$ is nondegenerate at
any point
$(v,w)\in V_2^8.$ By the inverse function theorem, the maps $f$ and $(\Id\circ p_1)\times K$ are
mutually inverse real-analytic diffeomorphisms.

The last statement follows from Proposition \ref{int4}. The statement before it follows from the fact that $f$ and
$(\Id\circ p_1)\times K$ are mutually inverse maps.
\end{proof}

\smallskip

We have an analogous proposition, presenting the scheme for the search of (unit) Killing vector fields on $S^{15}$, lying in
$\mathfrak{spin}(9)$ and projecting into $\mathfrak{p}-\{\mathfrak{p}_1\cup \mathfrak{p}_2\}$, similarly to the second
part in the proof of Theorem \ref{CWhom_spin}. Let $S^{14}$ and $S^7_1$ be respectively unit sphere in
$(\mathfrak{p},(\cdot, \cdot)_1)$
and $(\mathfrak{p}_1,\frac{1}{8}\langle \cdot, \cdot \rangle|_{\mathfrak{p}_1}),$ $S^6$ is the same as in
Proposition \ref{Kil}. Then it is known that $S^{14}$ is the \textit{join}
$S^7_1\ast S^6$ \cite{FR}. This means that $S^{14}$ is the image of continuous map
$J: S^7_1\times S^6\times [0,1]\rightarrow S^{14},$ where $J(x,y,s)=sx + \sqrt{1-s^2}y$. In addition,
$J$ is real-analytic
homeomorphism of $S^7_1\times S^6\times (0,1)$ onto $S^{14}-\{S^7_1\cup S^6\}$.

\begin{prop}
\label{Kil1}
Let $S^7$ be the same as in Proposition \ref{Kil}. Then there is the following sequence of real-analytic maps
$$S^{14}-\{S^7_1\cup S^6\}\stackrel{J^{-1}}{\longrightarrow}
S^7_1\times S^6\times (0,1) \stackrel{g\times \Id\times \Id}{\longrightarrow} S^7\times S^6\times (0,1)
\stackrel{I\times Id}{\longrightarrow}$$
$$ V_2^8\times (0,1) \stackrel{\Id\times J}{\longrightarrow} V_2^9\stackrel{Q}{\longrightarrow}
G_+(9,2)\stackrel{\incl}{\longrightarrow} \mathfrak{spin}(9)\stackrel{p}{\longrightarrow} S^{14}-\{S^7_1\cup S^6\}. $$
Here $g$ maps an element $v\cdot e_9\in S^7_1$ to $v\in S^7,$ $I=(\Id\circ p_1)\times X,$
$(\Id\times J)(v,w,s):=(v,J(e_9,w,s)),$ $Q$ is the canonical projection, $\incl$ is a natural inclusion map given by
Proposition \ref{int3}. Moreover, the composition $f$ of all maps but the last one (in the above diagram), applied
to any vector $u\in S^{14}-\{S^7_1\cup S^6\}$, gives some unit Killing vector field on $S^{15},$ lying in
$\mathfrak{spin}(9),$ which is projected under $p$ to the vector $u$.
\end{prop}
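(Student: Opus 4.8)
The plan is to read Proposition~\ref{Kil1} as the ``join version'' of Proposition~\ref{Kil}, built in exact parallel with the second part of the proof of Theorem~\ref{CWhom_spin} (the part handling vectors $W=W_1+W_2$ with both $W_i\neq0$), just as Proposition~\ref{Kil} mirrors its first part. I would proceed in two stages: first verify that every arrow in the displayed sequence is a well-defined real-analytic map, and then trace a generic vector $u\in S^{14}\setminus\{S^7_1\cup S^6\}$ through the composition $f$ (all arrows but the last) and check that $f(u)$ is a simple bivector of $\mathfrak{spin}(9)$ --- hence a unit Killing field on $S^{15}$ by Lemma~\ref{simple1} --- and that $p(f(u))=u$.

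For the first stage: $J^{-1}$ is real-analytic on $S^{14}\setminus\{S^7_1\cup S^6\}$ because $J\colon S^7_1\times S^6\times(0,1)\to S^{14}\setminus\{S^7_1\cup S^6\}$, $(x,y,s)\mapsto sx+\sqrt{1-s^2}\,y$, is a real-analytic diffeomorphism of open manifolds, its inverse being the (obviously real-analytic) map that splits $u=u_{\mathfrak p_1}+u_{\mathfrak p_2}$ along $\mathfrak p=\mathfrak p_1\oplus\mathfrak p_2$ and normalizes each component. The map $g$ is the identification $v\cdot e_9\leftrightarrow v$; it is well defined and a real-analytic diffeomorphism because, by Lemmas~\ref{simple} and~\ref{simple1} and Corollary~\ref{scpsi} together with the equality $\mathfrak p_1=W_{e_9}$, the unit sphere $S^7_1$ of $(\mathfrak p_1,\tfrac18\langle\cdot,\cdot\rangle)$ is precisely the set of simple bivectors $v\cdot e_9$ with $v\in S^7\subset\mathbb R^8$. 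The map $I=(\Id\circ p_1)\times X$ is, componentwise, the projection onto $S^7$ and the map $X$ (the counterpart of the map $K$ from Proposition~\ref{Kil}), which was shown there to be a real-analytic diffeomorphism onto $V_2^8$. The arrow $\Id\times J$ sends $(v,w,s)$ to $\bigl(v,\ s e_9+\sqrt{1-s^2}\,w\bigr)$; since $v\perp w$ in $\mathbb R^8$ and $e_9$ is orthogonal to both $v$ and $w$ in $\mathbb R^9$, this pair is an orthonormal $2$-frame in $\mathbb R^9$, so the map into $V_2^9$ is well defined and real-analytic. Finally $Q$ is the canonical real-analytic bundle projection $V_2^9\to G_+(9,2)$, $\incl$ is the real-analytic embedding from Proposition~\ref{int3} sending the oriented plane with positive orthonormal frame $(v',w')$ to $v'\cdot w'$, and $p$ is linear, hence real-analytic; so $f$ itself is a well-defined real-analytic map.

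For the second stage, fix $u\in S^{14}\setminus\{S^7_1\cup S^6\}$ and write $J^{-1}(u)=(x,y,s)$, so $u=sx+\sqrt{1-s^2}\,y$ with $sx\in\mathfrak p_1$, $\sqrt{1-s^2}\,y\in\mathfrak p_2$, $x\in S^7_1$, $y\in S^6$, $s\in(0,1)$. Put $v:=g(x)\in S^7\subset\mathbb R^8$, so $x=v\cdot e_9$. Pushing $(x,y,s)$ along the arrows gives successively $(v,y,s)$, then $\bigl((v,X(v,y)),s\bigr)\in V_2^8\times(0,1)$, then the frame $\bigl(v,\ s e_9+\sqrt{1-s^2}\,X(v,y)\bigr)\in V_2^9$, and finally
\[
f(u)=v\cdot\!\bigl(s e_9+\sqrt{1-s^2}\,X(v,y)\bigr)=s\,(v\cdot e_9)+\sqrt{1-s^2}\,\bigl(v\cdot X(v,y)\bigr)\in\mathfrak{spin}(9).
\]
Here $s e_9+\sqrt{1-s^2}\,X(v,y)$ is a unit vector of $\mathbb R^9$ orthogonal to $v$, so $f(u)$ is a simple bivector and, by Lemma~\ref{simple1}, a unit Killing field on $S^{15}$. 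To compute $p(f(u))$ I would use linearity of $p$ together with the two facts: $v\cdot e_9\in\mathfrak p_1=W_{e_9}\subset\mathfrak p$, hence $p(v\cdot e_9)=v\cdot e_9=x$; and $v\cdot X(v,y)$ lies in $V_v$, where (by the defining property of $X$, i.e.\ of $K$ in the proof of Theorem~\ref{CWhom_spin}, the normalizations on $S^6$ and on $(\mathfrak p_2,\tfrac12\langle\cdot,\cdot\rangle)$ being matched as in Proposition~\ref{Kil}) one has $p(v\cdot X(v,y))=y$. Therefore $p(f(u))=sx+\sqrt{1-s^2}\,y=u$, which is what is claimed.

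The step I expect to be the only real obstacle is bookkeeping, not substance: one must keep straight the distinct normalizations --- $S^7_1\subset(\mathfrak p_1,\tfrac18\langle\cdot,\cdot\rangle)$ versus $S^7\subset\mathbb R^8$, and $S^6\subset(\mathfrak p_2,\tfrac12\langle\cdot,\cdot\rangle)$ versus the frame vector in $\mathbb R^8$ --- and use repeatedly that $\mathfrak p_1$ equals the Clifford--Killing space $W_{e_9}$, so $p$ restricts to the identity on $\mathfrak p_1$, and that $p$ restricts to the linear isomorphism $p_v\colon V_v\to\mathfrak p_2$ from the proof of Theorem~\ref{CWhom_spin}. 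Well-definedness and real-analyticity of $f$ reduce, through $I=(\Id\circ p_1)\times X$, to the diffeomorphism statement already proved in Proposition~\ref{Kil}; all the $\mathfrak{spin}(9)$-specific input --- that unit Killing fields on $S^{15}$ in $\mathfrak{spin}(9)$ are exactly the simple bivectors --- is provided by Lemmas~\ref{simple} and~\ref{simple1}, so no new computation is required.
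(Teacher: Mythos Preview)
Your proposal is correct and follows exactly the approach the paper has in mind: the paper's own proof is the single sentence ``This proposition easily follows from Proposition~\ref{Kil},'' and what you have written is precisely the detailed unpacking of that sentence, tracing the join construction through the maps of Proposition~\ref{Kil} and using $\mathfrak{p}_1=W_{e_9}$ together with the isomorphism $p_v:V_v\to\mathfrak{p}_2$ from the proof of Theorem~\ref{CWhom_spin}. Your caveat about bookkeeping of normalizations is apt but, as you note, involves no new ideas beyond Proposition~\ref{Kil} and Lemma~\ref{simple1}.
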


\begin{proof}
This proposition easily follows from Proposition \ref{Kil}.
\end{proof}

\begin{prop}
\label{closure}
The image $A$ of the set $S^{14}-\{S^7_1\cup S^6\}$ in $G_+(9,2)$ under the map $f$ from Proposition \ref{Kil1}
is open and connected in $G_+(9,2)$. Its closure is equal to $G_+(9,2)$ and its boundary consists of two disjoint
connected components, $G_+(8,2)$ and $\mathfrak{p}_1$. A~nonzero vector $u\in \mathfrak{p}$ is a projection under $p$
of unique Killing vector field of constant length on $S^{15}$, lying in $\mathfrak{spin}(9)$, if and only if
$u\notin \mathfrak{p}_2$.
\end{prop}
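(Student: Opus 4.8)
The plan is to realize $G_+(9,2)=UKVF(\mathfrak{spin}(9),15)$ (Proposition \ref{int3}) as a disjoint union of three pieces and to read off all the assertions. The organizing remark is that every unit Killing field $U\in\mathfrak{spin}(9)$ satisfies $p(U)\in S^{14}$: indeed $Ux_0=(p(U))x_0$ since $\mathfrak{h}$ fixes $x_0$, and by Corollary \ref{scpsi} (with $t=1$, so that $\psi_1=g_{\can}$) the identification $\mathfrak{p}\cong T_{x_0}S^{15}$ carries $(\cdot,\cdot)_1$ isometrically onto $g_{\can}|_{x_0}$, hence $|p(U)|_{(\cdot,\cdot)_1}=\|Ux_0\|_{g_{\can}}=1$. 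Since $(\cdot,\cdot)_1=\tfrac18\langle\cdot,\cdot\rangle|_{\mathfrak{p}_1}\oplus\tfrac12\langle\cdot,\cdot\rangle|_{\mathfrak{p}_2}$ is an orthogonal splitting, one gets $S^{14}=S^7_1\ast S^6$ with $S^6=S^{14}\cap\mathfrak{p}_2$, $S^7_1=S^{14}\cap\mathfrak{p}_1$, and $S^{14}\setminus\{S^7_1\cup S^6\}=\{u\in S^{14}:p_{\mathfrak{p}_1}(u)\neq0,\ p_{\mathfrak{p}_2}(u)\neq0\}$, where $p_{\mathfrak{p}_i}$ is the further orthogonal projection of $p$ onto $\mathfrak{p}_i$. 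The three pieces will be $A$, the submanifold $G_+(8,2)$ of Proposition \ref{int4}, and the $7$-sphere $S^7_1$ of unit Killing fields lying in the Clifford-Killing space $\mathfrak{p}_1$.

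The heart of the argument, and its main obstacle, is the uniqueness in the last sentence. Let $U=v\cdot w$ be a unit Killing field (a simple bivector by Lemma \ref{simple1}), $v,w$ orthonormal in $\mathbb{R}^9$, and write $v=v'+v_9e_9$, $w=w'+w_9e_9$ with $v',w'\in\mathbb{R}^8$. Using the Clifford relation (\ref{proper}) and $\mathfrak{p}_1=W_{e_9}=\mathbb{R}^8\cdot e_9$ — which is $\langle\cdot,\cdot\rangle$-orthogonal to $\mathfrak{spin}(8)$ by Proposition \ref{scalar} — I would compute
$$
v\cdot w=-|z|^{-2}\,z\cdot\eta,\qquad p_{\mathfrak{p}_1}(v\cdot w)=e_9\cdot z,\qquad p_{\mathfrak{spin}(8)}(v\cdot w)=-|z|^{-1}\,\hat z\cdot\eta',
$$
where $z:=v_9w-w_9v\in\mathbb{R}^8$ lies in $\operatorname{span}(v,w)$, $\eta:=v_9v+w_9w=\eta'+|z|^2e_9\in\operatorname{span}(v,w)$, $\eta':=v_9v'+w_9w'\in\mathbb{R}^8$ is orthogonal to $z$, and $\hat z:=z/|z|$ (so $|z|^2=v_9^2+w_9^2\le1$). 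If $p_{\mathfrak{p}_1}(U)\neq0$, then $z$ is read off from $p_{\mathfrak{p}_1}(U)$; since $\hat z\cdot\eta'$ lies in the Clifford-Killing space $V_{\hat z}$, on which $p_{\mathfrak{p}_2}$ is a linear isomorphism (proof of Theorem \ref{CWhom_spin}), and $p_{\mathfrak{p}_2}(U)=-|z|^{-1}p_{\mathfrak{p}_2}(\hat z\cdot\eta')$, the vector $\hat z\cdot\eta'$ — hence $\eta'=-\hat z\cdot(\hat z\cdot\eta')$, hence $\eta$, hence $U=-|z|^{-2}z\cdot\eta$ — is determined by $p(U)$. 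Because a constant-length Killing field projecting to $u$ necessarily has length $|u|_{(\cdot,\cdot)_1}$, rescaling reduces the general case to the unit one, so uniqueness holds whenever $0\neq u\notin\mathfrak{p}_2$; existence is clear here ($f(u)$ of Proposition \ref{Kil1}, rescaled, if $p_{\mathfrak{p}_2}(u)\neq0$; the vector $u$ itself, a simple bivector in the Clifford-Killing space $\mathfrak{p}_1$, if $u\in\mathfrak{p}_1\setminus\{0\}$). Conversely, if $0\neq u\in\mathfrak{p}_2$ then for each $\hat v\in S^7\subset\mathbb{R}^8$ the isomorphism $p_{\mathfrak{p}_2}\colon V_{\hat v}\to\mathfrak{p}_2$ produces a simple bivector $K_{\hat v}\in V_{\hat v}\subset\mathfrak{spin}(8)$ with $p(K_{\hat v})=u$, and $\hat v\mapsto K_{\hat v}$ has $1$-dimensional fibres, so its image is positive-dimensional and uniqueness fails. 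This settles the last sentence.

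Finally I assemble the topology. As $f$ is real-analytic (Proposition \ref{Kil1}), $p\circ\incl$ is smooth, and $p\circ f=\Id$ on $S^{14}\setminus\{S^7_1\cup S^6\}$, the differential $df$ is everywhere injective; since domain and codomain both have dimension $14=\dim SO(9)-\dim SO(2)-\dim SO(7)$, $f$ is a local diffeomorphism and $A=f(S^{14}\setminus\{S^7_1\cup S^6\})$ is open in $G_+(9,2)$, and connected since $S^{14}\setminus\{S^7_1\cup S^6\}\cong S^7_1\times S^6\times(0,1)$ is. Inspecting the $\mathfrak{p}_1$- and $\mathfrak{p}_2$-components of $p$ on elements of the three pieces (and using that a nonzero constant-length Killing field cannot lie in $\mathfrak{h}$, as it would vanish at $x_0$) shows they are pairwise disjoint, while the uniqueness/existence just proved shows every point of $G_+(9,2)$ lies in one of them; thus $G_+(9,2)=A\sqcup G_+(8,2)\sqcup S^7_1$. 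Since $G_+(8,2)=SO(8)/(SO(2)\times SO(6))$ and $S^7_1$ are compact submanifolds of dimensions $12$ and $7$, hence nowhere dense, $A$ is open and dense, so $\overline{A}=G_+(9,2)$ and $\partial A=G_+(9,2)\setminus A=G_+(8,2)\sqcup S^7_1$; being two disjoint compact connected sets, these are precisely the connected components of $\partial A$. Everything beyond the Clifford-algebraic identity displayed above is routine.
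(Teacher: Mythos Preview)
Your proof is correct and takes a genuinely different route from the paper's. The paper argues purely topologically: it shows $A$ is open (from $p\circ f=\Id$ and the equality of dimensions), observes that the boundary of $A$ is contained in a union of sets of codimension $\ge 2$ in the $14$-dimensional manifold $G_+(9,2)$ (namely $\mathfrak{p}_1$ together with a subset of the $12$-dimensional $G_+(8,2)$), notes that such a set cannot disconnect $G_+(9,2)$, and runs a connectivity argument to force $\overline{A}=G_+(9,2)$ and $D=G_+(8,2)$; the uniqueness assertion is then declared to follow. You reverse the logic: you prove uniqueness first and constructively, via the Clifford identity $v\cdot w=-|z|^{-2}\,z\cdot\eta$ that lets one read $z\in\mathbb{R}^8$ off $p_{\mathfrak{p}_1}(U)$ and then recover $\eta'$ (hence $\eta$, hence $U$) from $p_{\mathfrak{p}_2}(U)$ through the isomorphism $p_{\mathfrak{p}_2}\colon V_{\hat z}\to\mathfrak{p}_2$ of Theorem~\ref{CWhom_spin}; the partition $G_+(9,2)=A\sqcup G_+(8,2)\sqcup S^7_1$ and all the topological claims then follow at once from dimension and compactness. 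Your approach yields an explicit inversion formula for $p$ on $G_+(9,2)\setminus G_+(8,2)$ and makes the last sentence of the proposition transparent rather than a consequence; the paper's argument is softer, avoids the Clifford computation, and would transfer more readily to situations where such an explicit inversion is not at hand.
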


\begin{proof}
Dimensions of $S^{14}$ and $G_+(9,2)=SO(9)/(SO(2)\times SO(7))$ are both equal to $14$.
The composition of all maps in the diagram from Proposition \ref{Kil1} is identical on the open subset
$S^{14}-\{S^7_1\cup S^6\}\subset S^{14}$; all maps in Proposition \ref{Kil1} are real-analytic.
Then the set $A$ is open in $G_+(9,2)$. The boundary of the set $A$ in $G_+(9,2)$ consists of two closed connected
components. The first, which we denote by $C$, is $\mathfrak{p}_1$, of dimension $8$. Another one, which we denote by
$D$,
consists of unit Killing field on $S^{15}$, projecting to $S^{6}\subset \mathfrak{p}_2$. Therefore, by
Propositions \ref{Kil} and \ref{int4}, $D\subset G_+(8,2)$. Thus the topological dimension of $D$ is no more than
dimension of $G_+(8,2)=SO(8)/(SO(2)\times SO(6)),$ which is equal to 12. So $C\cup D$ cannot divide $G_+(9,2)$.
Let us suppose that the closure of $A$ in $G_+(9,2)$ is not equal to $G_+(9,2)$. Then there is
a point $X,$ lying in nonempty open subset $G_+(9,2)-(A\cup C\cup D).$ Since $C\cup D$ does not divide $G_+(9,2)$,
and $G_+(9,2)$ is arcwise connected, then there is an arc in open subset $G_+(9,2)-(C\cup D)$, joining
point $X$ with arbitrary given point $Y\in A.$ But this is impossible, because $A$ is connected and open in
$G_+(9,2)$,
while $A\cup C\cup D$ is closed in $G_+(9,2)$. Therefore, in addition $D=G_+(8,2)$ also. The last statement follows from
previous ones.
\end{proof}

\begin{remark}
Since $\pi_2(SO(9)/(SO(2)\times SO(7)))=\pi_1(SO(2))=\mathbb{Z}$, the Grassmannian $G_+(9,2)$ is not homeomorphic to
$S^{14},$ see book \cite{FR} by Rokhlin and Fuks.
\end{remark}

\begin{corollary}
\label{intp1}
The space of all unit Killing vector fields on $S^{15}$, lying in the Lie
algebra $\mathfrak{spin}(9)\subset \mathfrak{so}(16)$ and projecting under $p$ into $\mathfrak{p}_1$, is the space
$\mathfrak{p}_1$ itself. In addition, $p$ is identical on $\mathfrak{p}_1$.
\end{corollary}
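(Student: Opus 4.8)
The plan is to read the corollary off from Proposition \ref{closure}, combined with the two elementary facts about $\mathfrak{p}_1$ already used in the proof of Theorem \ref{CWhom_spin}: that $\mathfrak{p}_1$ is a summand of $\mathfrak{p}$ in the decomposition (\ref{spdec}), and that $\mathfrak{p}_1=W_{e_9}$ is a Clifford-Killing space in $\mathfrak{so}(16)$.

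The ``in addition'' clause is immediate. By construction $p:\mathfrak{g}\to\mathfrak{p}$ is the $\langle\cdot,\cdot\rangle$-orthogonal projection onto $\mathfrak{p}$, and $\mathfrak{p}_1\subset\mathfrak{p}$; hence $p$ restricts to the identity on $\mathfrak{p}$, in particular on $\mathfrak{p}_1$, and every $W\in\mathfrak{p}_1$ trivially projects into $\mathfrak{p}_1$. Next I would record that every $W\in\mathfrak{p}_1$ is a Killing field of constant length on $S^{15}$: writing $W=e_9\cdot w$ with $w\in e_9^{\perp}=\mathbb{R}^8$ (possible because $\mathfrak{p}_1=W_{e_9}$), Lemma \ref{simple} gives $W^2=-|w|^2\cdot 1$, and then Lemma \ref{simple1} shows $W$ has constant length $|w|$ on $S^{15}$, of unit length precisely when $|w|=1$, equivalently (by Proposition \ref{scalar}) when $\langle W,W\rangle=8$.

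The only step with real content is the converse inclusion. Let $U\in\mathfrak{spin}(9)$ be a Killing field of constant length on $S^{15}$ with $p(U)\in\mathfrak{p}_1$; I claim $U\in\mathfrak{p}_1$. This is clear for $U=0$. If $U\neq 0$, then $p(U)\neq 0$: otherwise $U\in\ker p=\mathfrak{h}=\mathfrak{spin}(7)$, so $U$ vanishes at $x_0$, forcing its constant length to be $0$ and hence $U=0$ — this is the impossibility invoked near (\ref{isom}). Thus $p(U)\in\mathfrak{p}_1\setminus\{0\}$; since $\mathfrak{p}=\mathfrak{p}_1\oplus\mathfrak{p}_2$, this vector is not in $\mathfrak{p}_2$, so Proposition \ref{closure} provides a \emph{unique} Killing field of constant length in $\mathfrak{spin}(9)$ whose $p$-image equals $p(U)$. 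But $p(U)$ itself is such a field, by the previous paragraph together with $p\circ p=p$. Uniqueness therefore yields $U=p(U)\in\mathfrak{p}_1$. Combining the two inclusions, the set of constant-length Killing fields in $\mathfrak{spin}(9)$ projecting under $p$ into $\mathfrak{p}_1$ is exactly $\mathfrak{p}_1$, on which $p$ is the identity; imposing unit length then selects the $W=e_9\cdot w\in\mathfrak{p}_1$ with $|w|=1$. I expect no genuine obstacle: the argument is bookkeeping around Proposition \ref{closure}, the two points to keep straight being that $\mathfrak{p}_1\setminus\{0\}$ is disjoint from $\mathfrak{p}_2$ (so the uniqueness part of Proposition \ref{closure} is available) and that $\mathfrak{h}=\mathfrak{spin}(7)$ supports no nonzero constant-length Killing field on $S^{15}$ (so nonzero such fields have nonzero $p$-image).
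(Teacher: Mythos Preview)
Your proof is correct and follows exactly the route the paper intends: Corollary \ref{intp1} is stated without proof as an immediate consequence of Proposition \ref{closure}, and you have simply written out the bookkeeping (that $\mathfrak{p}_1=W_{e_9}\subset\mathfrak{p}$ consists of constant-length Killing fields, so the uniqueness clause of Proposition \ref{closure} applies to any nonzero $p(U)\in\mathfrak{p}_1$). Your observation at the end that the \emph{unit} fields in $\mathfrak{p}_1$ are only the $e_9\cdot w$ with $|w|=1$ is well taken; the paper's phrasing ``is the space $\mathfrak{p}_1$ itself'' is slightly loose in this respect.
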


\begin{corollary}
\label{all}
Proposition \ref{Kil1} gives all unit Killing vector fields on $S^{15}$ from $\mathfrak{spin}(9),$ projecting to
$\mathfrak{p}-(\mathfrak{p}_1\cup \mathfrak{p}_2).$ Thus Propositions \ref{Kil}, \ref{Kil1}, and Corollary \ref{intp1}
altogether present the way to get all
unit Killing vector fields on $S^{15}$ from $\mathfrak{spin}(9),$ projecting to any given point in
$S^{14}\subset \mathfrak{p}.$
\end{corollary}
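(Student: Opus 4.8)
The plan is to assemble the statement by cases according to where the $p$-projection of a unit Killing vector field lands in $S^{14}$, using the uniqueness part of Proposition \ref{closure} to identify the field produced by the construction of Proposition \ref{Kil1}.

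First I would record the elementary fact that if $U\in\mathfrak{spin}(9)$ is a unit Killing vector field on $S^{15}$, then $p(U)\in S^{14}$. Indeed, the value of $U$ at $x_0$ is $Ux_0=p(U)x_0$ under the identification $\mathfrak{p}\cong S^{15}_{x_0}$, $W\mapsto Wx_0$, and this identification carries the round metric $g_{\ccan}=\psi_1$ to the inner product $(\cdot,\cdot)_1$ of Corollary \ref{scpsi}; since $U$ has unit length on the round sphere, $(p(U),p(U))_1=1$. By that same corollary, $(\cdot,\cdot)_1=\frac18\langle\cdot,\cdot\rangle|_{\mathfrak{p}_1}+\frac12\langle\cdot,\cdot\rangle|_{\mathfrak{p}_2}$, so $S^{14}\cap\mathfrak{p}_1=S^7_1$ and $S^{14}\cap\mathfrak{p}_2=S^6$, and as a set $S^{14}$ is the disjoint union of $S^7_1$, $S^6$ and $S^{14}-(S^7_1\cup S^6)$, which is precisely the stratification underlying the join decomposition $S^{14}=S^7_1\ast S^6$ used before Proposition \ref{Kil1}.

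Then I would split into these three cases. If $p(U)\in\mathfrak{p}_1$, Corollary \ref{intp1} says $p$ is the identity on $\mathfrak{p}_1$, hence $U=p(U)\in\mathfrak{p}_1$ and such $U$ are exactly the ones described by Corollary \ref{intp1}. If $p(U)\in\mathfrak{p}_2$, i.e.\ $p(U)\in S^6$, then $U\in p^{-1}(\mathfrak{p}_2)\subset\mathfrak{spin}(8)$ by Proposition \ref{int4}, and the last assertion of Proposition \ref{Kil} already says that every unit Killing vector field in $\mathfrak{spin}(9)$ projecting to $u=p(U)\in S^6$ arises from the scheme \eqref{seq} there (as $v$ ranges over $S^7$). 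Finally, if $p(U)\in\mathfrak{p}-(\mathfrak{p}_1\cup\mathfrak{p}_2)$, then $p(U)\in S^{14}-(S^7_1\cup S^6)$, and Proposition \ref{closure} tells us that $U$ is the \emph{unique} unit Killing vector field in $\mathfrak{spin}(9)$ with this projection; since Proposition \ref{Kil1} exhibits one such field projecting to $p(U)$, that field must coincide with $U$. Hence every unit Killing vector field of $\mathfrak{spin}(9)$ projecting into $\mathfrak{p}-(\mathfrak{p}_1\cup\mathfrak{p}_2)$ comes from Proposition \ref{Kil1}, which is the first assertion; combining the three cases yields the second assertion, since then Propositions \ref{Kil}, \ref{Kil1} and Corollary \ref{intp1} together account for every point of $S^{14}$.

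I do not expect a genuine obstacle: the corollary is essentially bookkeeping built on Propositions \ref{Kil}, \ref{Kil1} and \ref{closure}. The only point requiring attention is the reduction to the uniqueness statement of Proposition \ref{closure} — that is, checking that the $p$-image of any unit Killing field lies on $S^{14}$, so that the trichotomy $S^7_1$, $S^6$, $S^{14}-(S^7_1\cup S^6)$ matches exactly Corollary \ref{intp1}, Proposition \ref{Kil}, and Proposition \ref{Kil1}.
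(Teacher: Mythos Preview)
Your argument is correct and is exactly the intended route: the paper states this corollary without proof, treating it as an immediate consequence of Proposition~\ref{closure} (specifically its last sentence, the uniqueness outside $\mathfrak{p}_2$) together with Propositions~\ref{Kil}, \ref{Kil1} and Corollary~\ref{intp1}. Your case split and the use of uniqueness to pin down the field produced by Proposition~\ref{Kil1} are precisely what the paper leaves implicit.
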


It follows from Propositions \ref{int1}, \ref{uhlensp}, \ref{uhlensu}, and \ref{int3} that symmetric spaces
$O(2n)/U(n),$ $Sp(n+1)/U(n+1),$ $SU(2(n+1))/S(U(n+1)\times U(n+1))$, and $G_+(9,2)$ can be considered as real-analytic
closed submanifold in $\mathfrak{so}(2n)$, $\mathfrak{sp}(n+1),$ $\mathfrak{su}(2(n+1))$, and
$\mathfrak{spin}(9)\subset \mathfrak{so}(16),$
respectively. Clearly, they do not intersect corresponding isotropy Lie subalgebras $\mathfrak{so}(2n+1)$,
$\mathfrak{sp}(n),$ $\mathfrak{s}(\mathfrak{u}(n+1)\oplus \mathfrak{u}(n+1))$, and $\mathfrak{spin}(7)$.
Similar statements are true for any
separate orbit in (\ref{orb}). Moreover, by Theorems \ref{osn_u}, \ref{osn_sp},  and \ref{CWhom_spin}, the natural
linear projections map respectively  these symmetric spaces, or their union (\ref{orb}) onto unit spheres $S^{2(n-1)}$,
$S^{4n+2},$ $S^{4n+2},$  $S^{14}$, and $S^{2n}$ in tangent spaces to $S^{2n-1},$ $S^{4n+3},$ $S^{4n+3,}$ $S^{15},$ and
$S^{2n+1}$ at their initial points.

Really, it is possible to reconstruct the search for all required unit Killing vector fields on $S^{15}$ from $G_+(9,2),$
and the space $G_+(9,2)$ itself, knowing only the map $f$ from Proposition \ref{Kil1}. There is unique continuous
extension of $f$ to $S^{14}-S^{6},$ which we denote by $f,$ but there is no such extension to all $S^{14}$.
Consider continuous map
$$\phi:= f\circ J: S^7_1\times S^6\times (0,1]\rightarrow G_+(9,2).$$
It has unique continuous extension to all $S^7_1\times S^6\times [0,1],$ which we denote as $\phi$. Now the
algorithm for the search for all required unit Killing vector fields on $S^{15}$ is expressed
by equalities
$$\phi (S^7_1\times S^6\times [0,1])=G_+(9,2);\quad \phi(S^7_1\times S^6\times \{0\})=G_+(8,2);\quad p\circ \phi=J.$$
Since $\phi$ is surjective, continuous, and closed map, we also get the space $G_+(9,2)$ as the quotient space
of $S^7_1\times S^6\times [0,1]$  with respect to $\phi.$

Let us take $s=\frac{1}{2}$ and $O:=J(S^7_1\times S^6\times [0,s)),$ which is open tubular $(\pi/4)$-neighborhood of
$S^6$ in $S^{14}.$ Then the formula
$$\omega (J(v,w,t))=\phi(v,w,2t-1),\quad \frac{1}{2}\leq t\leq 1,$$
correctly defines surjective, continuous, and closed map of the complement $CO$ for the neighborhood $O$ of $S^6$
in $S^{14}$ onto $G_+(9,2).$ So $G_+(9,2)$ is the quotient space of $CO$, and $\omega$ is homeomorphism on $CO-B,$
where $B$ is joint boundary of $O$ and $CO.$

Change $\omega|_B$ by surjective continuous mapping $q\circ F^{-1}\circ h: B\rightarrow G_+(8,2),$
where $h$ is the canonical homeomorphism of $B$ onto $S^7_1\times S^6,$ and $q$ and $F$ are taken from
Proposition \ref{Kil} and its proof. Then the space $G_+(9,2)$ is a result of gluing  $CO$ with $G_+(8,2)$ by the map
$q\circ F^{-1}\circ h.$ One can prove that there exists some real-analytic map $c: G_+(8,2)\rightarrow S^6$ such
that
\begin{equation}
\label{cqf}
c\circ q \circ F^{-1}=p_2,\quad\mbox{where}\quad p_2: S^7\times S^6 \rightarrow S^6
\end{equation}
is the projection to the second factor. In addition, if $r:S^6\rightarrow \mathbb{R}P^6$ is the canonical
projection,
then $r\circ c: G_+(8,2)\rightarrow \mathbb{R}P^6$ is a real-analytic fibration with the fiber $\mathbb{C}P^3.$
Unlike (\ref{cqf}) now we only have more complicated (although analogous) formula
$$(p\circ \omega)(J(v,w,s))= A(v)\circ p_2,\quad\mbox{where}\quad A(v)\in SO(7),$$
for the restriction $p: G_+(8,2)\rightarrow S^6$ of the above linear projection
$p: \mathfrak{spin}(8)\rightarrow \mathfrak{p}_2.$ We don't know whether real-analytic map
$r\circ p: G_+(8,2)\rightarrow \mathbb{R}P^6$ is a fibration
with the fiber $\mathbb{C}P^3.$

\section*{The conclusion}

Despite many dispersed remarks in the main body of the paper, it is appropriate to give some additional remarks
concerning all investigated spaces.

By Proposition \ref{addsymm}, any $Sp(n+1)$-generalized normal homogeneous Riemannian metric on
$S^{4n+3}=Sp(n+1)/Sp(n)$ is proportional to some
metric $\mu_t$ (see paper~\cite{B1} for details).
Therefore, metrics from Table 2 exhaust all generalized normal homogeneous Riemannian metrics on spheres.
On the other hand, all metrics from Table 2 induce generalized normal homogeneous metrics on corresponding
real projective spaces. Metrics obtained in such a way, metrics from
Corollary \ref{cp},  together with normal homogeneous metrics on the projective spaces
$\mathbb{C}P^n=SU(n+1)/S(U(n)\times U(1))$,
$\mathbb{H}P^n=Sp(n+1)/Sp(n)\times Sp(1)$,
and $\mathbb{C}aP^2=F_4/Spin(9)$
exhaust all generalized normal homogeneous Riemannian metrics on projective spaces
(see details in \cite{Zil82} and~\cite{BNN}).

All compact generalized homogeneous spaces studied here have positive sectional curvature.
Omitting details, we refer to papers \cite{Volp}, \cite{Volp1}, and \cite{Volp2} by D.E.~Volper,
where he calculated the exact upper and lower bounds of sectional curvatures for all suitable one-parameter families.
In all these cases, the bounds are some functions of the parameter $t$ and don't depend
on dimension. It is very interesting that these functions coincide for spaces $(S^{4n+3},\mu_t)$ and $(S^{15},\psi_t)$.
There are some related results for spheres in paper \cite{VZ} by L.~Verdiani and W.~Ziller and in some other papers.
As far as we know, the corresponding bounds for family $(S^{4n+3},\mu_{t,s})$ have not been calculated in the literature.
Nevertheless, with the help of some criteria for positivity of sectional curvature from paper \cite{VZ},
it is proved in \cite{B2} that all generalized normal homogeneous spaces from this family also have positive sectional
curvatures.

It should be noted that generalized normal homogeneous Riemannian metrics show a great diversity of properties.
For example, for the families in Theorem \ref{nonnormal}, $\mu_t$ and $\xi_t$ are simultaneously weakly symmetric
and naturally reductive, $\nu_t$ and $\psi_t$ are weakly symmetric but not naturally reductive \cite{Zil82,Zil96}.
The metrics $\mu_{t,s}$ are weakly symmetric (see e.~g. 12.9.2 in \cite{Wolf2007}),
but not naturally reductive \cite{Zil82}.
Yu.G.~Nikonorov proved that geodesic orbit Riemannian metrics on $S^{4n+3}$ with respect to $Sp(n+1)$
are precisely the multiples of metrics $\mu_t$ \cite{NikGOsp}.

\newpage

\vspace{10mm}


\begin{thebibliography}{99}

\bibitem{Al68}
D.V.~Alekseevskii, {\it Compact quaternion spaces} // Funk. Anal. Pril.,
2(2) (1968), 11--20.


\bibitem{B1}
V.N.~Berestovskii,
{\it Generalized normal homogeneous spheres} (in Russian) // submitted.

\bibitem{B2}
V.N.~Berestovskii,
{\it Generalized normal homogeneous spheres $S^{4n+3}$ with largest connected motion group $Sp(n+1)\cdot U(1)$}
(in Russian) // submitted.


\bibitem{BNN}
V.N.~Berestovskii, E.V.~Nikitenko, and Yu.G.~Nikonorov.
{\it Classification of generalized normal homogeneous Riemannian manifolds
of positive Euler characteristic} // Differential Geom. Appl., 29(4) (2011), 533--546.


\bibitem{BerNik7}
V.N.~Berestovskii, Yu.G.~Nikonorov,
{\it Killing vector fields of constant length on Riemannian
manifolds} // Siber. Math. J., 49(3) (2008), 395--407.


\bibitem{BerNik6}
V.N.~Berestovskii, Yu.G.~Nikonorov,
{\it Killing vector fields of constant length on locally symmetric Riemannian manifolds} //
Transform. Groups, 13(1) (2008), 25--45.


\bibitem{BerNik}
V.N.~Berestovskii,  Yu.G.~Nikonorov, {\it On $\delta$-homogeneous
Riemannian manifolds} // Differential Geom. Appl., 26(5) (2008), 514--535.


\bibitem{BerNik3}
V.N.~Berestovskii,  Yu.G.~Nikonorov,
{\it On $\delta$-homogeneous Riemannian manifolds, II} //
Siber. Math. J., 50(2) (2009), 214--222.

\bibitem{BerNik5}
V.N.~Berestovskii, Yu.G.~Nikonorov,
{\it Clifford-Wolf homogeneous Riemannian manifolds} // J. Differ. Geom., 82(3) (2009), 467--500.


\bibitem{BerNik2009ch}
V.N.~Berestovskii, Yu.G.~Nikonorov,
{\it The Chebyshev norm on the Lie algebra of the motion group of a compact homogeneous Finsler manifold} //
Sovrem. Mat. Prilozh., 60 (2008), 98--121 (Russian);
English translation in J. Math. Sci. (N.~Y.), 161(1) (2009), 97--121.

\bibitem{BerNikBook}
V.N.~Berestovskii, Yu.G.~Nikonorov,
{\it Riemannian manifolds and homogeneous geodesics}, South Mathematical Institute of VSC RAS, Vladikavkaz,
2012, 412~p. (Russian).

\bibitem{BP}
V.N.~Berestovskii, C.~Plaut,
{\it Homogeneous spaces of curvature bounded below} // J. Geom. Anal., 9(2) (1999), 203--219.

\bibitem{Berg}
M.~Berger,
{\it Les varietes riemanniennes homogenes normales a courbure strictement positive} //
Ann. Scoula Norm. Sup. Pisa Cl. Sci. S\'er. 3, 15(3) (1961), 179--246.

\bibitem{Bes}
A.L.~Besse,
{\it Einstein manifolds}, Springer-Verlag: Berlin, Heidelberg, New York, London, Paris, Tokyo, 1987.


\bibitem{F2}
W.J.~Firey,
{\it $P$-means of convex bodies} // Math. Scand.,  10 (1962), 17--24.

\bibitem{F3}
W.J.~Firey,
{\it Some applications of means of convex bodies} // Pacific J. Math.,  14 (1964), 53--60.

\bibitem{Fried}
T.~Friedrich,
{\it Weak Spin(9)-structures on 16-dimensional Riemannian manifolds} // Asian J. Math.  5(1)  (2001), 129--160.

\bibitem{FR}
D.B.~Fuks, V.A.~Rokhlin,
{\it Beginner's course in topology. Geometric chapters},
Universitext. Springer series in Soviet Mathematics. Springer-Verlag: Berlin, 1984.

\bibitem{GlZ}
H.~Gluck, W.~Ziller,
{\it The geometry of the Hopf fibrations} // L'Enseignement Math\'ematique, 32 (1986), 173--198.


\bibitem{GZ}
K.~Grove, W.~Ziller,
{\it Cohomogeneity one manifolds with positive Ricci curvature} // Inv. Math., 149 (2002), 619--646.


\bibitem{HS}
H.~Hopf, H.~Samelson,
{\it Ein Satz \"uber die Wirkungr\"aume geschlossener Lischer Gruppen} // Math. Helv.,
13(1) (1940/1941), 240--251.

\bibitem{Hus}
D.~Husemoller,
{\it Fibre bundles}, vol. 20 of {\it Graduate Texts in Mathematics}, Springer-Verlag: New York, third edition, 1994.

\bibitem{J}
I.M.~James,
{\it The topology of Stiefel manifolds}, vol. 20 of {\it London Mathematical Society Lecture Note Series},
Cambridge University Press: Cambridge, London, New York, Melbourne, 1976.

\bibitem{Koz}
S.E.~Kozlov,
{\it Geometry of real Grassmannian manifolds. I, II} // J. Math. Sci. (New York), 100(3) (2000), 2239--2253.

\bibitem{NikGOsp}
Yu.G.~Nikonorov,
{\it Geodesic orbit Riemannian metrics on spheres} // submitted.


\bibitem{On}
A.L.~Onishchik,
{\it Topology of Transitive Transformation Groups}, Johann Ambrosius Barth: Leipzig, Berlin, Heidelberg, 1994.


\bibitem{VZ}
L.~Verdiani, W.~Ziller,
{\it Positively curved homogeneous metrics on spheres} // Math. Zeitschrift, 261(3) (2009), 473--488.

\bibitem{Volp}
D.E.~Volper,
{\it Sectional curvatures of a diagonal family of $Sp(n+1)$-invariant metrics on (4n+3)-dimensional spheres} //
Siber. Math. J., 35(6) (1994), 1089--1100.

\bibitem{Volp1}
D.E.~Volper,
{\it A family of metrics on 15-dimensional sphere} //
Siber. Math. J., 38(2) (1997), 223--234.

\bibitem{Volp2}
D.E.~Volper,
{\it Sectional curvatures of nonstandard metrics on $\mathbb{C}P^{2n+1}$} //
Siber. Math. J., 40(1) (1999), 39--45.


\bibitem{Wolf2007}
J.A. Wolf, {\it Harmonic analysis on commutative spaces}\,, American Mathematical
Society, Providence, RI, 2007.

\bibitem{Zil77}
W.~Ziller,
{\it The Jacobi equation on naturally reductive compact Riemannian homogeneous spaces} //
Comment. Math. Helv., 52 (1977), 573--590.

\bibitem{Zil82}
W.~Ziller,
{\it Homogeneous Einstein metrics on spheres and projective spaces} // Math. Ann., 259 (1982), 351--358.


\bibitem{Zil96}
W.~Ziller,
{\sl Weakly symmetric spaces}, 355--368. In: {\it Progress
in Nonlinear Differential Equations}. V.~20. Topics in geometry: in
memory of Joseph D'Atri. Birkh{\"a}user, 1996.


\end{thebibliography}
\end{document}